 \theoremstyle{plain}
 \newtheorem{theorem}{Theorem}[section]
 \newtheorem{lemma}[theorem]{Lemma}
 \newtheorem{corollary}[theorem]{Corollary}
 \newtheorem{proposition}[theorem]{Proposition}
 \newtheorem{remark}[theorem]{Remark}
 \theoremstyle{definition}
 \newtheorem{definition}[theorem]{Definition}
 \newtheorem{example}[theorem]{Example}
 \title{G$_2$ manifolds with nodal singularities along circles}
 \author{Gao Chen}
\begin{document}

 \maketitle
\begin{abstract}
  The goal of this paper is the construction of a compact manifold with G$_2$ holonomy and nodal singularities along circles using twisted connected sum method. This paper finds matching building blocks by solving the Calabi conjecture on certain asymptotically cylindrical manifolds with nodal singularities. However, by comparison to the untwisted connected sum case, it turns out that the obstruction space for the singular twisted connected sum  construction is infinite dimensional. By analyzing the obstruction term, there are strong evidences that the obstruction may be resolved if a further gluing is performed in order to get a compact manifold with G$_2$ holonomy and isolated conical  singularities with link $\mathbb{S}^3\times\mathbb{S}^3$.
\end{abstract}
\tableofcontents
\section{Introduction}
The main goal of this paper is to construct a compact manifold with G$_2$ holonomy and nodal singularities along circles using twisted connected sum method. The main motivation is the study of the moduli space of manifolds with G$_2$ holonomy. It was proved by Joyce (Theorem C of \cite{Joyce}) that locally near smooth manifolds with G$_2$ holonomy, the moduli space is a smooth manifold with the third betti number of the original manifold as the dimension. In the plenary talk of the AMS Sectional Meeting in 2016 at Stony Brook, Sir Simon Donaldson listed the global behavior of the moduli space of G$_2$ manifolds as one of the most important problems in the area of manifolds with special holonomies.

This kind of problem was studied in many other dimensions. In dimension 1, it is trivial to say that any compact oriented 1-manifold is diffeomorphic to each other. Moreover, the moduli space of compact oriented Riemannian 1-manifolds is characterized by the cohomology class of the unit-length oriented 1-form. In dimension 2, the topology of a compact Riemann surface is determined by its genus. For each fixed genus, the classical theorem by Torelli \cite{Torelli} says that the non-singular projective algebraic curve is determined by its Jacobian variety, in other words, cohomology class of holomorphic 1-forms. In dimension 4, it was proved that any K3 surface is diffeomorphic to each other \cite{Kodaira} and the moduli space of K3 surfaces is characterized by its cohomology classes of three 2-forms as in Theorem \ref{K3Torelli}.

A K3 surface becomes singular if the nondegeneracy condition in Theorem \ref{K3Torelli} is not satisfied. When approaching the points corresponding to singular K3 surfaces, a typical method is to rescale the singular point and study the bubbling limit \cite{Anderson, Donaldson}. Therefore, the structure of  K3 surfaces is in some sense determined by the structure of non-compact hyperK\"ahler 4-manifolds with decaying curvatures. Such manifolds are called \textit{gravitational instantons}. Under faster than quadratic curvature decay condition, gravitational instantons were classified by the author and Xiuxiong Chen \cite{ChenChen1, ChenChen2, ChenChen3} generalizing previous works of Kronheimer \cite{Kronheimer1, Kronheimer2} and Minerbe \cite{Minerbe}. An important example among them is the ALE-A$_1$ gravitational instanton, which is also known as Eguchi-Hanson space. The starting point is the A$_1$ singularity $z_1^2+z_2^2+z_3^2=0$. There are two ways to resolve it. The first way is to blow up the singular point and the second way is to deform it to $z_1^2+z_2^2+z_3^2=\epsilon$. The key point is that the deformation and the blow up are diffeomorphic to each other. This phenomenon is closely related to the fact that all K3 surfaces are diffeomorphic to each other and the moduli space of K3 surfaces is smooth.

In dimension 6, one can do the similar thing for Calabi-Yau threefolds \cite{CandelasdelaOssa}. In this case, the starting point is the nodal singularity $z_1^2+z_2^2+z_3^2+z_4^2=0$ as in Example \ref{Nodal-singularity}. It can be birationally resolved in two different ways and can also be deformed to $\sum_{j=1}^4 z_j^2=\epsilon$. However, they are not diffeomorphic to each other. Therefore, near the points representing singular manifolds, roughly speaking, the moduli space of Calabi-Yau threefolds looks like the union of several manifolds with possibly different dimensions. The precise statement was proved by Rong and Zhang \cite{RongZhang}. In general the relationship between the birational resolution and the smoothing of a singularity is called an \textit{extremel transition}. The famous Reid's fantasy \cite{Reid} conjectures that all Calabi-Yau threefolds are connected to each other by extremal transitions. This conjecture is still far from reach. In fact, even the precise statement of the conjecture varies \cite{Gross1, Gross2, Rossi} due to the mysterious role played by the possibly non-K\"ahler Calabi-Yau threefolds.

In dimension 7, the fundamental question is whether the dimension of moduli space of G$_2$ manifolds can change or not. This problem is complicated because all the currently known examples of compact singular G$_2$ examples have codimension 4 singularities. According to the work of Joyce \cite{Joyce}, and more recent work of Joyce-Karigiannis \cite{JoyceKarigiannis}, they are related to gravitational instantons.

The main goal of this paper is to instead, construct an example with nodal singularities along circles. The main tool is the twisted connected sum method due to Kovalev \cite{Kovalev} and Corti-Haskins-Nordstr\"om-Pacini \cite{CortiHaskinsNordstromPacini2}. In Section 8.4.5 of \cite{CortiHaskinsNordstromPacini2}, they proposed three technical problems in the construction of singular G$_2$ manifolds using twisted connected sum method. The first problem is the construction of asymptotically cylindrical Calabi-Yau threefolds with nodal singularities. The second problem is finding matching data on the ends. The third problem is to control neck length in the gluing construction. This paper starts from the solution of the first problem by combing the Theorem 1.4 of Hein-Sun's work \cite{HeinSun} with Theorem D of Haskins-Hein-Nordstr\"om's work \cite{HaskinsHeinNordstrom} and proving the following theorem:

\begin{theorem}
Let $i:\mathcal{Z}\subset \mathbb{CP}^{N_1}\times\Delta$ be a flat family of projective varieties over disc $\Delta$. Denote $(\pi_1\circ i)^*(\mathcal{O}(1))$ by $\mathcal{L}$. Denote $(\pi_2\circ i)^{-1}(s)$ by $Z_s$. Denote $\mathcal{L}|_{Z_s}$ by $L_s$.  Suppose that there exists an morphism $f:\mathcal{Z}\rightarrow\mathbb{CP}^1=\mathbb{C}\cup\{\infty\}$. Denote $f^{-1}(\infty)$ by $\mathcal{S}$. Denote $\mathcal{Z}\setminus \mathcal{S}$ by $\mathcal{V}$. Denote $\mathcal{S}\cap Z_s$ by $S_s$. Denote $Z_s\setminus S_s$ by $V_s$. Suppose that $V^{\mathrm{sing}}$ is a finite subset of $V_0$. Suppose that $\mathcal{Z}$ is smooth and the induced map $(\pi_2\circ i)_*$ on the tangent space is surjective at each point on $\mathcal{Z}\setminus V^{\mathrm{sing}}$. Suppose that $df$ is not the pull back of any form on $\Delta$ at each point on $\mathcal{S}$. Suppose that the complex dimension $n$ of $Z_s$ is at least 3. Suppose that for each $x\in V^{\mathrm{sing}}$, there exists a holomorphic function $\epsilon_x(s)$ with $\epsilon_x(0)=0$ such that the germ $(\mathcal{Z},x,\pi_2\circ i)$ is isomorphic to the germ $(\mathcal{C}_x,o_x,\pi_x)$, where \[C_{x,s}=\pi_x^{-1}\mathcal{C}_x(s)=\{z_1^2+...z_n^2=\epsilon_x(s)\},\] and $o_x$ is the tip point of $C_x=C_{x,0}$. Assume that $\Omega_s$ is a meromorphic family of meromorphic $n$-forms on $Z_s$. Assume that $\Omega_s$ is holomorphic on $\mathcal{V}\setminus V^{\mathrm{sing}}$. Assume that $\frac{\Omega_s}{f}$ is holomorphic near $\mathcal{S}$. Assume that the ratio of $\Omega_s$ to $\Omega_{\epsilon_{x}(s)}$ in Example \ref{Stenzel-metric} is holomorphic near $x$. For simplicity, denote $(Z_0,S_0,L_0,\Omega_0,V_0)$ by $(Z,S,L,\Omega,V)$. Then after replacing $\Omega$ by its product with a constant, there exists an asymptotically cylindrical Calabi-Yau metric $\omega\in c_1(L)|_V$ on $V$ such that \[\frac{\omega^n}{n!}=\frac{i^{n^2}}{2^n}\Omega\wedge\bar\Omega.\] Moreover, $\omega$ has conical singularity with rate $\lambda>0$ and tangent cone $(C_x,\omega_{C_x})$ at $x$ as in Definition \ref{Definition-conical-singularity}.
\label{Non-compact-Calabi-Yau}
\end{theorem}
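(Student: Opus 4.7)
The plan is to combine the two existence theorems referred to in the statement, namely Theorem D of Haskins--Hein--Nordstr\"om and Theorem 1.4 of Hein--Sun, by setting up a single Monge--Amp\`ere problem on $V\setminus V^{\mathrm{sing}}$ whose solution has the required asymptotics simultaneously at the cylindrical end and at each node.

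First I would construct a reference K\"ahler metric $\omega_0$ in the class $c_1(L)|_V$ that is approximately Calabi--Yau on two different regions. Away from the nodes, the hypotheses on $f$ and on $\Omega/f$ near $\mathcal{S}$ are exactly those used by Haskins--Hein--Nordstr\"om to produce, after a finite cover in the base $\mathbb{CP}^1$ if necessary, a smooth asymptotically cylindrical K\"ahler metric on a neighbourhood of infinity whose volume form agrees with $\tfrac{i^{n^2}}{2^n}\Omega\wedge\bar\Omega$ up to an exponentially decaying error; the asymptotic cross section is the product of a Calabi--Yau metric on a smooth fibre $S_s$ and a cylinder on the punctured disc, which is Ricci-flat because $K_{Z_s}$ has only a simple pole along $S_s$. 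Near each node $x\in V^{\mathrm{sing}}$, the germ assumption identifies a neighbourhood of $x$ with a neighbourhood of the tip of the Stenzel cone $C_x$, and the hypothesis on the ratio of $\Omega_s$ to $\Omega_{\epsilon_x(s)}$ says that $\Omega$ matches the Stenzel holomorphic volume form to first order. I would then patch together the Haskins--Hein--Nordstr\"om model, the Stenzel cone metrics near each $x$, and a smooth K\"ahler metric in $c_1(L)|_V$ in the intermediate region using cutoff functions, obtaining a reference $\omega_0$ whose Ricci potential $F$, defined by $\tfrac{\omega_0^n}{n!}=e^F\,\tfrac{i^{n^2}}{2^n}\Omega\wedge\bar\Omega$ after the constant normalization, decays exponentially on the cylindrical end and like $r^{\lambda}$ near each node in the Stenzel radial coordinate.

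Next I would solve the complex Monge--Amp\`ere equation
\[(\omega_0+i\partial\bar\partial u)^n=e^{-F}\omega_0^n\]
in a weighted H\"older space adapted to both asymptotics: exponential weight $e^{\delta t}$ along the cylindrical coordinate $t=-\log|f|$ as in Haskins--Hein--Nordstr\"om, and a conical weight of rate $\lambda$ near each $x$ as in Hein--Sun. The linear theory requires showing that the Laplacian $\Delta_{\omega_0}$ is an isomorphism between the corresponding weighted spaces; this follows by combining the indicial root analysis on the cylinder (which excludes $\delta$ in a small interval around $0$ because $F$ has zero average on the asymptotic cross section after the constant adjustment) with the indicial root analysis on the Stenzel cone at the tip (which determines the admissible range of $\lambda$ via the spectrum of the link $\mathbb{S}^3\times\mathbb{S}^3$). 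With this linear theory in place, the nonlinear equation is solved either by the Calabi continuity method of Tian--Yau/HHN or by the perturbative fixed-point argument of Hein--Sun, using the smallness of $F$ to get a solution $u$ in the same weighted space.

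Finally I would upgrade regularity: on the cylindrical end, standard elliptic theory gives that $u$ decays exponentially, so $\omega=\omega_0+i\partial\bar\partial u$ inherits the asymptotically cylindrical Calabi--Yau structure from Haskins--Hein--Nordstr\"om; near each node, the Hein--Sun asymptotic expansion shows that $\omega$ has a conical singularity with tangent cone $(C_x,\omega_{C_x})$ and rate $\lambda>0$ in the sense of Definition \ref{Definition-conical-singularity}. The main obstacle I anticipate is the linear theory step, specifically verifying that the combined weighted spaces avoid both sets of indicial roots and that the Laplacian of the patched reference metric is genuinely Fredholm in this doubly-weighted setting; this requires a careful parametrix construction that glues the cylindrical-end parametrix of Haskins--Hein--Nordstr\"om to the conical-tip parametrix on the Stenzel cone, with the transition region contributing only a compact error so that the index computation reduces to the two model problems independently.
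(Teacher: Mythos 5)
Your proposal correctly identifies the two input theorems and correctly anticipates the need for a doubly-weighted linear theory (cylindrical exponential weight at infinity, conical weight of rate $\lambda$ at each node); the paper indeed proves a bijectivity statement for the Laplacian on exactly such spaces via the refined index-change formula and the analysis of critical rates on the Stenzel cone. However, the nonlinear step is where the proposal has a genuine gap, and it is precisely the part the paper spends most of Section 3 addressing. You claim the equation can be solved ``using the smallness of $F$'' by the Hein--Sun fixed-point argument or by the Tian--Yau/HHN continuity method, but the Ricci potential of the patched reference metric is bounded, \emph{not} small, near the nodes: it equals $-2\log|h_s|$ for a nonvanishing holomorphic $h_s$, so after subtracting the constant $-2\log|h_s(x_i)|$ one only gets a decaying remainder, and the constant itself must be carried in the function space (this is why the paper works in $(\oplus\,\mathbb{R}\chi_i)\oplus C^\infty_{\nu}$). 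No perturbative fixed-point argument applies.

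More seriously, your proposal offers no mechanism for the a priori estimates (closedness of the continuity method) on the singular $V$ itself. The paper explicitly lists why neither Hein--Sun's scheme nor HHN's scheme transports directly: Hein--Sun's openness step uses a non-standard weighted norm that does not obviously generalize to the asymptotically cylindrical setting, and their closedness step relies on Eyssidieux--Guedj--Zeriahi to produce weak solutions, which is unavailable (and in any case too weak) on a non-compact end. The paper's solution is to run the continuity method in a parameter $\tau$ \emph{and simultaneously} in the smoothing parameter $s$ of the flat family $Z_s$: HHN's Theorem 4.1 gives solutions on $V_s$ for $s\ne 0$; uniform $C^0$ estimates come from a uniform Sobolev inequality on the degenerating family, uniform $C^2$ estimates from the Eells--Sampson Bochner formula applied to the embedding into $\mathbb{CP}^{N_1}$ (avoiding the unbounded curvature of the reference metric), and then Evans--Krylov and Schauder upgrade to $C^\infty_{\mathrm{loc}}$; the limit as $s\to 0$ is identified with the weak solution on $V_0$ and its Gromov--Hausdorff limit is controlled by Cheeger--Colding and Donaldson--Sun theory after capping off the cylindrical end. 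This passage through the smooth family and the Gromov--Hausdorff machinery is the essential new content, and it is absent from your argument; the step you flag as the ``main obstacle'' (Fredholm theory in the doubly-weighted space) is in fact one of the more routine parts.
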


Then the second problem is solved for a particular example using the additional information about the quartic K3 surfaces in $\mathbb{CP}^3$ using Theorem \ref{K3Torelli} and Chapter 3 of \cite{ReidK3}.

\begin{proposition}
It is possible to find the following data with required properties:

(1) $X_-=\mathbb{CP}^3$. $\Pi$ is a 2-plane in $\mathbb{CP}^4$. $X_+$ is a quartic 3-fold in $\mathbb{CP}^4$ containing $\Pi$ with nine nodal singularities $X_+^{\mathrm{sing}}$.

(2) $|S_{0,\pm},S_{\infty,\pm}|\subset|-K_{X_{\pm}}|$ are pencils with smooth base locuses $C_{\pm}$ disjoint with $X_+^{\mathrm{sing}}$. $Z_{\pm}$ are the blow-up of $X_{\pm}$ at $C_{\pm}$.

(3) $S_{\pm}$ are smooth K3 surfaces in $|S_{0,\pm},S_{\infty,\pm}|$ disjoint with $X_+^{\mathrm{sing}}$. Their proper transforms are also denoted by $S_{\pm}\subset Z_{\pm}$. $\Omega_{\pm}$ are meromorphic 3-forms on $Z_{\pm}$ with simple poles along $S_{\pm}$.

(4) $(S_{\pm},\omega_{S_\pm},\omega^{J}_{S_\pm}+i\omega^{K}_{S_\pm})$ are Calabi-Yau surfaces.

(5) $\omega_{\pm}$ are asymptotically cylinderical Calabi-Yau metrics on $V_{\pm}=Z_{\pm}\setminus S_{\pm}$ with $\frac{\omega_{\pm}^3}{6}=\frac{i}{8}\Omega_{\pm}\wedge\bar\Omega_{\pm}$.

(6) $\omega_+$ has conical singularity in the sense of Definition \ref{Definition-conical-singularity} with the nodal singularity in Example \ref{Nodal-singularity} as the tangent cone for all $x\in V_+^{\mathrm{sing}}$.

(7) $K_{\pm}$ are compact subsets of $V_{\pm}$. $P_{\pm}:[1,\infty)\times \mathbb{S}^1\times S_{\pm}\rightarrow V_{\pm}\setminus K_{\pm}$ are diffeomorphisms on the ends. $t_\pm$ are coordinates on $[1,\infty)$, $\vartheta_\pm$ are coordinates on $\mathbb{S}^1$. Up to exponentially decaying errors $\varrho_{\pm}$ and $\varsigma_{\pm}$, \[(P_{\pm}^{*}\omega_{\pm},P_{\pm}^{*}\Omega_{\pm})=(\omega_{\infty,\pm},\Omega_{\infty,\pm}) +(d\varrho_{\pm},d\varsigma_{\pm}),\]
where \[\omega_{\infty,\pm}=dt_{\pm}\wedge d\vartheta_{\pm}+\omega_{S_{\pm}},\] and \[\Omega_\infty=(d\vartheta_{\pm}-idt_{\pm})\wedge(\omega^{J}_{S_{\pm}}+i\omega^{K}_{S_{\pm}}).\]

(8) $r$ is a diffeomorphism from $(S_{+},\omega_{S_+},\omega^{J}_{S_+},\omega^{K}_{S_+})$ to $(S_{-},\omega^{J}_{S_-},\omega_{S_-},-\omega^{K}_{S_-})$.
\label{Matching}
\end{proposition}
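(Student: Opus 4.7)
The construction breaks into three parts: the algebraic set-up for (1)--(3), an application of Theorem~\ref{Non-compact-Calabi-Yau} for (4)--(7), and the period-theoretic matching for (8).

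For the algebraic part, $X_-=\mathbb{CP}^3$ is fixed with $|{-K_{X_-}}|=|\mathcal{O}(4)|$. For $X_+$, I would use the classical model of a quartic threefold containing a $2$-plane: in coordinates where $\Pi=\{z_3=z_4=0\}$, the defining polynomial is $z_3f+z_4g$ with $f,g\in H^0(\mathbb{CP}^4,\mathcal{O}(3))$. For generic $f,g$, $X_+$ is smooth off of $\Pi$, and its singular locus on $\Pi$ is $\{f|_\Pi=g|_\Pi=0\}$, giving nine ordinary double points by B\'ezout. Since the nodes are Gorenstein and adjunction gives $-K_{X_+}=\mathcal{O}(1)|_{X_+}$, the system $|{-K_{X_+}}|$ is the complete linear system of hyperplane sections. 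A generic pencil on each side, further required on $X_+$ to have its spanning plane avoid the nine nodes (an open non-empty condition), yields smooth base curves $C_\pm$ disjoint from $X_+^{\mathrm{sing}}$ and smooth generic K3 members $S_\pm$ also avoiding $X_+^{\mathrm{sing}}$. The blow-ups $Z_\pm$ along $C_\pm$ inherit the nine nodes on $Z_+$ and are otherwise smooth; the pencils extend to morphisms $f:Z_\pm\to\mathbb{CP}^1$ with $S_\pm=f^{-1}(\infty)$, and the meromorphic $3$-forms $\Omega_\pm$ with simple poles along $S_\pm$ are the Poincar\'e residues, unique up to scale.

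Items (4)--(7) follow from Theorem~\ref{Non-compact-Calabi-Yau} applied to the trivial family $\mathcal{Z}_\pm=Z_\pm\times\Delta$ with $\epsilon_x\equiv 0$, so that each of the nine singular germs on $Z_+$ is already in the required nodal model at $s=0$. The hypotheses---smoothness of $\mathcal{Z}_\pm$ off $V_+^{\mathrm{sing}}$, surjectivity of $(\pi_2\circ i)_*$, non-pull-back of $df$ along $S_\pm$ (because $S_\pm$ is a smooth fibre), and the explicit nodal form of the germs---are immediate. The theorem then outputs $\omega_\pm$ on $V_\pm$ solving the Monge-Amp\`ere equation in~(5), with conical nodal singularities at $V_+^{\mathrm{sing}}$ giving~(6), and the asymptotic form~(7) together with an induced hyperk\"ahler triple $(\omega_{S_\pm},\omega^J_{S_\pm},\omega^K_{S_\pm})$ on the K3 cross-section, which is (4).

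The main obstacle is (8). By the Torelli theorem (Theorem~\ref{K3Torelli}), the existence of $r$ intertwining the hyperk\"ahler triples in the prescribed swap is equivalent to finding markings of $S_+$ and $S_-$ whose period triples in $\Lambda_{K3}\otimes\mathbb{R}$ are related by the linear map that exchanges $[\omega]$ with $[\omega^J]$ and reverses $[\omega^K]$. Following the orthogonal-gluing strategy of \cite{CortiHaskinsNordstromPacini2}, the natural invariants are the sublattices $N_\pm\subset H^2(S_\pm,\mathbb{Z})$ of classes restricted from $Z_\pm$: on the $X_-$ side $N_-$ contains the $\langle 4\rangle$ polarisation $H|_{S_-}$; on the $X_+$ side $N_+$ contains $H|_{S_+}$ together with the line class $[\Pi\cap S_+]$ (after arranging $S_+$ to meet $\Pi$ transversally in a line missing the nine nodes), giving an explicitly computable rank-$2$ sublattice with intersection form $\bigl(\begin{smallmatrix}4&1\\1&-2\end{smallmatrix}\bigr)$. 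The remaining step is to find primitive embeddings $N_\pm\hookrightarrow\Lambda_{K3}$ so that the orthogonal complements can simultaneously carry the required periods under the sign-swap isometry, and then to invoke surjectivity of the K3 period map together with the moduli available in the choice of cubics $f,g$ and of the pencils to realise those periods on both sides. Chapter~3 of \cite{ReidK3} supplies the explicit description of quartic K3s needed for this. The decisive and hardest step is this last lattice-arithmetic compatibility: checking that the rank-$2$ algebraic lattice forced by $\Pi\subset X_+$ is compatible, under the prescribed swap, with being a primitive orthogonal complement to $\langle 4\rangle$ inside $\Lambda_{K3}$.
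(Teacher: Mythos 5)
Your algebraic set-up for (1)--(3) agrees with the paper (Example 7.3 of [CortiHaskinsNordstromPacini1]), but the way you propose to get (4)--(7) is wrong. Theorem~\ref{Non-compact-Calabi-Yau} cannot be applied to the trivial family $\mathcal{Z}_+ = Z_+\times\Delta$ with $\epsilon_x\equiv 0$: a hypothesis of that theorem is that the total space $\mathcal{Z}$ is \emph{smooth}, and if $\epsilon_x\equiv 0$ then near each node the germ of $\mathcal{Z}_+$ is $C_x\times\Delta$, which is singular along $\{o_x\}\times\Delta$. More fundamentally, the proof of Theorem~\ref{Non-compact-Calabi-Yau} in Section~3 is a degeneration argument: it applies Haskins--Hein--Nordstr\"om on the smooth fibers $Z_s$, $s\neq 0$, to produce asymptotically cylindrical Calabi--Yau metrics there, then passes to the limit $s\to 0$ using Gromov--Hausdorff compactness and Donaldson--Sun theory. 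With the trivial family there are no smooth fibers and nothing to degenerate. You need a genuine smoothing of the nine nodes, for instance $\{z_3 f + z_4 g = s\,h\}$ for a quartic $h$ not vanishing at the nodes, which gives a smooth total space with $\epsilon_x(s)$ vanishing to first order in $s$.

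For (8) you correctly locate the lattice-theoretic difficulty, but you neither solve it nor identify the specific obstruction, which is precisely what Section~4 of the paper is devoted to. The key subtlety you miss is that $Z_+$ is the blow-up of the \emph{singular} $X_+$ at $C_+$, not of the small resolution $Y_+$. Consequently the K\"ahler classes on $S_+$ that extend to $Z_+$ form a single ray spanned by $-K_{X_+}|_{S_+}$, not the two-dimensional cone spanned by $\tilde\Pi|_{S_+}$ and $-K_{Y_+}|_{S_+}$. Since $\mathrm{Amp}(\mathbb{CP}^3)$ is also a single ray, both $k_+$ and $k_-$ are rigid, and the generic ``orthogonal gluing'' of Proposition~6.18 of [CortiHaskinsNordstromPacini2] (which needs open ample subcones on both sides) does not apply. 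This forces a ``handcrafted gluing'': after choosing the period $k\in\mathbb{S}(T(\mathbb{R}))$ off of countably many hyperplanes, the paper must still verify via Torelli (Theorem~\ref{K3Torelli}) that the resulting K3 surface $S_-$ carries $A = -\tfrac{1}{4}c_1(K_{Y_-})$ as a nef and big class that actually gives a smooth quartic in $\mathbb{CP}^3$. That is where Reid's Lemma~\ref{ReidK3lemma} enters: one must rule out the monogonal and hyperelliptic cases, and rule out $-2$ curves in $A^\perp$, and the paper does this by the explicit Diophantine check (e.g.\ $-36b^2 + 4c^2 = -2$ has no integer solutions). Without this argument, items (1)--(3) and (8) are not simultaneously realizable, so what you call ``the decisive and hardest step'' is not a loose end; it is the content of the proposition.
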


Extend $t_\pm$ to non-negative smooth functions on $V_\pm$ such that $t_+$ equals to 0 near $V^{\mathrm{sing}}_+=X^{\mathrm{sing}}_+$. Choose $\chi=\chi(s):\mathbb{R}\rightarrow[0,1]$ as a smooth function satisfying $\chi(s)=1$ for $s\le 1$ and $\chi(s)=0$ for $s\ge 2$. Using the data in Proposition \ref{Matching}, as in Section 3 of \cite{CortiHaskinsNordstromPacini2}, for fixed large enough $T$, define \[\omega_{T,\pm}=\omega_{\pm}-d((1-\chi(t_{\pm}-T+2))\varrho_{\pm})\]
and \[\Omega_{T,\pm}=\Omega_{\pm}-d((1-\chi(t_{\pm}-T+2))\varsigma_{\pm})\] on $V_{\pm}$. Let $M_{\pm}$ be $\mathbb{S}^1\times V_{\pm}$. Let $\theta_{\pm}$ be the coordinates on $\mathbb{S}^1$. Define
\[\varphi_{T,\pm}=d\theta_{\pm}\wedge\omega_{T,\pm}+\mathrm{Re}\Omega_{T,\pm}.\]
Remark that using the diffeomorphism \[(\theta_-,t_-,\vartheta_-,x_-)=(\vartheta_+,2T+1-t_+,\theta_+,r(x_+)),\]
$\varphi_{T,\pm}$ can be glued into a closed G$_2$ structure $\varphi_T$.

Let $M$ be the manifold obtained by this gluing map, then the construction of a G$_2$ manifold with nodal singularities along circles is reduced to finding a perturbation of $\varphi_T$ which induces a metric with G$_2$ holonomy but still preserves the singularities. However, the analysis on manifolds with conical singularities along smooth submanifolds is very complicated. A slightly simpler problem is the analysis on manifolds with isolated conical singularities. Therefore, this paper starts from solving an analogy problem instead. In this case, $(Z_-,\omega_-,\Omega_-)$ is $(Z_+,\omega_+,-\Omega_+)$ and $r$ is the identity map. So $r$ is a diffeomorphism from $(S_{+},\omega_{S_+},\omega^{J}_{S_+},\omega^{K}_{S_+})$ to $(S_{-},\omega_{S_-},-\omega^{J}_{S_-},,-\omega^{K}_{S_-})$ instead and the gluing map is given by \[(\theta_-,t_-,\vartheta_-,x_-)=(\theta_+,2T+1-t_+,-\vartheta_+,r(x_+)).\] Remark that in this case, the $\mathbb{S}^1$ factor with coordinate $\theta=\theta_-=\theta_+$ is global. So the theorem in this case is

\begin{theorem}
(Doubling construction of Calabi-Yau threefolds)

For the new choice of gluing data, for sufficiently large $T$, there is an $\mathbb{S}^1$-invariant perturbation $\varphi$ of $\varphi_T$ such that the holonomy group of $\varphi$ is contained in $\mathrm{SU}(3)\subset\mathrm{G}_2$ and for each $x\in V_{\pm}^{\mathrm{sing}}$, there exist numbers $c_{1,x}>0$, $c_{2,x}>0$, $c_{3,x}$ and a homeomorphism $P_x:O_x\rightarrow U_x$ between a neighborhood $o\in O_x\subset C_x$ and $x\in U_x\subset V_{\pm}$, such that
\[|\nabla^j_{\varphi_{\mathbb{S}^1\times C_x}}((\mathrm{Id}\times P_x)^*\varphi-\varphi_{\mathbb{S}^1\times C_x})|_{\varphi_{\mathbb{S}^1\times C_x}}=O(r^{\lambda-j})\] as $r\rightarrow 0$ for a positive number $\lambda$ and all $j\in\mathbb{N}_0$, where \[\varphi_{\mathbb{S}^1\times C_x}=c_{1,x}d\theta\wedge\omega_{C_x}+c_{2,x}\mathrm{Re}(e^{ic_{3,x}}\Omega_{C_x}).\]
\label{Doubling-construction-of-Calabi–Yau-threefolds}
\end{theorem}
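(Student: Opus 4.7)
The plan is to exploit the global $\mathbb{S}^1$-invariance of $\varphi_T$ (with coordinate $\theta=\theta_-=\theta_+$) to reduce the problem to the Calabi conjecture on a compact singular K\"ahler threefold. An $\mathbb{S}^1$-invariant torsion-free G$_2$ structure of the form $d\theta\wedge\omega+\mathrm{Re}\,\Omega$ has holonomy in $\mathrm{SU}(3)\subset\mathrm{G}_2$ if and only if $(\omega,\Omega)$ is a Calabi--Yau pair on the quotient. So I quotient the glued seven-manifold by the $\mathbb{S}^1$-action to obtain a compact singular complex threefold $X_T$, topologically built from $V_+$ and $V_-=V_+$ by identifying cylindrical ends via $(t_-,\vartheta_-,x_-)=(2T+1-t_+,-\vartheta_+,r(x_+))$; it carries the $2|V^{\mathrm{sing}}_+|=18$ nodes inherited from $V_\pm^{\mathrm{sing}}$. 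The task becomes to construct a Calabi--Yau pair $(\omega,\Omega)$ on $X_T$ of the form $\omega=\omega_T+i\partial\bar\partial\phi$ with a small $\phi$ that preserves conical behaviour at the nodes.

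First I assemble an approximate Calabi--Yau pair $(\omega_T,\Omega_T)$ on $X_T$. The K\"ahler form $\omega_T$ is already defined in the statement. For the holomorphic volume form, the sign flip $\Omega_-=-\Omega_+$ together with the hyperk\"ahler-rotation condition on $r$ in Proposition \ref{Matching}(8) ensures $\omega_{\infty,+}=\omega_{\infty,-}$ and $\Omega_{\infty,+}=-\Omega_{\infty,-}$ under the gluing identification, so $P_+^*\Omega_+$ and $-P_-^*\Omega_-$ coincide up to exponentially small error on the overlap. Because the gluing truncates the cylindrical ends before reaching $S_\pm$, the divisors at infinity are excised and $\Omega_T$ has no poles; a Moser-type deformation of the complex structure on the overlap absorbs the exponentially small discrepancy and produces an integrable complex structure on $X_T$ together with a genuinely holomorphic $\Omega_T$ on the smooth locus. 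By Theorem \ref{Non-compact-Calabi-Yau}, $\omega_T$ already matches the Stenzel cone near each $x\in V_\pm^{\mathrm{sing}}$ up to $O(r^\lambda)$, so the Monge--Amp\`ere defect $F_T=\log\bigl((i/8)\Omega_T\wedge\bar\Omega_T\bigr)-\log\bigl(\omega_T^3/6\bigr)$ is exponentially small in $T$ along the neck and vanishes to positive order at each node.

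I then solve $(\omega_T+i\partial\bar\partial\phi)^3=e^{F_T}\omega_T^3$ in a doubly weighted H\"older space $C^{2,\alpha}_{\beta,\delta}(X_T)$ with polynomial rate $\beta>0$ at each node and exponential rate $\delta>0$ along the neck. The linearisation is $\Delta_{\omega_T}$. Combining the asymptotically conical Fredholm theory of Hein--Sun \cite{HeinSun} and Haskins--Hein--Nordstr\"om \cite{HaskinsHeinNordstrom} at each node with the long-neck analytic machinery of Kovalev \cite{Kovalev} and Corti--Haskins--Nordstr\"om--Pacini \cite{CortiHaskinsNordstromPacini2}, I expect a uniform-in-$T$ right inverse of $\Delta_{\omega_T}$ on mean-zero functions, provided $\beta$ lies strictly between $0$ and the smallest positive indicial root of the Laplacian on the Stenzel cone, and $\delta$ is strictly smaller than the spectral gap of the Laplacian on the cross-sectional K3. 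A Newton / fixed-point iteration then yields $\phi$ with $\|\phi\|_{C^{2,\alpha}_{\beta,\delta}}=O(e^{-\eta T})$ for some $\eta>0$; setting $\omega=\omega_T+i\partial\bar\partial\phi$ and $\Omega=\Omega_T$ (after an overall rescaling of $\Omega_T$ that enforces the Calabi--Yau identity) and lifting back up the $\mathbb{S}^1$-bundle produces the desired perturbation $\varphi=d\theta\wedge\omega+\mathrm{Re}\,\Omega$.

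The main difficulty is to squeeze all of these analytic ingredients into a single weighted framework. Concretely I expect to have to verify: (i) the approximate error $F_T$ lies in the chosen weighted space with norm going to zero as $T\to\infty$, forcing $\beta$ to be compatible with the rate $\lambda$ from Theorem \ref{Non-compact-Calabi-Yau}; (ii) the cokernel of $\Delta_{\omega_T}$ on the dual weighted space consists only of constants, an $L^2$ integration-by-parts argument that is clean only when the weights avoid the Stenzel and cylinder indicial spectra; and (iii) weighted Schauder estimates propagate the exponential smallness of $\phi$ to all its derivatives and yield the pointwise asymptotics $|\nabla^j(\varphi-\varphi_{\mathbb{S}^1\times C_x})|=O(r^{\lambda-j})$, with the constants $c_{1,x},c_{2,x},c_{3,x}$ read off from the leading Stenzel terms in the local expansions of $\omega$ and $\Omega$ near $x$.
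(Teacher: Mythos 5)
Your proposal takes a genuinely different route from the paper. You reduce everything to the singular Calabi conjecture on the $\mathbb{S}^1$-quotient: build an approximate Calabi--Yau pair $(\omega_T,\Omega_T)$ on a glued compact complex threefold with nodes, repair the complex structure so $\Omega_T$ becomes holomorphic, then solve a complex Monge--Amp\`ere equation in a doubly weighted H\"older space. The paper instead never descends to the complex threefold. It works entirely on the seven-manifold $\mathbb{S}^1\times M$ (with $\mathbb{S}^1$-invariance imposed at the end), uses the operator $d+d^*$ on forms, and proves the key operator estimate in Proposition \ref{Estimate-on-BT} by combining Hodge theory for $V_\pm$ and $M$ (Propositions \ref{Hodge-theory-L2-harmonic-form}--\ref{Hodge-theory-compact}) with the improved asymptotics near the nodes from Proposition \ref{Polyhomogenous-doubling} and Corollary \ref{Two-form-and-three-form-representation}. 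The torsion-free condition then reduces to a first-order system $(d+d^*_{\varphi_T})\gamma+*_{\varphi_T}d((1+\tfrac13\langle\gamma_{\ge0},\varphi_T\rangle)\tilde\Theta(\varphi_T))-*_{\varphi_T}dQ_{\varphi_T}(\gamma_{\ge0})=0$ in the style of Joyce's Proposition 10.3.4 and is solved by the implicit function theorem; the integrability of the complex structure on $M$ is then an automatic consequence of $\mathbb{S}^1$-invariance of the resulting torsion-free $\mathrm{G}_2$ structure. What your route buys is conceptual transparency (Yau's theorem is a clean black box once the setup is in place), and it is explicitly the shape of Doi--Yotsutani's argument in the smooth case, which the paper cites. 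What the paper's route buys is that it sidesteps the gluing of complex structures entirely, replaces two coupled nonlinear problems (complex-structure deformation plus Monge--Amp\`ere) by a single one, and produces the scaling constants $c_{1,x},c_{2,x},c_{3,x}$ in the theorem directly from the allowed asymptotic kernel components $\chi_i\mathrm{Re}\Omega,\chi_i\mathrm{Im}\Omega,\chi_i d\theta\wedge\omega$.

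That said, there is a real gap in your sketch. The ``Moser-type deformation'' that upgrades the approximately glued almost-complex structure on $X_T$ to an integrable one is not a deformation of closed forms, so Moser's trick does not literally apply; near the neck the two complex structures differ by an exponentially small but \emph{non-closed} tensor, and absorbing it requires solving a $\bar\partial$-type or Kodaira--Spencer problem with $T$-uniform estimates, plus a verification that this deformation does not corrupt the conical behaviour at the nodes. This is precisely the analytic content the paper avoids by working with $d+d^*$ on the seven-manifold, and it would need weighted machinery of the same kind as Propositions \ref{Polyhomogenous-L-totally-character} and \ref{Polyhomogenous-doubling} before your Step 2 is secure. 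Similarly, your item (ii) ---that the cokernel of $\Delta_{\omega_T}$ in the dual weighted space is only constants--- is claimed but requires exactly the indicial-root information from Proposition \ref{Calabi-Yau-cone}, together with the $T$-uniform gluing estimate that the paper derives from Proposition 4.2 of \cite{KovalevSinger}; without that, the ``uniform-in-$T$ right inverse'' you expect is not established. In short: the plan is a coherent alternative, but the two hard steps (complex-structure repair with uniform control and the Fredholm/invertibility statement with conical and neck weights) are stated rather than proved, and both still need the harmonic-cone analysis of Section 5 of the paper.
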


To emphasize that there is a global $\mathbb{S}^1$ factor, in this case, it is better to use $\mathbb{S}^1\times M$ instead of $M$ to denote the gluing of $\mathbb{S}^1\times V_{\pm}$.

Remark that the non-singular version of Theorem \ref{Doubling-construction-of-Calabi–Yau-threefolds} was proved by Doi and Yotsutani \cite{DoiYotsutani}.

A large portion of the proof of Theorem \ref{Doubling-construction-of-Calabi–Yau-threefolds} is inspired by the work of Karigiannis-Lotay \cite{KarigiannisLotay} and has an analogy in \cite{KarigiannisLotay}. The main tool of the proof of Theorem \ref{Doubling-construction-of-Calabi–Yau-threefolds} is the weighted analysis developed by Lockhart-McOwen \cite{LockhartMcOwen} and Melrose-Mendoza \cite{MelroseMendoza} independently and further refined by many people. One of the key points is Theorem \ref{Polyhomogenous-L-totally-character} when the weight changes. Another key point is the study of harmonic forms on the nodal singularity.

Back to the singular twisted connected sum case. It involves weighted analysis for manifolds with edge singularities. It was pioneered by Mazzeo \cite{Mazzeo} and followed by many people. In this paper, the analogy of Theorem \ref{Polyhomogenous-L-totally-character} is proved. However, the obstruction space in this case is infinite dimensional.

In personal discussions with the author, Sir Simon Donaldson and Edward Witten conjectured that the nodal singularities along circles may be replaced by isolated conical singularities with the homogenous space $(\mathrm{SU}(2)\times \mathrm{SU}(2)\times \mathrm{SU}(2))/\mathrm{SU}(2)$ as the link. As pointed out by Atiyah-Witten \cite{AtiyahWitten}, there are three ways of resolving the cone over $(\mathrm{SU}(2)\times \mathrm{SU}(2)\times \mathrm{SU}(2))/\mathrm{SU}(2)$. In this paper, by analyzing the infinite dimensional obstruction space, there are strong evidences that this conjecture is correct. It is left for future studies.

The basic facts about G$_2$ structures, the nodal singularity and the weighted analysis are reviewed in Section 2. Theorem \ref{Non-compact-Calabi-Yau} is proved in Section 3. Proposition \ref{Matching} is proved in Section 4. Harmonic forms on the nodal singularity is studied in Section 5. Theorem \ref{Doubling-construction-of-Calabi–Yau-threefolds} is proved in Section 6. The analogy of the refined change of index formula for singular twisted connected sum, the fact that the obstruction space is infinite dimensional as well as the conjectural picture are discussed in Section 7.

\noindent{\bf Acknowledgement:}  The author would like to thank Edward Witten for introducing him to this problem and for many fruitful discussions. The author is also grateful to the helpful conversations with Jeff Cheeger, Xiuxiong Chen, Sir Simon Donaldson, Lorenzo Foscolo, Mark Haskins, Hans-Joachim Hein, Helmut Hofer, Fanghua Lin, Rafe Mazzeo, Johannes Nordstr\"om, Song Sun, Akshay Venkatesh, Jeff Viaclovsky and Ruobing Zhang. This material is based upon work supported by the National Science Foundation under Grant No. 1638352, as well as support from the S. S. Chern Foundation for Mathematics Research Fund.

\section{Preliminaries}
\begin{definition}
A G$_2$ structure on a 7-dimensional manifold $M$ is defined by a 3-form $\varphi$ such that at each point there exists an element in $\mathrm{GL}(7,\mathbb{R})$ which maps $\varphi$ to $e^{123} + e^{145} + e^{167} + e^{246} -e^{257} -e^{347} -e^{356}$, where $e^{ijk} = e^i \wedge e^j \wedge e^k$ and $\{e^i\}$ are the standard basis of $T^*M$. It induces a metric $g_\varphi$ by
\[g_\varphi(u,v)\mathrm{Vol}_{g_\varphi} =\frac{1}{6}(u\lrcorner\varphi)∧(v\lrcorner\varphi)\wedge\varphi.\]
\end{definition}

\begin{definition}
The G$_2$ structure provides a $g$-orthogonal decomposition of forms on $M$. In particular, for three forms,
$\Omega^3(M) = \Omega^3_1(M) \oplus \Omega^3_7(M) \oplus \Omega^3_{27}(M)$,
where $\Omega^3_1(M) = \{f\varphi\}$, $\Omega^3_7(M)=\{X\lrcorner*_{g_\varphi}\varphi\}$ and the orthogonal complement is $\Omega^3_{27}(M)$.
\end{definition}

\begin{proposition}
(Lemma 3.1.1 of \cite{Joyce})
Denote $*_\varphi\varphi$ by $\Theta(\varphi)$, then using the metric induced by $\varphi$,
\[\Theta(\varphi+\gamma)=*\varphi+\frac{4}{3}\pi_1(\gamma)+*\pi_7(\gamma)-*\pi_{27}(\gamma)-Q(\gamma),\]
where $\pi_1, \pi_7$ and $\pi_{27}$ are the orthogonal projection to $\Omega^3_1$, $\Omega^3_7$ and $\Omega^3_{27}$, and $Q$ is the higher order term satisfying the estimates in Lemma 3.1.1 of \cite{Joyce}.
\label{Theta-function}
\end{proposition}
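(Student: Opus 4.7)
The plan is to treat this as a pointwise statement at each $x\in M$, reducing it to a computation in $\Lambda^\bullet \mathbb{R}^7$ via $\mathrm{GL}(7,\mathbb{R})$-equivariance, and then to read off the three linear-order coefficients from $\mathrm{G}_2$ representation theory together with one normalization in each irreducible summand.

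I would begin by observing that $\Theta$ is a real-analytic map on the open set of positive $\mathrm{G}_2$-type 3-forms on $T^*_xM$, so Taylor's theorem produces an exact expansion
\[\Theta(\varphi+\gamma) = \Theta(\varphi) + L_\varphi(\gamma) - Q(\gamma),\]
in which $L_\varphi:\Lambda^3\to\Lambda^4$ is the Fr\'echet derivative at $\varphi$ and $Q(\gamma) = O(|\gamma|^2)$, with analogous pointwise derivative estimates on $Q$ obtained by differentiating the integral Taylor remainder in the $\gamma$-variable. Together with smoothness of $\Theta$ on its domain, this directly produces the remainder estimates of Lemma 3.1.1 of \cite{Joyce}.

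Next I would identify $L_\varphi$ by representation theory. Because $\Theta$ is built from $\varphi$ alone via the metric $g_\varphi$, the map $L_\varphi$ is equivariant for the stabilizer $\mathrm{G}_2\subset\mathrm{GL}(7,\mathbb{R})$ of $\varphi$. Both $\Lambda^3$ and $\Lambda^4$ decompose $\mathrm{G}_2$-equivariantly as $\mathbf{1}\oplus\mathbf{7}\oplus\mathbf{27}$ into pairwise inequivalent irreducibles, and $*_{g_\varphi}$ is an equivariant isomorphism between the corresponding summands. By Schur's lemma, $*^{-1}_{g_\varphi}\circ L_\varphi$ is scalar on each summand, so
\[L_\varphi(\gamma) = \alpha_1\, {*}\pi_1(\gamma) + \alpha_7\, {*}\pi_7(\gamma) + \alpha_{27}\, {*}\pi_{27}(\gamma)\]
for three real constants independent of $\gamma$.

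Finally, each scalar is pinned down by evaluating $L_\varphi$ on a representative element. For $\alpha_1$, the rescaling $\varphi\mapsto c\varphi$ gives $g_{c\varphi} = c^{2/3}g_\varphi$ and thus $*_{c\varphi} = c^{1/3}*_\varphi$ on 3-forms, hence $\Theta(c\varphi) = c^{4/3}\Theta(\varphi)$; differentiating at $c=1$ yields $\alpha_1 = 4/3$. For $\alpha_7$ and $\alpha_{27}$, I would use the $\mathrm{G}_2$-decomposition $\mathfrak{gl}(7,\mathbb{R}) = \mathfrak{g}_2\oplus\mathbb{R}\cdot\mathrm{Id}\oplus\mathbf{7}\oplus\mathbf{27}$ and study the path $\varphi_t = (\mathrm{Id}+tA)^*\varphi$ for $A$ chosen in the $\mathbf{7}$- or $\mathbf{27}$-summand respectively, so that $\dot\varphi_0$ lies purely in $\Omega^3_7$ or $\Omega^3_{27}$. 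Since $\Theta(\varphi_t) = (\mathrm{Id}+tA)^*{*}\varphi$, the first-order term in $t$ is immediate from differentiating the pullback and determines $\alpha_7 = 1$ and $\alpha_{27} = -1$. The main technical obstacle is this last step: one has to verify that $A\mapsto A^*\varphi$ carries each $\mathrm{G}_2$-irreducible summand of $\mathfrak{gl}(7,\mathbb{R})$ isomorphically onto the correspondingly labelled subspace of $\Omega^3$ and then track the conventions carefully enough for the signs and the factor $4/3$ to come out as stated, all of which is a direct linear-algebraic computation on the standard model form.
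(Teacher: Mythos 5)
Your argument is correct, and it is essentially the standard representation-theoretic proof — the same route Joyce takes for his Lemma~3.1.1. The paper itself does not reprove the statement; it only cites Joyce, so there is no in-paper proof to compare against. Your reduction via Schur's lemma is clean: $\Lambda^3$ and $\Lambda^4$ of $\mathbb{R}^7$ both decompose multiplicity-free as $\mathbf 1\oplus\mathbf 7\oplus\mathbf{27}$ under $\mathrm{G}_2$, $*_{g_\varphi}$ intertwines the isotypic pieces, so the derivative $L_\varphi$ must be scalar on each summand. The normalization of $\alpha_1$ from $g_{c\varphi}=c^{2/3}g_\varphi$, $*_{c\varphi}=c^{1/3}*_\varphi$ on 3-forms and hence $\Theta(c\varphi)=c^{4/3}\Theta(\varphi)$ is correct, as is the observation that $\exp(tA)$ with $A$ in the $\mathbf 7$-part of $\mathfrak{so}(7)$ is an isometry (so it commutes with $*$, giving $\alpha_7=1$), while for $A$ symmetric traceless the model computation on the standard form gives $D_A(*_\varphi\varphi)=-*_\varphi(D_A\varphi)$ and hence $\alpha_{27}=-1$; the needed fact that $A\mapsto D_A\varphi$ sends the $\mathbf 7$ and $\mathbf{27}$ summands of $\mathfrak{gl}(7,\mathbb{R})$ nontrivially (hence isomorphically) onto $\Omega^3_7$ and $\Omega^3_{27}$ follows from equivariance together with the fact that the kernel is exactly $\mathfrak g_2$. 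One small typographical note on the statement as reproduced in the paper: for the equality to balance as a 4-form identity, the term $\frac{4}{3}\pi_1(\gamma)$ should read $\frac{4}{3}*\pi_1(\gamma)$, which is also what your derivation yields.
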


\begin{theorem}
(\cite{FernandezGray}) The holonomy group of a metric $g$ is contained in G$_2$ if and only if $g$ is induced by a G$_2$ structure $\varphi$ satisfying $d\varphi=d\Theta(\varphi)=0$.
\end{theorem}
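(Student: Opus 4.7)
The plan is to analyze the intrinsic torsion of a G$_2$ structure and show that the four closure components together capture it completely. The easy direction is: if $\mathrm{Hol}(g)\subset \mathrm{G}_2$, then because G$_2$ stabilizes the reference $3$-form in $\mathbb{R}^7$, the holonomy principle yields a parallel G$_2$ structure $\varphi$ inducing $g$, i.e.\ $\nabla^g\varphi=0$. Since $\nabla^g$ is torsion-free, on antisymmetric tensors one has $d=(n+1)\,\mathrm{alt}\circ\nabla^g$, so $d\varphi=0$; and since $\Theta(\varphi)=*_g\varphi$ is built from $\varphi$ and $g$, it is also parallel, giving $d\Theta(\varphi)=0$.

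For the converse, I would set up the intrinsic torsion as follows. A G$_2$ structure $\varphi$ reduces the frame bundle to G$_2\subset\mathrm{SO}(7)$, and the failure of $\nabla^g$ to take values in the G$_2$-subbundle defines a tensor $T\in\Gamma(T^*M\otimes\mathfrak{g}_2^{\perp})$ with $\mathfrak{g}_2^{\perp}\cong V_7$ inside $\mathfrak{so}(7)$. Thus $T$ is a section of $V_7\otimes V_7$, which as a G$_2$-representation decomposes into four irreducible pieces
\[
V_7\otimes V_7 \;\cong\; V_1\oplus V_7\oplus V_{14}\oplus V_{27},
\]
giving torsion components $\tau_0\in\Omega^0$, $\tau_1\in\Omega^1$, $\tau_2\in\Omega^2_{14}$, $\tau_3\in\Omega^3_{27}$ (the Fernández--Gray classes). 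The statement $\nabla^g\varphi=0$ is equivalent to $T\equiv 0$, which is equivalent to all four $\tau_i$ vanishing.

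The core of the proof is the identification
\[
d\varphi \;=\; \tau_0\,\Theta(\varphi) + 3\,\tau_1\wedge\varphi + *\tau_3,
\qquad
d\Theta(\varphi) \;=\; 4\,\tau_1\wedge\Theta(\varphi) + \tau_2\wedge\varphi,
\]
where the coefficients are computed by contracting $\nabla^g\varphi$ against $\varphi$ and $\Theta(\varphi)$ using the G$_2$ contraction identities. Since the right-hand decompositions are into distinct G$_2$-irreducible summands of $\Omega^4$ and $\Omega^5$ respectively, the closure $d\varphi=d\Theta(\varphi)=0$ forces $\tau_0=\tau_1=\tau_2=\tau_3=0$ componentwise. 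Hence $\nabla^g\varphi=0$, which gives $\mathrm{Hol}(g)\subset\mathrm{G}_2$ by the holonomy principle.

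The main obstacle is the algebraic step: one has to match each G$_2$-type in $\Omega^4=\Omega^4_1\oplus\Omega^4_7\oplus\Omega^4_{27}$ and $\Omega^5=\Omega^5_7\oplus\Omega^5_{14}$ with the right torsion piece and verify that no cancellation can occur between, for instance, a $V_7$ coming from $\tau_1$ in $d\varphi$ and the $V_7$ from $\tau_3$. This boils down to concrete G$_2$ representation-theoretic contraction identities such as $\varphi_{ijk}\varphi_{lmk}=g_{il}g_{jm}-g_{im}g_{jl}-\psi_{ijlm}$ (with $\psi=\Theta(\varphi)$), which one applies to $(\nabla_i\varphi)_{jkl}$ after symmetrizing/antisymmetrizing. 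Once these identities are in hand, the coefficients in the displayed formulas are forced, and the equivalence is purely linear algebra at each point.
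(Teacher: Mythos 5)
The paper states this result purely as a citation to \cite{FernandezGray} and does not give a proof, so there is no in-paper argument to compare against. Your reconstruction is the standard modern intrinsic-torsion argument and it is correct in outline: identify the torsion tensor $T$ as a section of $T^*M\otimes\mathfrak{g}_2^\perp\cong V_7\otimes V_7=V_1\oplus V_7\oplus V_{14}\oplus V_{27}$, name the four Fern\'andez--Gray classes $\tau_0,\tau_1,\tau_2,\tau_3$, match them against the isotypic pieces of $d\varphi\in\Omega^4=\Omega^4_1\oplus\Omega^4_7\oplus\Omega^4_{27}$ and of $d\Theta(\varphi)\in\Omega^5=\Omega^5_7\oplus\Omega^5_{14}$, and invoke the holonomy principle once $\nabla^g\varphi=0$ is obtained. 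This is faithful to what is in the cited paper, up to the translation between Fern\'andez--Gray's original classification via $\nabla\varphi$ and the now-standard formulation in terms of $d\varphi$ and $d\Theta(\varphi)$ (worked out by Bryant, Cabrera, Karigiannis, and others). The one point you flag as the ``main obstacle'' --- ruling out cancellation among the torsion contributions --- you resolve correctly: because $\tau_0$, $\tau_1$, $\tau_3$ land in the three mutually distinct irreducible summands $\Omega^4_1$, $\Omega^4_7$, $\Omega^4_{27}$ of $d\varphi$, and $\tau_1$, $\tau_2$ land in the two distinct summands $\Omega^5_7$, $\Omega^5_{14}$ of $d\Theta(\varphi)$, the vanishing of $d\varphi$ and $d\Theta(\varphi)$ forces each class to vanish separately. (A minor convention remark: some authors initially write two a priori different $1$-forms $\tau_1$, $\bar\tau_1$ in $d\varphi$ and $d\Theta(\varphi)$ and must prove they are proportional; that subtlety is irrelevant here because you only need that each equation independently kills its own $V_7$ part.)
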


Therefore, it suffices to consider the moduli space of G$_2$ structures $\varphi$ satisfying $d\varphi=d\Theta(\varphi)=0$.

\begin{definition}
Suppose that $M$ is a K\"ahler manifold with complex dimension $n$. Suppose that $\omega$ is a K\"ahler form and $\Omega$ is a holomorphic $n$-form. Then $(M,g,J,\omega,\Omega)$ is called a Calabi-Yau $n$-fold if \[\frac{\omega^n}{n!}=\frac{i^{n^2}}{2^n}\Omega\wedge\bar\Omega\]
\end{definition}

\begin{example}
(\cite{Stenzel})

Let $C_\epsilon=\{z_1^2+...z_{n+1}^2=\epsilon\}\subset\mathbb{C}^{n+1}$. When $\epsilon\not=0$, up to scaling, the unique $\mathrm{SO}(n+1)$-invariant asymptotically conical Calabi-Yau metric $g$ on $C_\epsilon$ is given by \[\omega_s=\frac{i}{2}\partial\bar\partial |\epsilon|^{\frac{n-1}{n}}(f(\cosh^{-1}(\frac{|z_1|^2+...|z_{n+1}|^2}{|\epsilon|}))),\] where $(f'(w)^n)'=n(\sinh w)^{n-1}$, and $f(0)=f'(0)=0$.
When $\epsilon=0$, \[\omega_0=\frac{i}{2}\partial\bar\partial(\frac{n}{n-1})^{\frac{n+1}{n}}(|z_1|^2+...|z_{n+1}|^2)^{\frac{n-1}{n}}.\]
Define \[\Omega_\epsilon=\frac{dz_1dz_2...dz_n}{z_{n+1}},\] then by direct calculation,
\[\frac{\omega_\epsilon^n}{n!}=\frac{i^{n^2}}{2^n}\Omega_\epsilon\wedge\bar\Omega_\epsilon.\]
\label{Stenzel-metric}
\end{example}

\begin{example}
Let $C$ be the nodal singularity $\{z_1^2+z_2^2+z_3^2+z_4^2=0\}\subset\mathbb{C}^4$. Then \[\omega_C=\frac{i}{2}\partial\bar\partial(\frac{3}{2})^{\frac{4}{3}}(|z_1|^2+...|z_4|^2)^{\frac{2}{3}}\] and \[\Omega_C=\frac{dz_1dz_2dz_3}{z_4}\] satisfy $\frac{\omega_\epsilon^3}{6}=\frac{i}{8}\Omega_C\wedge\bar\Omega_C$ and therefore define a Calabi-Yau cone structure on $C$.

 The nodal singularity $\{(z_1+iz_2)(z_1-iz_2)+(z_3+iz_4)(z_3-iz_4)=0\}$ is birationally equivalent to its small resolution
 \[\{z_j\in\mathbb{C}, z\in\mathbb{C}\cup\{\infty\}:z_1^2+z_2^2+z_3^2+z_4^2=0, z=\frac{z_1+iz_2}{z_3+iz_4}=-\frac{z_3-iz_4}{z_1-iz_2}\}.\]
 It replaces the tip point by $\mathbb{CP}^1=\mathbb{C}\cup\{\infty\}$.
 The other small resolution is given by \[\{z_j\in\mathbb{C}, z\in\mathbb{C}\cup\{\infty\}:z_1^2+z_2^2+z_3^2+z_4^2=0, z=\frac{z_1+iz_2}{z_3-iz_4}=-\frac{z_3+iz_4}{z_1-iz_2}\}.\]
 Both of the small resolutions are Calabi-Yau threefolds by \cite{CandelasdelaOssa}.

 It is easy to see that $C$ is diffeomorphic to the cone over $\mathbb{S}^2\times\mathbb{S}^3$. The deformation $\{z_1^2+...z_4^2=\epsilon\}$ is diffeomorphic to $\mathbb{R}^3\times\mathbb{S}^3$. Both of the small resolutions are diffeomorphic to $\mathbb{S}^2\times\mathbb{R}^4$.
\label{Nodal-singularity}
\end{example}

The following proposition is well-known:
\begin{proposition}
For any Calabi-Yau threefold $(M,g,J,\omega,\Omega)$, define an $\mathbb{S}^1$-invariant G$_2$ structure $\varphi$ on $\mathbb{S}^1\times M$ by $\varphi=d\theta\wedge\omega+\mathrm{Re}\Omega$, where $\theta$ is the standard coordinate on $\mathbb{S}^1$. It satisfies $d\varphi=d\Theta(\varphi)=0$. On the other hands, any $\mathbb{S}^1$-invariant G$_2$ structure on $\mathbb{S}^1\times M$ satisfying $d\varphi=d\Theta(\varphi)=0$ must comes from a Calabi-Yau threefold structure on $M$.
\end{proposition}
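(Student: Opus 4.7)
The plan is to verify both implications by combining the algebraic fact that the $\mathrm{G}_2$-stabilizer of a nonzero vector in $\mathbb{R}^7$ is $\mathrm{SU}(3)$ with the pointwise identity $\Theta(\varphi)=\tfrac{1}{2}\omega\wedge\omega-d\theta\wedge\mathrm{Im}\,\Omega$ whenever $\varphi = d\theta\wedge\omega+\mathrm{Re}\,\Omega$.

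For the forward direction I would choose a unitary coframe $e^1,\dots,e^6$ at each point of $M$ adapted to the $\mathrm{SU}(3)$ data so that $\omega=e^{12}+e^{34}+e^{56}$ and $\Omega=(e^1+ie^2)\wedge(e^3+ie^4)\wedge(e^5+ie^6)$. Setting $e^7=d\theta$, expansion of $d\theta\wedge\omega+\mathrm{Re}\,\Omega$ in this coframe reproduces the model $\mathrm{G}_2$ 3-form from the definition, and the induced metric is $d\theta^2+g_M$. The same pointwise computation yields the formula for $\Theta(\varphi)$ above. Then $d\varphi=-d\theta\wedge d\omega+d\,\mathrm{Re}\,\Omega$ and $d\Theta(\varphi)=\omega\wedge d\omega-d\theta\wedge d\,\mathrm{Im}\,\Omega$ both vanish because $d\omega=0$ and $d\Omega=0$ on a Calabi-Yau threefold.

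For the converse I would first use $\mathbb{S}^1$-invariance to write $\varphi=d\theta\wedge\alpha+\beta$ with $\alpha\in\Omega^2(M)$ and $\beta\in\Omega^3(M)$ pulled back from $M$. The vector field $V=\partial_\theta$ is a nowhere-vanishing Killing field that preserves $\varphi$, so at each point of $M$ the $\mathrm{G}_2$ structure restricts on the hyperplane $V^\perp$ to an $\mathrm{SU}(3)$ structure; this identifies $\alpha$ (up to normalization by $|V|^2$) with a Hermitian form $\omega$ compatible with an almost complex structure $J$, and $\beta$ with the real part of a $(3,0)$-form $\Omega$, all satisfying the Calabi-Yau volume normalization. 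The closed condition then forces $d\omega=0$ and $d\,\mathrm{Re}\,\Omega=0$, and the coclosed condition, once $\Theta(\varphi)$ is expressed in the same way, forces $d\,\mathrm{Im}\,\Omega=0$. Hence $d\omega=0$ and $d\Omega=0$, and the standard torsion analysis of $\mathrm{SU}(3)$ structures concludes that $J$ is integrable and $\Omega$ is holomorphic, giving the desired Calabi-Yau structure on $M$.

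The main point that needs care is showing that $|V|_{g_\varphi}$ is constant on $M$, so that the pointwise $\mathrm{SU}(3)$ reduction assembles into a global $\mathrm{SU}(3)$ structure and $d\theta$ can be consistently identified (up to a global rescaling) with the metric dual of $V$. This uses that a torsion-free $\mathrm{G}_2$ metric is Ricci-flat, combined with the Bochner identity $\tfrac{1}{2}\Delta|V|^2=|\nabla V|^2$ for a Killing field, which forces $|V|$ to be locally constant. Beyond this step the rest is bookkeeping in the $\mathrm{SU}(3)\subset\mathrm{G}_2$ dictionary.
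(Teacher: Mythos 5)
The paper states this proposition without proof, calling it well-known, so there is no proof of the author's to compare against; I will assess your argument against the standard one, which your proposal essentially follows.

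The forward direction is fine: the pointwise model check, $\Theta(\varphi)=\tfrac12\omega\wedge\omega - d\theta\wedge\mathrm{Im}\,\Omega$, and the vanishing of $d\varphi$ and $d\Theta(\varphi)$ are all correct. (Minor sign slip: $d\Theta(\varphi)=\omega\wedge d\omega+d\theta\wedge d\,\mathrm{Im}\,\Omega$, not a minus sign; both terms vanish anyway.)

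The converse, however, has a genuine gap precisely at the step you flag as "the main point that needs care." The Bochner identity you invoke, $\tfrac12\Delta|V|^2=|\nabla V|^2$ for a Killing field on a Ricci-flat manifold, does \emph{not} by itself force $|V|$ to be locally constant. It only shows $|V|^2$ is subharmonic (with the analyst's sign of $\Delta$, or superharmonic with the paper's convention $\Delta=dd^*+d^*d$); for example $|x|^2$ on flat $\mathbb{R}^n$ is subharmonic but certainly not locally constant, and flat space has many non-parallel Killing fields. To conclude $\nabla V=0$ one must additionally integrate the identity over a \emph{closed} manifold (or invoke the strong maximum principle on one), getting $\int|\nabla V|^2=0$. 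So compactness of $\mathbb{S}^1\times M$ is an essential hypothesis here, and it should be stated; without it the statement you are proving can fail. Once compactness is put in, the argument is standard.

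There is also a secondary bookkeeping issue worth naming explicitly rather than folding into "the $\mathrm{SU}(3)\subset\mathrm{G}_2$ dictionary." After you know $V=\partial_\theta$ is parallel, the parallel distribution $V^\perp$ need not coincide with the product distribution $TM$ of the given splitting $\mathbb{S}^1\times M$: writing $V^\flat=u^2\,d\theta+\eta$ with $u^2=|V|^2$ constant, parallelism gives $d\eta=0$ but not $\eta=0$. One must argue that $\eta$ is exact (automatic if $M$ is simply connected; otherwise it follows from uniqueness of the harmonic representative of $[V^\flat]\in H^1(\mathbb{S}^1\times M)$, using $b_1(\mathbb{S}^1\times M)=1+b_1(M)$ together with the fact that $V^\flat$ is the parallel, hence harmonic, representative), and then replace $\theta$ by $\theta'=\theta+f/u^2$ to bring $\varphi$ into the stated product form $d\theta'\wedge\omega+\mathrm{Re}\,\Omega$. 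Without this step the $\mathrm{SU}(3)$-structure lives on the leaves of $V^\perp$, not tautologically on the $M$ factor.
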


The next part is the definition of an asymptotically cylindrical K\"ahler manifold with conical singularities.

\begin{definition}
Let $(F,g_F)$ be a compact Riemannian manifold. Then $C(F)$ is the set $((0,\infty)\times F)\cup\{o\}$. Let $r$ be the coordinate on $(0,\infty)$ and define $r(o)=0$. Then the cone metric $g_{C(F)}$ is defined by $g_{C(F)}=dr^2+r^2g_F$. Similarly, for any compact Riemannian manifold $(F_\infty,g_{F_\infty})$, define the product metric on $\mathbb{R}\times F_\infty$ by $g_\infty=dt^2+g_{F_\infty}$, where $t$ is the coordinate on $\mathbb{R}$. The K\"ahler structures $J_{C(F)}, J_\infty$, the K\"ahler forms $\omega_{C(F)},\omega_\infty$, the $(n,0)$-forms $\Omega_{C(F)}, \Omega_\infty$, and the G$_2$ structures $\varphi_{C(F)}, \varphi_\infty$ on $C(F)$ or $\mathbb{R}\times F_\infty$ are defined similarly.
\end{definition}

\begin{definition}
A Calabi-Yau cone $C$ with smooth cross-section and with Ricci-flat K\"ahler cone metric $\omega_C=\frac{i}{2}\partial\bar\partial r^2$ is regular if its Reeb field, i.e. the holomorphic Killing field $J(r\frac{\partial}{\partial r})$, generates a free $\mathbb{S}^1$-action on $C\setminus\{o\}$. This exhibits $C$ as the blow-down of the zero section of $\frac{1}{q}K_B$ for some K\"ahler-Einstein Fano manifold $B$ and $q\in\mathbb{N}$. C is called strongly regular if $-\frac{1}{q}K_B$ is very ample.
\label{Definition-strongly-regular-cone}
\end{definition}
\begin{remark}
Example \ref{Stenzel-metric} is strongly regular.
\end{remark}

\begin{definition}
Let $V$ be a manifold with K\"ahler metric $\omega$. $x\in V$ is called a conical singularity with rate $\nu_x>0$ and tangent cone $(C_x,\omega_{C_x})$ with respect to $\omega$ if there exist a K\"ahler metric cone $(C_x,J_{C_x},\omega_{C_x})$ and a biholomorphism $P_x:O_x\rightarrow U_x$ between neighborhoods $o\in O_x\subset C_x$ and $x\in U_x\subset V$ such that \[|\nabla^j_{\omega_{C_x}}(P^*\omega-\omega_{C_x})|_{\omega_{C_x}}=O(r^{\nu_x-j})\] as $r\rightarrow 0$ for all $j\in\mathbb{N}_0$. Assume that the set $\{r\le r_{0,x}\}$ is contained in $O_x$.
\label{Definition-conical-singularity}
\end{definition}

\begin{definition}
Let $V$ be a manifold with a metric $g$. $g$ is called asymptotically cylindrical with rate $\nu_\infty>0$ and cross-section $F_\infty$ if there exist a set $U_\infty$ and a diffeomorphism $P_\infty:[1,\infty)\times F_\infty \rightarrow U_\infty$ such that $V\setminus U_\infty$ is bounded and \[|\nabla^j_{g_\infty}(P^*g-g_\infty)|_{g_\infty}=O(e^{-\nu_\infty t})\] as $t\rightarrow \infty$ for all $j\in\mathbb{N}_0$. The asymptotically cylindrical almost complex structure $J$, K\"ahler form $\omega$, $(n,0)$-form $\Omega$ and G$_2$ structure $\varphi$ are defined similarly.
\end{definition}

The next goal is to describe the analysis on an asymptotically cylindrical K\"ahler manifold $V$ with conical singularities following Lockhart-McOwen \cite{LockhartMcOwen}. Remark that in this paper, $\delta$ is multiplied by $-1$ and $\lambda$ is divided by $i$ compared to \cite{LockhartMcOwen}. The same result was obtained by Melrose and Mendoza \cite{MelroseMendoza} independently.

\begin{definition}
Assume that $V$ is asymptotically cylindrical with conical singularities at $V^{\mathrm{sing}}$. Assume that $U_x$ and $U_\infty$ are disjoint. Let $N_2$ be the number of points in $V^{\mathrm{sing}}$. Assume that $\nu>0$ is smaller than the minimum of $\{\nu_1,...,\nu_{N_2},\nu_\infty\}$. For any $x\in V^{\mathrm{sing}}$, choose $r_x$ as a smooth function with range $[0,2r_{0,x}]$ such that $r_x=2r_{0,x}$ outside $U_x$ and $r_x=r$ when $r\le r_{0,x}$. Extend $t$ to a non-negative smooth function on $V$ such that $t$ equals to 0 on $U_x$ for all $x\in V^{\mathrm{sing}}$. For any $\delta=(\delta_1,...\delta_{N_2},\delta_\infty)\in \mathbb{R}^{N_2+1}$, using the metric $\omega$, define the weighted $L^2$ space $L^2_{\delta}$ by \[||\gamma||_{L^2_{\delta}}=(\int_V |\prod_{i=1}^{N_2}r_{x_i}^{-\delta_i}e^{\delta_\infty t}\gamma|^2\prod_{i=1}^{N_2}r_{x_i}^{-2n})^{\frac{1}{2}},\]
where $n$ is the complex dimension of $V$.
Assume that $k$ is a large enough interger. Define the weighted Hilbert space by
\[||\gamma||_{W^{k,2}_{\delta}} =(\sum_{j=0}^{k}||\nabla^j\gamma||^2_{L^2_{(\delta_1-j,...\delta_{N_2}-j,\delta_\infty)}})^{\frac{1}{2}}.\]
Roughly speaking, it means $\gamma$ has rate $r_{x_i}^{\delta_i}$ near each $x_i\in V^{\mathrm{sing}}$ and rate $e^{-\delta_\infty t}$ on the end. In general, one can define $W^{k,p}_{\delta}$ and $C^{k,\alpha}_{\delta}$ spaces. However, the $W^{k,2}_{\delta}$ space is enough for this paper. Define the space $C^{\infty}_{\delta}$ as the intersection of $W^{k,2}_{\delta}$ for all $k$. Choose $\chi=\chi(s):\mathbb{R}\rightarrow[0,1]$ as a smooth function satisfying $\chi(s)=1$ for $s\le 1$ and $\chi(s)=0$ for $s\ge 2$. Denote $\chi(\frac{2r_{x_i}}{r_{0,x_i}})$ by $\chi_i$. Denote $1-\chi(t)$ by $\chi_\infty$. It is also useful to consider spaces like \[(\oplus_{i=1}^{N_2}\mathbb{R}\chi_i)\oplus W^{k,2}_{\delta}\] for small positive $\delta_i$. For constants $c_1, ... c_{N_2}$, define
\[||\sum_{i=1}^{N_2}c_i\chi_i+\gamma|| _{(\oplus_{i=1}^{N_2}\mathbb{R}\chi_i)\oplus W^{k,2}_{\delta}}=\sum_{i=1}^{N_2}|c_i|+||\gamma||_{W^{k,2}_{\delta}}.\]
\label{Definition-weighted-Sobelov}
\end{definition}

Now let $D$ be the Laplacian operator $\Delta=dd^*+d^*d$ or the operator $d+d^*$ acting on the direct sum of all odd degree forms. Then the order $m$ of $D$ is 2 or 1 respectively.
\begin{definition}
$\lambda_i\in\mathbb{C}$ is called a critical rate for $D$ near $x_i$ if there exists \[\gamma=\sum_{p=0}^{p_{i,\lambda_i}} e^{\lambda_i\log r}(-\log r)^p \gamma_{i,\lambda_i,p}\] in the kernel of $D_{C_{x_i}}$. $\lambda_\infty\in\mathbb{C}$ is called a critical rate for $D$ near infinity if there exists \[\gamma=\sum_{p=0}^{p_{\infty,\lambda_\infty}} e^{-\lambda_\infty t}t^p \gamma_{\infty,\lambda_\infty,p}\] in the kernel of $D_{[0,\infty)\times F_\infty}$. Define $\mathcal{K}_i(\lambda_i)$ or $\mathcal{K}_\infty(\lambda_\infty)$ as the space of such $\gamma$. Define the multiplicity $d_i(\lambda_i)$ or $d_\infty(\lambda_\infty)$ as the dimension of $\mathcal{K}_i(\lambda_i)$ or $\mathcal{K}_\infty(\lambda_\infty)$. $\delta_i\in\mathbb{R}$ or $\delta_\infty\in\mathbb{R}$ is called critical near $x_i$ or infinity if it is the real part of a critical $\lambda_i$ or $\lambda_\infty$. $\delta$ is called critical if either at least one of the $\delta_i$ is critical near $x_i$ or $\delta_\infty$ is critical near infinity. For non-critical weights $\delta_i<\delta'_i$, define $N(\delta,\delta')$ by \[N(\delta,\delta')=\sum_{i=1}^{N_2}\sum_{\delta_i<\mathrm{Re}(\lambda''_i)<\delta_i}d_i(\lambda''_i)+
\sum_{\delta_\infty<\mathrm{Re}(\lambda''_\infty)<\delta'_\infty}d_\infty(\lambda''_\infty).\]
\label{Definition-critical}
\end{definition}

The main theorem of Lockhart-McOwen \cite{LockhartMcOwen} is the following:

\begin{theorem}
$D:W^{k,2}_{\delta}\rightarrow W^{k-m,2}_{(\delta_1-m,...\delta_{N_2}-m,\delta_\infty)}$ is Fredholm if and only if $\delta$ is non-critical. Moreover, the Fredholm index $i_{\delta}(\Delta)$ is independent of $k$. For non-critical weights $\delta_i<\delta'_i$ and $\delta_\infty<\delta'_\infty$, $i_{\delta}(\Delta)-i_{\delta'}(\Delta)=N(\delta,\delta')$.
\label{Lockhart-McOwen}
\end{theorem}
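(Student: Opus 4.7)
The plan is to establish the Fredholm property, the independence of the index from $k$, and the change-of-index formula by constructing a parametrix modulo compact operators, handling the conical ends via Mellin transform and the cylindrical end via Fourier transform, and then reading off the change of index as a residue.

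\emph{Model operators.} First I would analyze each model end separately. Near a conical singularity $x_i$, introducing the logarithmic variable $s=-\log r$ so that $r\partial_r=-\partial_s$ gives $D=r^{-m}D_{b,i}$, where $D_{b,i}$ is a $b$-type elliptic operator on the cylinder $[0,\infty)_s\times F_{x_i}$ that is translation-invariant modulo exponentially decaying perturbations. Taking Mellin transform in $r$, equivalently Fourier transform in $s$, yields a holomorphic family $\hat D_{b,i}(\lambda)$ of elliptic operators on $F_{x_i}$ whose inverse is meromorphic in $\lambda$, with poles exactly at the critical rates of Definition \ref{Definition-critical}; the order of the pole accounts for the $(\log r)^p$ terms appearing in $\mathcal{K}_i(\lambda_i)$. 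Near the cylindrical end the operator is already translation-invariant in $t$ up to exponential remainders, and conjugation by $e^{-\delta_\infty t}$ followed by Fourier transform in $t$ produces an elliptic family whose non-invertibility recovers the critical values of $\delta_\infty$.

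\emph{Global parametrix and Fredholm property.} Next I would build a global parametrix $Q$ on $V$ by inverting $\hat D_{b,i}$ along the Mellin contour $\operatorname{Re}\lambda=\delta_i$ on each conical end, inverting the cylindrical model along $\operatorname{Re}\tau=\delta_\infty$, and using a standard interior elliptic parametrix on the compact part, all patched with the cutoffs $\chi_i,\chi_\infty$. That the contours avoid the poles is precisely the non-critical hypothesis. The errors $DQ-I$ and $QD-I$ are then smoothing operators whose Schwartz kernels decay strictly faster than the respective weights permit, hence compact on $W^{k,2}_\delta$ by a weighted Rellich-type embedding, giving the Fredholm property. Conversely, at a critical weight one constructs a non-compact Weyl sequence by translating the $s$-variable or the $t$-variable against a model solution, obstructing Fredholmness.

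\emph{Index independence and change-of-index.} Independence of the index from $k$ follows from weighted elliptic regularity: any $W^{k,2}_\delta$ solution of $Du=0$ lies in $C^\infty_\delta$, and a cokernel element identified via the unweighted $L^2$ pairing corresponds to a kernel element of the formal adjoint in the dual weighted space, which is again $k$-independent. For the change of index, by additivity it suffices to treat the case where $\delta$ and $\delta'$ differ only at one end with a single critical rate $\lambda_0$ in between. The key step is the asymptotic expansion: any kernel element $u$ in the larger space admits, near the relevant end, a decomposition
\[
u=\sum_{p=0}^{p_{\lambda_0}} e^{\lambda_0\log r}(-\log r)^p\gamma_p+v,
\]
with $(\gamma_p)$ giving an element of $\mathcal{K}_i(\lambda_0)$ and $v\in W^{k,2}_\delta$; this is extracted as the residue of the inverse Mellin representation when the contour is shifted across $\lambda_0$, using that $u$ is $D$-harmonic modulo compactly supported error. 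Applying the same residue calculation to the adjoint and accounting for both kernels yields the net change $d_i(\lambda_0)$, or $d_\infty(\lambda_0)$ in the cylindrical case, which sums to $N(\delta,\delta')$.

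\emph{Main obstacle.} The hard part will be the asymptotic expansion and residue calculation at a critical weight, especially when $\hat D_{b,i}(\lambda)^{-1}$ has higher-order poles and genuine $(\log r)^p$ terms are forced, together with the bookkeeping required by the simultaneous presence of several conical singularities and a cylindrical end, so that the weights $\delta_1,\dots,\delta_{N_2},\delta_\infty$ must be localized independently and the parametrix patching performed without cross-contamination of the model inverses.
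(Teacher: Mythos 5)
This statement (Theorem~\ref{Lockhart-McOwen}) is not proved in the paper: it is cited verbatim as the main theorem of Lockhart--McOwen \cite{LockhartMcOwen}, proved independently by Melrose--Mendoza \cite{MelroseMendoza}, and is used as a black box. There is therefore no ``paper's own proof'' to compare against. The paper only proves downstream refinements of it, namely Theorem~\ref{Polyhomogenous-L-totally-character} and Corollary~\ref{Refined-index-change-formula}, and those proofs explicitly take Theorems~\ref{Invertible-IL}, \ref{Polyhomogenous-IL} and \ref{Lockhart-McOwen} as given.

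That said, your sketch is a reasonable outline of the standard proof in the cited references, and it is closer in flavor to the Melrose--Mendoza/b-calculus route (Mellin transform on the conical ends, Fourier transform on the cylindrical end, parametrix patching, residues of the meromorphic model inverse) than to Lockhart--McOwen's more classically phrased cylindrical-end version; the two are equivalent under $s=-\log r$. A few places would need to be tightened to turn this into a full proof: the compactness of $DQ-I$, $QD-I$ requires explicitly exhibiting the gain in weight coming from the asymptotically conical/cylindrical decay hypothesis (rate $\nu$) before invoking a weighted Rellich lemma; the converse (non-Fredholm at a critical weight via a translated Weyl sequence) must be carried out carefully because the model solution does not itself lie in the weighted space; and the ``residue calculation'' for the change-of-index formula is really the content of Theorem~\ref{Polyhomogenous-IL} plus a duality argument between $W^{k,2}_{\delta}$ and $W^{-k,2}_{(-2n-\delta_1,\dots,-2n-\delta_{N_2},-\delta_\infty)}$, and one needs to keep track of both the kernel gain and the cokernel loss to get exactly $d_i(\lambda_0)$ rather than just a bound. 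None of these is a wrong turn, but as written they are assertions rather than proofs.
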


In the first paragraph of page 420 of \cite{LockhartMcOwen}, Lockhart-McOwen used the following theorem of \cite{Kondratev} and \cite{MazjaPlamenevskii}:

\begin{theorem}
The operator $D_{C_{x_i}}:W^{k,2}_{\delta_i}(C_{x_i})\rightarrow W^{k-m,2}_{\delta_i-m}(C_{x_i})$ or the operator $D_\infty:W^{k,2}_{\delta_\infty}(\mathbb{R}\times F_\infty)\rightarrow W^{k-m,2}_{\delta_\infty}(\mathbb{R}\times F_\infty)$ is an isomorphism for non-critical $\delta_i$ or non-critical $\delta_\infty$.
\label{Invertible-IL}
\end{theorem}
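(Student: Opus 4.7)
The plan is to reduce the problem to a one-parameter family of elliptic operators on the compact cross-section, via Fourier transform in $t$ for the cylinder and via the substitution $r=e^{-t}$ for the cone. I begin with the cylinder $\mathbb{R}\times F_\infty$. Since the product metric is translation-invariant in $t$, the operator $D_\infty$ has coefficients independent of $t$. Conjugation by $e^{\delta_\infty t}$ identifies $W^{k,2}_{\delta_\infty}$ with the unweighted $W^{k,2}$ and replaces $\partial_t$ by $\partial_t-\delta_\infty$. Partial Fourier transform in $t$ then yields a family $\widehat{D}_\infty(\tau)$ of elliptic operators on the compact manifold $F_\infty$ parametrised by $\tau\in\mathbb{R}$, which is precisely the model operator at the complex parameter $\lambda_\infty=\delta_\infty+i\tau$ of Definition \ref{Definition-critical}. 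Non-criticality of $\delta_\infty$ therefore translates directly into invertibility of $\widehat{D}_\infty(\tau)$ on $F_\infty$ for every $\tau\in\mathbb{R}$.

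From here a bounded inverse is constructed by Plancherel. The key is a uniform operator-norm estimate on $\widehat{D}_\infty(\tau)^{-1}$ in the correct transverse Sobolev norms: for $|\tau|$ large the degree-$m$ symbol terms dominate, giving polynomial decay of the inverse of the correct order via a G\aa{}rding-type estimate; for $\tau$ in a bounded interval, continuity of the inversion map on the open set of elliptic isomorphisms gives a uniform bound. Combining these with elliptic regularity on $F_\infty$ and Plancherel on $L^2(\mathbb{R}_t)$ produces a bounded two-sided inverse $D_\infty^{-1}\colon W^{k-m,2}_{\delta_\infty}\to W^{k,2}_{\delta_\infty}$, which gives the cylindrical isomorphism.

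The cone $C_{x_i}=C(F)$ reduces to the cylinder via $r=e^{-t}$. The factor $r_{x_i}^{-2n}$ in Definition \ref{Definition-weighted-Sobelov} is chosen precisely so that the cone volume form $r^{2n-1}\,dr\wedge d\mathrm{vol}_F$ becomes, after weighting by $r^{-2n}$, the cylindrical form $dt\wedge d\mathrm{vol}_F$. After absorbing a suitable power of $r$ coming from the conformal factor of the cone metric relative to the cylinder, $W^{k,2}_{\delta_i}(C_{x_i})$ is identified with $W^{k,2}_{\delta_i}(\mathbb{R}\times F)$, and the critical-rate expression $e^{\lambda_i\log r}(-\log r)^p$ of Definition \ref{Definition-critical} becomes $e^{-\lambda_i t}t^p$, so the criticality conditions match. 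The operator $D_{C_{x_i}}$ becomes a translation-invariant elliptic operator on the cylinder multiplied by $r^{-m}$, which after the shift $\delta_i\mapsto\delta_i-m$ on the target is exactly the situation covered by the cylindrical case.

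The main obstacle is the uniform estimate on $\widehat{D}_\infty(\tau)^{-1}$ with the correct polynomial order in $|\tau|$, so that Plancherel actually yields a bounded operator between the weighted Sobolev spaces with the stated weight shifts; this requires a careful bookkeeping of how $|\tau|$-powers combine with the transverse norms on $F_\infty$ to reconstruct the full cylindrical Sobolev norm. A secondary point is that for $D=d+d^*$ on odd forms the target is even forms of equal total rank, so ``isomorphism'' is read between these matched bundles; the Fourier reduction is unaffected because the model operator on $F_\infty$ remains elliptic with index zero between the corresponding bundles, and fibrewise invertibility then follows from injectivity alone. The remaining bookkeeping for the weight conventions and the cone-cylinder identification is routine.
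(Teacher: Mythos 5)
The paper does not actually prove Theorem~\ref{Invertible-IL}: it is stated as a known result and cited directly from Kondrat'ev and Maz'ja--Plamenevski\u{\i} (via the first paragraph of page~420 of Lockhart--McOwen), so there is no in-paper argument to compare against. Your sketch is a correct outline of the canonical proof behind those references: conjugate by $e^{\delta_\infty t}$ to pass to unweighted spaces, take a partial Fourier transform in $t$ to obtain the indicial family $\widehat{D}_\infty(\tau)$ on $F_\infty$ at $\lambda_\infty=\delta_\infty+i\tau$, observe that non-criticality is exactly fibrewise invertibility for every real $\tau$, and reassemble a bounded two-sided inverse via Plancherel once one has parameter-dependent elliptic estimates; then pull the cone over to the cylinder by $r=e^{-t}$, with the volume factor $r_{x_i}^{-2n}$ in Definition~\ref{Definition-weighted-Sobelov} accounting for the Jacobian and the order-$m$ weight shift in the target accounted for by the $r^{-m}$ factor in the cone operator. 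You correctly identify the one real piece of work --- the uniform-in-$\tau$ estimate on $\widehat{D}_\infty(\tau)^{-1}$ in the anisotropic Sobolev scale so that Plancherel reconstitutes the full cylindrical $W^{k,2}$ norm --- as the technical heart of Kondrat'ev's theorem; as a sketch that is honest rather than a gap. Two points worth writing out explicitly in a full proof: (i) the index-zero claim for $\widehat{D}_\infty(\tau)$ is immediate for $\Delta$ by self-adjointness, and for $d+d^*$ follows because the index is locally constant in $\tau$ and the operator is invertible (hence index zero) for $|\tau|$ large; and (ii) in the cone case the conformal relation $g_C=r^2 g_{\mathrm{cyl}}$ produces degree-dependent powers of $r$ when comparing pointwise norms of $p$-forms, and keeping track of these is what makes the critical sets on the cone and the cylinder match under $r=e^{-t}$, so this piece of bookkeeping should not be dismissed as entirely routine.
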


In Section 5 of \cite{LockhartMcOwen}, Lockhart-McOwen used the following theorem of \cite{AgmonNirenberg}, \cite{Kondratev} and \cite{MazjaPlamenevskii}:

\begin{theorem}
Suppose that $\delta_i>\delta'_i$ or $\delta_\infty>\delta'_\infty$ are non-critical, then for any $\gamma\in W^{k,2}_{\delta'_i}(C_{x_i}\cap\{r\le r_{0,x_i}\})$ or $\gamma\in W^{k,2}_{\delta'_\infty}([1,\infty)\times F_\infty)$ satisfying $D_{C_{x_i}}\gamma=0$ or $D_\infty \gamma=0$, there exists $\gamma'$ in the direct sum of $\mathcal{K}_i(\lambda''_i)$ for all $\delta_i'<\mathrm{Re}(\lambda''_i)<\delta_i$ or $\mathcal{K}_\infty(\lambda''_\infty)$ for all $\delta_\infty'<\mathrm{Re}(\lambda''_\infty)<\delta_\infty$ such that $\gamma-\gamma'\in W^{k,2}_{\delta_i}(C_{x_i}\cap\{r\le r_{0,x_i}\})$ or $W^{k,2}_{\delta_\infty}([1,\infty)\times F_\infty)$.
Moreover,
\[||\gamma||_{(\oplus_{\delta'_i<\mathrm{Re}(\lambda''_i)<\delta_i}\chi_i\mathcal{K}_i(\lambda''_i))
\oplus W^{k,2}_{\delta_i}}\le C||\gamma||_{W^{k,2}_{\delta'_i}}\] or
\[||\gamma||_{(\oplus_{\delta'_\infty<\mathrm{Re}(\lambda''_\infty)<\delta_\infty} \chi_\infty\mathcal{K}_\infty(\lambda''_\infty))
\oplus W^{k,2}_{\delta_\infty}}\le C||\gamma||_{W^{k,2}_{\delta'_\infty}}.\]
\label{Polyhomogenous-IL}
\end{theorem}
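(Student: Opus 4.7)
The plan is to treat both parts simultaneously by reducing the cone case to the cylinder case via the logarithmic substitution $t = -\log r$. On the Ricci-flat cone $C_{x_i}$, the operators $d+d^*$ and $\Delta$ take the form $r^{-m}P(r\partial_r, \nabla_{F_{x_i}})$ where $P$ is a differential operator that is translation-invariant in $t$; the weighted Sobolev norm $W^{k,2}_{\delta_i}$ corresponds (after absorbing the Jacobian $r^{-2n}$) to the exponentially weighted Sobolev norm on the half-cylinder with weight $e^{-\delta_i t}$, and a mode $r^{\lambda_i}(-\log r)^p\gamma_{i,\lambda_i,p}$ in $\mathcal{K}_i(\lambda_i)$ transforms into the cylindrical mode $e^{-\lambda_\infty t}t^p \gamma_{\infty,\lambda_\infty,p}$ with $\lambda_\infty = -\lambda_i$. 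Hence it suffices to prove the cylindrical statement.

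For the cylinder, I would apply the Fourier--Laplace transform $\hat\gamma(\tau,\cdot) = \int e^{-\tau t}\gamma(t,\cdot)\,dt$ in the $t$-variable. Since $\gamma \in W^{k,2}_{\delta'_\infty}$ gives $e^{\delta'_\infty t}\gamma \in L^2$, the transform is holomorphic and $L^2$ on the vertical line $\mathrm{Re}(\tau) = -\delta'_\infty$. The equation $D_\infty \gamma = 0$ becomes $\hat D(\tau) \hat\gamma(\tau) = 0$, where $\hat D(\tau)$ is a polynomial family of elliptic operators on the compact cross-section $F_\infty$. By analytic Fredholm theory, $\hat D(\tau)^{-1}$ extends meromorphically to $\mathbb{C}$ with poles precisely at $\tau = -\lambda''_\infty$ for critical rates $\lambda''_\infty$, and the principal parts at these poles correspond under inverse transform to the modes $\sum_p e^{-\lambda''_\infty t}t^p \gamma_{\infty, \lambda''_\infty, p}$ spanning $\mathcal{K}_\infty(\lambda''_\infty)$. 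Writing $\gamma$ via the inversion formula on the contour $\mathrm{Re}(\tau) = -\delta'_\infty$ and shifting to $\mathrm{Re}(\tau) = -\delta_\infty$, the residue theorem produces a decomposition $\gamma = \gamma' + \gamma''$, where $\gamma'$ is the sum of residues at the finitely many poles in the strip $\delta'_\infty < \mathrm{Re}(\lambda''_\infty) < \delta_\infty$ and $\gamma''$ is the integral along the shifted contour.

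For the norm estimate, Plancherel on each vertical contour gives $\|e^{\delta_\infty t}\gamma''\|_{L^2}\le C\|\hat\gamma\|_{L^2(\mathrm{Re}(\tau)=-\delta_\infty)}$, and a fibrewise elliptic estimate on $F_\infty$ combined with polynomial-in-$\tau$ operator-norm bounds on $\hat D(\tau)^{-1}$ yields the corresponding $W^{k,2}_{\delta_\infty}$ bound. The residue coefficients comprising $\gamma'$ lie in a finite-dimensional space and are controlled by the same contour data via Cauchy's formula; afterward the cutoff $\chi_\infty$ (respectively $\chi_i$) globalizes each kernel mode into $\chi_\infty \mathcal{K}_\infty(\lambda''_\infty)$ up to an error in $W^{k,2}_{\delta_\infty}$. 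The main technical hurdle, which is the analytic content contributed by Agmon--Nirenberg, is to show that $\hat D(\tau)^{-1}$ is uniformly invertible with polynomial-in-$|\tau|$ operator-norm bounds on every vertical strip avoiding the critical rates; this parameter-dependent ellipticity is precisely what justifies the contour deformation and ensures convergence of the Plancherel tail, and once it is established the stated inequality follows by packaging the two estimates above.
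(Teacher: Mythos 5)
The paper does not give a proof of Theorem \ref{Polyhomogenous-IL}: it is quoted as a known input from Agmon--Nirenberg, Kondrat'ev and Maz'ja--Plamenevski\v{\i}, used directly in the proof of Theorem \ref{Polyhomogenous-L-totally-character}. So there is no in-paper proof to compare to. Your proposal follows the standard Mellin/Fourier--Laplace strategy that those sources use and that underlies the Lockhart--McOwen machinery, and the reduction of the cone case to the cylinder case by $t=-\log r$, together with the analytic Fredholm and parameter-dependent ellipticity framework, is the right skeleton.

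There is, however, a genuine gap in the step where you say the equation $D_\infty\gamma=0$ transforms into $\hat D(\tau)\hat\gamma(\tau)=0$. That cannot be right on the half-cylinder $[1,\infty)\times F_\infty$: the family $\hat D(\tau)$ of elliptic operators on the compact cross-section is invertible for every $\tau$ off the discrete critical set, and $\hat\gamma(\tau)$ is holomorphic in the half-plane $\mathrm{Re}(\tau)\ge -\delta'_\infty$, so $\hat D(\tau)\hat\gamma(\tau)=0$ there would force $\hat\gamma\equiv 0$ by analytic continuation and hence $\gamma\equiv 0$, which is absurd. The omission is the boundary contribution at $t=1$. When you transform a $t$-derivative over $[1,\infty)$, integration by parts produces boundary terms, or equivalently, if you first cut off $\gamma$ by $\chi(t)$ supported in $t\ge 2$ and equal to $1$ for $t\ge 3$, then $D_\infty(\chi\gamma)=[D_\infty,\chi]\gamma$ is compactly supported and its transform is entire with uniform bounds on vertical lines. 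Either way the true relation is $\hat D(\tau)\hat\gamma(\tau)=B(\tau)$ with $B$ entire and of controlled growth, so that $\hat\gamma=\hat D(\tau)^{-1}B(\tau)$ is meromorphic with poles only at the critical rates. It is this meromorphic continuation, not the (impossible) identity $\hat D(\tau)\hat\gamma=0$, that makes the contour shift from $\mathrm{Re}(\tau)=-\delta'_\infty$ to $\mathrm{Re}(\tau)=-\delta_\infty$ produce residues equal to the $\mathcal{K}_\infty(\lambda''_\infty)$ modes and a remainder decaying at rate $\delta_\infty$. Once you replace your transform relation with $\hat D(\tau)\hat\gamma=B(\tau)$ and note that the Plancherel and polynomial-in-$|\tau|$ resolvent bounds then apply to $\hat D(\tau)^{-1}B(\tau)$ on the shifted contour, the rest of your outline goes through.
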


Choose a non-critical weight $\delta_i\in(\mathrm{Re}(\lambda_i),\mathrm{Re}(\lambda_i)+\nu)$ or a non-critical weight $\delta_\infty\in(\mathrm{Re}(\lambda_\infty),\mathrm{Re}(\lambda_\infty)+\nu)$. By Theorem \ref{Invertible-IL}, the maps \[D_{C_{x_i}}:W^{k,2}_{\delta_i}(C_{x_i})\rightarrow W^{k-m,2}_{\delta_i-m}(C_{x_i})\] and \[D_\infty:W^{k,2}_{\delta_\infty}(\mathbb{R}\times F_\infty)\rightarrow W^{k-m,2}_{\delta_\infty}(\mathbb{R}\times F_\infty)\] are isomorphisms. Therefore, after shrinking $r_{0,x_i}$ or replacing $t$ by $t+T$ if necessary, the maps \[\chi_i D+(1-\chi_i)D_{C_{x_i}}:W^{k,2}_{\delta_i}(C_{x_i})\rightarrow W^{k-m,2}_{\delta_i-m}(C_{x_i})\] and \[\chi_\infty D+(1-\chi_\infty)D_\infty:W^{k,2}_{\delta_\infty}(\mathbb{R}\times F_\infty)\rightarrow W^{k-m,2}_{\delta_\infty}(\mathbb{R}\times F_\infty)\] are also isomorphisms. Remark that for each $\gamma\in\mathcal{K}_i(\lambda_i)$ or $\gamma\in\mathcal{K}_\infty(\lambda_\infty)$, $-D(\gamma\chi_i)\in W^{k-m,2}_{\delta_i-m}$ or $-D(\gamma\chi_\infty)\in W^{k-m,2}_{\delta_\infty}$. One can use $\chi_iD+(1-\chi_i)D_{C_{x_i}}$ or $\chi_\infty D+(1-\chi_\infty)D_\infty$ to revert it. Then the inverse image plus $\gamma$ lies in kernel of $D$ restricted to the set where $\chi_i$ or $\chi_\infty$ is identically 1. Denote the space of such sums by $\mathcal{P}_i(\lambda_i)$ or $\mathcal{P}_\infty(\lambda_\infty)$. Remark that the definition of $\mathcal{P}_i(\lambda_i)$ or $\mathcal{P}_\infty(\lambda_\infty)$ can be changed to any set such that $D\gamma=0$ near $x_i$ or infinity for any $\gamma$ in $\mathcal{P}_i(\lambda_i)$ or $\mathcal{P}_\infty(\lambda_\infty)$ and there is a bijection between $\mathcal{K}_i(\lambda_i)$ and $\mathcal{P}_i(\lambda_i)$ or between $\mathcal{K}_\infty(\lambda_\infty)$  and $\mathcal{P}_\infty(\lambda_\infty)$ using their asymptotic behaviours.

The following theorem can be proved using Theorem \ref{Invertible-IL} and Theorem \ref{Polyhomogenous-IL}:

\begin{theorem}
Suppose that $\delta_i>\delta'_i$ or $\delta_\infty>\delta'_\infty$ are non-critical, then for any $\gamma\in W^{k,2}_{\delta'_i}(C_{x_i}\cap\{r\le r_{0,x_i}\})$ satisfying $D\gamma\in W^{k-m,2}_{\delta_i-m}$ or $\gamma\in W^{k,2}_{\delta'_\infty}([1,\infty)\times F_\infty)$ satisfying $D\gamma\in W^{k-m,2}_{\delta_\infty}$, there exists $\gamma'$ in the direct sum of $\chi_i\mathcal{P}_i(\lambda''_i)$ for all $\delta'_i<\mathrm{Re}(\lambda''_i)<\delta_i$ or $\chi_\infty\mathcal{P}_\infty(\lambda''_\infty)$ for all critical $\delta'_\infty<\mathrm{Re}(\lambda''_\infty)<\delta_\infty$ such that $\gamma-\gamma'\in W^{k,2}_{\delta_i}(C_{x_i}\cap\{r\le r_{0,x_i}\})$ or $W^{k,2}_{\delta_\infty}([1,\infty)\times F_\infty)$.
Moreover,
\[||\gamma||_{(\oplus_{\delta'_i<\mathrm{Re}(\lambda''_i)<\delta_i}\chi_i\mathcal{P}_i(\lambda''_i))
\oplus W^{k,2}_{\delta_i}}\le C(||\gamma||_{W^{k,2}_{\delta'_i}}+||D\gamma||_{W^{k-m,2}_{\delta_i-m}})\] or
\[||\gamma||_{(\oplus_{\delta'_\infty<\mathrm{Re}(\lambda''_\infty)<\delta_\infty} \chi_\infty\mathcal{P}_\infty(\lambda''_\infty))
\oplus W^{k,2}_{\delta_\infty}}\le C(||\gamma||_{W^{k,2}_{\delta'_\infty}}+||D\gamma||_{W^{k-m,2}_{\delta_\infty}}).\]
\label{Polyhomogenous-L-totally-character}
\end{theorem}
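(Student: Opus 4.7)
The plan is to reduce the inhomogeneous case $D\gamma \in W^{k-m,2}_{\delta_i-m}$ to the homogeneous case handled by Theorem \ref{Polyhomogenous-IL}, using the invertibility of the model operator at the higher weight $\delta_i$ to absorb the right-hand side. I will describe the argument for the conical end near $x_i$; the cylindrical case near infinity is strictly parallel, with $D_\infty$ and exponential weights in $t$ replacing $D_{C_{x_i}}$ and power-law weights in $r$.

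The first step is to kill the inhomogeneous term. Pick a cutoff $\tilde{\chi}$ supported in $\{r \le r_{0,x_i}\}$ and equal to $1$ on a slightly smaller neighborhood, and set $\xi := \tilde{\chi}\, D\gamma$ (extended by zero to all of $C_{x_i}$), so that $\xi \in W^{k-m,2}_{\delta_i-m}(C_{x_i})$. Using the isomorphism $\chi_i D + (1-\chi_i) D_{C_{x_i}}: W^{k,2}_{\delta_i}(C_{x_i}) \to W^{k-m,2}_{\delta_i-m}(C_{x_i})$ constructed in the discussion preceding the theorem (which relies on Theorem \ref{Invertible-IL}), I solve for $\eta \in W^{k,2}_{\delta_i}$ with the bound $\|\eta\|_{W^{k,2}_{\delta_i}} \le C\|D\gamma\|_{W^{k-m,2}_{\delta_i-m}}$. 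On the region where $\chi_i \equiv 1$ one has $D\eta = D\gamma$, so $\tilde\gamma := \gamma - \eta \in W^{k,2}_{\delta'_i}$ satisfies $D\tilde\gamma = 0$ near the tip.

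Next, I apply the homogeneous theorem to $\tilde\gamma$. Since Theorem \ref{Polyhomogenous-IL} is stated for the exact cone operator $D_{C_{x_i}}$ but we only control the perturbed operator $D$, I bridge this gap by a bootstrap: the difference $D - D_{C_{x_i}}$ sends $W^{k,2}_{\delta}$ continuously into $W^{k-m,2}_{\delta - m + \nu}$ by Definition \ref{Definition-conical-singularity}, so $D_{C_{x_i}}\tilde\gamma$ lies in a strictly better weighted space than a priori. Iterating at a finite chain of non-critical intermediate weights $\delta'_i = \delta^{(0)} < \delta^{(1)} < \cdots < \delta^{(N)} = \delta_i$ chosen so that each slab contains at most one critical rate, I successively peel off the leading polyhomogeneous contribution using Theorem \ref{Polyhomogenous-IL} and absorb the residue using Theorem \ref{Invertible-IL}. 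This yields the decomposition $\tilde\gamma = \sum_{\delta'_i < \mathrm{Re}(\lambda''_i) < \delta_i} \chi_i\, \gamma'_{\lambda''_i} + \tilde\gamma''$ with $\gamma'_{\lambda''_i} \in \mathcal{K}_i(\lambda''_i)$ and $\tilde\gamma'' \in W^{k,2}_{\delta_i}$, together with the cumulative norm bound.

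Finally, convert each $\mathcal{K}_i$-term to its $\mathcal{P}_i$-counterpart via the bijection described just before the theorem; the conversion differences lie in $W^{k,2}_{\delta_i}$ and are absorbed into the remainder. Adding $\eta$ back yields the desired decomposition of $\gamma$ together with the claimed estimate. The step I expect to be the main obstacle is the bootstrap bookkeeping: verifying that each iteration genuinely improves the weight by a fixed amount $\nu$, that no critical rate in $(\delta'_i, \delta_i)$ is skipped, and that the finitely many applications of Theorems \ref{Invertible-IL} and \ref{Polyhomogenous-IL} combine to give the single global norm estimate claimed in the conclusion.
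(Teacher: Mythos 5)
Your overall toolkit matches the paper's — kill the inhomogeneity with the hybrid operator $\chi_i D + (1-\chi_i)D_{C_{x_i}}$, then bootstrap through the weight range using the $\nu$-gain of $D - D_{C_{x_i}}$ together with Theorems \ref{Invertible-IL} and \ref{Polyhomogenous-IL} — but the bookkeeping has a genuine gap, precisely where you flagged concern. You propose to peel off $\chi_i\mathcal{K}_i(\lambda''_i)$ terms slab by slab and only convert everything to $\mathcal{P}_i$ at the end. Neither half of this works. After subtracting a leading term $\chi_i\gamma'_{\lambda''_i}$ with $\gamma'_{\lambda''_i}\in\mathcal{K}_i(\lambda''_i)$, the new remainder $\tilde\gamma^{(1)}$ is no longer $D$-harmonic near the tip: there $D\tilde\gamma^{(1)}=-(D-D_{C_{x_i}})\gamma'_{\lambda''_i}$, which only lies in $W^{k-m,2}_{\mathrm{Re}(\lambda''_i)+\nu-m}$. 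For the smallest such $\lambda''_i$ this rate is essentially $\delta'_i+\nu=\delta^{(1)}$, the weight $\tilde\gamma^{(1)}$ already sits at, so $D_{C_{x_i}}\tilde\gamma^{(1)}$ is controlled at no rate better than $\delta^{(1)}-m$, the preimage under $D_{C_{x_i}}$ does not improve the weight, and the iteration stalls after one step. Separately, the claim ``the conversion differences lie in $W^{k,2}_{\delta_i}$'' is false whenever $\delta_i-\delta'_i>\nu$: by the construction preceding the theorem, the gap between a $\mathcal{P}_i(\lambda''_i)$ element and $\chi_i$ times its $\mathcal{K}_i(\lambda''_i)$ counterpart only lies in $W^{k,2}_{\delta}$ for some $\delta\in(\mathrm{Re}(\lambda''_i),\mathrm{Re}(\lambda''_i)+\nu)$, which is strictly weaker than $W^{k,2}_{\delta_i}$ when $\mathrm{Re}(\lambda''_i)+\nu<\delta_i$.

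The missing idea is to peel off $\mathcal{P}_i(\lambda''_i)$ elements at every step rather than $\mathcal{K}_i(\lambda''_i)$ elements: these are exactly $D$-harmonic near the tip, so the remainder stays $D$-harmonic there, $D_{C_{x_i}}(\text{remainder})=(D_{C_{x_i}}-D)(\text{remainder})$ genuinely gains $\nu$ each time, and the iteration closes. The paper packages this as a contradiction/induction on the threshold: take $\delta_i$ within $\nu/2$ of the infimum of failure weights, apply the theorem (inductively) at a non-critical $\delta''_i\in(\delta_i-\nu,\delta_i-\nu/2)$ to get $\mathcal{P}_i$ terms for the range $(\delta'_i,\delta''_i)$, then handle the thin slab $(\delta''_i,\delta_i)$ manually with $D_{C_{x_i}}^{-1}$ and Theorem \ref{Polyhomogenous-IL}; in that slab $\mathrm{Re}(\lambda''_i)+\nu>\delta''_i+\nu>\delta_i$, so the $\mathcal{K}_i\to\mathcal{P}_i$ replacement is harmless. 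Your forward iteration can be repaired by inserting that replacement in each slab of width at most $\nu$ (licensed by the same inequality $\mathrm{Re}(\lambda''_i)+\nu>\delta^{(j+1)}$) instead of once at the end.
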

\begin{proof}
The proof of this theorem is essentially due to Lockhart-McOwen \cite{LockhartMcOwen}. There were lots of closely related theorems due to many authors, for example, Proposition 4.21 of \cite{KarigiannisLotay} and Proposition 2.9 of \cite{HeinSun}. For the reader's convenience, a proof is included here without claiming any originality.

Suppose that this theorem is not true. Choose $\delta_i$ as a non-critical value for the failure of this theorem such that it is smaller than the infimum of all such $\delta_i$ plus $\frac{\nu}{2}$. Then choose a non-critical $\delta''_i\in (\delta_i-\nu,\delta_i-\frac{\nu}{2})$. Assume that $\gamma\in W^{k,2}_{\delta'_i}(C_{x_i}\cap\{r\le r_{0,x_i}\})$ and $D\gamma\in W^{k-m,2}_{\delta_i-m}$. Then $D\gamma\in W^{k-m,2}_{\delta''_i-m}$. So there exists $\gamma''$ in the direct sum of $\chi_i\mathcal{P}_i(\lambda_i)$ for all $\delta'_i<\mathrm{Re}(\lambda_i)<\delta''_i$ such that $\gamma-\gamma''\in W^{k,2}_{\delta''_i}(C_{x_i}\cap\{r\le r_{0,x_i}\})$. Since $D\gamma''$ vanishes on a neighborhood of $x_i$, it is easy to see that $D(\gamma-\gamma'')\in W^{k-m,2}_{\delta_i-m}$. So it is also true that $D_{C_{x_i}}(\gamma-\gamma'')\in W^{k-m,2}_{\delta_i-m}$. Remark that $\gamma-\gamma''-D_{C_{x_i}}^{-1}(\chi_i(\gamma-\gamma''))$ satisfies the hypothesis of Theorem \ref{Polyhomogenous-IL} in a smaller neighborhood of $x_i$. So it can be written as the sum of elements in $\chi_i\mathcal{K}_i(\lambda_i)$ for all $\mathrm{Re}(\lambda_i) \in (\delta''_i,\delta_i)$ plus an element in $W^{k,2}_{\delta_i}$. The construction of $\mathcal{P}_i(\lambda_i)$ provides a contradiction to the definition of $\delta_i$. The argument near infinity is similar.
\end{proof}

The following corollary is a refinement of Theorem \ref{Lockhart-McOwen}:

\begin{corollary}
Suppose that $\delta_i>\delta'_i$ or $\delta_\infty>\delta'_\infty$ are non-critical, then the maps \[D:(\bigoplus_{\substack{i=1,...,N_2,\infty\\
\delta_i>\mathrm{Re}(\lambda_i)>\delta'_i}}\chi_i\mathcal{P}_i(\lambda_i))
\oplus W^{k,2}_{\delta}\rightarrow W^{k-m,2}_{(\delta_1-m,...\delta_{N_2}-m,\delta_\infty)}\]
and \[D:W^{k,2}_{\delta'}\rightarrow W^{k-m,2}_{(\delta'_1-m,...\delta'_{N_2}-m,\delta'_\infty)}\] commute with the inclusion map. Moreover, the inclusion map induces isomorphisms on the kernels and cokernels of $D$.
\label{Refined-index-change-formula}
\end{corollary}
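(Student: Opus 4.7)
The approach is a snake--lemma argument relating the two Fredholm operators. Write $A=\bigoplus_{i,\lambda_i}\chi_i\mathcal{P}_i(\lambda_i)$ for the finite-dimensional polyhomogeneous piece, abbreviate $D_1:A\oplus W^{k,2}_{\delta}\to W^{k-m,2}_{\delta-m}$ and $D_2:W^{k,2}_{\delta'}\to W^{k-m,2}_{\delta'-m}$, and let $\bar D$ be the induced map on the quotients $W^{k,2}_{\delta'}/(A\oplus W^{k,2}_{\delta})\to W^{k-m,2}_{\delta'-m}/W^{k-m,2}_{\delta-m}$. The routine preliminary checks are that the inclusion of sources is injective (elements of $\chi_i\mathcal{P}_i(\lambda_i)$ have leading rate exactly $\mathrm{Re}(\lambda_i)<\delta_i$ near $x_i$, and analogously at infinity, so they meet $W^{k,2}_{\delta}$ trivially), that $D$ sends $A\oplus W^{k,2}_{\delta}$ into $W^{k-m,2}_{\delta-m}$ (for $\gamma\in\mathcal{P}_i(\lambda_i)$ the construction forces $D\gamma=0$ on $\{\chi_i\equiv 1\}$, so $D(\chi_i\gamma)=[D,\chi_i]\gamma$ is supported in a fixed annulus bounded away from $x_i$ and thus lies in every weighted target), and that $W^{k-m,2}_{\delta-m}\hookrightarrow W^{k-m,2}_{\delta'-m}$ is the obvious inclusion since $\delta>\delta'$ componentwise. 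Commutativity of the resulting diagram is automatic, and the snake lemma then produces the exact sequence
\[0\to\ker D_1\to\ker D_2\to\ker\bar D\to\mathrm{coker}\,D_1\to\mathrm{coker}\,D_2\to\mathrm{coker}\,\bar D\to 0,\]
reducing the corollary to $\ker\bar D=0$ and $\mathrm{coker}\,\bar D=0$.

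The vanishing of $\ker\bar D$ is an immediate application of Theorem \ref{Polyhomogenous-L-totally-character}: given $\gamma\in W^{k,2}_{\delta'}$ with $D\gamma\in W^{k-m,2}_{\delta-m}$, cut off by $\chi_i$ near each $x_i$ and by $\chi_\infty$ near infinity, transport to the model cone/cylinder via the biholomorphism of Definition \ref{Definition-conical-singularity}, and invoke the theorem to obtain local decompositions into elements of $\bigoplus\chi_i\mathcal{P}_i(\lambda)$ plus errors in $W^{k,2}_{\delta_i}$ (resp.\ $W^{k,2}_{\delta_\infty}$); a standard patching glues the local decompositions into a global one that places $\gamma$ inside $A\oplus W^{k,2}_{\delta}$, so its class in the source quotient is zero.

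The vanishing of $\mathrm{coker}\,\bar D$ is a dimension count. A second application of the snake lemma to the short exact sequence $0\to W^{k,2}_{\delta}\to A\oplus W^{k,2}_{\delta}\to A\to 0$ with vertical maps $D|_{W^{k,2}_{\delta}}$, $D_1$, and the zero map (target row $0\to W^{k-m,2}_{\delta-m}\xrightarrow{\mathrm{id}}W^{k-m,2}_{\delta-m}\to 0\to 0$) shows that $D_1$ is Fredholm with $\mathrm{ind}(D_1)=i_{\delta}(D)+\dim A$. By construction $\dim A=N(\delta',\delta)$, and Theorem \ref{Lockhart-McOwen} gives $i_{\delta'}(D)-i_{\delta}(D)=N(\delta',\delta)$, so $\mathrm{ind}(D_1)=\mathrm{ind}(D_2)$. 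Combined with the kernel isomorphism just established, this forces $\dim\mathrm{coker}\,D_1=\dim\mathrm{coker}\,D_2$, and because $\ker\bar D=0$ makes the map $\mathrm{coker}\,D_1\to\mathrm{coker}\,D_2$ injective, the equality of dimensions upgrades to an isomorphism, whence $\mathrm{coker}\,\bar D=0$.

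The main technical point that needs care is the transfer of Theorem \ref{Polyhomogenous-L-totally-character}, whose statement is on the exact model cone $C_{x_i}$ and cylinder, to $\gamma$ living on $V$. The key input is that on $\{r\le r_{0,x_i}\}$ the operator $D$ on $V$ differs from $D_{C_{x_i}}$ by a perturbation of order $O(r^{\nu_x-j})$ coming from the conical singularity condition in Definition \ref{Definition-conical-singularity}, which strictly improves weighted regularity and can therefore be absorbed into the right-hand side $D\gamma$; the analogous exponential improvement holds at infinity. Beyond this absorption no new analytic input is needed, which is exactly what makes Theorem \ref{Polyhomogenous-L-totally-character} the right black box to feed into the snake-lemma machine.
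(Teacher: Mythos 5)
Your proposal is correct, and the $\ker\bar D=0$ step (equivalently, surjectivity of the induced map on kernels together with injectivity on cokernels) is essentially the same as the paper's: both rest on applying Theorem \ref{Polyhomogenous-L-totally-character} to decompose an element of $W^{k,2}_{\delta'}$ whose $D$-image lies in $W^{k-m,2}_{\delta-m}$ into a polyhomogeneous piece in $\bigoplus\chi_i\mathcal{P}_i(\lambda_i)$ plus a remainder in $W^{k,2}_{\delta}$.

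Where you genuinely diverge is the surjectivity of $\mathrm{coker}\,D_1\to\mathrm{coker}\,D_2$. The paper proves this directly and constructively: given $\eta\in W^{k-m,2}_{\delta'-m}$, it produces a local parametrix $\sigma_i=(\chi_iD+(1-\chi_i)D_{C_{x_i}})^{-1}(\chi_i\eta)$ from Theorem \ref{Invertible-IL}, observes that $\eta-\sum D(\chi_i\sigma_i)-D(\chi_\infty\sigma_\infty)$ vanishes identically near each $x_i$ and near infinity (hence lies in $W^{k-m,2}_{\delta-m}$), and concludes that the class of $\eta$ comes from $\mathrm{coker}\,D_1$. You instead run a Fredholm index count: a short-exact-sequence argument gives $\mathrm{ind}(D_1)=i_{\delta}(D)+\dim A$, the identity $\dim A=N(\delta',\delta)$ combined with the index-change formula of Theorem \ref{Lockhart-McOwen} yields $\mathrm{ind}(D_1)=\mathrm{ind}(D_2)=i_{\delta'}(D)$, and with the kernel isomorphism in hand the equal cokernel dimensions plus injectivity of $\mathrm{coker}\,D_1\to\mathrm{coker}\,D_2$ force an isomorphism. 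Both arguments are valid. The trade-off is that your route leans on the index-change formula of Theorem \ref{Lockhart-McOwen} and on Fredholmness (i.e.\ finite-dimensionality of the (co)kernels), which is fine here since the corollary is presented as a refinement of that theorem; the paper's route is a touch more self-contained, never invoking the index formula, and has the side benefit of giving an explicit representative splitting $\eta$ off $W^{k-m,2}_{\delta-m}$. Your final paragraph on transferring the model-cone estimate to $V$ is not really an extra step of the proof: that transfer is already the content of Theorem \ref{Polyhomogenous-L-totally-character} as stated (its operator $D$ is the ambient one near $x_i$), so the paragraph is harmless but redundant.
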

\begin{proof}
It is trivial to see that the inclusion map commutes with $D$ and induces maps between kernels of $D$ or between cokernels of $D$. It is also trivial to see that the induces map is injective between the kernels of $D$. Suppose that $\gamma\in W^{k+m,2}_{\delta'}$ and $D\gamma=0$. By Theorem \ref{Polyhomogenous-L-totally-character}, it is in the image of the map between the kernels induced by the inclusion map. On the other hands, for any element in $W^{k-m,2}_{(\delta'_1-m,...\delta'_{N_2}-m,\delta'_\infty)}$, the product with $\chi_i$ can be inverted using the map $\chi_iD+(1-\chi_i)D_{C_{x_i}}$. The product of this inverse with $\chi_i$ lies in $W^{k,2}_{\delta'}$. Thus, any element in $W^{k-m,2}_{(\delta'_1-m,...\delta'_{N_2}-m,\delta'_\infty)}$ can be written as an element in the image of $D$ on $W^{k,2}_{\delta'}$ plus an element vanishing near $x_i$ and infinity. This proves the surjectivity of the map between cokernels induced by the inclusion map. The injectivity of this map is an immediate corollary of Theorem \ref{Polyhomogenous-L-totally-character}.
\end{proof}

For the Laplacian operator acting on functions, as in Proposition 2.9 of \cite{HeinSun}, any element $\gamma$ in $\mathcal{K}_i(\lambda_i)$ or $\mathcal{K}_\infty(\lambda_\infty)$ can be as a generalized Fourier series. Then the following proposition follows easily from the explicit solution of the ordinary differential equation as well as Theorem 2.14 of \cite{HeinSun}:

\begin{proposition}
For the Laplacian operator acting on functions, the following statements are true:

(1) Any critical $\lambda_i\in\mathbb{C}$ or $\lambda_\infty\in\mathbb{C}$ is in fact real.

(2) $\mathcal{K}_i(\lambda_i)$ has no $(-\log r)^p$ terms.

(3) $\mathcal{K}_\infty(0)=\mathrm{Span}\{1,t\}$.

(4) There is no critical rate in $(-2n+2,0)$ near $x_i$.

(5) $\mathcal{K}_i(0)=\mathrm{Span}\{1\}$.

(6) There is no critical rate in $(0,1]$ near $x_i$.

(7) If $\lambda_i\in (1,2)$, then any element in $\mathcal{K}_i(\lambda_i)$ is pluriharmonic.

(8) Any element in $\mathcal{K}_i(2)$ can be written as a pluriharmonic function in $\mathcal{K}_i(2)$ plus a $J(r\frac{\partial}{\partial r})$-invariant function in $\mathcal{K}_i(2)$.

(9) Denote the direct sum of pluriharmonic functions in $\mathcal{K}_i(\lambda_i)$ with rates $\lambda_i\in(1,2]$ by $\mathcal{P}_i$. In Corollary \ref{Refined-index-change-formula}, $\mathcal{P}_i(\lambda_i)$ can be replaced by the corresponding $\mathcal{P}_i$ because any pluriharmonic function is harmonic with respect to both $\omega_{C_{x_i}}$ and $\omega$. Denote the space of $J(r\frac{\partial}{\partial r})$-invariant functions in $\mathcal{K}_i(2)$ by $\mathcal{H}_i$. Remark that the difference between $\mathcal{H}_i$ and corresponding element in $\mathcal{P}_i(2)$ lies in $W^{k,2}_{2+\frac{\nu}{2}}$ near $x_i$.
\label{Calabi-Yau-cone}
\end{proposition}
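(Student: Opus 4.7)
The plan is to prove all nine parts by separation of variables on the cone and cylinder, combined with Lichnerowicz--Obata and the Sasaki--Einstein spectral decomposition of Hein--Sun (Theorem 2.14 of \cite{HeinSun}). On $C_{x_i}=C(L_i)$ of complex dimension $n$, substituting $f=r^\lambda h(y)$ into $\Delta f=0$ yields the indicial equation $\lambda(\lambda+2n-2)=\mu$, where $\mu$ is an eigenvalue of $\Delta_{L_i}$; logarithmic factors in $r$ occur precisely when the indicial roots coincide. On $\mathbb{R}\times F_\infty$, $f=e^{-\lambda_\infty t}h(y)$ yields $\lambda_\infty^2=\mu$, with polynomial $t$-factors appearing only at the double root $\mu=0$. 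Parts (1) and (2) then follow at once: since $\mu\geq 0$ the discriminant $(2n-2)^2+4\mu$ is positive (using $n\geq 2$), so the two roots $\lambda_\pm=-(n-1)\pm\sqrt{(n-1)^2+\mu}$ are real and distinct, excluding log-terms.

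For parts (3)--(5), on the cylinder the double root at $\mu=0$ gives solutions of the form $(c_1+c_2 t)h(y)$ with $\Delta_{F_\infty}h=0$; since $F_\infty$ (e.g.\ $\mathbb{S}^1\times K3$) is connected, $h$ is constant, giving $\mathcal{K}_\infty(0)=\mathrm{Span}\{1,t\}$. On the cone the branch $\lambda_+(\mu)$ is non-negative and $\lambda_-(\mu)\leq-(2n-2)$, with equality only at $\mu=0$, so there is no critical rate in the open interval $(-(2n-2),0)$, and $\mathcal{K}_i(0)$ arises only from $\mu=0$, i.e.\ from harmonic, hence constant, functions on the connected link $L_i$.

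For part (6) I would invoke the Lichnerowicz--Obata theorem applied to $(L_i,g_{L_i})$, an Einstein manifold of real dimension $2n-1$ with $\mathrm{Ric}=(2n-2)g_{L_i}$ (the Sasaki--Einstein condition inherited from the Ricci-flat Kähler cone). This gives $\mu_1\geq 2n-1$ with equality iff $L_i$ is the round $S^{2n-1}$. Because $x_i\in V^{\mathrm{sing}}$ is a genuine conical singularity (the relevant tangent cone being, e.g., the nodal cone of Example \ref{Nodal-singularity}), $L_i\not\cong S^{2n-1}$, so the inequality is strict, forcing $\lambda_1>1$ and excluding critical rates in $(0,1]$.

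Parts (7)--(9) require the finer decomposition of the spectrum by the Reeb weight $k$ of $\xi=J(r\partial_r)$. Writing $h=\sum_k h_k$ with $\xi h_k=ik h_k$, the Sasakian Laplacian splits as $\Delta_{L_i}h_k=(\Delta_B+k^2)h_k$, where $\Delta_B$ is the transverse (basic) Laplacian on the Kähler--Einstein Fano quotient $B_i$. Real parts of holomorphic functions on the cone with $r$-weight $\lambda$ carry Reeb weights $\pm\lambda$ and contribute eigenfunctions with the basic part constant; conversely, a non-pluriharmonic harmonic function of rate $\lambda$ must have a nonzero component with $|k|<\lambda$, forcing $\Delta_B$ to have a positive eigenvalue $\lambda(\lambda+2n-2)-k^2$. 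The Sasakian Matsushima--Lichnerowicz bound (Theorem 2.14 of \cite{HeinSun}) says the smallest such basic eigenvalue is $4n-4\cdot 0$, equivalently it forbids any such contribution for $\lambda\in(1,2)$, proving (7); at $\lambda=2$ the basic eigenvalue $4n$ is exactly realized by moment-map functions of transverse Hamiltonian Killing fields, whose pullbacks are $J(r\partial_r)$-invariant, yielding the splitting in (8). Statement (9) is then a formal consequence: pluriharmonic functions are annihilated by $\partial\bar\partial$ hence harmonic for both $\omega_{C_{x_i}}$ and $\omega$, so the representatives in $\mathcal{P}_i(\lambda_i)$ from Corollary \ref{Refined-index-change-formula} can be taken in the pluriharmonic space $\mathcal{P}_i$ directly, while the genuine non-pluriharmonic contribution at rate $2$ differs from a chosen element of $\mathcal{H}_i$ only by a term of higher rate $2+\frac{\nu}{2}$, which is in $W^{k,2}_{2+\nu/2}$. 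The main obstacle is the sharp spectral gap in (7): ruling out any non-pluriharmonic eigenfunction for $\lambda\in(1,2)$ rests on the full Sasakian spectral estimates in \cite{HeinSun}, and must carefully exploit both the Reeb-weight bound $|k|\leq\lambda$ for harmonic functions of rate $\lambda$ and the strict Lichnerowicz bound on the transverse Kähler base.
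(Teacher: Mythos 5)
Your proposal takes essentially the same route as the paper, which (very tersely) reduces the proposition to the generalized Fourier series expansion of Proposition 2.9 of \cite{HeinSun}, the explicit quadratic indicial equation $\lambda(\lambda+2n-2)=\mu$ on the cone and $\lambda_\infty^2=\mu$ on the cylinder, the Lichnerowicz--Obata estimate for the Sasaki--Einstein link, and Theorem 2.14 of \cite{HeinSun} for the refined classification at rates in $(1,2]$. Your extra sketch of the Reeb-weight argument for parts (7)--(8) is in the right spirit but imprecise as written: the asserted bound $|k|\le\lambda$ for harmonic functions of rate $\lambda$ does not follow from the decomposition $\mu=k^2+\mu_B$ alone, and the quoted constant ``$4n-4\cdot 0$'' is garbled. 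Those sharp spectral facts are exactly the content of Theorem 2.14 of \cite{HeinSun}, which the paper simply cites rather than re-derives, so you should do the same.
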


The next goal is the analysis on manifolds with edge singularities. It is required that the smooth part of the manifold can be viewed as a manifold with boundary and the boundary is a fibration. As a special case, assume that the boundary is the trivial fibration over $\mathbb{S}^1$ with fiber $F$, in other words, is $F\times\mathbb{S}^1$. So in this special case, the singular manifold looks like $C(F)\times\mathbb{S}^1$ locally.

In the pioneer work of Mazzeo \cite{Mazzeo}, the weighted Sobelov space is defined using the same formula as in Definition \ref{Definition-weighted-Sobelov}. Elliptic differential operators like $\Delta:W^{k,2}_{\delta}\rightarrow W^{k-2,2}_{\delta-2}$ are studied in \cite{Mazzeo}. The main result of \cite{Mazzeo} discusses whether the elliptic differential operator is Fredholm or not. On the other hands, Theorem 7.14 of \cite{Mazzeo} is closely related Theorem \ref{Polyhomogenous-L-totally-character}.

Remark that there are different versions of weighted analysis by changing the definition slightly. One way is to change the domain and range of $\Delta$ as Cheeger \cite{Cheeger} and Hunsicker-Mazzeo \cite{HunsickerMazzeo} did when studying Hodge theory on manifolds with edge singularities. The other way is to change the definition of $C^{k,\alpha}_\delta$ as Chen-Donaldson-Sun \cite{ChenDonaldsonSun1,ChenDonaldsonSun2,ChenDonaldsonSun3} did when using manifolds with edge singularities to study the K\"ahler-Einstein problem.

\section{Asymptotically cylindrical Calabi-Yau manifolds with isolated conical singularities}

In this section, Theorem \ref{Non-compact-Calabi-Yau} is proved as a combination of \cite{HeinSun} and \cite{HaskinsHeinNordstrom}. There are several major technical problems in this process. Firstly, it is necessary to find a good substitute for the finiteness of diameter property in \cite{HeinSun}. Secondly, it is not clear how to get the generalization of \cite{EyssidieuxGuedjZeriahi} on the existence of weak solutions because the weak solution in the sense of current is too weak to apply the standard analysis for the asymptotically cylindrical manifolds. Thirdly, the openess part of \cite{HeinSun} uses a non-standard weighted analysis and therefore does not have a simple generalization to the non-compact case. Finally, as personally communicated to the author by Hein-Sun, the proof of Proposition 3.2 of \cite{HeinSun} requires a little more explanation. In fact, one needs to show that on the central fiber we can assume a priori the existence of $K_1>0$ such that the $K$-inequalities hold for $s=0$, $t\in[0,1]$ with this $K_1$, and one needs to take $K$ to be bigger than this $K_1$. In the setting studied in the paper of Hein-Sun, this follows directly from the results of \cite{EyssidieuxGuedjZeriahi} since one can work on the fixed variety $X_0$. In our setting, it is necessary find out solutions to the above problems.

As in \cite{HaskinsHeinNordstrom}, near $S_s$, let $(t,\vartheta)$ be the coordinates on $[T,\infty)\times\mathbb{S}^1$ such that $f$ can be written as $e^{t+i\vartheta}$ on $V_s$ near $S_s$. Using the diffeomorphisms between fibers of $f$ near $S_s$, there exists a smooth family of local diffeomorphisms between $f^{-1}(\{|\frac{1}{f}|\le e^{-T}\})\cap Z_s$ and $\{|\frac{1}{f}|<e^{-T}\}\times S_s$. Its restriction yields a smooth family of diffeomorphisms \[\Phi_{\infty,s}:[T,\infty)\times \mathbb{S}^1\times S_s\rightarrow U_{\infty,s}=f^{-1}(\{0<|\frac{1}{f}|\le e^{-T}\})\cap Z_s\subset V_s.\] Using Yau's solution to the Calabi conjecture, it is easy to choose a holomorphic family of nowhere vanishing holomorphic $(n-1)$-forms $\Omega_{S_s}$ and a family of Ricci-flat metrics $\omega_{S_s}\in[c_1(L)|_{S_s}]$ on $S_s$ such that after replacing $\Omega_s, \Omega_{\infty,s}$ by their products with constants, \[(\omega_{\infty,s},\Omega_{\infty,s})=(dt\wedge d\vartheta+\omega_{S_s},(d\vartheta-idt)\wedge\Omega_{S_s})\] satisfy $\omega_{\infty,s}^n=i^{n^2}\Omega_{\infty,s}\wedge\bar\Omega_{\infty,s}$ and $\Phi_{\infty,s}^*\Omega_s=\Omega_{\infty,s}+d\varsigma_s$ with \[|\nabla^j_{\omega_{\infty,s}}\varsigma_s|_{\omega_{\infty,s}}\le C_j e^{-\nu_\infty t}\] for all $t>T$ and $j\in\mathbb{N}_0$, where $\nu_\infty$, $C_j$, $T$ are positive constants independent of $s$. Remark that on the cylinder $[T,\infty)\times \mathbb{S}^1\times S_s$, any exponentially decaying closed form can be written as $d$ of an exponentially decaying form.

The next goal is to construct a family of background metrics $\hat\omega_s$. Using Part 1 of Section 4.2 of \cite{HaskinsHeinNordstrom}, the pull back of Fubini-Study metric can be modified to a smooth family of metrics $\hat\omega_s$ by adding a smooth family of $i\partial\bar\partial$ exact forms supported in $U_{\infty,s}$ such that $\Phi_{\infty,s}^*\hat\omega_s=\omega_{\infty,s}+d\varrho_s$ with $|\nabla^j_{\omega_{\infty,s}}\varrho_s|_{\omega_{\infty,s}}\le C_j(e^{-\nu_\infty t})$ for all $t>T$ and $j\in\mathbb{N}_0$ after modifying the positive constants $\nu_\infty$, $C_j$ and $T$ if necessary. Moreover, $\hat\omega_s$ can be chosen to satisfy \[\int_V \frac{\hat\omega_s^n}{n!}-\frac{i^{n^2}}{2^n}\Omega_s\wedge\bar\Omega_s=0.\] Remark that the assumption that $\Omega_s$ are comparable to explicit $n$-forms $\Omega_{\epsilon_x(s)}$ near $x\in V^{\mathrm{sing}}$ means that the singularities are harmless.

By assumption, for each $x_i\in V^{\mathrm{sing}}$, there exists a local biholomorphism $\Phi_{x_i}:\mathcal{U}_{x_i}\rightarrow \mathcal{O}_{x_i}$ between a neighborhood $\mathcal{U}_{x_i}$ of $x_i$ in $\mathcal{Z}$ and a neighborhood $\mathcal{O}_{x_i}$ of the vertex $o_{x_i}$ in $\mathcal{C}_{x_i}$. Assume that $\mathcal{U}_{x_i}$ and $U_{\infty,s}$ are disjoint. As in Proposition 2.4 of \cite{ArezzoSpotti}, there exists a bounded family of functions $\psi_{1,s}$ such that $\psi_{1,s}$ is smooth outside $x_i$ and $\hat\omega_s+i\partial\bar\partial\psi_{1,s}$ equals to $\Phi_{x_i}^*\omega_{\epsilon_{x_i}(s)}$ after shrinking the neighborhood $\mathcal{U}_{x_i}$ of $x_i$. Assume that $\psi_{1,s}$ is supported in $\mathcal{U}_{x_i}$ before the shrinking of $\mathcal{U}_{x_i}$. In particular, the condition \[\int_V \frac{(\hat\omega_s+i\partial\bar\partial\psi_{1,s})^n}{n!}-\frac{i^{n^2}}{2^n}\Omega_s\wedge\bar\Omega_s=0\] is still true.

Assume that $\nu$ is small enough. Let $F_s$ be the function in $(\oplus_{i=1}^{N_2}\mathbb{R}\chi_i)\oplus C^{\infty}_{\nu,...\nu,\nu}$ satisfying \[\frac{(\hat\omega_s+i\partial\bar\partial\psi_{1,s})^n}{n!}=e^{F_s}\frac{i^{n^2}}{2^n}\Omega\wedge\bar\Omega.\] $F_s$ is the real part of a holomorphic function on $\mathcal{U}_{x_i}$. Denote $\mathcal{U}_{x_i}\cap Z_s$ by $U_{x_i,s}$. Then it is easy to find a family of functions $F_{\tau,s}$ on $V_s=Z_s\setminus S_s$ for all $\tau\in[0,1]$ continuous in $(\oplus_{i=1}^{N_2}\mathbb{R}\chi_i)\oplus C^{\infty}_{\nu,...\nu,\nu}$ topology such that $F_{\tau,s}=\tau F_s$ on $U_{x_i,s}$, $F_{1,s}=F_s$ on $x\in \mathcal{Z}$, $F_{0,s}=0$ and
\[\int_{V_s} (e^{F_{\tau,s}}-1)\frac{i^{n^2}}{2^n}\Omega_s\wedge\bar\Omega_s=0.\]

As in \cite{HeinSun} and \cite{HaskinsHeinNordstrom}, define $\mathcal{T}\subset[0,1]$ as the set of $\tau$ such that
\[\frac{(\hat\omega_0+i\partial\bar\partial\psi_{\tau,0})^n}{n!}=e^{F_{\tau,0}}\frac{i^{n^2}}{2^n}\Omega\wedge\bar\Omega\]
has a bounded and smooth solution $\psi_{\tau,0}$ on $V\setminus V^{\mathrm{sing}}$ such that $\psi_{\tau,0}\in C^{\infty}_{\nu}$ near infinity and $\hat\omega_0+i\partial\bar\partial\psi_{\tau,0}$ has conical singularity at each $x_i\in V^{\mathrm{sing}}$ with tangent cone the same as Example \ref{Nodal-singularity}. It is clear that $1\in \mathcal{T}$.

The openness of $\mathcal{T}$ can be proved using the following proposition as in \cite{HeinSun}:
\begin{proposition}
The Laplacian operator is a bijective map between the set \[W^{k,2}_{(2+\nu,...,2+\nu,\nu)}
\oplus(\oplus_{i=1}^{N_2}\chi_i(\mathcal{P}_i\oplus\mathbb{R}r_{x_i}^2\oplus\mathcal{H}_i))\]
and the set \[\{\gamma\in W^{k-2,2}_{\nu,...,\nu,\nu}\oplus(\oplus_{i=1}^{N_2}\mathbb{R}\chi_i):\int_V \gamma\omega^n=0\}.\]
\end{proposition}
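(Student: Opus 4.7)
The plan is to combine the refined Fredholm machinery of Corollary \ref{Refined-index-change-formula}, the harmonic-function structure near the nodal singularities from Proposition \ref{Calabi-Yau-cone}, and direct integration-by-parts estimates. First I will verify that $\Delta$ actually maps the source into the target $Y' = \{\gamma \in W^{k-2,2}_{(\nu,\ldots,\nu,\nu)} \oplus \bigoplus_i \mathbb{R}\chi_i:\int_V\gamma\omega^n=0\}$. The summand $\chi_i\mathcal{P}_i$ is the easy case: by Proposition \ref{Calabi-Yau-cone}(9), a pluriharmonic $p$ is harmonic with respect to both $\omega_{C_{x_i}}$ and $\omega$, so $\Delta(\chi_i p)$ is compactly supported in the cutoff region. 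For $h\in\mathcal{H}_i$, using $\Delta_{C_{x_i}}h=0$ and $\omega-\omega_{C_{x_i}}=O(r^\lambda)$ at $x_i$ gives $\Delta(\chi_i h)\in W^{k-2,2}_{(\nu,\ldots,\nu,\nu)}$ once $\nu$ is smaller than the conical rate $\lambda$. The decisive calculation concerns $\chi_i r_{x_i}^2$: since $r^2$ is a Kähler potential for $\omega_{C_{x_i}}$, one has $\Delta_{C_{x_i}}r^2=-2n$, and therefore $\Delta(\chi_i r_{x_i}^2)=-2n\chi_i+\alpha_i$ with $\alpha_i\in W^{k-2,2}_{(\nu,\ldots,\nu,\nu)}$, producing precisely the $\bigoplus_i\mathbb{R}\chi_i$ component demanded by the target. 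The integral constraint follows by Stokes on $V\setminus\bigcup B_\epsilon(x_i)\cap\{t\leq T\}$: since every summand of a source element is $O(r^{1+})$ at each $x_i$ and exponentially small at infinity, all boundary contributions vanish as $\epsilon\to 0$ and $T\to\infty$.

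Injectivity will follow from integration by parts. If $\psi\in X$ and $\Delta\psi=0$, each summand of $\psi$ vanishes to positive order at each $x_i$ and decays exponentially at infinity, which controls both $\psi$ and $d\psi$ at the boundaries of the truncation. The Bochner identity $\int|d\psi|^2\omega^n = -\int\psi\Delta\psi\,\omega^n$ with vanishing boundary contributions then forces $\psi$ to be locally constant, and the exponential decay at infinity forces $\psi\equiv 0$.

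For surjectivity, I will apply Corollary \ref{Refined-index-change-formula} with $\delta=(2+\nu,\ldots,2+\nu,\nu)$ and $\delta'=(-\nu,\ldots,-\nu,\nu)$. By Proposition \ref{Calabi-Yau-cone}(4)--(9), the critical rates of the Laplacian on the nodal cone lying in $(-\nu,2+\nu)$ are exactly $\{0\}\cup(1,2]$, corresponding to $\mathcal{K}_i(0)=\mathbb{R}$ and to the pluriharmonic plus $J(r\partial_r)$-invariant decomposition $\mathcal{P}_i\oplus\mathcal{H}_i$. The corollary identifies the kernel and cokernel of $\Delta$ enlarged by these spaces with those of $\Delta:W^{k,2}_{(-\nu,\ldots,-\nu,\nu)}\to W^{k-2,2}_{(-\nu-2,\ldots,-\nu-2,\nu)}$. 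On this base map the kernel is trivial by the integration-by-parts argument above, and the cokernel is one-dimensional, spanned by the integral functional $\gamma\mapsto\int_V\gamma\,\omega^n$. The latter claim is verified by $L^2$-duality: the dual weight, which for $n=3$ is $(-4+\nu,\ldots,-4+\nu,-\nu)$, excludes the $r^{-4}$ singular mode (so no Green-function contributions survive), and the flux balance between the cylindrical infinity and the singular points forces the linear-in-$t$ component of any harmonic element to vanish, leaving only constants.

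The main obstacle, and the heart of the proof, is to reconcile the discrepancy between the critical direction $\mathbb{R}\chi_i$ that Corollary \ref{Refined-index-change-formula} would naively include in the source and the fact that the proposition instead puts $\mathbb{R}r_{x_i}^2$ in the source and $\mathbb{R}\chi_i$ in the target. The bridge is exactly the identity $\Delta(\chi_i r_{x_i}^2)=-2n\chi_i+\alpha_i$. Given $\gamma=\gamma_0+\sum_i c_i\chi_i\in Y'$, I will set $d_i=-c_i/(2n)$ so that $\gamma-\sum_i d_i\Delta(\chi_i r_{x_i}^2)=\gamma_0-\sum_i d_i\alpha_i$ lies entirely in $W^{k-2,2}_{(\nu,\ldots,\nu,\nu)}$. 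Since $\int\alpha_i\omega^n=2n\int\chi_i\omega^n$, the integral constraint $\int\gamma\omega^n=0$ propagates to the residual. I will then invert $\Delta$ on this residual using the refined Fredholm statement to obtain $\psi_0\in W^{k,2}_{(2+\nu,\ldots,2+\nu,\nu)}\oplus\bigoplus_i\chi_i(\mathcal{P}_i\oplus\mathcal{H}_i)$, and the full preimage $\psi=\psi_0+\sum_i d_i\chi_i r_{x_i}^2\in X$ satisfies $\Delta\psi=\gamma$. Uniqueness is already furnished by the injectivity step.
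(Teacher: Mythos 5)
Your proof takes the same overall route as the paper's: compute the index using Corollary \ref{Refined-index-change-formula} and Proposition \ref{Calabi-Yau-cone}, trade $\chi_i$ for $\chi_ir_{x_i}^2$ via the observation that $\Delta(\chi_ir_{x_i}^2)$ equals a nonzero multiple of $\chi_i$ modulo $W^{k-2,2}_{\nu,\ldots,\nu}$, and obtain injectivity and the constraint $\int_V\gamma\omega^n=0$ by integration by parts. Your choice of intermediate weight $\delta'=(-\nu,\ldots,-\nu,\nu)$ instead of the paper's self-adjoint weight $(-n+1,\ldots,-n+1,\nu)$ is immaterial, since both lie in the non-critical interval $(-2n+2,0)$ at each $x_i$ and the index does not change across that gap.

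There are two concrete problems. First, a constant error: with the paper's normalization $\omega_{C_{x_i}}=\frac{i}{2}\partial\bar\partial r^2$ (see Definition \ref{Definition-strongly-regular-cone} and Example \ref{Stenzel-metric}), one has $\Delta_{C_{x_i}}r^2=-4n$, not $-2n$. The paper records this as $-\Delta(\chi_ir_{x_i}^2)-4n\chi_i\in W^{k-2,2}_{\nu,\ldots,\nu,\nu}$, consistent with its remark $-\Delta|z|^2=4$ on $\mathbb{C}$. Your $-2n$ would be correct for the convention $\omega=i\partial\bar\partial\phi$, but not here; the factor $d_i=-c_i/(2n)$ should accordingly become $-c_i/(4n)$.

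Second, and more seriously, the constructive surjectivity step is internally inconsistent. In your surjectivity paragraph you correctly note that the critical rates of the Laplacian in $(-\nu,2+\nu)$ at $x_i$ are $\{0\}\cup(1,2]$, so that Corollary \ref{Refined-index-change-formula} supplies the enlarged source $W^{k,2}_{(2+\nu,\ldots,2+\nu,\nu)}\oplus\bigoplus_i\chi_i(\mathcal{P}_i(0)\oplus\mathcal{P}_i\oplus\mathcal{H}_i)$, where $\mathcal{P}_i(0)$ corresponds to $\mathcal{K}_i(0)=\mathrm{Span}\{1\}$ and is represented by $\mathbb{R}\chi_i$. Yet in the final paragraph you assert that the preimage $\psi_0$ of the residual $\gamma_0-\sum_id_i\alpha_i$ lies in $W^{k,2}_{(2+\nu,\ldots,\nu)}\oplus\bigoplus_i\chi_i(\mathcal{P}_i\oplus\mathcal{H}_i)$, silently dropping exactly the $\mathcal{P}_i(0)$ summand you had just identified. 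The Corollary only guarantees $\psi_0$ in the full enlarged space, and there is no reason for its $\mathcal{P}_i(0)$ (i.e., $\chi_i$) component to vanish: indeed $\Delta\chi_i$ is a nonzero compactly supported element of $W^{k-2,2}_{\nu,\ldots,\nu}$ that cannot be in the image of $\Delta$ on the smaller source, since otherwise $\chi_i$ would differ from an element of that smaller source by a kernel element and the kernel is trivial. If the $\mathcal{P}_i(0)$ component of $\psi_0$ is nonzero, then $\psi=\psi_0+\sum_id_i\chi_ir_{x_i}^2$ has a $\chi_i$ summand and does not belong to the source of the proposition. To close this gap you must either show the $\mathcal{P}_i(0)$ component vanishes (which needs an argument), or settle matters abstractly as the paper does, by verifying that the operator on the proposition's stated source and target has Fredholm index $-1$, trivial kernel, and image contained in the hyperplane $\{\int_V\gamma\omega^n=0\}$, and then invoking the index count rather than constructing $\psi$ explicitly.
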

\begin{proof}
In the compact case, this proposition was proved in Hein-Sun's paper \cite{HeinSun} using a non-standard weighted norm. It is not clear how to do the analogy here. However, this proposition can be proved using Corollary \ref{Refined-index-change-formula} and Proposition \ref{Calabi-Yau-cone} as an analogy of Proposition 2.7 of \cite{HaskinsHeinNordstrom}:

Consider the Laplacian operator acting on functions from $W^{k,2}_{(-n+1,...,-n+1,\nu)}$ to $W^{k-2,2}_{(-n-1,...,-n-1,\nu)}$. Remark that the weight is non-critical by Proposition \ref{Calabi-Yau-cone} if $\nu$ is small enough. The dual operator is the Laplacian operator acting on functions from $W^{2-k,2}_{(-n+1,...,-n+1,-\nu)}$ to $W^{-k,2}_{(-n-1,...,-n-1,-\nu)}$. Thus \[i_{(-n+1,...,-n+1,\nu)}(\Delta)=-i_{(-n+1,...,-n+1,-\nu)}(\Delta).\] Therefore, by Theorem \ref{Lockhart-McOwen} and Proposition \ref{Calabi-Yau-cone}, \[i_{(-n+1,...,-n+1,\nu)}(\Delta)=-\frac{1}{2}
\sum_{-\nu<\mathrm{Re}(\lambda_\infty)<\nu}d(\lambda_\infty)=-1.\]

Using Corollary \ref{Refined-index-change-formula} and Proposition \ref{Calabi-Yau-cone}, the index of
\[\Delta:W^{k,2}_{(2+\nu,...,2+\nu,\nu)}
\oplus(\oplus_{i=1}^{N_2}\chi_i(\mathcal{P}_i\oplus\mathcal{H}_i))\rightarrow W^{k-2,2}_{\nu,...,\nu,\nu}\]
is also -1. Using the fact that $-\Delta (\chi_i r_{x_i}^2)-4n\chi_i\in W^{k-2,2}_{\nu,...\nu,\nu}$, it is easy to see that the index of
\[\Delta:W^{k,2}_{(2+\nu,...,2+\nu,\nu)}
\oplus(\oplus_{i=1}^{N_2}\chi_i(\mathcal{P}_i\oplus\mathbb{R}r_{x_i}^2\oplus\mathcal{H}_i))\rightarrow
W^{k-2,2}_{\nu,...,\nu,\nu}\oplus(\oplus_{i=1}^{N_2}\mathbb{R}\chi_i)\]
is also -1. Remark that for any function $\psi$ in the kernel of this operator, the decay condition of $\psi$ near $x_i$ and infinity insures that the boundary term in the integral $\int |\nabla \psi|^2-\psi\Delta \psi$ vanishes. This implies that $\nabla \psi=0$, so $\psi=0$ by the decay condition near infinity. Another integration by parts shows that the integral of any function in the image is 0. Now, this proposition is an immediate corollary of the fact that the index is -1.
\end{proof}

Now assume that $\{\tau_i\}\subset\mathcal{T}\rightarrow \tau_\infty$. It suffices to show that $\tau_\infty\in\mathcal{T}$. Using Theorem 4.1 of \cite{HaskinsHeinNordstrom}, for all $s\not=0$, there exists $\psi_{\tau,s}\in C^\infty_\nu$ such that
\[\frac{(\hat\omega_s+i\partial\bar\partial\psi_{\tau,s})^n}{n!}=e^{F_{\tau,s}}\frac{i^{n^2}}{2^n}\Omega_s\wedge\bar\Omega_s.\] The goal is to obtain uniform estimates on $\psi_{\tau_j,s_j}$ for a sequence $s_j\rightarrow 0$ so that the limit is expected to be $\psi_{\tau_\infty,0}$.

The starting point is the $C^0$-estimate as in Step 1 of \cite{HaskinsHeinNordstrom}. It requires an estimate using $\omega_{1,s}=\hat\omega_s+i\partial\bar\partial\psi_{1,s}$ as the background metric. Recall the definition of $t$ in Definition \ref{Definition-weighted-Sobelov}. Using this notation, the required estimate can be stated as the following:
\begin{proposition}
For all small enough $s\not=0$ and $\mu>0$, there exists a family of piecewise constant positive functions $\xi_{\mu,s}$ on $V_s$ with $C_{\mu}^{-1}e^{-2\mu t}\le\xi_{\mu,s}\le C_\mu e^{-2\mu t}$ and $\int_{V_s}\xi_{\mu,s}\omega_{1,s}^n=1$ such that
\[||e^{-\mu t}(u-\bar u_{\mu})||_{L^{2\sigma}(\omega_{1,s})}\le C_{\mu,\sigma}||\nabla^{\omega_{1,s}} u||_{L^2(\omega_{1,s})}\]
for all $\sigma\in[1,\frac{n}{n-2}]$ and all $u\in C^\infty_0(V_s)$, where $\bar u_\mu=\int_{V_s}u\xi_{\mu,s}\omega_{1,s}^n$.
\end{proposition}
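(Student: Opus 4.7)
The strategy is a partition-of-unity argument with $s$-uniform local weighted Sobolev inequalities. First I would fix $\xi_{\mu,s}$. Decompose $V_s$ into three pieces: the approximately-Stenzel regions $U_{x_i,s}=\mathcal{U}_{x_i}\cap Z_s$ near each forming node, the asymptotic cylinder $U_{\infty,s}\cong[T,\infty)\times\mathbb{S}^1\times S_s$, and the compact middle. Let $\xi_{\mu,s}$ be piecewise constant on unit-length slabs $\{k\le t\le k+1\}$ on the end, equal to $c_\mu e^{-2\mu k}$ there and to $c_\mu$ elsewhere (recall that $t$ is extended to vanish off the end). Choose $c_\mu$ so that $\int_{V_s}\xi_{\mu,s}\,\omega_{1,s}^n=1$; this $c_\mu$ is uniformly bounded above and below in $s$ because $\omega_{1,s}$ equals the Stenzel metric $\omega_{\epsilon_{x_i}(s)}$ near $x_i$ (which has finite volume on bounded sets as $n\ge 3$), is close to the product metric at infinity, and converges smoothly on the middle.

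Next I would establish, with $s$-uniform constants, a local weighted Sobolev inequality of the form
\[\|e^{-\mu t}w\|_{L^{2\sigma}(\omega_{1,s})}\le C\bigl(\|\nabla w\|_{L^2(\omega_{1,s})}+\|e^{-\mu t}w\|_{L^2(\omega_{1,s})}\bigr)\]
on each of the three regions. On $U_{x_i,s}$ I pull back by $\Phi_{x_i}$ to $(C_{x_i,\epsilon_{x_i}(s)},\omega_{\epsilon_{x_i}(s)})$; these Stenzel spaces are all mutually isometric after rescaling by $|\epsilon_{x_i}(s)|^{-(n-1)/(2n)}$, so they share a single Sobolev constant. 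On the middle, $\omega_{1,s}$ lies in a smooth compact family and the classical Sobolev inequality is uniform. On the end, with the model product metric, I would Fourier-decompose along $\mathbb{S}^1\times S_s$: the non-zero spectral modes satisfy a pure Poincar\'e inequality from the spectral gap of $\omega_{S_s}$ (uniform because $\omega_{S_s}$ depends smoothly on $s$), and the zero mode reduces to a scalar function on $[T,\infty)$ for which the estimate is the elementary one-dimensional weighted Sobolev inequality.

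Then I would combine the local estimates with the global weighted mean. On each region a weighted Poincar\'e inequality controls $u$ up to a local average: one on each Stenzel region, one on the middle, one on each slab of the cylinder. On a fixed overlap annulus between adjacent regions, bounded geometry forces adjacent averages to agree up to $C\|\nabla u\|_{L^2(\omega_{1,s})}$ by integrating $|\nabla u|$ along a fixed family of curves and applying Cauchy--Schwarz. Summing slab-to-slab comparisons on the cylinder, weighted by $e^{-\mu k}$, telescopes to show that every local average lies within $C_\mu\|\nabla u\|_{L^2(\omega_{1,s})}$ of the global weighted average $\bar u_\mu$. Finally, setting $w=e^{-\mu t}(u-\bar u_\mu)$ and applying the local Sobolev inequalities to $w$, and using $|\nabla w|\le\mu e^{-\mu t}|u-\bar u_\mu|+e^{-\mu t}|\nabla u|$, the $\|e^{-\mu t}w\|_{L^2}$ term is absorbed by the weighted Poincar\'e estimate just proved.

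I expect the main obstacle to be uniformity in $s$ near the forming nodes: a naive cutoff at a fixed scale produces an $s$-dependent loss as $|\epsilon_{x_i}(s)|\to 0$, because the Stenzel neck shrinks. The scaling argument above is designed precisely to avoid this, but a subtle remaining point is to check that the comparison between the Stenzel-region local average and the middle-region local average survives the rescaling; this should reduce to smooth convergence of $(V_s\setminus\bigcup B_\delta(x_i),\omega_{1,s})$ to $(V_0\setminus V^{\mathrm{sing}},\omega_{1,0})$ on compact subsets as $s\to 0$, combined with a Harnack-type comparison on a fixed annulus separating the Stenzel region from the middle.
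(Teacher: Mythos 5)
Your proposal is correct and follows essentially the same three-region decomposition and gluing strategy that the paper uses, but there is one genuinely different and slightly over-engineered step worth flagging. For the Stenzel regions $U_{x_i,s}$, the paper's argument is shorter and bypasses the rescaling subtlety you worried about at the end: since $\omega_{1,s}=\hat\omega_s+i\partial\bar\partial\psi_{1,s}$ is Ricci-flat on $\mathcal{U}_{x_i}$ and the diameter and volume of $U_{x_i,s}$ have two-sided bounds uniformly in $s$ (the Stenzel metrics $\omega_{\epsilon_{x_i}(s)}$ converge to the cone metric $\omega_0$ on bounded sets, so a fixed coordinate neighborhood has comparable diameter and volume for all small $s$), a uniform Sobolev constant follows directly from comparison geometry (Ricci lower bound plus volume lower bound plus diameter upper bound). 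This makes your rescaling argument unnecessary; it is not wrong, but to close it you would still need the global Sobolev inequality on the asymptotically conical Stenzel space $(C_1,\omega_1)$ (which holds, since it is Ricci-flat with Euclidean volume growth), and it is a longer route to the same place. For the cylindrical end and the slab-to-slab gluing, the paper simply invokes Proposition 4.21 of Haskins--Hein--Nordstr\"om; your Fourier decomposition into zero and nonzero $\mathbb{S}^1\times S_s$-modes plus telescoping slab averages is exactly the mechanism behind that proposition, so you have effectively unpacked the citation. The one point I would make sharper in your telescoping step: when you sum the slab-average differences weighted by $e^{-\mu k}$, you should apply Cauchy--Schwarz so that the cumulative drift from slab $0$ to slab $k$ is bounded by $C\sqrt{k}\,\|\nabla u\|_{L^2}$ rather than $Ck\,\|\nabla u\|_{L^2}$, but either bound is killed by the exponential weight, so the conclusion survives.
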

\begin{proof}
Recall that $\hat\omega_s+i\partial\bar\partial\psi_{1,s}$ is Ricci flat in $\mathcal{U}_{x_i}$. It is clear that its diameter and volume on $U_{x_i,s}$ have two-sided bounds for small enough $s$. In particular, it has a Sobolev bound on $U_{x_i,s}$ uniform in $s$. On the region $(Z_s\setminus U_{\infty,s})\setminus \cup_{i=1}^{N_2}U_{x_i,s}$, the metrics are smooth, so its diameter and volume also have two-sided bounds and moreover the Sobolev bound is also uniform in $s$ for small enough $s$. Now the proposition follows as the proof of Proposition 4.21 of \cite{HaskinsHeinNordstrom}.
\end{proof}

Using $\omega_{1,s}$ as the background metric, Step 1 of the proof of Theorem 4.1 of \cite{HaskinsHeinNordstrom} can be applied without change. In particular, the potential $\psi_{\tau,s}-\psi_{1,s}$ has a $C^0$-bound uniform in $\tau$ and $s$ for $\tau\in[0,1]$ and small non-zero $s$. However, recall that $\psi_{1,s}$ already has a uniform $C^0$-bound. Therefore, the potential $\psi_{\tau,s}$ also has a uniform $C^0$-bound. The next goal is the proof of the following $C^2$-bound for the metric $\omega_{\tau,s}=\hat\omega_s+i\partial\bar\partial\psi_{\tau,s}$:
\begin{proposition}
For all $\tau\in[0,1]$, all small enough non-zero $s$, there exists a constant $C$ independent of $\tau$ and $s$ such that $C^{-1}\hat\omega_s\le\omega_{\tau,s}$ on $V_s$. Moreover, for any closed subset $\mathcal{K}$ of $\mathcal{V}\setminus V^{\mathrm{sing}}$, there exists a constant $C_\mathcal{K}$ only depending on $\mathcal{K}$ such that in addition $\omega_{\tau,s}\le C_\mathcal{K}\hat\omega_s$ on $K_s=\mathcal{K}\cap V_s$.
\end{proposition}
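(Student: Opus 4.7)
The plan is to establish the stronger two-sided bound $C^{-1}\omega_{1,s}\le \omega_{\tau,s}\le C\omega_{1,s}$ uniformly in $\tau\in[0,1]$ and small nonzero $s$, via the classical Chern--Lu / Aubin--Yau maximum principle with $\omega_{1,s}$ (rather than $\hat\omega_s$) as background, and then to translate this into the stated inequalities using $\omega_{1,s}=\hat\omega_s+i\partial\bar\partial\psi_{1,s}$. The choice of background is crucial: $\omega_{1,s}$ has uniform geometry in $s$ (Ricci-flat Stenzel on $\mathcal{U}_{x_i}$, exponentially close to the product on the cylindrical end, smooth on the compact middle), while the pullback Fubini--Study $\hat\omega_s$ has intrinsic curvature blowing up along the shrinking vanishing cycle as $s\to 0$.

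Setting $H=\operatorname{tr}_{\omega_{1,s}}\omega_{\tau,s}$ and $\phi=\psi_{\tau,s}-\psi_{1,s}$, I would combine the Chern--Lu inequality
\[\Delta_{\omega_{\tau,s}}\log H\ge -K\operatorname{tr}_{\omega_{\tau,s}}\omega_{1,s}-C_0,\]
where $-K$ is a uniform lower bisectional-curvature bound on $\omega_{1,s}$, with $\Delta_{\omega_{\tau,s}}\phi=n-\operatorname{tr}_{\omega_{\tau,s}}\omega_{1,s}$. For $u=\log H-(K+1)\phi$ this yields
\[\Delta_{\omega_{\tau,s}}u\ge\operatorname{tr}_{\omega_{\tau,s}}\omega_{1,s}-C_1.\]
On the cylindrical end both $H\to n$ and $\phi\to 0$ exponentially (since $\psi_{\tau,s},\psi_{1,s}\in C^\infty_\nu$ there), so the supremum of $u$ is attained in a bounded region; for $s\ne 0$ small, the remainder of $V_s$ is compact, so a maximum is achieved at an interior point. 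There $\operatorname{tr}_{\omega_{\tau,s}}\omega_{1,s}\le C_1$, and combined with the uniformly bounded volume-form ratio $\omega_{\tau,s}^n/\omega_{1,s}^n$ (from the Monge--Amp\`ere equation for $\psi_{\tau,s}$ divided by that for $\psi_{1,s}$) and the arithmetic--geometric mean inequality, this produces a uniform upper bound on $H$, hence $C^{-1}\omega_{1,s}\le\omega_{\tau,s}\le C\omega_{1,s}$ on all of $V_s$.

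To deduce the stated conclusions, observe that $\psi_{1,s}$ is supported in $\mathcal{U}_{x_i}$, so $\omega_{1,s}=\hat\omega_s$ on $V_s\setminus\bigcup_i\mathcal{U}_{x_i}$. For any closed $\mathcal{K}\subset\mathcal{V}\setminus V^{\mathrm{sing}}$ one can shrink $\mathcal{U}_{x_i}$ so that $\mathcal{K}\cap\mathcal{U}_{x_i}=\emptyset$, giving $\omega_{\tau,s}\le C\omega_{1,s}=C\hat\omega_s$ on $K_s$, which is the required $C_{\mathcal{K}}$-bound. For the global lower bound, an explicit comparison using the Stenzel potential shows $\omega_{1,s}\ge c\hat\omega_s$ uniformly in $s$ on $\mathcal{U}_{x_i}$: near the vanishing cycle the Stenzel Hessian carries a factor $|\epsilon_{x_i}(s)|^{-1/n}$ relative to Euclidean, and away from it the cone model $(|z|^2)^{-(n-1)/n}i\partial\bar\partial|z|^2$ dominates the ambient Euclidean metric, so in both regimes $\omega_{1,s}$ dominates the restriction of Fubini--Study. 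Hence $\omega_{\tau,s}\ge C^{-1}\omega_{1,s}\ge C^{-1}c\hat\omega_s$ on all of $V_s$.

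The main obstacle is the uniform bisectional-curvature bound for $\omega_{1,s}$ near each node, which has to hold as $s\to 0$. It comes from the exact scale symmetry $z\mapsto\lambda z$ sending $C_{\epsilon_{x_i}(s)}$ to $C_{\lambda^2\epsilon_{x_i}(s)}$ and mapping the Stenzel metric to itself up to the homogeneity factor $|\lambda|^{2(n-1)/n}$; since bisectional curvature scales as the inverse of the metric, uniform bounds on a fixed reference $C_1$ transfer to each $C_{\epsilon_{x_i}(s)}$. A related subtlety is ensuring the maximum of $u$ is not lost to the shrinking vanishing cycle as $s\to 0$; this is handled by the $C^0$-bound on $\phi$ from the preceding proposition together with the uniform boundedness of $H$ in the Stenzel region, which in turn follows from the Monge--Amp\`ere equation and the comparable volume forms of $\omega_{\tau,s}$ and $\omega_{1,s}$ there.
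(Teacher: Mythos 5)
Your proof is a correct outline of the classical Aubin--Yau/Chern--Lu scheme, but the key step you yourself flag as the ``main obstacle'' --- a uniform bisectional-curvature bound for the background $\omega_{1,s}$ near the nodes --- is false, and the scaling argument you invoke in fact proves the opposite. The dilation $z\mapsto\lambda z$ pulls the Stenzel metric on $C_{\lambda^2\epsilon}$ back to $|\lambda|^{2(n-1)/n}$ times that on $C_\epsilon$; since scaling a metric by $c^2$ divides the curvature norm by $c^2$, taking $\lambda=\epsilon^{-1/2}$ gives
\[|\mathrm{Rm}(\omega_{C_\epsilon})|_{\omega_{C_\epsilon}}(p)=|\epsilon|^{-(n-1)/n}\,|\mathrm{Rm}(\omega_{C_1})|_{\omega_{C_1}}(\epsilon^{-1/2}p),\]
which blows up as $\epsilon\to 0$. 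The same blowup is already present for $s=0$, where $\omega_{1,0}$ is a genuine metric cone with curvature $\sim r^{-2}$. Since the Stenzel/cone metrics are Ricci-flat and non-flat, both the upper and lower bisectional-curvature bounds diverge, so the Chern--Lu inequality with $\omega_{1,s}$ as background gives no uniform constant $K$ and the maximum principle cannot be closed near the vanishing cycles.

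The paper handles this by a different, and essential, device: on the compact middle region $\hat\omega_s$ is the restriction of the Fubini--Study metric, so instead of treating the identity as a map into $(V_s,\hat\omega_s)$ one treats the composition with the projective embedding as a holomorphic map from $(V_s,\omega_{\tau,s})$ into $(\mathbb{CP}^{N_1},\omega_{FS})$ and applies the Eells--Sampson/Chern--Lu Bochner formula to \emph{that} map. The only target-curvature input is then $|\mathrm{Rm}_{\omega_{FS}}|$, which is bounded independently of how the embedded variety degenerates, while the source-Ricci term is controlled by the bounded functions $F_{\tau,s}$; this is the Rong--Zhang trick the paper cites. Note also that this estimates $\mathrm{tr}_{\omega_{\tau,s}}\hat\omega_s$ (not $\mathrm{tr}_{\omega_{1,s}}\omega_{\tau,s}$), i.e.\ the direction $\hat\omega_s\le C\omega_{\tau,s}$, which is why the proposition asserts the reverse bound $\omega_{\tau,s}\le C_{\mathcal{K}}\hat\omega_s$ only on compact subsets away from the nodes, where the volume ratio and linear algebra upgrade the trace bound. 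Your observation that $\omega_{1,s}$ dominates $\hat\omega_s$ near the nodes is correct and explains the asymmetry in the statement, but it does not rescue the curvature bound on which your maximum principle depends.
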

\begin{proof}
Viewing the identity map as a harmonic from $(V_s,\omega_{\tau,s})$ to $(V_s,\hat\omega_s)$, the Eells-Sampson's Bochner type formula ( Equation (16) of \cite{EellsSampson}, see also Equation (3) of \cite{Chern} and Thereom 4.1 of \cite{Lu}) implies that
\[-\Delta_{\omega_{\tau,s}}\log \mathrm{tr}_{\omega_{\tau,s}}\hat\omega_s \ge C(-|\mathrm{Ric}_{\omega_{\tau,s}}|_{\omega_{\tau,s}} -|\mathrm{Rm}_{\hat\omega_s}|_{\hat\omega_s}\mathrm{tr}_{\omega_{\tau,s}}\hat\omega_s).\]
Remark that $-\Delta|z|^2=4$ on $\mathbb{C}$ using the Laplacian operator $\Delta=d^*d+dd^*$ acting on 0-forms.
The Riemannian curvature of $\hat\omega_s$ is uniformly bounded on $U_{\infty,s}$. The Ricci curvature of $\omega_{\tau,s}$ is bounded by $C\mathrm{tr}_{\omega_{\tau,s}}\hat\omega_s$ because the Ricci form $i\partial\bar\partial\log F_{\tau,s}$ is bounded using $\hat\omega_s$-norm. Therefore, \[-\Delta_{\omega_{\tau,s}}\log \mathrm{tr}_{\omega_{\tau,s}}\hat\omega_s \ge -C\mathrm{tr}_{\omega_{\tau,s}}\hat\omega_s\] on $U_{\infty,s}$.

On the compact part $V_s\setminus U_{\infty,s}$, the curvature of $\hat\omega_s$ is no longer bounded. However, recall that $\hat\omega_s$ equals to the pull back of the Fubini-Study metric in this region. Thus, as in the proof of Lemma 3.2 of \cite{RongZhang}, embed $V_s$ into $\mathbb{CP}^{N_1}$ and view the composition of the embedding with the identity map as a harmonic map from $(V_s,\omega_{\tau,s})$ to $(\mathbb{CP}^{N_1},\omega_{FS})$. In this case, the Eells-Sampson's Bochner type formula implies that
\[-\Delta_{\omega_{\tau,s}}\log \mathrm{tr}_{\omega_{\tau,s}}\hat\omega_s \ge C(-|\mathrm{Ric}_{\omega_{\tau,s}}|_{\omega_{\tau,s}} -|\mathrm{Rm}_{\omega_{FS}}|_{\omega_{FS}}\mathrm{tr}_{\omega_{\tau,s}}\hat\omega_s).\]
The Riemannian curvature of the Fubini-Study metric is indeed bounded. The Ricci curvature of $\omega_{\tau,s}$ is also bounded by $C\mathrm{tr}_{\omega_{\tau,s}}\hat\omega_s$. Thus \[-\Delta_{\omega_{\tau,s}}\log \mathrm{tr}_{\omega_{\tau,s}}\hat\omega_s \ge -C\mathrm{tr}_{\omega_{\tau,s}}\hat\omega_s\] is true on $V_s$.

As in the proof of Lemma 3.2 of \cite{RongZhang}, using the formula \[-\Delta_{\omega_{\tau,s}}\psi_{\tau,s}=2n-2\mathrm{tr}_{\omega_{\tau,s}}\hat\omega_s,\] it is easy to see that
\[-\Delta_{\omega_{\tau,s}}(\log \mathrm{tr}_{\omega_{\tau,s}}\hat\omega_s-C\psi_{\tau,s}) \ge C\mathrm{tr}_{\omega_{\tau,s}}\hat\omega_s-2Cn.\]
Since $\omega_{\tau,s}$ and $\hat\omega_s$ are asymptotically cylindrical, for a large enough number $T_{\tau,s}$ depending on $\tau$ and $s$, for all $t\ge T_{\tau,s}$, $\log \mathrm{tr}_{\omega_{\tau,s}}\hat\omega_{\tau,s}-C\psi_{\tau,s}\le\log(2n)$. So either $\log \mathrm{tr}_{\omega_{\tau,s}}\hat\omega_s-C\psi_{\tau,s}$ is bounded above by $\log(3n)$ or it attains its maximum at a point on $V_s$. In either cases, there is a uniform upper bound of $\mathrm{tr}_{\omega_{\tau,s}}\hat\omega_s$ independent of $\tau$ and $s$ using the $C^0$-bound of $\psi_{\tau,s}$. The lower bound comes from the upper bound of $\mathrm{tr}_{\omega_{\tau,s}}\hat\omega_s$ and the bound of $F_{\tau,s}$ on $K_s$.
\end{proof}

Let $\mathcal{K}$ be the set $\{t\ge1\}$, then there a $C^2$-bound on $\mathcal{K}$. The uniform $C^{2,\alpha}$-estimate independent of $\tau$, $s$ and $T\ge1$ on $\{T\le t\le T+1\}$ for real Monge-Amp\'ere equation was done by Evans-Krylov-Trudinger. See Section 17.4 of \cite{GilbargTrudinger} for details. In complex case, the arguments in Section 17.4 of \cite{GilbargTrudinger} still work. An alternative way to achieve the $C^{2,\alpha}$-estimate on $\{T\le t\le T+1\}$ was done by Theorem 1.5 of \cite{ChenWang} using the rescaling argument. Now it is standard to get a $C^\infty$-bound of $\psi_{\tau,s}$ on $\{T\le t\le T+1\}$ independent of $\tau$, $s$ and $T\ge 1$ through Schauder estimates. Using Step 3 and Step 4 of proof of Theorem 4.1 of \cite{HaskinsHeinNordstrom}, there is a $C^\infty_\nu$-bound of $\psi_{\tau,s}$ on $\{t\ge 1\}$. The same argument implies the $C^\infty$-estimate on $\mathcal{K}$ with bound depending on $\mathcal{K}$ but independent of $\tau$ and $s$ for all compact subset $\mathcal{K}$ of $\mathcal{V}\setminus V^{\mathrm{sing}}$.

Recall that there exists a smooth family of diffeomorphisms between $S_0$ and $S_s$. Its product with $[T,\infty)\times\mathbb{S}^1$ is a smooth family of diffeomorphisms from $\{t\ge T\}\cap V_0$ to $\{t\ge T\}\cap V_s$ such that it is the identity map when $s=0$. For any closed subset $K$ of $V_0\setminus V^{\mathrm{sing}}$, there exists a smooth family of embeddings $\hat\Phi_{K,s}:K\rightarrow V_s$ such that it is the identity map when $s=0$ and is the given diffeomorphism when restricted to $t\ge T$. Using $\hat\Phi_{K,s}$, it is possible to talk about the $C^\infty_{\mathrm{loc}}$ convergence as the following:

\begin{proposition}
For all $\tau_i\rightarrow\tau_\infty$ and $s_i\rightarrow 0$, there exists a subsequence $\tau_{i_k}$ and $s_{i_k}$ and a metric $\tilde\omega=\hat\omega_0+i\partial\bar\partial\tilde\psi$ on $V_0\setminus V^{\mathrm{sing}}$ such that for any closed subset $K$ of $V_0\setminus V^{\mathrm{sing}}$, $\hat\Phi_{K,s}^*\psi_{\tau_{i_k},s_{i_k}}\rightarrow\tilde\psi$ in $C^\infty_\nu$-sense on $K$. Moreover, \[\frac{\tilde\omega^n}{n!}=e^{F_{\tau_\infty,0}}\frac{i^{n^2}}{2^n}\Omega_0\wedge\bar\Omega_0\] in weak sense.
\end{proposition}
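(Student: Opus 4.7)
The plan is to combine the uniform a priori estimates established in the preceding paragraphs with a standard compactness and diagonal-subsequence argument. First, I would exhaust $V_0\setminus V^{\mathrm{sing}}$ by an increasing sequence of closed subsets $\mathcal{K}_j=\{r_{x_i}\ge \tfrac{1}{j}\text{ for each } i\}\cap\{t\le j\}$. On each $\mathcal{K}_j$ the $C^\infty$-estimate established above, which is independent of $\tau$ and $s$, gives a uniform $C^\infty$-bound on $\hat\Phi_{\mathcal{K}_j,s_i}^*\psi_{\tau_i,s_i}$. Arzel\`a-Ascoli together with a diagonal subsequence then yields $(\tau_{i_k},s_{i_k})$ such that these pullbacks converge in $C^\infty$ on every $\mathcal{K}_j$ to a smooth function $\tilde\psi$ on $V_0\setminus V^{\mathrm{sing}}$, bounded by the uniform $C^0$-bound on $\psi_{\tau,s}$. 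Setting $\tilde\omega=\hat\omega_0+i\partial\bar\partial\tilde\psi$ then defines the desired metric on the smooth locus.

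To upgrade local smooth convergence to $C^\infty_\nu$-convergence on closed subsets $K\subset V_0\setminus V^{\mathrm{sing}}$, I would appeal to the uniform $C^\infty_\nu$-bound on $\{t\ge 1\}$ produced in the preceding paragraph: a bounded sequence in $C^\infty_\nu$ that converges in $C^\infty_{\mathrm{loc}}$ admits a subsequence converging in $C^\infty_{\nu'}$ for any $\nu'\in(0,\nu)$, so by arranging the earlier a priori estimates with a decay rate slightly larger than the one stated in the proposition, the convergence holds with the stated $\nu$. On the singular side, the weights $\nu$ near each $x_i$ are vacuous because $K$ is by hypothesis disjoint from $V^{\mathrm{sing}}$.

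For the Monge-Amp\`ere identity, by construction $F_{\tau_i,s_i}\to F_{\tau_\infty,0}$ in the topology of $(\oplus_{i=1}^{N_2}\mathbb{R}\chi_i)\oplus C^\infty_{(\nu,\dots,\nu,\nu)}$, and both $\hat\omega_{s_i}$ and $\Omega_{s_i}$ converge smoothly on closed subsets of $V_0\setminus V^{\mathrm{sing}}$ via $\hat\Phi_{\mathcal{K},s}$. Passing to the limit in the Monge-Amp\`ere equation satisfied by $\psi_{\tau_{i_k},s_{i_k}}$ on $V_{s_{i_k}}$ therefore produces the desired identity classically, and hence weakly, on $V_0\setminus V^{\mathrm{sing}}$. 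The hard part will be to extend this identity weakly across the isolated conical singularities: since $\tilde\psi$ need not be smooth at the points of $V^{\mathrm{sing}}$ one cannot differentiate classically there, but because $\tilde\psi$ is globally bounded and $V^{\mathrm{sing}}$ is a finite set, Bedford-Taylor theory guarantees that $(\hat\omega_0+i\partial\bar\partial\tilde\psi)^n$ is a well-defined positive Borel measure that does not charge $V^{\mathrm{sing}}$, and the right-hand side $e^{F_{\tau_\infty,0}}\tfrac{i^{n^2}}{2^n}\Omega_0\wedge\bar\Omega_0$ extends as a measure with the same property; testing against a smooth cutoff supported away from $V^{\mathrm{sing}}$ and sending the cutoff to $1$ then promotes the identity from $V_0\setminus V^{\mathrm{sing}}$ to a weak identity of measures on $V_0$.
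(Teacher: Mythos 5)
Your convergence argument is essentially the paper's: the uniform $C^\infty$-estimates on closed subsets away from $V^{\mathrm{sing}}$ and the weighted estimates on the end, together with Arzel\`a--Ascoli and a diagonal subsequence, give $C^\infty_{\mathrm{loc}}$ and then $C^\infty_\nu$ convergence, and passing to the limit yields the complex Monge--Amp\`ere equation classically on $V_0\setminus V^{\mathrm{sing}}$.

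Where you diverge is in what ``in weak sense'' means and how to establish it. The remark immediately following the proposition defines a weak solution on a singular variety as one whose \emph{pullback to a resolution} of $V_0$ satisfies the Monge--Amp\`ere equation in the Bedford--Taylor sense on a smooth complex manifold. Your argument instead tries to run Bedford--Taylor directly on $V_0$: you assert that $(\hat\omega_0+i\partial\bar\partial\tilde\psi)^n$ is a well-defined Borel measure that does not charge $V^{\mathrm{sing}}$ and conclude by sending a cutoff to $1$. This has a genuine gap: $V_0$ is not a smooth complex manifold near $V^{\mathrm{sing}}$ (near each $x_i$ it is the affine quadric cone $C_{x_i}$), so there is no ambient smooth chart across $x_i$ in which to extend $\tilde\psi$ as a bounded plurisubharmonic function and form the Monge--Amp\`ere current; the classical Bedford--Taylor machinery you invoke is not directly available across the singular point. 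You either need the pluripotential theory on singular analytic spaces, which you neither cite nor reproduce, or you need to change the ambient space.

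The paper's route fixes precisely this: one pulls the data back to a (small) resolution $\tilde V_0\to V_0$, so that the exceptional set is an analytic subvariety of positive codimension inside a smooth complex manifold. There, the pullback of $\tilde\psi$ is locally $i\partial\bar\partial$ of a bounded plurisubharmonic function off the exceptional divisor; by Section~5 of Demailly this extends as a bounded plurisubharmonic function across the (pluripolar) exceptional divisor, and by Proposition~4.6.4 of Klimek the Monge--Amp\`ere mass of this extension does not charge the exceptional divisor. That gives the weak identity on the resolution, which is exactly the paper's definition of a weak solution. If you adjust your last paragraph to work on the resolution rather than on $V_0$ itself, your argument aligns with the paper's; as written, the final step does not meet the stated definition of ``weak sense'' and invokes Bedford--Taylor theory in a setting where it does not directly apply.
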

\begin{remark}
For bounded plurisubharmonic functions on a smooth manifold, the Monge-Amp\'ere equation makes sense in the weak sense. A weak solution on a singular manifold is defined as the solution whose pull back to the resolution of the singular manifold satisfies the Monge-Amp\'ere equation in weak sense.
\end{remark}
\begin{proof}
For any $T_0\ge T$, using the diagonal argument and the uniform $C^{\infty}$-bound on $\{T_0\le t\le T_0+1\}$, it is easy to find a subsequence $C^{\infty}$-converging on $\{T_0\le t\le T_0+1\}$. The limit belongs to $C^\infty_{\nu}$ and satisfies the equation. The convergence in $C^\infty_{\nu}$-norm follows from the weighted analysis on the cylinder. The pre-compactness on $K\cap\{t\le T\}$ follows from the $C^\infty_{\mathrm{loc}}$-estimate of $\psi_{\tau,s}$ as in Theorem 1.4 of \cite{RongZhang}.

The limit $\tilde\omega$ satisfies the equation locally on $V\setminus V^{\mathrm{sing}}$. In \cite{HeinSun} and \cite{RongZhang}, they claim that the compact analogy is a weak solution without proof. In an email from Hein-Sun to the author, they provided the following explaination:

$\tilde\omega$ can be pulled back to the resolution of $V$ so that the equation is satisfied locally on the resolution except on the exceptional divisor. However, the pull back of $\tilde\omega$ can be written as $i\partial\bar\partial$ of a bounded plurisubharmonic function locally except on the exceptional divisor. By Section 5 of \cite{Demailly}, the bounded plurisubharmonic function can be extended to the resolution. By Prop 4.6.4 of \cite{Klimek}, the extension satisfies the Monge-Amp\'ere equation in the weak sense because the Monge-Amp\'ere mass on the exceptional divisor vanishes.
\end{proof}

On the other hand, it is also interesting to consider the Gromov-Hausdorff limit of a subsequence of $\omega_{\tau_{i_k},s_{i_k}}$. To get started, using the $C^\infty_{\mathrm{loc}}$-convergence of the metric outside the singularity, there exists $d>0$ such that for any $x\in V^{\mathrm{sing}}$ and any point $q\in U_{x,s_{i_k}}$ with $\mathrm{dist}_{\omega_{\tau_{i_k},s_{i_k}}}(q,\partial U_{x,s_{i_k}})=d$, the volume of the ball $B_{\omega_{\tau_i,s_i}}(q,d)$ has a positive lower bound. Therefore, for any $x\in V^{\mathrm{sing}}$ and any point $p\in U_{x,s_{i_k}}$, let $D$ be the distance of $p$ to $\partial U_{x,s_{i_k}}$ using $\omega_{\tau_{i_k},s_{i_k}}$. Then if $D>3d$, choose $q$ as the point on the minimal geodesic of length $D$ joining $p$ and $\partial U_{x,s_{i_k}}$ such that the distance from $p$ to $q$ is $D-d$. Therefore, using the fact that $\omega_{\tau_{i_k},s_{i_k}}$ is Ricci flat on $U_{x,s_{i_k}}$, the volume comparison implies that using $\omega_{\tau_{i_k},s_{i_k}}$,
\[\mathrm{Vol}(B(p,D-2d))\ge\frac{\mathrm{Vol}(B(p,D)\setminus B(p,D-2d))}{(\frac{D}{D-2d})^{2n}-1}\ge\frac{CD\mathrm{Vol}(B(q,\epsilon))}{d}.\]
Since the volume of $U_{x,s_{i_k}}$ is bounded, $D$ is also bounded. Using the standard $\epsilon$-net argument, it is easy to see that $(V_{s_{i_k}},\omega_{\tau_{i_k},s_{i_k}})$ has a Gromov-Hausdorff limit $(X,d_X)$ with Gromov-Hausdorff approximation equalling to $\hat\Phi_{K,s_{i_k}}$ when restricted to the set $K=V_0\setminus\cup_{x\in V^{\mathrm{sing}}}U_{x,0}$. Remark that the space $(X,d_X)$ is non-compact and therefore the result of Donaldson-Sun can not be applied directly. In order to solve this problem, the Gromov-Hausdorff limit of the metrics defined by the (1,1)-forms $(\pi_1\circ i)^*\omega_{FS}+i\partial\bar\partial\psi_{\tau_{i_k},s_{i_k}}$ is considered instead. This (1,1)-form may not be positive. However, by checking the difference between $(\pi_1\circ i)^*\omega_{FS}$ and $\hat\omega_{s_{i_k}}$ carefully, there exists a bump (1,1)-form $\beta$ on $\mathbb{CP}^1$ such that for large enough number $N_3$, $(\pi_1\circ i)^*\omega_{FS}+i\partial\bar\partial\psi_{\tau_{i_k},s_{i_k}}+N_3 f^*\beta$ is positive. Without loss of generality assume that $N_3\beta=N_4 [c_1(\mathcal{O}(1))]$ for an integer $N_4$. Thus, after modifying the metric $\omega_{\tau_{i_k},s_{i_k}}$ near infinity, there exists a compact metric on $Z_s$ in the cohomology class $c_1((\pi_1\circ i)^*(\mathcal{O}(1))\otimes(f^*\mathcal{O}(1))^{\otimes N_4})$ such that the diameter, the volume and the Ricci curvature have two-sided bounds. The Gromov-Hausdorff limit is isometric to $(X,d_X)$ except on the end $U_{\infty,s_{i_k}}$. Therefore, even though $(X,d_X)$ is non-compact, Cheeger-Colding theory \cite{CheegerColding1, CheegerColding2, CheegerColding3} and Donaldson-Sun theory \cite{DonaldsonSun1, DonaldsonSun2} can still be used.

As in \cite{RongZhang}, $(X,d_X)$ is isometric to the completion of the metric $\tilde\omega$. In particular, when fixing $\tau_i=\tau_\infty\in\mathcal{T}$, using the uniqueness of weak solutions, $d_X$ is isometric to $\omega_{\tau_i,0}$. Now assume that instead $\tau_i\rightarrow\tau_\infty\in\mathcal{T}$, then by choosing small enough $s_i$, $(X,d_X)$ is also the Gromov-Hausdorff limit of $\omega_{\tau_i,0}$. The rest parts of \cite{HeinSun} can be applied without change.

\section{The matching problem}

This section solves the matching problem. The starting point is the review of the matching problem of smooth manifolds in \cite{CortiHaskinsNordstromPacini2} using a particular example:
\begin{example}
(Example 7.3 of \cite{CortiHaskinsNordstromPacini1})

Fix a 2-plane $\Pi\subset\mathbb{CP}^4$. Let $X_+\subset\mathbb{CP}^4$ be a general quartic 3-fold containing $\Pi$. It has 9 nodal singularities. The blowing up of $X_+$ over $\Pi$ yields a non-singular 3-fold $Y_+\rightarrow X_+$ with nine (-1,-1)-curves resolving
the 9 ordinary double points of $X_+$ on $\Pi$. In Example 7.3 of \cite{CortiHaskinsNordstromPacini1}, Corti-Haskins-Nordstr\"om-Pacini prove that $N_+:=H^2(Y_+,\mathbb{Z})=\mathbb{Z}^2$ with basis $\tilde\Pi$ (the proper transform of $\Pi$) and $-K_{Y_+}$. The quadratic form $[.]\cup[.]\cup -K_{Y_+}$ in this basis equals to \[\begin{bmatrix}
    -2 & 1 \\
    1 & 4
\end{bmatrix}.\]
It is easy to see that $-K_{Y_+}$ also equals to the pull back of the $\mathcal{O}(1)$-bundle of $\mathbb{CP}^4$.
\end{example}

On the other hand, choose $Y_-=X_-=\mathbb{CP}^3$. Then $N_-:=H^2(Y_-,\mathbb{Z})=\mathbb{Z}$ with base $\mathcal{O}(1)$. The anti-canonical divisor $-K_{Y_-}=\mathcal{O}(4)$. The quadratic form $c_1(\mathcal{O}(1))\cup c_1(\mathcal{O}(1)) \cup -K_{Y_-}$ in this basis equals to 4.

Choose smooth anti-canonical divisors $S_{\pm}$ of $Y_{\pm}$. It is easy to find other anti-canonical divisors $S_{0,\pm}$ intersecting $S_{\pm}$ transversally. Let $C_{\pm}$ be their intersections. Then the ratios of the corresponding sections provide holomorphic functions $f_{\pm}$ from $\tilde Z_{\pm}$ to $\mathbb{CP}^1$ with $\{f_{\pm}=\infty\}$ equal to the proper transforms $S_{\infty,\pm}$ of $S_{\pm}$, where $\tilde Z_{\pm}$ is the blowing up of $Y_{\pm}$ at $C_{\pm}$. Define $Z_+$ as the blowing up of $X_+$ at $C_+$. Define $Z_-$ as $\tilde Z_-$. Let $\Omega_{\pm}$ be the meromorphic 3-forms on $\tilde Z_{\pm}$ with simple poles along $S_{\infty,\pm}$. Their residues $\omega^{J}_{S_{\pm}}+i\omega^{K}_{S_{\pm}}$ are nowhere vanishing (2,0)-forms on $S_{\infty,\pm}=S_\pm$. The main goal in the smooth case is to find K\"ahler classes on $\tilde Z_\pm$ and a diffeomorphism $r$ from $S_+$ to $S_-$ such that the unique Ricci-flat metrics $\omega_{S_\pm}$ on the restrictions of the K\"ahler classes on $S_{\pm}$ satisfy \[(\omega_{S_+},\omega^{J}_{S_+},\omega^{K}_{S_+})=(r^*\omega^{J}_{S_-},r^*\omega_{S_-},-r^*\omega^{K}_{S_-}).\]
Remark that $Y_{\pm}$ are simply-connected. By Lefchetz hyperplane theorem, $S_{\pm}$ are also simply-connected. Since they have trivial canonical line bundles using the adjoint formula, they are all K3 surfaces. Therefore, the following theorem about the moduli space of marked hyperk\"ahler structures on the K3 surface proved by \cite{Pjateckii-SapiroSafarevic, BurnsRapoport, LooijengaPeters, Todorov, Siu} can be applied:
\begin{theorem}
(\cite{Besse})

Let $S$ be the smooth 4-manifold which underlies the minimal resolution of $\mathbb{T}^4/\mathbb{Z}_2$. Let $\Omega$ be the space of three cohomology classes $[\alpha_1], [\alpha_2], [\alpha_3]$ in $H^2(S, \mathbb{R})$ which satisfy the following conditions:

(1) (Integrability) \[\int_M \alpha_i\wedge\alpha_j = 2\delta_{ij}V.\]

(2) (Nondegeneracy) For any $[\Sigma]\in H_2(S, \mathbb{Z})$ with $[\Sigma]^2=-2$, there exists $i\in \{1, 2, 3\}$ with $[\alpha_i][\Sigma]\not=0.$

$\Omega$ has two components $\Omega^+$ and $\Omega^-$. For any $([\alpha_1], [\alpha_2], [\alpha_3])\in\Omega^+$, there exists on $S$ a hyperk\"ahler structure for which the cohomology classes of the K\"ahler forms $[\omega_i]$ are the given $[\alpha_i]$. It is unique up to tri-holomorphic isometries which induce identity on $H_2(S,\mathbb{Z})$. Moreover, any hyperk\"ahler structure on K3 surface must be constructed by this way.
\label{K3Torelli}
\end{theorem}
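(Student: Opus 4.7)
The plan is to combine three major classical results for K3 surfaces: Yau's solution of the Calabi conjecture, the surjectivity of the period map for complex K3 surfaces due to Todorov and Siu, and the global Torelli theorem established by Pjateckii-Shapiro-Shafarevich, Burns-Rapoport, and Looijenga-Peters. The bridge between the Riemannian and complex pictures is the identification of hyperk\"ahler structures on a compact simply-connected $4$-manifold with Calabi-Yau structures whose holonomy equals $\mathrm{SU}(2)=\mathrm{Sp}(1)$.

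For existence, starting from a triple $([\alpha_1],[\alpha_2],[\alpha_3])\in\Omega^+$, the integrability conditions translate directly into the K3 period relations for the class $[\alpha_2]+i[\alpha_3]$: one checks $([\alpha_2]+i[\alpha_3])^2=0$ and $([\alpha_2]+i[\alpha_3])\cdot \overline{([\alpha_2]+i[\alpha_3])}=4V>0$. By surjectivity of the period map, this produces a complex structure $J$ on $S$ compatible with the orientation, with $H^{2,0}_J(S)=\mathbb{C}\cdot([\alpha_2]+i[\alpha_3])$; the distinction between $\Omega^+$ and $\Omega^-$ picks out which of the two possible orientations of the positive $3$-plane spanned by the $[\alpha_i]$ matches that of $S$. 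Next, I would verify that $[\alpha_1]$ is a K\"ahler class for $J$. Orthogonality to $[\alpha_2]\pm i[\alpha_3]$ forces $[\alpha_1]\in H^{1,1}_J(S,\mathbb{R})$ and positivity of its self-intersection places it in the positive cone. The nondegeneracy condition is exactly calibrated so that $[\alpha_1]$ avoids the $(-2)$-walls: any effective $(-2)$-curve has class in $H^{1,1}_J$, hence is automatically orthogonal to both $[\alpha_2]$ and $[\alpha_3]$, so orthogonality to $[\alpha_1]$ as well would violate the hypothesis.

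Having established that $[\alpha_1]$ is K\"ahler, Yau's theorem produces a unique Ricci-flat K\"ahler metric $g$ in that class. Since $c_1(S,J)=0$ and $S$ is simply-connected, the holonomy of $g$ is contained in $\mathrm{SU}(2)=\mathrm{Sp}(1)$, so $(S,g)$ is hyperk\"ahler and carries an $\mathbb{S}^2$-family of parallel complex structures; the two orthogonal to $J$ produce K\"ahler forms whose classes are exactly $[\alpha_2]$ and $[\alpha_3]$ by construction and the normalization enforced by the integrability condition. For uniqueness, any second hyperk\"ahler structure realizing the same triple produces the same period data, so global Torelli yields a biholomorphism acting as the identity on $H_2(S,\mathbb{Z})$; Yau's uniqueness promotes this to a K\"ahler isometry, and testing it on the two orthogonal complex structures shows it is tri-holomorphic. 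Conversely, any hyperk\"ahler structure on $S$ yields a triple in $\Omega$ by taking the K\"ahler forms of an orthonormal basis of parallel complex structures, and nondegeneracy follows because no nonzero class can pair trivially with all three K\"ahler forms simultaneously.

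The main technical obstacle is the K\"ahler cone step: one must show that the positive cone minus the $(-2)$-walls is exactly the K\"ahler cone of a K3 surface. This is classical for projective K3s but requires density and approximation arguments to cover the non-algebraic case, and it is precisely these arguments, together with the delicate surjectivity of the period map in the non-algebraic setting addressed in the cited works, that constitute the hardest parts of the proof.
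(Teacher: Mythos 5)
The paper does not give a proof of Theorem \ref{K3Torelli}: it is quoted verbatim from \cite{Besse}, which in turn surveys the primary sources already mentioned in the surrounding text (\cite{Pjateckii-SapiroSafarevic, BurnsRapoport, LooijengaPeters, Todorov, Siu} plus \cite{Yau}). Your sketch correctly reconstructs the standard proof underlying that citation — translate the integrability conditions into a period point $[\alpha_2]+i[\alpha_3]$, invoke surjectivity of the period map, use the nondegeneracy condition to keep $[\alpha_1]$ off the $(-2)$-walls, apply Yau's theorem, reduce holonomy to $\mathrm{SU}(2)$, and then close with global Torelli and Yau's uniqueness. This is the same route Besse and the original papers take, so there is nothing in the paper to contrast with it.

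One small but genuine imprecision: you write that one must show ``the positive cone minus the $(-2)$-walls is exactly the K\"ahler cone.'' That set is a disjoint union of chambers, and the K\"ahler cone of a fixed marked K3 with the chosen period is just one of them. If $[\alpha_1]$ lands in a different chamber, you cannot simply declare it K\"ahler. The fix is that the Weyl group generated by reflections $r_\delta$ in algebraic $(-2)$-classes $\delta$ acts simply transitively on the chambers while fixing the period (since $\delta\perp\sigma$); precomposing the marking with a suitable product of reflections moves the K\"ahler chamber onto the one containing $[\alpha_1]$ without disturbing $[\alpha_2],[\alpha_3]$. Equivalently, one appeals to the refined (``strong'') surjectivity of the period map, which takes as input both a period $\sigma$ and a class in a specified chamber and realizes both simultaneously. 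With that adjustment, your argument is sound, and your identification of the K\"ahler cone step as the hardest part is accurate.
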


Thus, it suffices to find out the matching cohomology classes. In general, the matching data can only be found in the deformation classes of $\tilde Z_{\pm}$.

Remark that all K3 surfaces are diffeomorphic to $S$. Denote $H^2(S,\mathbb{Z})$ by $L$. $L$ is a lattice \[L=-E_8\oplus-E_8\oplus \begin{bmatrix}
    0 & 1 \\
    1 & 0
\end{bmatrix}^{\oplus 3}\] using the intersection form. For example, see \cite{SchulzTammaro} for a concrete description of the K3 lattice. It is clear that the quadratic form in fact acts on $L\otimes\mathbb{R}$. The set of elements in $L\otimes\mathbb{R}$ for which the square using this quadratic form is positive is called the positive cone.
It is easy to get the following proposition:
\begin{proposition}
The lattice $N_+\oplus N_-$ with quadratic form $\begin{bmatrix}
    -2 & 1 & 0\\
    1 & 4 & 0\\
    0 & 0 & 4
\end{bmatrix}$ can be embedded into $L$.
\label{Perpendicular-embedding}
\end{proposition}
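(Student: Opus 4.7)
The plan is to give an explicit embedding by locating the required basis vectors inside two of the three hyperbolic summands $U=\bigl[\begin{smallmatrix}0&1\\1&0\end{smallmatrix}\bigr]$ of the K3 lattice $L$. Write $L\supset U\oplus U$ with bases $\{e_1,f_1\}$ and $\{e_2,f_2\}$ respectively. Inside these two summands I want to find vectors $v_1,v_2\in U_1$ representing the first two basis vectors of $N_+$ and a vector $v_3\in U_2$ representing the generator of $N_-$; since $U_1$ and $U_2$ are mutually orthogonal, the $N_-$-direction will automatically be perpendicular to the $N_+$-block, which matches the zeros in the off-diagonal blocks of the given quadratic form.

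Concretely, the plan is to take
\[
v_1=e_1-f_1,\qquad v_2=e_1+2f_1,\qquad v_3=e_2+2f_2.
\]
A routine computation gives $\langle v_1,v_1\rangle=-2$, $\langle v_2,v_2\rangle=4$, $\langle v_1,v_2\rangle=1$, $\langle v_3,v_3\rangle=4$, and $\langle v_1,v_3\rangle=\langle v_2,v_3\rangle=0$, so the Gram matrix of $(v_1,v_2,v_3)$ is exactly the matrix in the statement. Linear independence of $v_1,v_2,v_3$ follows at once from the non-vanishing determinant $-36$ of that Gram matrix, so the $\mathbb{Z}$-linear map sending the standard basis of $N_+\oplus N_-$ to $(v_1,v_2,v_3)$ is an injection of lattices into $L$.

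The only thing to verify is that such an embedding is actually legal, i.e.\ that $L$ really does contain at least two orthogonal copies of $U$; this is part of the structure theorem $L=-E_8\oplus-E_8\oplus U^{\oplus 3}$ cited just before the statement. There is no real obstacle in this proof: the signature arithmetic $(2,1)\hookrightarrow(3,19)$ is trivially compatible, and the discriminant group $\mathbb{Z}/9\oplus\mathbb{Z}/4\cong\mathbb{Z}/36$ has length one, so Nikulin's criterion \cite{Nikulin} would in fact also yield a primitive embedding if one were needed later. For the present statement, however, the explicit vectors above are enough, and the main work is just the four-line Gram-matrix check.
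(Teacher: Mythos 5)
Your explicit vectors do realize the Gram matrix, so they prove the literal statement that the abstract lattice can be mapped injectively into $L$; but the sublattice they generate is \emph{not primitive} in $L$, and that is exactly the extra property the paper's construction needs and its own choice of embedding supplies. Your $v_1,v_2$ span a full-rank sublattice of the hyperbolic plane $U_1$ with index $3$ (for instance $\tfrac{1}{3}(2v_1+v_2)=e_1\in L$ and $\tfrac{1}{3}(v_2-v_1)=f_1\in L$), and in fact no rank-$2$ lattice of discriminant $\pm 9$ can sit primitively inside a single $U$, since a primitive full-rank sublattice of $U$ would have to be all of $U$. So the plan of fitting the whole of $N_+$ into one hyperbolic summand is structurally doomed to give a non-primitive embedding.

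This matters downstream in two ways. First, the argument after Lemma~\ref{ReidK3lemma} declares ``the first condition implies that $a,b,c\in\mathbb{Z}$''; with your $v_1,v_2$ this is false (e.g.\ $a=2/3$, $b=1/3$, $c=0$ gives an element of $L$), so that deduction would need to be redone. Second, and more fundamentally, Proposition~\ref{Surjectivity-deformation} and the $N_\pm$-polarised markings it produces presuppose $N_\pm\hookrightarrow L$ primitive (the polarising lattice of a K3 surface is a primitive sublattice of $H^2$). You do note at the end that a primitive embedding exists by Nikulin, and the arithmetic sanity check (signature $(2,1)\hookrightarrow(3,19)$, discriminant group $\mathbb{Z}/36$ of length one) is right. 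But the reference you cite is not in the paper's bibliography, and more to the point the later handcrafted-gluing argument wants to compute with an explicit primitive embedding, not merely know one exists abstractly. The paper's proof achieves this by putting $\tilde\Pi$ and one coordinate of $-K_{Y_+}$ into adjacent simple roots of $-E_8$ (so the $\alpha$-coordinates force integrality) and distributing the remaining parts of $-K_{Y_+}$ and $-\tfrac14K_{Y_-}$ across the three $U$'s. To repair your proof, either give such an explicit primitive embedding, or at minimum flag that your vectors are not what the later argument uses.
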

\begin{proof}
Let $B_1,B_2,B_3$ and $C_1,C_2,C_3$ be the basis in the last three components. Embed $\tilde\Pi$ into a simple root of the first $-E_8$ component. The adjacent simple root of $-E_8$ is an element whose square equals to $-2$ and its product with $\tilde\Pi$ is 1. Let $-K_{Y_+}$ be the sum of this element with $B_1+C_1+B_2+C_2+B_3+C_3$, then it is clear that $(-K_{Y_+})^2=4$. Now let $-\frac{1}{4}K_{Y_-}=B_1+C_1-B_2-C_2$.
\end{proof}

A key proposition in \cite{CortiHaskinsNordstromPacini1} is the following:
\begin{proposition}
(Proposition 6.9 of \cite{CortiHaskinsNordstromPacini2})

Fix the embeddings $N_{\pm}\subset L$ as in Proposition \ref{Perpendicular-embedding}. Let $D_{N_{\pm}}$ be the Griffiths domains $\{\Pi\in\mathbb{P}(N_{\pm}^{\perp}\otimes\mathbb{C}):\Pi\wedge\bar\Pi>0\}$. Let $\mathcal{Y}_{\pm}$ be the deformation types of $Y_{\pm}$ such that there exist anti-canonical K3 divisors $S_{\pm}$ on $Y_{\pm}$ with $N_{\pm}$-polarised markings $h_{\pm}:L\cong H^2(S_{\pm},\mathbb{Z})$, which means, by definition, the restriction maps $H^2(Y_{\pm},\mathbb{Z})\rightarrow H^2(S_{\pm},\mathbb{Z})$ are equivalent to the inclusions $N_{\pm}\hookrightarrow L$ for the chosen isomorphisms $N_{\pm}\cong H^2(Y_{\pm},\mathbb{Z})$ and $h_{\pm}$. Then there exist

(1) sets $U_{\mathcal{Y}_{\pm}}\subset	D_{N_\pm}$ with complement locally finite unions of complex analytic submanifolds of positive codimensions;

(2) open subcones $\mathrm{Amp}_{\mathcal{Y}_{\pm}}$ of the positive cones of $N_{\pm}\otimes\mathbb{R}$ with the following property: for any $\Pi_{\pm}\in U_{\mathcal{Y}_{\pm}}$ and $k_{\pm}\in \mathrm{Amp}_{\mathcal{Y}_{\pm}}$, there exist $Y_{\pm}\in\mathcal{Y}_{\pm}$, smooth anti-canonical divisors $S_{\pm}$, and $N_{\pm}$-polarized markings $h_{\pm}:L\rightarrow H^2(S_{\pm},\mathbb{Z})$ such that $h_{\pm}(\Pi_{\pm})=H^{2,0}(S_{\pm})$ and $h_{\pm}(k_\pm)$ are the restrictions to $S_\pm$ of K\"ahler classes on $Y_\pm$.
\label{Surjectivity-deformation}
\end{proposition}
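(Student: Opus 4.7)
The plan is to set up a Kuranishi deformation problem for marked pairs $(Y_\pm, S_\pm, h_\pm)$ and reduce the statement to the surjectivity of the K3 period map (Theorem \ref{K3Torelli}), in the spirit of Corti-Haskins-Nordstr\"om-Pacini \cite{CortiHaskinsNordstromPacini2}.

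First I would fix a fiducial marked building block $(Y_\pm^0, S_\pm^0, h_\pm^0) \in \mathcal{Y}_\pm$ and consider the versal deformation of the pair for which $S_\pm$ remains a smooth anticanonical divisor. Over any simply-connected neighbourhood of the base point the marking $h_\pm^0$ propagates uniquely, so one obtains a period map $\mathcal{P}$ to $D_{N_\pm}$ sending $(Y_\pm, S_\pm, h_\pm) \mapsto h_\pm^{-1}(H^{2,0}(S_\pm))$. Infinitesimal tangent directions split into deformations of $Y_\pm$ along $T_{Y_\pm^0}(-\log S_\pm^0)$ and deformations of $S_\pm^0$ inside the linear system $|-K_{Y_\pm^0}|$; the second factor, composed with the inclusion $H^0(S_\pm^0, N_{S_\pm^0/Y_\pm^0}) \hookrightarrow H^1(S_\pm^0, T_{S_\pm^0})$, surjects onto the tangent space of $D_{N_\pm}$ because the global K3 Torelli theorem in Theorem \ref{K3Torelli} exhausts $N_\pm^\perp$-polarised Hodge structures. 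I would take $U_{\mathcal{Y}_\pm} \subset D_{N_\pm}$ to be the image of $\mathcal{P}$ with the Noether-Lefschetz-type loci removed (where the Picard lattice of $S_\pm$ strictly enlarges $N_\pm$) together with the discriminant of smoothness of $S_\pm$; each is a locally finite union of proper complex analytic subvarieties of positive codimension.

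For the ample cones I would define $\mathrm{Amp}_{\mathcal{Y}_\pm} \subset N_\pm \otimes \mathbb{R}$ as an open subcone of the positive cone whose closure lies inside the K\"ahler cone of $Y_\pm^0$ and which avoids every reflection hyperplane associated to a $(-2)$-class in $L$ that could arise as an effective curve on some K3 fibre $S_\pm$. By openness of ampleness under small deformations any $k_\pm$ in this cone extends to a K\"ahler class on $Y_\pm$ throughout the family, and its restriction to the anticanonical $S_\pm$ is big and nef. The Torelli-type description of the K\"ahler cone of a K3 surface, namely the positive cone minus the walls of effective $(-2)$-classes, then upgrades big and nef to K\"ahler, giving $h_\pm(k_\pm)$ as the restriction of a K\"ahler class on $Y_\pm$.

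The main obstacle is the global surjectivity of $\mathcal{P}$ modulo the excluded locus, i.e.\ ensuring that its image meets every point of $D_{N_\pm}$ outside a locally finite union of positive codimension analytic subvarieties. This reduces via the infinitesimal splitting above to showing that, as $S_\pm$ varies among smooth anticanonical divisors of the deformations of $Y_\pm$, every $N_\pm$-polarised Hodge structure is realised outside a negligible set; the classical K3 Torelli theorem supplies the absolute surjectivity, while the local finiteness of the excluded loci is a lattice argument: on any compact subset of $D_{N_\pm}$ only finitely many $(-2)$-classes in $L\cap N_\pm^\perp$ pair with nearby period lines in a controlled way, and the Noether-Lefschetz loci are enumerated by primitive sublattices of bounded discriminant.
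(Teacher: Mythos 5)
The paper does not actually prove this proposition: it is stated verbatim as Proposition 6.9 of Corti--Haskins--Nordstr\"om--Pacini \cite{CortiHaskinsNordstromPacini2}, and the proof is deferred to that reference. There is therefore no ``paper's own proof'' to match your sketch against. On its own merits, your outline does capture the broad architecture one expects (versal deformations of the pair $(Y_\pm,S_\pm)$, a period map to $D_{N_\pm}$, lattice control of the K\"ahler cone), but it has a real gap at the central step.

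You claim that the composition $H^0(S_\pm^0, N_{S_\pm^0/Y_\pm^0}) \hookrightarrow H^1(S_\pm^0, T_{S_\pm^0}) \to T D_{N_\pm}$, i.e.\ the derivative of the period map restricted to the second factor (moving $S$ in the fixed anticanonical system of $Y^0$), already surjects, ``because the global K3 Torelli theorem \dots exhausts $N_\pm^\perp$-polarised Hodge structures.'' Global Torelli controls the image of the period map on the \emph{full} moduli of K3 surfaces; it says nothing about the tangent map of the subfamily that you get by varying $S$ inside a fixed weak Fano $Y^0$. Whether the $H^0(S,-K_{Y}|_S)$-directions alone cover the period directions is a genuine geometric question, and in the CHNP proof the first factor, the deformations of $Y$ itself along $T_{Y^0}(-\log S^0)$, is used essentially to get a submersion. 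You would also need to bridge the gap from infinitesimal surjectivity, which only gives that the image of $\mathcal{P}$ is open, to the global statement that its complement in $D_{N_\pm}$ is a locally finite union of proper analytic subvarieties; this requires an additional properness/monodromy argument over the moduli of lattice-polarized K3s and is precisely what makes Proposition 6.9 nontrivial. Finally, note that Theorem \ref{K3Torelli} as stated in this paper is the unmarked hyperk\"ahler Torelli theorem (Besse's formulation in terms of the triple of 2-forms), not the surjectivity of the period map for $N$-polarized marked K3s; the translation is standard but should be made explicit, especially since you need to retain both the marking $h_\pm$ and the ambient building block $Y_\pm$.
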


For any $\Pi_{\pm}\in U_{\mathcal{Y}_{\pm}}$ and $k_{\pm}\in \mathrm{Amp}_{\mathcal{Y}_{\pm}}$, choose (2,0)-forms $\omega^{J}_{S_\pm}+i\omega^{K}_{S_\pm}$ in $H^{2,0}(S_{\pm})$ and denote $h_{\pm}(k_\pm)$ by $\omega_{S_\pm}$. Then there exist K\"ahler classes on $\tilde Z_\pm$ such that their restrictions to $S_{\pm}$ are also $\omega_{S_\pm}$.

The following proposition was proved in \cite{CortiHaskinsNordstromPacini2} using Proposition \ref{Surjectivity-deformation}:
\begin{proposition}
(Proposition 6.18 of \cite{CortiHaskinsNordstromPacini2})
There exist $\Pi_{\pm}$ and $k_{\pm}$ such that the corresponding $Y_{\pm}$, $S_{\pm}$, $h_{\pm}^{-1}:H^2(S_{\pm},\mathbb{Z})\rightarrow L$, $\omega_{S_\pm},\omega^{J}_{S_\pm},\omega^{K}_{S_\pm}$ satisfy \[h_+^{-1}([\omega_{S_+}],[\omega^{J}_{S_+}],[\omega^{K}_{S_+}]) =h_-^{-1}([\omega^{J}_{S_-}],[\omega_{S_-}],-[\omega^{K}_{S_-}]).\]
\end{proposition}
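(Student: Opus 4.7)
The plan is to encode the three matching equalities as linear relations inside the K3 lattice $L$, exploit the perpendicular embedding $N_+\oplus N_-\hookrightarrow L$ from Proposition \ref{Perpendicular-embedding} to reduce the matching problem to a single norm condition on three orthogonal vectors, and then invoke Proposition \ref{Surjectivity-deformation} to realize the resulting period data as K3 divisors in deformations of $Y_\pm$. I first check that the embeddings in Proposition \ref{Perpendicular-embedding} are actually perpendicular: $\tilde\Pi$ and the adjacent simple root both lie in the first $-E_8$ summand, so they are orthogonal to $-\tfrac14 K_{Y_-}=B_1+C_1-B_2-C_2$, and $(B_1+C_1+B_2+C_2+B_3+C_3)\cdot(B_1+C_1-B_2-C_2)=2-2=0$ gives $-K_{Y_+}\cdot(-\tfrac14K_{Y_-})=0$. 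Thus $N_+\subset N_-^\perp$ and $N_-\subset N_+^\perp$, and the three desired equalities force
\[k_+:=h_+^{-1}([\omega_{S_+}])=h_-^{-1}([\omega^J_{S_-}])\in N_+\cap N_-^\perp=N_+,\]
\[k_-:=h_+^{-1}([\omega^J_{S_+}])=h_-^{-1}([\omega_{S_-}])\in N_+^\perp\cap N_-=N_-,\]
\[v:=h_+^{-1}([\omega^K_{S_+}])=-h_-^{-1}([\omega^K_{S_-}])\in N_+^\perp\cap N_-^\perp=(N_++N_-)^\perp,\]
so the problem reduces to choosing $(k_+,k_-,v)$ in three mutually orthogonal real subspaces of $L\otimes\mathbb{R}$.

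The integrability conditions of Theorem \ref{K3Torelli} for both triples $(k_+,k_-,v)$ on the $+$ side and $(k_-,k_+,-v)$ on the $-$ side are now automatic except for the common norm condition $k_+^2=k_-^2=v^2>0$. A signature count makes this solvable: $N_+$ has determinant $-9$ and trace $2$, so signature $(1,1)$; $N_-$ has signature $(1,0)$; hence $N_++N_-$ has signature $(2,1)$ and $(N_++N_-)^\perp$ has signature $(1,18)$ inside the $(3,19)$ K3 lattice, providing a one-parameter family of positive $v$. Choosing $k_\pm$ in the positive cones of $N_\pm\otimes\mathbb{R}$ and then rescaling produces $k_+^2=k_-^2=v^2>0$. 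Setting $\Pi_+:=k_-+iv\in N_+^\perp\otimes\mathbb{C}$ and $\Pi_-:=k_+-iv\in N_-^\perp\otimes\mathbb{C}$, one checks $\Pi_\pm^2=0$ and $\Pi_\pm\wedge\bar\Pi_\pm>0$, so $\Pi_\pm\in D_{N_\pm}$; fixing the sign of $v$ places the three real classes on each side in the component $\Omega^+$ of Theorem \ref{K3Torelli}.

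It remains to arrange $k_\pm\in\mathrm{Amp}_{\mathcal{Y}_\pm}$ and $\Pi_\pm\in U_{\mathcal{Y}_\pm}$ so that Proposition \ref{Surjectivity-deformation} applies. The ample cones are nonempty open subcones of the respective positive cones (containing the restrictions of $-K_{Y_\pm}$) and remain open under the small rescaling needed to equalize norms, so membership is easy to ensure. The main obstacle is the genericity of $\Pi_\pm$: the complement of $U_{\mathcal{Y}_\pm}$ is a locally finite union of positive-codimension complex analytic subvarieties of $D_{N_\pm}$, while our ansatz constrains $\Pi_\pm$ to the real slice $\{k_\mp+iv:k_\mp\in N_\mp\otimes\mathbb{R},\,v\in(N_++N_-)^\perp\otimes\mathbb{R}\}$. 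I would verify that this slice is totally real in $N_\pm^\perp\otimes\mathbb{C}$ — the rank identities $1+19=20=\mathrm{rank}(N_+^\perp)$ and $2+19=21=\mathrm{rank}(N_-^\perp)$ show that $N_\mp\oplus(N_++N_-)^\perp$ spans $N_\pm^\perp$ rationally, so the slice is a real form of $N_\pm^\perp\otimes\mathbb{C}$ — and therefore its intersection with any proper complex analytic subvariety is a proper real-analytic subset. A generic choice of $(k_+,k_-,v)$ satisfying the norm equality then avoids the bad locus for both $+$ and $-$ simultaneously, and Proposition \ref{Surjectivity-deformation} applied separately to each side produces $Y_\pm,S_\pm,h_\pm$ realizing the chosen periods and thereby the required matching equality.
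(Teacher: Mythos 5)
Your proposal is correct and follows essentially the same route as the paper's sketch of the Corti--Haskins--Nordstr\"om--Pacini argument: identify $T(\mathbb{R})=N_+^\perp\cap N_-^\perp$, parametrize periods as $\Pi_\pm=k_\mp\pm iv$ with $k_\pm\in\mathrm{Amp}_{\mathcal{Y}_\pm}$ and $v\in T(\mathbb{R})$, observe that this yields a totally real submanifold of maximal dimension in each period domain $D_{N_\pm}$, and then use genericity to dodge the positive-codimension complex-analytic bad loci of Proposition \ref{Surjectivity-deformation} on both sides simultaneously. You add useful bookkeeping that the paper leaves implicit (verification that the embedding of Proposition \ref{Perpendicular-embedding} really is perpendicular, the signature $(1,18)$ of $T(\mathbb{R})$, and the rank counts showing the slice is a real form), but the core mechanism is the same.
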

\begin{proof}
The proof due to \cite{CortiHaskinsNordstromPacini2} is sketched here in order to see how to adjust it to the singular situation.

Let $T(\mathbb{R})$ be the subspace $N^{\perp}_+\cap N^{\perp}_-$ of $L\otimes\mathbb{R}$. Consider the real manifold \[\mathcal{A}=\mathbb{S}(\mathrm{Amp}_\mathcal{Y_+})\times \mathbb{S}(\mathrm{Amp}_\mathcal{Y_-})\times \mathbb{S}(T(\mathbb{R})),\]
where $\mathbb{S}(\mathrm{Amp}_\mathcal{Y_+})$, $\mathbb{S}(\mathrm{Amp}_\mathcal{Y_-})$ or $\mathbb{S}(T(\mathbb{R}))$ means the set of elements in $\mathrm{Amp}_\mathcal{Y_+}$, $\mathrm{Amp}_\mathcal{Y_-}$ or $T(\mathbb{R})$ whose square equals to 1 with respect to the quadratic form. There are two projections $\mathrm{pr}_\pm: \mathcal{A}\rightarrow D_{N_{\pm}}, (k_+,k_-,k)\rightarrow <k_\mp\pm ik>$, where $D_{N_{\pm}}$ are the Griffiths period domains. The key point of the proof due to \cite{CortiHaskinsNordstromPacini2} is the fact that the real analytic embedded submanifolds $\mathbb{S}(\mathrm{Amp}_{\mathcal{Y}_\mp})\times \mathbb{S}(T(\mathbb{R}))$ of $D_{N_{\pm}}$ are totally real with maximal dimensions. Therefore, it is easy to see that the set $\mathrm{pr}_+^{-1}(U_{\mathcal{Y}_+})\cap \mathrm{pr}_-^{-1}(U_{\mathcal{Y}_-})$ is non-empty.
\end{proof}

Up to here, it has been shown how to find the matching data for smooth asymptotically cylindrical Calabi-Yau manifolds. Remark that $\mathbb{S}(\mathrm{Amp}_\mathcal{Y_-})$ has a single point. Denote it by $k_-$. In order to get the matching data for the manifolds with nodal singularities, the cohomology class $k_+$ defined as $h_+^{-1}[\omega_{S_+}]$ must comes from the restriction of $-\frac{1}{2}c_1(K_{Y_+})$. This means that even though for any $k$ in the complement of real submanifolds with smaller dimensions in $\mathbb{S}(T(\mathbb{R}))$, it is still true that $<k_-+ik>\in U_{\mathcal{Y}_+}$, in general, $<k_+-ik>$ may not be in $U_{\mathcal{Y}_-}$ due to the restriction on the value of $k_+$. As the ``handcrafted gluing" problem in \cite{CortiHaskinsNordstromPacini2}, the following well-known lemma can be used solve this problem:

\begin{lemma}
(Chapter 3 of \cite{ReidK3}, cited as Lemma 7.15 of \cite{CortiHaskinsNordstromPacini2})

Let $S$ be a K3 surface, and let $A$ be a nef line bundle on $S$ with $A^2>0$ (i.e., $A$ is nef and big). Then either

(1) $|A|$ is monogonal, that is, $A=aE+\Gamma$, where $E$ and $\Gamma$ are holomorphic curves with $E^2=0$, $E\cdot \Gamma=1$, $\Gamma^2=-2$, and $a=1,2,3,...$, or

(2) $|A|$ has no fixed point, is base point free and either:

(2.1) the morphism given by $|A|$ is birational onto its image and an isomorphism away from a finite union of -2 curves, or

(2.2) $A$ is hyperelliptic, that is, one of the following cases holds: (2.2.1) $A^2=2$ and $S$ is a double cover of $\mathbb{CP}^2$; (2.2.2) $A=2B$ with $B^2=2$ and $S$ is a double cover of the Veronese surface; or (2.2.3) $S$ has an elliptic pencil $E$ with $A\cdot E=2$.
\label{ReidK3lemma}
\end{lemma}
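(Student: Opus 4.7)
The plan is to follow the classical analysis of complete linear systems on K3 surfaces due to Saint-Donat and Mayer, as laid out in Chapter 3 of \cite{ReidK3}. The key inputs are Riemann-Roch on a K3 surface, which together with $K_S = 0$ reads $\chi(L) = \tfrac{1}{2} L^2 + 2$, Kodaira-Ramanujam vanishing giving $h^i(A) = 0$ for $i > 0$ (since $A$ is nef and big), and the Hodge index theorem.

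The first step is to analyze whether $|A|$ has a fixed component. Write $|A| = |M| + F$ with $F \geq 0$ fixed and $|M|$ the moving part, and assume $F \neq 0$. Using $A^2 > 0$ and nefness of $A$, one shows $M^2 \geq 0$, and then a careful computation combining Riemann-Roch for $M$, nefness, and the Hodge index theorem forces $M^2 = 0$. A moving linear system with self-intersection zero on a K3 surface is necessarily a multiple $aE$ of an elliptic pencil $|E|$; this is a standard consequence of Riemann-Roch for $M$ together with the fact that the general member of a pencil with $M^2 = 0$ on a K3 is a disjoint union of elliptic curves. Analysis of the intersection numbers between $F$ and $E$, combined with nefness of $A = aE + F$, then pins down $F$ as a single irreducible $(-2)$-curve $\Gamma$ with $E \cdot \Gamma = 1$, producing case (1).

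Assume now that $|A|$ has no fixed components. Saint-Donat's theorem then gives that $|A|$ is base-point free: a mobile complete linear system on a K3 is base-point free unless it has the monogonal form of case (1). Thus $\phi := \phi_{|A|}$ is a morphism to $\mathbb{P}^N$ with $N = h^0(A) - 1 = \tfrac{1}{2} A^2 + 1$. Let $Y = \phi(S)$ and $d = \deg (\phi: S \to Y)$, so that $d \cdot \deg Y = A^2$. The curves contracted by $\phi$ are exactly those $C$ with $A \cdot C = 0$; by Hodge index, applied with $A^2 > 0$, these curves satisfy $C^2 < 0$, and on a K3 this forces them to be unions of $(-2)$-curves. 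If $d = 1$ we are in case (2.1).

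If $d \geq 2$, then $\deg Y \leq \tfrac{1}{2} A^2 = N - 1$, so $Y$ is a non-degenerate variety of minimal degree in $\mathbb{P}^N$; by the classical del Pezzo--Bertini classification, $Y$ is then $\mathbb{P}^2$, the Veronese surface $V_4 \subset \mathbb{P}^5$, or a rational normal scroll. To confirm that $d = 2$ and to identify the three sub-cases of (2.2), I would restrict $|A|$ to a generic smooth member $C \in |A|$: by adjunction $A|_C \cong K_C$, and comparing dimensions via Riemann-Roch on $C$ shows $|A|_C = |K_C|$, so $d$ equals the degree of the canonical morphism of $C$ and is therefore at most $2$. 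The hyperelliptic case $d = 2$ then splits into (2.2.1), (2.2.2), (2.2.3) according to which variety of minimal degree $Y$ is, by pulling back to $S$ either the hyperplane class of $\mathbb{P}^2$, the line class of the Veronese, or the ruling of the scroll. The main obstacle is ensuring that the three hyperelliptic sub-cases are complete and mutually exclusive; this case analysis is the heart of Chapter 3 of \cite{ReidK3}, which we invoke.
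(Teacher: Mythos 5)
The paper does not prove this lemma --- it is stated verbatim as a citation to Chapter 3 of Reid's \emph{Chapters on Algebraic Surfaces} (and to Lemma~7.15 of Corti--Haskins--Nordstr\"om--Pacini), so there is no in-paper proof to compare against. That said, your sketch is a faithful outline of the Saint-Donat/Mayer argument that Reid's chapter gives: the fixed-component analysis reducing to $M^2=0$ via Riemann--Roch and Hodge index and hence to the monogonal case, Saint-Donat's freeness result for mobile complete linear systems, the adjunction identity $A|_C\cong K_C$ on a smooth member $C\in|A|$ (so the morphism restricts to the canonical map of $C$ and $\deg\phi_{|A|}\le 2$), and the del Pezzo--Bertini classification of varieties of minimal degree to separate the hyperelliptic sub-cases. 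One small remark: in the case $d\ge 2$ you do not actually need the adjunction argument to rule out $d\ge 3$, since $d\cdot\deg Y=A^2$ together with the nondegeneracy bound $\deg Y\ge N-1=\tfrac12 A^2$ already forces $d\le 2$; adjunction is then what identifies the geometry of the double cover. The steps you explicitly defer to Reid (that the sub-cases in (2.2) are exhaustive and mutually exclusive, and the precise structure of the contracted $(-2)$-curves in (2.1)) are exactly where the substantive case analysis lives, so invoking the reference there is appropriate.
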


Motivated by Lemma \ref{ReidK3lemma}, the first thing to check is linear combinations $a\tilde\Pi-bK_{Y_+}-\frac{c}{4}K_{Y_-}$ satisfying $a\tilde\Pi-bK_{Y_+}-\frac{c}{4}K_{Y_-}\in L$, \[(a\tilde\Pi-bK_{Y_+}-\frac{c}{4}K_{Y_-})^2=-2\] and \[(a\tilde\Pi-bK_{Y_+}-\frac{c}{4}K_{Y_-})\cdot (-K_{Y_+})=0.\] The first condition implies that $a, b, c\in\mathbb{Z}$. The second condition implies that $-2a^2+2ab+4b^2+4c^2=-2$. The third condition implies that $a+4b=0$. Therefore $-36b^2+4c^2=-2$. This is impossible. Thus, for all $C\in L$ satisfying $C^2=-2$ and $C\cdot (-K_{Y_+})=0$, $C$ can not be in $(N_+\otimes\mathbb{R})\oplus (N_-\otimes\mathbb{R})$. So $C^\perp$ intersects $T(\mathbb{R})$ transversally. There is no difficulty to find $k\in \mathbb{S}(T(\mathbb{R}))$ such that $k$ does not lie in $C^\perp$ for all such $C$ and $<k_-+ik>\in U_{\mathcal{Y}_+}$ is still true. Similarly, it is possible to assume that for all $E\in L$ satisfying $E^2=0$, $E\cdot (-K_{Y_+})=0$ and $E\cdot (-\frac{1}{4}K_{Y_-})=2$, $k$ does not lie in $C^\perp$.

By Theorem \ref{K3Torelli}, there exists a K3 surface $S_-$ with a marking $h_-$ such that \[h_-([\omega^{J}_{S_-}],[\omega_{S_-}],-[\omega^{K}_{S_-}])=(k_+,k_-,k).\] It is smooth and does not contain any -2 curve because $k_+$ and $k$ can not vanish simutanously on it. $A=-\frac{1}{4}c_1(K_{Y_-})=2k_-$ lies in $H^{2}(S_-,\mathbb{Z})$ and is a K\"ahler (1,1)-class on $S_-$, so $A$ is a nef line bundle on $S_-$ with $A^2=4>0$. By Lemma \ref{ReidK3lemma}, the morphism given by $|A|$ is an isomorphic onto its image. By Kodaira vanishing theorem and Riemann-Roch theorem, the morphism given by $|A|$ is in fact an isomorphic onto a smooth quartic surface in $\mathbb{CP}^3$. This solves the matching problem.

In an email from Nordstr\"om to the author, he said that Lemma 2.4 of \cite{Fukuoka} and Lemma 5.18 of \cite{CrowleyNordstrom} may provide more examples of matching data for the singular twisted connected sum problem.

\section{Harmonic forms on the nodal cone}
This section deals with the homogenous harmonic forms on strongly regular Calabi-Yau cones $C=C(F)$ with complex dimension 3 defined in Definition \ref{Definition-strongly-regular-cone}. Some results in this section have been proved in \cite{CheegerSpectral1}, \cite{CheegerSpectral2}, \cite{KarigiannisLotay} and \cite{FoscoloHaskinsNordstorm} .

The starting point is the definition of homogenous forms on $C$.
\begin{definition}
A $p$-form $\gamma=r^{\lambda}(r^{p-1}dr\wedge\alpha+r^p\beta)$ is called homogenous of rate $\lambda$ if $\frac{\partial}{\partial r}\alpha=\frac{\partial}{\partial r}\beta=0$.
\end{definition}

A direct calculation shows the following:
\begin{proposition}
Let $\gamma=r^{\lambda}(r^{p-1}dr\wedge\alpha+r^p\beta)$ be a $p$-form on $C$. Let $d_F$ and $d_F^*$ be the operator on each sphere $F$ using the metric on the unit sphere, then
\[d_C\gamma=r^{\lambda+p-1}dr\wedge((\lambda+p)\beta+r\frac{\partial}{\partial r}\beta-d_F\alpha)+r^{\lambda+p}d_F\beta,\]
\[d_C^*\gamma=r^{\lambda+p-3}dr\wedge(-d_F^*\alpha)+r^{\lambda+p-2}(-(\lambda-p+6)\alpha-r\frac{\partial}{\partial r}\alpha+d_F^*\beta),\]
\[\begin{split}
\Delta_C\gamma=&r^{\lambda+p-3}dr\wedge(\Delta_F\alpha-(\lambda+p-2)(\lambda-p+6)\alpha-(r\frac{\partial}{\partial r})^2\alpha\\&-(2\lambda+4)r\frac{\partial}{\partial r}\alpha-2d_F^*\beta)
+r^{\lambda+p-2}(\Delta_F\beta-(\lambda+p)(\lambda-p+4)\beta\\
&-(r\frac{\partial}{\partial r})^2\beta-(2\lambda+4)r\frac{\partial}{\partial r}\beta-2d_F\alpha).
\end{split}\]
\label{Laplacian-local-coordinate}
\end{proposition}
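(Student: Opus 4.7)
The plan is a direct computation exploiting the product-type structure of the cone metric $g_C = dr^2 + r^2 g_F$. The exterior derivative decomposes as $d_C = dr\wedge\partial_r + d_F$, where $d_F$ acts tangentially along the level sets $F=\{r=\text{const}\}$, and the Hodge star $*_C$ relates to $*_F$ through an explicit power of $r$ dictated by the orthonormal frame $\{dr,\,re^i\}$ obtained from any orthonormal frame $\{e^i\}$ on $F$. Throughout, $\alpha$ and $\beta$ are allowed to depend on $r$, so radial derivatives $r\partial_r\alpha$, $r\partial_r\beta$ appear in the answer; the strictly homogeneous case is recovered by setting them to zero.

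The first formula for $d_C\gamma$ falls out of Leibniz: on the $dr\wedge\alpha$ piece, $d(r^{\lambda+p-1})\wedge dr = 0$ and $d(dr\wedge\alpha) = -dr\wedge d_F\alpha$ (the $dr\wedge dr$ term vanishes), giving $-r^{\lambda+p-1}dr\wedge d_F\alpha$; on the $r^{\lambda+p}\beta$ piece, Leibniz produces $(\lambda+p)r^{\lambda+p-1}dr\wedge\beta + r^{\lambda+p}dr\wedge\partial_r\beta + r^{\lambda+p}d_F\beta$, and collecting gives the stated expression. For $d_C^*\gamma$ I would use $d_C^* = -*_C d_C *_C$ (valid because the real dimension $d=2n=6$ is even). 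A short orthonormal-frame computation shows that for a $(p-1)$-form $\tilde\alpha$ and a $p$-form $\tilde\beta$ on $F$ pulled back to $C$,
\[
*_C(r^a\,dr\wedge\tilde\alpha) = \epsilon_1\, r^{a+d-2p+1}\,*_F\tilde\alpha,\qquad *_C(r^b\,\tilde\beta) = \epsilon_2\, r^{b+d-2p-1}\,dr\wedge *_F\tilde\beta,
\]
for signs $\epsilon_1,\epsilon_2\in\{\pm 1\}$ depending on $p$ and $d$. Applying this to $\gamma$, then the first formula to $*_C\gamma$, then $*_C$ once more, and converting $*_F d_F *_F$ into $\pm d_F^*$ via the analogous sign computation on $F$ (dimension $n_F=5$), one obtains the stated expression; the constant $\lambda-p+6$ is simply $\lambda+d-p$ with $d=6$.

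For the Laplacian I would plug the first two formulas into $\Delta_C\gamma = d_C d_C^*\gamma + d_C^* d_C\gamma$ and collect. Two structural simplifications organize the bookkeeping: first, $r\partial_r$ commutes with $d_F$ and $d_F^*$, so cross-terms of the form $r\partial_r(d_F^*\beta) - d_F^*(r\partial_r\beta)$ arising in $d_C^*d_C\gamma$ cancel, and similarly in $d_C d_C^*\gamma$; second, the surviving $d_F$--$d_F^*$ combinations on $\alpha$ (respectively on $\beta$) assemble into $\Delta_F\alpha = (d_Fd_F^*+d_F^*d_F)\alpha$ (respectively $\Delta_F\beta$). The scalar coefficients produced by commuting $r\partial_r$ past $r^{\lambda+p-1}$, $r^{\lambda+p}$ and another $r\partial_r$ combine to give exactly $(r\partial_r)^2 + (2\lambda+4)r\partial_r$ together with the constants $(\lambda+p-2)(\lambda-p+6)$ on the $dr$-component and $(\lambda+p)(\lambda-p+4)$ on the tangential component, while the $\alpha$--$\beta$ coupling contributes the $-2d_F^*\beta$ and $-2d_F\alpha$ terms respectively. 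The entire argument is algebraic; the only real obstacle is careful sign-tracking in the Hodge-star step and faithfully bookkeeping the polynomial-in-$(\lambda,p)$ coefficients produced when $r\partial_r$ commutes past the powers of $r$. No analytic input is required beyond these manipulations.
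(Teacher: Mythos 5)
Your proposal is correct and matches the approach the paper implicitly intends: the paper states this result without proof, preceded only by the remark ``A direct calculation shows the following,'' and your outline (decompose $d_C = dr\wedge\partial_r + d_F$, track the powers of $r$ through the Hodge star of the cone metric in an orthonormal coframe, and assemble $\Delta_C = d_Cd_C^* + d_C^*d_C$) is exactly that direct calculation. Your Hodge-star exponents $r^{a+d-2p+1}$ and $r^{b+d-2p-1}$ and the identity $d_C^* = -*_Cd_C*_C$ in even real dimension $d=6$ check out, and the identification $\lambda-p+6 = \lambda + d - p$ is the correct bookkeeping.
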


So the homogenous harmonic forms are closely related to the eigenforms on the link $F$.

\begin{definition}
Using Hilbert-Schmidt theorem, assume that $\phi_{0,j}$ are orthogonal basis of $L^2(\Lambda^0(F))$ with $\Delta_F\phi_{0,j}=\mu_{0,j}\phi_{0,j}$. Then $d_F\phi_{0,j}$ are orthogonal to each other because \[(d_F\phi_{0,j},d_F\phi_{0,j'})=(d_F^*d_F\phi_{0,j},\phi_{0,j'})=\mu_{0,j}(\phi_{0,j},\phi_{0,j'})=0\] if $j\not=j'$. By Hilbert-Schmidt theorem applied to the Laplacian oprator acting on 1-forms, it is possible to assume that $d_F\phi_{0,j}$ for $j=2,3,...$ and $\phi_{1,j}$ for $j=1,2,...$ are orthogonal basis of $L^2(\Lambda^1(F))$ with \[\Delta_Fd_F\phi_{0,j}=\mu_{0,j}d_F\phi_{0,j}\] and \[\Delta_F\phi_{1,j}=\mu_{1,j}\phi_{1,j}.\] It is clear that $d_F^*d_F\phi_{0,j}=\mu_{0,j}\phi_{0,j}$ while $d_F^*\phi_{1,j}=0$.

Inductively, for $p=0,1,2,...5$, $d_F\phi_{p,j}$ are orthogonal to each other, so it is possible to assume that $d\phi_{p,j}$ for $j=h_p+1,h_p+2,...$ and $\phi_{p+1,j}$ for $j=1,2,3,...$ are orthogonal basis of $L^{p+1}(\Lambda^2(F))$ with \[\Delta_Fd_F\phi_{p,j}=\mu_{p,j}d_F\phi_{p,j}\] and \[\Delta_F\phi_{p+1,j}=\mu_{p+1,j}\phi_{p+1,j},\] where $h_p$ is the dimension of the cohomology group $H^p(F,\mathbb{R})$.
\end{definition}

The relationship between homogenous harmonic forms on $C$ and eigenforms on $F$ are given by the following:
\begin{definition}
Choose $\lambda=-2$ in Proposition \ref{Laplacian-local-coordinate}, then
\[\begin{split}
\Delta_C\gamma=&r^{p-5}dr\wedge(\Delta_F\alpha+(p-4)^2\alpha-(r\frac{\partial}{\partial r})^2\alpha-2d_F^*\beta)\\&
+r^{p-4}(\Delta_F\beta+(p-2)^2\beta-(r\frac{\partial}{\partial r})^2\beta-2d_F\alpha).
\end{split}\]
So it is important to study the eigenvalues of the self-adjoint operator
\[(\alpha,\beta)\rightarrow (\Delta_F\alpha+(p-4)^2\alpha-2d_F^*\beta,\Delta_F\beta+(p-2)^2\beta-2d_F\alpha)\]
from a subspace of $L^2(\Lambda^{p-1}F\oplus\Lambda^{p}F)$ to another subspace of $L^2(\Lambda^{p-1}F\oplus\Lambda^{p}F)$. It is easy to see that the eigenforms are, up to linear combinations,

(1) $(d_F\phi_{p-2,j},0)$ with eigenvalue $\mu_{p-2,j}+(p-4)^2$,

(2) $(0,\phi_{p,j})$ with eigenvalue $\mu_{p,j}+(p-2)^2$,

(3) $(\phi_{p-1,j},0)$ with eigenvalue $(p-4)^2$ if $\mu_{p-1,j}=0$,

(4) $(\phi_{p-1,j},\frac{3-p\pm\sqrt{(p-3)^2+\mu_{p-1,j}}}{\mu_{p-1,j}}d_F\phi_{p-1,j})$ with eigenvalue
\[(\sqrt{(p-3)^2+\mu_{p-1,j}}\mp1)^2\] if $\mu_{p-1,j}\not=0$.

Using the identification $\gamma=r^{-2}(r^{p-1}dr\wedge\alpha+r^p\beta)$ between $\gamma\in\Lambda^p(C(F))$ with $(\alpha,\beta)\in\Lambda^{p-1}F\oplus\Lambda^{p}F$, the eigenforms are denoted by $\hat\phi_{p,j}\in\Lambda^p(C(F))$ with eigenvalues $\hat\mu_{p,j}$. By Hilbert-Schmidt theorem, they form an $L^2$ basis. By Proposition \ref{Harmonic-form-decomposition}, any homogenous form is harmonic if and only if it is the linear combination of $r^{\pm\sqrt{\hat\mu_{p,j}}}\hat\phi_{p,j}$.
\label{Definition-hat-phi}
\end{definition}

A more precise decomposition is the following:
\begin{proposition}
A harmonic homogenous $p$-form $\gamma=r^{\lambda}(r^{p-1}dr\wedge\alpha+r^p\beta)$ can be written as a linear combination of

(1) closed but not coclosed harmonic homogenous forms $r^{\lambda}r^{p-1}dr\wedge d_F\phi_{p-2,j}$ with $\mu_{p-2,j}=(\lambda+p-2)(\lambda-p+6)\not=0$,

(2) closed but not coclosed harmonic homogenous forms $r^{\lambda}r^{p-1}dr\wedge\phi_{p-1,j}$ with $\mu_{p-1,j}=\lambda+p-2=0$, and $\lambda\not=-2$.

(3) closed and coclosed homogenous forms $r^{\lambda}r^{p-1}dr\wedge\phi_{p-1,j}$ with \[\mu_{p-1,j}=\lambda-p+6=0,\]

(4) closed and coclosed homogenous forms \[r^{\lambda}((\lambda+p)r^{p-1}dr\wedge\phi_{p-1,j}+r^pd_F\phi_{p-1,j})\] with $\mu_{p-1,j}=(\lambda+p)(\lambda-p+6)\not=0$,

(5) neither closed nor coclosed homogenous harmonic forms \[r^{\lambda}(-(\lambda-p+4)r^{p-1}dr\wedge\phi_{p-1,j}+r^pd_F\phi_{p-1,j})\] with $\mu_{p-1,j}=(\lambda+p-2)(\lambda-p+4)\not=0$ and $\lambda\not=-2$,

(6) closed and coclosed homogenous forms $r^{\lambda}(r^p\phi_{p,j})$ with $\mu_{p,j}=\lambda+p=0$,
 and

(7) coclosed but not closed harmonic homogenous forms $r^{\lambda}(r^p\phi_{p,j})$ with \[\mu_{p,j}=(\lambda+p)(\lambda-p+4)\] and $\lambda+p\not=0$.
\label{Harmonic-form-decomposition}
\end{proposition}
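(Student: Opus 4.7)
The plan is to reduce the harmonic condition on $\gamma$ to an algebraic system on the Hilbert-Schmidt coefficients fixed before Definition \ref{Definition-hat-phi}, decouple that system block-by-block, and then read off the closed/coclosed properties directly from Proposition \ref{Laplacian-local-coordinate}. Because $\gamma$ is homogeneous, $r\partial_r$ annihilates both $\alpha$ and $\beta$, so after discarding the $r\partial_r$ terms in Proposition \ref{Laplacian-local-coordinate} the condition $\Delta_C\gamma=0$ becomes the coupled pair
\begin{equation*}
\Delta_F\alpha-A_\alpha\alpha=2d_F^*\beta,\qquad \Delta_F\beta-A_\beta\beta=2d_F\alpha,
\end{equation*}
where I write $A_\alpha=(\lambda+p-2)(\lambda-p+6)$ and $A_\beta=(\lambda+p)(\lambda-p+4)$.

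Expanding $\alpha=\sum_j a_j\phi_{p-1,j}+\sum_j b_j\,d_F\phi_{p-2,j}$ and $\beta=\sum_j c_j\phi_{p,j}+\sum_j e_j\,d_F\phi_{p-1,j}$, and using $d_F^*\phi_{p,j}=0$, $d_F^*d_F\phi_{p-1,j}=\mu_{p-1,j}\phi_{p-1,j}$, $d_Fd_F\phi_{p-2,j}=0$, and the diagonal action of $\Delta_F$ on each basis vector, matching coefficients decouples the system into three independent blocks: a scalar equation $b_j(\mu_{p-2,j}-A_\alpha)=0$ yielding case (1); a scalar equation $c_j(\mu_{p,j}-A_\beta)=0$ yielding case (6) when $\lambda+p=0$ and case (7) otherwise; and a $2\times 2$ system coupling $(a_j,e_j)$ for each $j$ with $\mu_{p-1,j}\ne 0$, together with a degenerate version when $\mu_{p-1,j}=0$.

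For the $2\times 2$ block $(\mu-A_\alpha)a_j=2\mu e_j$, $(\mu-A_\beta)e_j=2a_j$ with $\mu=\mu_{p-1,j}\ne 0$, the determinant condition is $(\mu-A_\alpha)(\mu-A_\beta)=4\mu$, and a direct factorisation shows that its two roots are $\mu=(\lambda+p)(\lambda-p+6)$, with eigenratio $a_j=(\lambda+p)e_j$, and $\mu=(\lambda+p-2)(\lambda-p+4)$, with $a_j=-(\lambda-p+4)e_j$; these produce cases (4) and (5) respectively. When $\mu_{p-1,j}=0$ the element $d_F\phi_{p-1,j}$ is absent from the basis of $\Lambda^pF$, so only $a_j$ survives; the first equation then forces $A_\alpha a_j=0$, whose two factors $\lambda+p-2=0$ and $\lambda-p+6=0$ give cases (2) and (3). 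Finally I would substitute each of the seven families into the formulas of Proposition \ref{Laplacian-local-coordinate} for $d_C\gamma$ and $d_C^*\gamma$ to verify the stated closed/coclosed labels.

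The only genuine difficulty is the bookkeeping around degenerations and overlaps between the seven families. When $\mu_{p-1,j}=0$ the element $d_F\phi_{p-1,j}$ is not part of the basis of $\Lambda^pF$, so the $2\times 2$ block must be replaced by its scalar analogue, and similarly $\mu_{p,j}=0$ in the $\beta$-block collapses case (7) into case (6). The nonvanishing side-conditions in the statement are exactly what keeps the seven families mutually exclusive; in particular, the exclusion $\lambda\ne -2$ in case (2) rules out the coincidence $\lambda=-2$, $p=4$, at which the coclosedness factor $\lambda-p+6=0$ of case (3) is also satisfied.
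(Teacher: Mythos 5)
Your proposal is correct and follows essentially the same route as the paper: expand $\alpha,\beta$ in the generalized Fourier basis $\{d_F\phi_{p-2,j},\phi_{p-1,j}\}$ and $\{d_F\phi_{p-1,j},\phi_{p,j}\}$, use the harmonic formula in Proposition \ref{Laplacian-local-coordinate} to decouple into scalar blocks for the $d_F\phi_{p-2,j}$ and $\phi_{p,j}$ coefficients and a $2\times2$ block for the $(\phi_{p-1,j},d_F\phi_{p-1,j})$ pair, and factor the compatibility condition $(\mu-A_\alpha)(\mu-A_\beta)=4\mu$ into the two roots $(\lambda+p)(\lambda-p+6)$ and $(\lambda+p-2)(\lambda-p+4)$ with the stated eigenratios. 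Your additional remarks on the degenerate $\mu_{p-1,j}=0$ collapse of the $2\times2$ block and on the role of the nonvanishing side-conditions (e.g.\ $\lambda\neq -2$ keeping cases (2) and (3) disjoint at $p=4$) are correct and slightly more explicit than the paper's proof, but the underlying argument is the same.
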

\begin{proof}
Use Hilber-Schimdt theorem to write $\alpha$ and $\beta$ as the generalized Fourier series
\[\alpha=\sum_{j=h_{p-2}+1}^{\infty}\alpha_{p-2,j}d_F\phi_{p-2,j}+\sum_{j=1}^{\infty}\alpha_{p-1,j}\phi_{p-1,j},\]
and
\[\beta=\sum_{j=h_{p-1}+1}^{\infty}\beta_{p-1,j}d_F\phi_{p-1,j}+\sum_{j=1}^{\infty}\beta_{p,j}\phi_{p,j}.\]
Since $d_F\phi_{p-2,j}$ is perpendicular to $d_F^*\beta$, the harmonic assumption implies that $\alpha_{p-2,j}=0$ unless $\mu_{p-2,j}=(\lambda+p-2)(\lambda-p+6)$. When $\mu_{p-1,j}=0$, using the fact that $\phi_{p-1,j}$ is also perpendicular to $d_F^*\beta$, the harmonic assumption implies that $\alpha_{p-1,j}=0$ unless $(\lambda+p-2)(\lambda-p+6)=0$. Similarly, $\beta_{p,j}=0$ unless $\mu_{p,j}=(\lambda+p)(\lambda-p+4)$.

For $j=h_{p-1}+1, h_{p-1}+2, ...$, the equation \[\Delta_F\beta-(\lambda+p)(\lambda-p+4)\beta-2d_F\alpha=0\] implies that $\alpha_{p-1,j}=\frac{1}{2}(\mu_{p-1,j}-(\lambda+p)(\lambda-p+4))\beta_{p-1,j}$. So the equation \[\Delta_F\alpha-(\lambda+p-2)(\lambda-p+6)\alpha-2d_F^*\beta=0\] implies that
\[\frac{1}{4}(\mu_{p-1,j}-(\lambda+p-2)(\lambda-p+6))(\mu_{p-1,j}-(\lambda+p)(\lambda-p+4))=\mu_{p-1,j}\] unless $\alpha_{p-1,j}=\beta_{p-1,j}=0$. Note that \[\frac{1}{4}(\mu_{p-1,j}-(\lambda+p-2)(\lambda-p+6))(\mu_{p-1,j}-(\lambda+p)(\lambda-p+4))=\mu_{p-1,j}\] if and only if $\mu_{p-1,j}=(\lambda+p)(\lambda-p+6)$ or $\mu_{p-1,j}=(\lambda+p-2)(\lambda-p+4)$.
\end{proof}

Recall the definition of $\mathcal{K}_i(\lambda_i)$ in Definition \ref{Definition-critical}. It is easy to prove the following proposition:

\begin{proposition}
Suppose that \[\gamma\in\mathcal{K}_i(\lambda_i)\] for the Hodge Laplacian acting on p-forms. Then up to linear combinations, either

(1) $\gamma=r^{\pm\sqrt{\hat\mu_{p,j}}}\hat\phi_{p,j}$ is homogenous with $\hat\mu_{p,j}\in\mathbb{R}$, or

(2) $\gamma=\log r\hat\phi_{p,j}$ with $\hat\mu_{p,j}=0$.
\label{No-log-term}
\end{proposition}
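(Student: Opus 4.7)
The plan is to reduce the problem to a family of elementary constant-coefficient linear ODEs by exploiting the spectral decomposition of the link operator $M_p$ introduced in Definition \ref{Definition-hat-phi}. The key preliminary is the following formula: for any function $f(r)$ of $r$ alone and any eigenform $\hat\phi_{p,j}$ of $M_p$ with eigenvalue $\hat\mu_{p,j}$, Proposition \ref{Laplacian-local-coordinate} applied with $\lambda=-2$ (so that the factor $2\lambda+4$ vanishes and the two radial constants collapse to $(p-4)^2$ and $(p-2)^2$) gives
\[
\Delta_C\bigl(f(r)\hat\phi_{p,j}\bigr) \;=\; r^{-2}\bigl(\hat\mu_{p,j}f - (r\partial_r)^2 f\bigr)\hat\phi_{p,j}.
\]
Since $\{\hat\phi_{p,j}\}$ is an $L^2$ basis by Hilbert--Schmidt (as recorded in Definition \ref{Definition-hat-phi}), an arbitrary $p$-form on $C$ expands as $\gamma=\sum_j f_j(r)\hat\phi_{p,j}$, and the identity above shows that $\Delta_C\gamma=0$ decouples into the family of ODEs $(r\partial_r)^2 f_j=\hat\mu_{p,j}f_j$ for all $j$.

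Next, I would use the polyhomogeneous form of $\gamma\in\mathcal{K}_i(\lambda_i)$ from Definition \ref{Definition-critical} to conclude that each coefficient has the shape $f_j(r) = r^{\lambda_i}p_j(\log r)$ for some polynomial $p_j$ of degree at most $p_{i,\lambda_i}$. After substituting $s=\log r$ and $f_j=e^{\lambda_i s}p_j(s)$, the radial ODE becomes
\[
p_j''(s) + 2\lambda_i p_j'(s) + (\lambda_i^2-\hat\mu_{p,j})p_j(s) = 0.
\]
Inspecting leading coefficients in $s$ gives a trichotomy: if $\lambda_i^2\neq\hat\mu_{p,j}$, the leading coefficient of $p_j$ is forced to vanish and a descent argument gives $p_j\equiv 0$; if $\lambda_i^2=\hat\mu_{p,j}$ with $\lambda_i\neq 0$, the identity $p_j''+2\lambda_i p_j'=0$ admits only constant polynomial solutions; if $\lambda_i=\hat\mu_{p,j}=0$, then $p_j''=0$ admits $p_j(s)=a+bs$.

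Reading off the survivors completes the proof: each nonzero summand $f_j\hat\phi_{p,j}$ is either a multiple of $r^{\pm\sqrt{\hat\mu_{p,j}}}\hat\phi_{p,j}$, giving alternative (1), or, in the resonant case $\lambda_i=0=\hat\mu_{p,j}$, a linear combination of $\hat\phi_{p,j}$ and $\log r\cdot\hat\phi_{p,j}$, the logarithmic summand of which provides alternative (2). Reality of $\hat\mu_{p,j}$ is automatic from the self-adjointness of $M_p$ on $L^2(\Lambda^{p-1}F\oplus\Lambda^pF)$. The only step that requires real care is the opening identity for $\Delta_C(f\hat\phi_{p,j})$: one must verify that all cross terms produced by Proposition \ref{Laplacian-local-coordinate} cancel, which is precisely what the normalization $\lambda=-2$ used in Definition \ref{Definition-hat-phi} is arranged to guarantee. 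Once that identity is established, the remaining analysis is purely linear ODE bookkeeping and no further geometric input is needed.
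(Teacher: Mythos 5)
Your proof is correct and takes essentially the same route as the paper: expand $\gamma$ in the $\hat\phi_{p,j}$ basis so that harmonicity decouples into the radial ODEs $(r\partial_r)^2\gamma_j=\hat\mu_{p,j}\gamma_j$, solve, and use self-adjointness of $M_p$ for reality of $\hat\mu_{p,j}$. The extra bookkeeping with the polyhomogeneous ansatz $f_j=r^{\lambda_i}p_j(\log r)$ and the substitution $s=\log r$ simply makes explicit what the paper leaves implicit, namely that a logarithm can survive only when $\hat\mu_{p,j}=0$.
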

\begin{proof}
Write $\gamma$ as
\[\gamma=\sum_{j=1}^{\infty}\gamma_j(r)\hat\phi_{p,j},\]
then
\[(\hat\mu_{p,j}-(r\frac{d}{dr})^2)\gamma_j=0.\]
If $\hat\mu_{p,j}\not=0$, $\gamma_j$ is the linear combination of $r^{\pm\sqrt{\hat\mu_{p,j}}}$. When $\hat\mu_{p,j}=0$, $\gamma_j$ is the linear combination of 1 and $\log r$.
\end{proof}

The next goal is the estimate of eigenvalues:
\begin{proposition}
(Obata \cite{Obata}) $\mu_{0,1}=0$ and $\phi_{0,1}=1$. For all $j=2,3,4,...$, $\mu_{0,j}>5$.
\label{Obata}
\end{proposition}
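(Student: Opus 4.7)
The plan is to reduce the proposition to the classical Lichnerowicz--Obata eigenvalue estimate applied to the link $F$. The first assertion is immediate: on a connected closed Riemannian manifold the kernel of the scalar Laplacian is spanned by the constant $1$, so $\mu_{0,1}=0$ with eigenfunction $\phi_{0,1}=1$. The real content is therefore the strict lower bound $\mu_{0,j}>5$ for $j\ge 2$.

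For this, I would first extract the Einstein condition on $F$. Since the Calabi--Yau cone metric $g_{C(F)}=dr^2+r^2 g_F$ is Ricci-flat of real dimension six, the standard warped-product Ricci identity forces $(F,g_F)$ to be Einstein with $\mathrm{Ric}_{g_F}=4\,g_F$. Lichnerowicz's eigenvalue estimate, applied in dimension $n=5$ with $\mathrm{Ric}\ge (n-1)g_F$, then gives
\[
\mu_{0,j} \ge \frac{n}{n-1}(n-1) = 5 \qquad \text{for every } j\ge 2.
\]

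To upgrade this to the strict inequality, I would invoke Obata's rigidity theorem: equality $\mu_{0,j}=5$ for some $j\ge 2$ would force $(F,g_F)$ to be isometric to the round unit sphere $\mathbb{S}^5$, and hence $C(F)$ to be isometric to flat $\mathbb{C}^3$. In the setting of Section~5 the cone $C$ is the nodal cone from Example~\ref{Nodal-singularity}, whose link is diffeomorphic to $\mathbb{S}^2\times\mathbb{S}^3$ rather than $\mathbb{S}^5$, so the Obata equality case is ruled out on purely topological grounds. The same conclusion holds for any strongly regular Calabi--Yau cone other than $\mathbb{C}^3$ itself, since the round $\mathbb{S}^5$ is the only link that realises equality.

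There is essentially no serious obstacle: the only point to watch is keeping the normalisation $\mathrm{Ric}_{g_F}=4g_F$ consistent with the real dimension of the cone, after which the strict inequality follows automatically from the topological mismatch between $\mathbb{S}^2\times\mathbb{S}^3$ and $\mathbb{S}^5$. If one wanted a more self-contained argument, the Lichnerowicz bound can alternatively be obtained by integrating the Bochner identity $\frac{1}{2}\Delta_F|\nabla u|^2 = |\nabla^2 u|^2+\langle\nabla\Delta_F u,\nabla u\rangle+\mathrm{Ric}_{g_F}(\nabla u,\nabla u)$ against a first eigenfunction $u$ and using $|\nabla^2 u|^2\ge \frac{1}{n}(\Delta_F u)^2$, with Obata rigidity recovered by tracking when this Cauchy--Schwarz step is an equality.
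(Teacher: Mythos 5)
Your proof is correct and follows essentially the same route as the paper: extract the Einstein condition $\mathrm{Ric}_{g_F}=4g_F$ (equivalently, scalar curvature $20$) from Ricci-flatness of the six-dimensional cone, apply Lichnerowicz to get $\mu_{0,j}\ge 5$, and invoke Obata's rigidity together with $F\not\cong\mathbb{S}^5$ for the strict inequality. The paper states exactly this, merely more tersely.
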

\begin{proof}
This follows from \cite{Obata} because the metric on $F$ is Einstein with scalar curvature 20 and $F$ is not isometric to the sphere.
\end{proof}

\begin{proposition}
$\mu_{1,j}\ge8$ for all $j=1,2,3...$, moreover, when $\mu_{1,j}=8$, then $r^2\phi_{1,j}^\#$ is a Killing vector field on $C$, where $\phi_{1,j}^\#$ means the metric dual using $g_C$.
\label{Killing-metric}
\end{proposition}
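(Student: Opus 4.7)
The plan is to combine the Weitzenb\"ock identity on the Einstein link with the elementary $L^2$-identity $\|d_F\phi\|^2=\mu\|\phi\|^2$ available for coclosed eigenforms, and to read off $\mu\ge 8$ from the non-negativity of the $L^2$-norm of the trace-free symmetric part of $\nabla\phi$.

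First I would record that, since the metric on $F$ is Einstein with scalar curvature $20$ and $\dim F=5$, $\mathrm{Ric}_F=4 g_F$. Writing $\phi=\phi_{1,j}$, $\mu=\mu_{1,j}$, the Weitzenb\"ock formula $\Delta_F\phi=\nabla^{*}\nabla\phi+\mathrm{Ric}_F(\phi)$ together with integration by parts yields
\[
\int_F|\nabla\phi|^2\;=\;(\mu-4)\int_F|\phi|^2.
\]
Next I would decompose $\nabla\phi=S+A$ pointwise into its symmetric and antisymmetric parts. The coclosedness $d_F^{*}\phi=0$ implies $\mathrm{tr}\,S=0$, while $A_{ij}=\tfrac12(\nabla_i\phi_j-\nabla_j\phi_i)=\tfrac12(d_F\phi)_{ij}$, so $|\nabla\phi|^2=|S|^2+|A|^2$ and $|d_F\phi|^2=2|A|^2$. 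Using $d_F^{*}\phi=0$ again, $\|d_F\phi\|_{L^2}^2=(\Delta_F\phi,\phi)_{L^2}=\mu\|\phi\|_{L^2}^2$, hence
\[
\int_F|A|^2\;=\;\tfrac{\mu}{2}\int_F|\phi|^2,
\qquad
\int_F|S|^2\;=\;\bigl(\tfrac{\mu}{2}-4\bigr)\int_F|\phi|^2\;\ge\;0.
\]
This forces $\mu\ge 8$, and in the equality case $S\equiv 0$, i.e.\ the dual vector field $X:=\phi^{\#_{g_F}}$ on $F$ satisfies the Killing equation.

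For the second assertion I would exploit the warped product structure $g_C=dr^2+r^2 g_F$. Pull $\phi$ back to $C$ to be $r$-independent; then a direct calculation in an $F$-adapted frame gives $\phi^{\#_{g_C}}=r^{-2}X$, so $r^2\phi^{\#_{g_C}}=X$ is the $r$-invariant extension to $C$ of the $F$-dual vector field. Since $X$ has no $\partial_r$-component and is $r$-independent, $\mathcal{L}_X g_C = r^2\,\mathcal{L}_X g_F$, so $X$ is Killing on $C$ precisely when it is Killing on $F$, which was just established in the equality case.

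The argument is a polished Lichnerowicz-type computation; the only step requiring care is pinning down the normalizations so that the identity $\|d_F\phi\|^2=2\|A\|^2$ is correct and the Bochner factor matches the Einstein constant $4$, after which both the inequality and the rigidity statement follow immediately from the warped product identity $\mathcal{L}_X g_C=r^2\mathcal{L}_X g_F$.
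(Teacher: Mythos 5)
Your argument is correct, and it supplies what the paper itself omits: the paper's ``proof'' of this proposition is a one-line citation of Lemma 3.11 of Karigiannis--Lotay, which is precisely the Lichnerowicz/Bochner argument you carry out. The normalizations all check: with $\mathrm{Ric}_F=4g_F$ (Einstein, $\mathrm{scal}=20$, $\dim F=5$), Bochner gives $\|\nabla\phi\|^2=(\mu-4)\|\phi\|^2$; coclosedness makes $\mathrm{tr}\,S=0$ and gives $\|d_F\phi\|^2=\mu\|\phi\|^2$; with the convention $|\alpha|^2=\tfrac12\alpha_{ij}\alpha^{ij}$ for 2-forms one indeed has $|d_F\phi|^2=2|A|^2$, yielding $\int|S|^2=(\tfrac{\mu}{2}-4)\int|\phi|^2\ge 0$ and hence $\mu\ge 8$, with equality iff $S\equiv 0$, i.e.\ $\phi^{\#_{g_F}}$ is Killing on $F$. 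The passage to $C$ via $r^2\phi^{\#_{g_C}}=\phi^{\#_{g_F}}$ and $\mathcal{L}_X g_C = r^2\,\mathcal{L}_X g_F$ for an $r$-independent, slice-tangent $X$ is also correct. So this is the same approach as the cited lemma, but written out in full rather than delegated to the reference.
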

\begin{proof}
This is similar to Lemma 3.11 of \cite{KarigiannisLotay}.
\end{proof}

\begin{proposition}
If $\phi_{2,j}$ is a primitive (1,1)-form on $C$, then either $\mu_{2,j}=0$ or $\mu_{2,j}\ge 9$.
\label{Primitive-one-one}
\end{proposition}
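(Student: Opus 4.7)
The plan is to exploit the strongly regular structure to reduce the eigenvalue estimate to a Bochner argument either on the Kähler--Einstein Fano base or directly on the Sasaki--Einstein link. Since $C$ has complex dimension $3$, its link $F$ is a Sasaki--Einstein $5$-manifold with Einstein constant $4$. By Definition \ref{Definition-strongly-regular-cone}, the Reeb field $\xi = J(r\partial_r)$ generates a free $\mathbb{S}^1$-action exhibiting $F$ as a principal $\mathbb{S}^1$-bundle over a Kähler--Einstein Fano surface $B$ with Ricci $= 6 g_B$ and with connection $1$-form $\eta$ satisfying $d\eta = 2\pi^*\omega_B$; the cone K\"ahler form is $\omega_C = r\,dr\wedge\eta + r^2\pi^*\omega_B$.

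First I would unpack the hypothesis. The form $\phi_{2,j}$ lives on $F$ and extends to $C$ as the $r$-independent $2$-form $r^{-2}(r^{2}\phi_{2,j}) = \phi_{2,j}$ under the identification of Definition \ref{Definition-hat-phi}. The condition that this extension be a \emph{primitive $(1,1)$-form on $C$} translates, after expanding $\phi_{2,j}\wedge\omega_C^{2}$ in the $(dr,\eta,\pi^*\omega_B)$ decomposition, into: (i) $\iota_\xi\phi_{2,j}=0$ and $\mathcal{L}_\xi\phi_{2,j}=0$, so $\phi_{2,j}$ is basic with respect to the Reeb foliation; (ii) $\phi_{2,j}$ is of transverse type $(1,1)$; and (iii) the transverse primitivity $\phi_{2,j}\wedge\omega_B = 0$. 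Hence $\phi_{2,j}$ descends to a primitive $(1,1)$-form on $B$, and on such basic forms the Sasakian Laplacian $\Delta_F$ agrees with the Kähler Laplacian $\Delta_B$ pulled up.

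Next I would apply the Bochner--Kodaira--Weitzenböck formula for primitive $(1,1)$-forms on the K\"ahler--Einstein Fano surface $B$, exactly in the spirit of Lemma 3.11 of \cite{KarigiannisLotay} which underlies Proposition \ref{Killing-metric}. For a primitive $(1,1)$-form $\phi$ on a K\"ahler--Einstein manifold with $\operatorname{Ric} = \rho g$, the Weitzenböck identity reads
\[
\Delta_B\phi = \nabla^*\nabla\phi + \mathcal{R}(\phi),
\]
where the curvature endomorphism $\mathcal{R}$ contributes $2\rho\,\phi$ from the Ricci terms plus a bisectional curvature piece that is non-negative on primitive $(1,1)$-forms. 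Integration by parts together with the Kähler identity $\bar\partial\phi=0$ (forced in dimension $2$ by primitivity together with a type argument) and Matsushima's decomposition gives a dichotomy: either $\nabla\phi \equiv 0$, in which case $\phi$ is parallel and hence harmonic, so $\mu_{2,j}=0$; or the eigenvalue is bounded below by a constant determined by $\rho=6$. I would then chase through the precise constant contributed by the transverse structure together with the $(p-2)^2$ shift visible in Proposition \ref{Laplacian-local-coordinate} to confirm the cutoff is exactly $9$.

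The hard part will be step three: extracting the \emph{sharp} lower bound $9$, as opposed to some weaker bound like $2\rho=12$ or the naive Lichnerowicz bound. A cleaner route may be to stay on $F$ and use the direct Bochner formula on $2$-forms on the Einstein $5$-manifold with $\operatorname{Ric}=4g$, which already gives a Ricci contribution of $8$; the remaining $+1$ should come from an integrated positivity estimate on the Weyl-like piece of the curvature operator using the transverse primitive $(1,1)$ structure. Either way, the key technical point is identifying which curvature terms are forced to be positive by the transverse primitivity and the Sasaki--Einstein condition, and ruling out the borderline case (where the inequality could fail) by showing it would force a parallel primitive $(1,1)$-form, reducing to the harmonic case $\mu_{2,j}=0$.
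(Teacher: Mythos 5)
The paper's ``proof'' is a single citation to Proposition 4.9(iii) of Foscolo--Haskins--Nordstr\"om, so let me assess your argument on its own terms. The main gap is at the very first step: you claim that $\pi^*\phi_{2,j}$ being primitive $(1,1)$ on $C$ forces $\mathcal{L}_\xi\phi_{2,j}=0$, so that $\phi_{2,j}$ is basic and descends to $B$. That does not follow. Expanding the $(1,1)$ condition against $J_C(r\partial_r)=\xi$ does give $\iota_\xi\phi_{2,j}=0$, and primitivity gives the transverse primitivity and transverse $(1,1)$ type, but these are all \emph{pointwise} constraints; they are preserved by the Reeb flow and cannot produce a Lie-derivative identity. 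Since the Reeb flow commutes with $\Delta_F$ and preserves the hypotheses, what you actually get is a Fourier decomposition $\phi_{2,j}=\sum_k\phi^{(k)}$ with $\mathcal{L}_\xi\phi^{(k)}=ik\phi^{(k)}$, each term satisfying the hypotheses with the same $\mu_{2,j}$. Your argument addresses only the mode $k=0$.

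Moreover, for $k=0$ the Weitzenb\"ock machinery you invoke is both overkill and insufficient. On a K\"ahler surface the real primitive $(1,1)$-forms are exactly the anti-self-dual $2$-forms, so $*_B\phi_B=-\phi_B$ and hence $d_B^*\phi_B=*_Bd_B\phi_B$; thus a coclosed primitive $(1,1)$-form on $B$ is automatically closed, i.e.\ harmonic, and one reads off $\mu_{2,j}=0$ directly without any curvature estimate. (This also shows why a lower bound of the type $\mu\ge\text{const}>0$ coming from a na\"ive Bochner formula on primitive $(1,1)$-forms on $B$ cannot be correct in general: the harmonic anti-self-dual $2$-forms on $B$, which exist, are genuine counterexamples, and the curvature operator on $\Lambda^-$ involves $W^-$, which carries no sign on a K\"ahler--Einstein surface.) The content of the proposition is therefore entirely in the modes $k\ne0$, which you do not treat: there one is working with sections of $\Lambda^{1,1}_0B$ twisted by a power of the line bundle defining the Reeb fibration, and the extra $+1$ over the Lichnerowicz-type contribution comes precisely from the nonzero Reeb--Fourier mode, not from a ``positivity of the Weyl-like piece.'' That is the analysis carried out in the cited Proposition 4.9(iii) of Foscolo--Haskins--Nordstr\"om, and it is the part that is missing here.
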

\begin{proof}
It is proved in the proof of Proposition 4.9. (iii) of \cite{FoscoloHaskinsNordstorm}.
\end{proof}

The homogenous harmonic forms on $C$ can be studied using the estimates of $\mu_{p,j}$.
\begin{proposition}
Let $\gamma$ be a homogeneous harmonic 1-form on $C$ with rate in $[-3,0]$, then $\gamma=0$.
\label{Harmonic-one-form-below-zero}
\end{proposition}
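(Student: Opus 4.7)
The plan is to invoke Proposition \ref{Harmonic-form-decomposition} at $p=1$ to reduce $\gamma$ to a linear combination of the seven standard types of homogenous harmonic $1$-form, and then to use the sharp eigenvalue estimates of Propositions \ref{Obata} and \ref{Killing-metric} to show that no non-trivial summand exists when the rate $\lambda$ lies in $[-3,0]$.

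First I would dispose of types (1)--(3): type (1) is vacuous since it involves $(p-2)$-forms with $p-2=-1$, while in types (2) and (3) the defining identities $\mu_{p-1,j}=\lambda+p-2=0$ and $\mu_{p-1,j}=\lambda-p+6=0$ force $\lambda=1$ or $\lambda=-5$, both outside $[-3,0]$.

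For types (4) and (5) the eigenvalue constraints become $\mu_{0,j}=(\lambda+1)(\lambda+5)$ and $\mu_{0,j}=(\lambda-1)(\lambda+3)$ respectively. On the interval $[-3,0]$ the first quadratic takes values in $[-4,5]$ and the second in $[-4,0]$. Since $\mu_{0,j}\ge 0$ always, the only candidates lie in $[0,5]$ for type (4) and at the endpoints $\lambda\in\{-3,0\}$ for type (5). Proposition \ref{Obata} rules these out: $\mu_{0,j}=0$ forces $\phi_{0,j}$ to be constant, so $d_F\phi_{0,j}=0$ and the type (4) form is identically zero, while the case $\mu_{0,j}=0$ is explicitly excluded by the $\mu_{p-1,j}\neq 0$ hypothesis in type (5); and the interior values $\mu_{0,j}\in(0,5]$ are blocked by the strict bound $\mu_{0,j}>5$ for $j\ge 2$.

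Finally, types (6) and (7) require $\mu_{1,j}=0$ or $\mu_{1,j}=(\lambda+1)(\lambda+3)$, which on $[-3,0]$ lies in $[-1,3]$; both are incompatible with the sharp lower bound $\mu_{1,j}\ge 8$ of Proposition \ref{Killing-metric}. The argument is essentially bookkeeping once the decomposition is in hand; the main subtlety is checking the borderline rates $\lambda\in\{-3,-1,0\}$, where some of the quadratics vanish or attain their maximum, and verifying that these leak neither through the $\mu_{p-1,j}\neq 0$ hypothesis nor through the extra restriction $\lambda\neq -2$ in Proposition \ref{Harmonic-form-decomposition}. Once that check is done each case collapses and $\gamma\equiv 0$.
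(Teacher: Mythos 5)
Your proof is correct and takes exactly the paper's intended approach: apply the decomposition of Proposition \ref{Harmonic-form-decomposition} at $p=1$ and eliminate each of the seven types using $\mu_{0,j}>5$ for $j\ge 2$ (Proposition \ref{Obata}) and $\mu_{1,j}\ge 8$ (Proposition \ref{Killing-metric}); the paper merely cites these three propositions without writing out the bookkeeping. One small inaccuracy worth noting but harmless: in type (4) with $\mu_{0,j}=0$ the form would be $r^{\lambda}(\lambda+1)\,dr\wedge\phi_{0,j}$, which is not identically zero for $\lambda\neq -1$, but this case is already excluded by the hypothesis $\mu_{p-1,j}\neq 0$, so it never needs to be considered.
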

\begin{proof}
The is an immediate corollary of Proposition \ref{Harmonic-form-decomposition}, Proposition \ref{Obata}, and Proposition \ref{Killing-metric}.
\end{proof}

Recall the following theorem essentially due to Cheeger-Tian:
\begin{proposition}
(Theorem 7.27 of \cite{CheegerTian}, see also Lemma 2.17 of \cite{HeinSun}). Let $\gamma$ be a homogeneous 1-form on $C$ with rate in $(0,1]$. Then $\gamma$ is harmonic if and only if, up to linear combinations, either $\gamma=d(r^{\sqrt{\mu_{0,j}+4}-2}\phi_{0,j})$ with $\mu_{0,j}\in(5,12]$ or $\gamma=r^2\phi_{1,j}$, where $\mu_{1,j}=8$ or $\gamma=rdr$.
\label{Cheeger-Tian}
\end{proposition}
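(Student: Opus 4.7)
The plan is to specialize Proposition \ref{Harmonic-form-decomposition} to $p=1$, list the seven types of homogeneous harmonic $1$-forms, and then prune using the spectral bounds in Propositions \ref{Obata} and \ref{Killing-metric}. Case (1) is vacuous since there are no $(-1)$-forms on $F$. Cases (2)--(3) only apply when $\phi_{0,j}$ is constant (so $\phi_{0,1}=1$ by Proposition \ref{Obata}); the weight conditions $\lambda-1=0$ or $\lambda+5=0$ then select $\lambda=1$, producing $\gamma=r\,dr$, or $\lambda=-5$, which lies outside $(0,1]$.

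Next I would handle case (4), which yields the exact forms $d(r^{\lambda+1}\phi_{0,j})$ constrained by $\mu_{0,j}=(\lambda+1)(\lambda+5)$. Setting $a=\lambda+1$, this becomes $(a+2)^2=\mu_{0,j}+4$, so $\lambda=\sqrt{\mu_{0,j}+4}-3$, and the constraint $\lambda\in(0,1]$ translates into $\mu_{0,j}\in(5,12]$; by Proposition \ref{Obata}, every nonconstant eigenfunction has $\mu_{0,j}>5$, so the whole interval is admissible, giving exactly the forms $d(r^{\sqrt{\mu_{0,j}+4}-2}\phi_{0,j})$ in the statement. Case (5) requires $\mu_{0,j}=(\lambda-1)(\lambda+3)$ to be a nonzero eigenvalue, but this expression is non-positive on $(0,1]$, so it contributes nothing.

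Finally, case (6) demands a harmonic $1$-form on $F$, which Proposition \ref{Killing-metric} rules out, and case (7) produces $\gamma=r^{\lambda+1}\phi_{1,j}$ with $\mu_{1,j}=(\lambda+1)(\lambda+3)$. Imposing $\lambda\in(0,1]$ gives $\mu_{1,j}\in(3,8]$, while Proposition \ref{Killing-metric} forces $\mu_{1,j}\geq 8$, so the only surviving possibility is $\mu_{1,j}=8$ and $\lambda=1$, producing $\gamma=r^{2}\phi_{1,j}$. Collecting the surviving cases reproduces the three families in the statement. The argument is largely bookkeeping; the one place where care is needed is matching the parametrization in Proposition \ref{Harmonic-form-decomposition} against the closed-form exponent $\sqrt{\mu_{0,j}+4}-2$ in the conclusion, and checking that the branch of the square root is the one consistent with $\lambda>0$.
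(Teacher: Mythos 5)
Your proof is correct. A key structural point: the paper does not actually prove Proposition \ref{Cheeger-Tian} --- it quotes it as an external result (Theorem 7.27 of Cheeger--Tian, Lemma 2.17 of Hein--Sun), merely remarking afterward that the statement has been ``adjusted to the strongly regular Calabi--Yau cone case.'' What you have done instead is derive it internally from Proposition \ref{Harmonic-form-decomposition} together with the spectral bounds of Propositions \ref{Obata} and \ref{Killing-metric}. This is precisely the method the paper uses for the companion range $\lambda\in[-3,0]$ in Proposition \ref{Harmonic-one-form-below-zero}, so your argument extends that computation to $(0,1]$ rather than outsourcing it. The case analysis checks out: case (1) is vacuous at $p=1$; cases (2)--(3) force $\phi_{0,j}$ constant with $\lambda\in\{1,-5\}$, yielding only $r\,dr$; case (4) gives the exact forms with $(\lambda+1)(\lambda+5)=\mu_{0,j}$, and the positive root of $(\lambda+3)^2=\mu_{0,j}+4$ plus $\lambda\in(0,1]$ is exactly $\mu_{0,j}\in(5,12]$ and exponent $\sqrt{\mu_{0,j}+4}-2$; case (5) has $(\lambda-1)(\lambda+3)\le 0$ on $(0,1]$, contradicting $\mu_{0,j}>0$; case (6) is excluded both by $\lambda=-1$ and by $\mu_{1,j}\ge 8$; and case (7) pins down $(\lambda+1)(\lambda+3)\in(3,8]$, which against $\mu_{1,j}\ge 8$ leaves only $\lambda=1$, $\mu_{1,j}=8$, i.e.\ $r^2\phi_{1,j}$. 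The trade-off is that the paper's citation is shorter but opaque (the reader must reconcile the cited statements with the cone normalization here), while your version makes the dependence on the eigenvalue estimates explicit and self-contained.
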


Remark that Proposition \ref{Cheeger-Tian} has been adjusted to the strongly regular Calabi-Yau cone case.

\begin{lemma}
Choose $r\phi_{1,1}=e^1=-Je^0=-Jdr$. Then $\mu_{1,1}=8$. If $\mu_{1,j}=8$, then there exists a constant $k_j$ such that \[L_{r^2(\phi_{1,j}-k_j\phi_{1,1})^\#}\omega
=L_{r^2(\phi_{1,j}-k_j\phi_{1,1})^\#}\mathrm{Re}\Omega
=L_{r^2(\phi_{1,j}-k_j\phi_{1,1})^\#}\mathrm{Im}\Omega=0.\]
\label{Killing-SU(3)}
\end{lemma}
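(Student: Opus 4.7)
My plan is to identify $V_j := r^2\phi_{1,j}^\#$ as a holomorphic Killing vector field on $C$ and then extract from the Calabi--Yau equation the fact that $L_{V_j}\Omega$ is a purely imaginary constant multiple of $\Omega$; the imaginary constant is then cancelled by subtracting an appropriate multiple of the Reeb-type field $V_1$.

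The first assertion, $\mu_{1,1}=8$, is a direct Weitzenb\"ock computation. Using the cone metric $g_C = dr^2 + r^2 g_F$, the metric dual of $-Jdr$ is $-J\partial_r$, so $r^2\phi_{1,1}^\# = -J(r\partial_r)$, which is (up to sign) the Reeb vector field $\xi$ of the K\"ahler cone. Since $\xi$ is Killing on any strongly regular CY cone, its restriction to $F$ has dual $1$-form a Killing $1$-form, and the Weitzenb\"ock identity $\Delta\eta = \nabla^*\nabla\eta + \mathrm{Ric}(\eta)$ together with $\mathrm{Ric}_F = 4\,g_F$ (coming from Ricci-flatness of $g_C$ and $\dim_{\mathbb{R}} F = 5$) give $\Delta_F\eta = 8\eta$, so $\mu_{1,1}=8$.

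Now suppose $\mu_{1,j}=8$, so $V_j := r^2\phi_{1,j}^\#$ is Killing on $C$ by Proposition~\ref{Killing-metric}. My first substantial step is to upgrade ``Killing'' to ``holomorphic Killing'', i.e.\ $L_{V_j}J = 0$, or equivalently $L_{V_j}\omega = 0$. The vector field $V_j$ has no radial component and no $r$-dependence, so it commutes with $r\partial_r$ and projects to a Killing vector field on the K\"ahler--Einstein base $B$ of the strongly regular cone (Definition~\ref{Definition-strongly-regular-cone}); by Matsushima's theorem on the compact K\"ahler--Einstein manifold $B$, this projection is holomorphic, and lifting back to $C$ (adjusting by a multiple of the Reeb field if necessary) shows that $V_j$ is itself holomorphic Killing on $C$. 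In particular $L_{V_j}\omega = L_{V_1}\omega = 0$, which yields the first identity of the lemma for every choice of $k_j$.

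With $V_j$ holomorphic, $L_{V_j}\Omega$ is a holomorphic $(n,0)$-form, hence $L_{V_j}\Omega = f_j\,\Omega$ for some holomorphic function $f_j$. Using the Calabi--Yau equation $\omega^n/n! = (i^{n^2}/2^n)\,\Omega\wedge\bar\Omega$ together with $L_{V_j}\omega = 0$, I get $(f_j+\bar f_j)\,\Omega\wedge\bar\Omega = 0$, forcing $\mathrm{Re}(f_j)\equiv 0$. A holomorphic function with identically vanishing real part must be a purely imaginary constant, so $f_j = ic_j$ with $c_j\in\mathbb{R}$. The same reasoning applied to $V_1 = -\xi$ gives $L_{V_1}\Omega = ic_1\Omega$; strong regularity forces $c_1 \ne 0$, because $\Omega$ has a definite nonzero weight under the Reeb $U(1)$-action (it is a rational multiple of the pull-back of a section of a tensor power of $-K_B$). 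Setting $k_j := c_j/c_1$ yields $L_{V_j - k_j V_1}\Omega = i(c_j - k_jc_1)\Omega = 0$, and taking real and imaginary parts finishes the proof.

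The main obstacle is the upgrade from Killing to holomorphic Killing in the third paragraph: Matsushima's theorem requires compactness, which the cone $C$ lacks, so the argument must exploit the fibration structure to descend to the compact K\"ahler--Einstein base $B$ where Matsushima applies, and then lift back. Once this is in hand, the remainder is routine bookkeeping with the Calabi--Yau equation and the identification of the Reeb weight of $\Omega$.
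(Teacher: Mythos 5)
Your overall skeleton agrees with the paper's once you have $L_{V_j}J=0$ in hand: both proofs then deduce $L_{V_j}\Omega=ic_j\Omega$ from the unit-length/Calabi--Yau normalization, compute $c_1\ne 0$ from the homogeneity weight of $\Omega$, and subtract $k_jV_1$. The difference, and the problem, is entirely in how you get $L_{V_j}J=0$.

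Your route via Matsushima has a genuine gap. You argue that because $V_j$ has no radial component and no $r$-dependence it commutes with $r\partial_r$ and therefore projects to a Killing field on the K\"ahler--Einstein base $B$. But the quotient map $C\setminus\{o\}\to B$ has $2$-real-dimensional fibers, spanned by $r\partial_r$ \emph{and} the Reeb field $\xi=J(r\partial_r)$. Commuting with $r\partial_r$ alone does not make $V_j$ projectable; you also need $[V_j,\xi]=0$, equivalently that the flow of $V_j$ preserves the contact form $\eta=\phi_{1,1}$, and you never establish this. (It can fail for Killing fields on Sasaki manifolds in general --- on the round sphere, for example --- and although one expects it to hold whenever $\mathrm{Hol}(g_C)=\mathrm{SU}(3)$, that expectation has to be proved, not assumed.) ``Adjusting by a multiple of the Reeb field'' does not repair this: if $V_j$ is not $\pi$-related to anything on $B$, there is no lift to compare against.

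The clean fix is exactly the paper's holonomy argument, which sidesteps the base $B$ entirely. Since $\mathrm{Hol}(g_C)=\mathrm{SU}(3)$, the space of $g_C$-parallel $2$-forms of unit length is $\{\pm\omega\}$. The flow $e^{sV_j}$ is a one-parameter group of isometries, so $(e^{sV_j})^*\omega$ is again unit-length parallel, hence equal to $\pm\omega$; connectedness at $s=0$ forces $(e^{sV_j})^*\omega=\omega$ for all $s$, i.e.\ $L_{V_j}\omega=0$ and hence $L_{V_j}J=0$. In particular $V_j$ then automatically preserves $\xi=J(r\partial_r)$, so $[V_j,\xi]=0$ is a \emph{consequence} of this argument rather than an independent input. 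Once you have this, your paragraphs deducing $L_{V_j}\Omega=ic_j\Omega$ (via holomorphy of $L_{V_j}\Omega$, nowhere-vanishing of $\Omega$, and the Calabi--Yau equation killing $\mathrm{Re}\,f_j$) are correct and essentially coincide with the paper's observation that the flow rotates $\Omega$ by a phase. The net effect is that the appeal to Matsushima is both unjustified and unnecessary: the holonomy argument you must use to close the gap already yields $L_{V_j}J=0$ directly.

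One smaller remark: your Weitzenb\"ock derivation of $\mu_{1,1}=8$ and your ``Reeb weight'' argument for $c_1\ne 0$ are both fine, though the paper instead verifies $k_1=1$ by the one-line computation $L_{V_1}\mathrm{Re}\,\Omega=d(r\partial_r\lrcorner\mathrm{Im}\,\Omega)=3\,\mathrm{Im}\,\Omega$, which is more explicit and gives the precise constant rather than just its nonvanishing.
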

\begin{proof}
By Proposition \ref{Killing-metric}, $r^2\phi_{1,j}^\#$ preserves the metric $g_C$. Let $e^{sr^2\phi_{1,j}^\#}$ be the one-parameter subgroup generated by $r^2\phi_{1,j}^\#$, then $e^{sr^2\phi_{1,j}^\#}$ preserves the metric $g_C$. Since $\omega$, $\Omega$ are parallel unit-length forms on $C$, $(e^{sr^2\phi_{1,j}^\#})^*\omega$ and $(e^{sr^2\phi_{1,j}^\#})^*\Omega$ are also parallel. The holonomy group of $g_C$ equals to $\mathrm{SU}(3)$ instead of a proper subgroup of $\mathrm{SU}(3)$, so the only unit-length parallel 2-forms are $\pm\omega$. By continuity, $(e^{sr^2\phi_{1,j}^\#})^*(\omega)=\omega$ for all $s$. So $L_{r^2\phi_{1,j}^\#}\omega=0$. Any unit-length parallel 3-form must be $e^{i\theta_j(s)}\Omega$. So after differentiating, there exists a constant $k_j$ such that $L_{r^2\phi_{1,j}^\#}\mathrm{Re}\Omega=3k_j\mathrm{Im}\Omega$ and $L_{r^2\phi_{1,j}^\#}\mathrm{Im}\Omega=-3k_j\mathrm{Re}\Omega$.
When $j=1$, $r^2\phi_{1,1}^\#=rJ\frac{\partial}{\partial r}$. So \[L_{r^2\phi_{1,1}^\#}\mathrm{Re}\Omega=d(r^2\phi_{1,1}^\#\lrcorner\mathrm{Re}\Omega)=d(r\frac{\partial}{\partial r}\lrcorner\mathrm{Im}\Omega)=3\mathrm{Im}\Omega.\]
Similarly $L_{r^2\phi_{1,1}^\#}\mathrm{Im}\Omega=-3\mathrm{Re}\Omega$. So \[L_{r^2(\phi_{1,j}-k_j\phi_{1,1})^\#}\omega
=L_{r^2(\phi_{1,j}-k_j\phi_{1,1})^\#}\mathrm{Re}\Omega
=L_{r^2(\phi_{1,j}-k_j\phi_{1,1})^\#}\mathrm{Im}\Omega=0.\]
\end{proof}

\begin{proposition}
A homogenous 1-form $\gamma$ with rate $\lambda\in[-3,1]$ is harmonic if and only if up to linear combinations, either

(1) $\gamma=d_C(r^{\sqrt{\mu_{0,j}+4}-2}\phi_{0,j})$ with $\mu_{0,j}\in(5,12]$,

(2) $\gamma=d_C r^2$,

(3) $\gamma=(d_C r^2)^\#\lrcorner\omega$, $d_C\gamma=4\omega$ and $d_C((d_C r^2)^\#\lrcorner\mathrm{Re}\Omega)=6\mathrm{Re}\Omega$
or

(4) $\gamma=(d_C(r^2\phi_{0,j}))^\#\lrcorner\omega$ with $\mu_{0,j}=12$, $\lambda=1$ and $d_C((d_C r^2\phi_{0,j})^\#\lrcorner\mathrm{Re}\Omega)$ equals to a linear combination of $\mathrm{Re}\Omega$ and $\mathrm{Im}\Omega$.
\label{Harmonic-one-form}
\end{proposition}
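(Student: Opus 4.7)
The plan is to combine Propositions \ref{Harmonic-one-form-below-zero}, \ref{Cheeger-Tian}, \ref{Harmonic-form-decomposition}, and Lemma \ref{Killing-SU(3)}. By Proposition \ref{Harmonic-one-form-below-zero} every homogeneous harmonic $1$-form of rate in $[-3,0]$ vanishes, so I only need to classify such forms of rate in $(0,1]$. Proposition \ref{Cheeger-Tian} exhibits every one of these as a linear combination of (a) $d(r^{\sqrt{\mu_{0,j}+4}-2}\phi_{0,j})$ with $\mu_{0,j}\in(5,12]$, (b) $r^2\phi_{1,j}$ with $\mu_{1,j}=8$, or (c) $r\,dr$. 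Family (a) is already case (1) and (c) equals $\tfrac12 d_C r^2$, matching case (2); the remaining work is to absorb family (b) into cases (3) and (4).

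I would first handle $j=1$ in family (b). Lemma \ref{Killing-SU(3)} fixes $r\phi_{1,1}=-J\,dr$; a direct K\"ahler-cone computation using $J(r\partial_r)=\xi$ (the Reeb) then gives $(d_Cr^2)^\#\lrcorner\omega=2r\partial_r\lrcorner\omega=-2rJ\,dr=2\,r^2\phi_{1,1}$, so this member of (b) is, up to scalar, the form in case (3). The identities $d_C\gamma=4\omega$ and $d_C((d_Cr^2)^\#\lrcorner\mathrm{Re}\Omega)=6\mathrm{Re}\Omega$ then follow from Cartan's formula together with the homogeneities $L_{r\partial_r}\omega=2\omega$ and $L_{r\partial_r}\mathrm{Re}\Omega=3\mathrm{Re}\Omega$ intrinsic to a Calabi--Yau cone of complex dimension $3$.

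For $j\geq 2$ with $\mu_{1,j}=8$, set $V_j:=r^2(\phi_{1,j}-k_j\phi_{1,1})^\#$. By Lemma \ref{Killing-SU(3)} $V_j$ is a holomorphic Killing vector field preserving $\omega,\mathrm{Re}\Omega,\mathrm{Im}\Omega$, so $V_j\lrcorner\omega$ is closed and of rate $1$; integrate it to a real rate-$2$ function $h_j$ with $dh_j=V_j\lrcorner\omega$. The K\"ahler identity $L_{J\nabla h}\Omega=\tfrac{i}{2}(\Delta h)\Omega$ combined with $L_{V_j}\Omega=0$ forces $\Delta_C h_j=0$. By Proposition \ref{Harmonic-form-decomposition}(7) at $p=0,\lambda=2$, harmonic rate-$2$ functions are spanned by $r^2\phi_{0,l}$ with $\mu_{0,l}=(2)(2+4)=12$, so $h_j$ is such a linear combination. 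Unwinding $V_j^\flat=(d_C h_j)^\#\lrcorner\omega$ then writes $r^2(\phi_{1,j}-k_j\phi_{1,1})$ as a linear combination of case (4) forms. Finally, in case (4) itself, $\nabla h=-JV$ together with $L_V\Omega=0$ and the identity $L_{JV}\mathrm{Re}\Omega=-L_V\mathrm{Im}\Omega$ gives $d_C((d_Cr^2\phi_{0,j})^\#\lrcorner\mathrm{Re}\Omega)=L_{\nabla h}\mathrm{Re}\Omega=0$, which is trivially a linear combination of $\mathrm{Re}\Omega$ and $\mathrm{Im}\Omega$.

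The hardest step will be the second half of family (b): globally integrating $V_j\lrcorner\omega$ to a well-defined potential $h_j$ on $C$ and locking its eigenvalue at $\mu_{0,l}=12$. Global integrability follows because the closed rate-$1$ form $V_j\lrcorner\omega$ lies in the image of $d$ acting on rate-$2$ functions, which is either direct weighted analysis or a consequence of $H^1(C,\mathbb{R})=0$ on a strongly regular CY cone. The eigenvalue identification rests on the K\"ahler identity translating $L_{V_j}\Omega=0$ into harmonicity of $h_j$, which combined with rate-$2$ homogeneity places $h_j$ uniquely into category (7) of Proposition \ref{Harmonic-form-decomposition}.
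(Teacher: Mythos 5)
The proposal is essentially correct and reaches the same classification, but the handling of the residual family $\{r^2\phi_{1,j}\}$ (i.e.\ type~(b) after applying Propositions~\ref{Harmonic-one-form-below-zero} and~\ref{Cheeger-Tian}) follows a genuinely different route than the paper's. The paper observes that $Jr^2\phi_{1,j}$ is again a harmonic homogeneous $1$-form of rate~$1$, applies Proposition~\ref{Cheeger-Tian} a second time, and then kills the residual $r^2\phi_{1,j''}$ summands via the closedness of $(r^2\phi_{1,j''})^\#\lrcorner\omega$ coming from $L_{r^2\phi_{1,j''}^\#}\omega=0$. You instead integrate $V_j\lrcorner\omega$ (closed by Lemma~\ref{Killing-SU(3)}) to a homogeneous rate-$2$ potential $h_j$, show $\Delta_C h_j=0$ from $L_{V_j}\Omega=0$, and then identify $h_j$ via Proposition~\ref{Harmonic-form-decomposition}(7) at $(p,\lambda)=(0,2)$. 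This is a cleaner, more structural argument: you avoid iterating Cheeger--Tian and instead exploit the classification of rate-$2$ harmonic functions, which is a stronger and more immediately usable input. The paper's route uses only one application of Lemma~\ref{Killing-SU(3)} and no integration step, so is slightly shorter at the cost of being less transparent about why case (4) takes the precise form it does.

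Two small points worth flagging. First, the identity $L_{J\nabla h}\Omega=\tfrac{i}{2}(\Delta h)\Omega$ as stated is only the $(3,0)$ part of $L_{J\nabla h}\Omega$; there is in general a nonvanishing $(2,1)$ term $i\bar\partial(\nabla^{1,0}h\lrcorner\Omega)$ that vanishes only when $\nabla^{1,0}h$ is holomorphic. Your conclusion $\Delta_C h_j=0$ from $L_{V_j}\Omega=0$ is still correct (the hypothesis forces both pieces to vanish, or equivalently one can argue via $L_{\nabla h_j}(\Omega\wedge\bar\Omega)=0$, hence $L_{\nabla h_j}\omega^3=0$, hence $\operatorname{div}\nabla h_j=0$), but the identity should not be quoted as a general K\"ahler identity without the caveat. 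Second, your claim $d_C((d_C h_j)^\#\lrcorner\mathrm{Re}\Omega)=0$ is stronger than what case (4) asserts; it holds for the specific $h_j$ built from the $\Omega$-preserving Killing field $V_j$, and is indeed consistent with the stated ``linear combination of $\mathrm{Re}\Omega$ and $\mathrm{Im}\Omega$,'' but you should be explicit that the vanishing is being proved only for those $h_j$ arising in your decomposition, not for arbitrary $\phi_{0,j}$ with $\mu_{0,j}=12$. The part of the normalization $k_j\phi_{1,1}$ landing in case (3) is implicit in your write-up and should be spelled out.
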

\begin{proof}
By Proposition \ref{Harmonic-one-form-below-zero} and Proposition \ref{Cheeger-Tian}, up to linear combinations, either (1) or (2) holds or \[\gamma=r^2\phi_{1,j}
=(Jr^2\phi_{1,j})^\#\lrcorner\omega\] with $\mu_{1,j}=8$. Remark that \[d_C((Jr^2\phi_{1,j})^\#\lrcorner\mathrm{Re}\Omega)=d_C((r^2\phi_{1,j})^\#\lrcorner\mathrm{Im}\Omega)\] is a multiple of $\mathrm{Re}\Omega$ by Proposition \ref{Killing-SU(3)}.

The 1-form $r^2\phi_{1,j}$ is harmonic. By the $\mathrm{SU}(3)$ structure, $Jr^2\phi_{1,j}$ is also a harmonic homogenous 1-form of rate 1. By Lemma \ref{Cheeger-Tian}, $Jr^2\phi_{1,j}$ is a linear combination of $d_C r^2=2rdr$, $d_C(r^2\phi_{0,j'})$ with $\mu_{0,j'}=12$, and $r^2\phi_{1,j''}$ with $\mu_{1,j''}=8$. By Proposition \ref{Killing-SU(3)}, $d_C((r^2\phi_{1,j''})^\#\lrcorner\omega)=0$ and $d_C((r^2\phi_{1,j''})^\#\lrcorner\mathrm{Re}\Omega)$ is a multiple of $\mathrm{Im}\Omega$. Since a linear combination of $r^2\phi_{1,j'''}$ is closed if and only if it is 0, by Lemma \ref{Cheeger-Tian}, $(r^2\phi_{1,j''})^\#\lrcorner\omega$ equals to a linear combination of forms in (1) and (2).
\end{proof}

\begin{proposition}
A homogenous 2-form $\gamma$ with rate $\lambda\in[-3,1]$ is harmonic if and only if up to linear combinations, either

(1) $\gamma=\phi_{2,1}$ with $\lambda=-2$,

(2) $\gamma=d_C(r^{\lambda+2}\phi_{1,j})$ with $\lambda\in[0,1]$ and $\mu_{1,j}=(\lambda+2)(\lambda+4)$,

(3) $\gamma=\omega$ with $\lambda=0$,

(4) $\gamma=rdr^\#\lrcorner\mathrm{Re}\Omega$ with $\lambda=1$,

(5) $\gamma=r^2\phi_{1,j}^\#\lrcorner\mathrm{Re}\Omega$ with $\lambda=1$,
or

(6) \[\gamma=(d_C(r^{\lambda+2}\phi_{0,j}))^\#\lrcorner\mathrm{Re}\Omega\] with $\lambda\in(0,1]$ and $\mu_{0,j}=(\lambda+3)^2-4\in(5,12]$.
\label{Harmonic-two-form}
\end{proposition}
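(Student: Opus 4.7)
The plan is to apply Proposition \ref{Harmonic-form-decomposition} with $p=2$, obtaining seven candidate classes of homogeneous harmonic 2-forms indexed by link eigenforms $\phi_{0,j}$, $\phi_{1,j}$, $\phi_{2,j}$, and then use the eigenvalue bounds of Propositions \ref{Obata}, \ref{Killing-metric}, \ref{Primitive-one-one} together with the rate window $\lambda\in[-3,1]$ to prune most of them. The surviving forms will be matched to items $(1)$--$(6)$ after rewriting via the parallel Calabi--Yau structure $(\omega_C,\Omega_C)$.

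First I would read off the rate/eigenvalue relations for the seven classes of Proposition \ref{Harmonic-form-decomposition}(1)--(7) at $p=2$, giving $\mu_{0,j}=\lambda(\lambda+4)$, $\mu_{1,j}=\lambda=0$, $\mu_{1,j}=\lambda+4=0$, $\mu_{1,j}=(\lambda+2)(\lambda+4)$, $\mu_{1,j}=\lambda(\lambda+2)$, $\mu_{2,j}=0$ at $\lambda=-2$, and $\mu_{2,j}=\lambda(\lambda+2)$ respectively. Because $\lambda(\lambda+4)\le 5$ and $\lambda(\lambda+2)\le 3$ on $[-3,1]$, the bound $\mu_{0,j}>5$ of Proposition \ref{Obata} eliminates the first class, and the bound $\mu_{1,j}\ge 8$ of Proposition \ref{Killing-metric} rules out the second, third, and fifth. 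The fourth class survives precisely on $\lambda\in[0,1]$, yielding item $(2)$; its subcase $\phi_{1,j}=\phi_{1,1}$, $\lambda=0$ produces $\omega_C$ through the identity $d_C(r^2\phi_{1,1})=-\omega_C$ (immediate from Lemma \ref{Killing-SU(3)}), which is item $(3)$. The sixth class supplies item $(1)$ via the harmonic 2-forms on the link $F\simeq\mathbb{S}^2\times\mathbb{S}^3$.

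The seventh class consists of coclosed-but-not-closed forms $r^{\lambda+2}\phi_{2,j}$ with $\mu_{2,j}=\lambda(\lambda+2)\in(0,3]$. Here I would decompose $\phi_{2,j}$ on the Sasaki--Einstein link into its $(2,0)+(0,2)$, primitive $(1,1)$, and Kähler-direction summands; Proposition \ref{Primitive-one-one} kills the primitive $(1,1)$ part since $\mu\in(0,3]$ forbids both $\mu=0$ and $\mu\ge 9$, and any Kähler-direction contribution must be a multiple of $\omega_C$, already recorded in item $(3)$. For the $(2,0)+(0,2)$ summand I would invoke the parallel map $\beta\mapsto \beta^{\sharp}\lrcorner\mathrm{Re}\,\Omega_C$ from real 1-forms onto real $(2,0)+(0,2)$ 2-forms. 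A short computation using $\mathcal{L}_{r\partial_r}g_C=2g_C$ and $\mathcal{L}_{r\partial_r}\Omega_C=3\Omega_C$ yields $[r\partial_r,\beta^{\sharp}]=(\lambda-1)\beta^{\sharp}$ when $\beta$ has rate $\lambda$, so this map preserves the homogeneous rate; since $\Omega_C$ is parallel it also intertwines the Hodge Laplacian. Harmonic $(2,0)+(0,2)$ 2-forms of rate $\lambda$ are therefore in bijection with harmonic 1-forms of rate $\lambda$, which Proposition \ref{Harmonic-one-form} classifies as spanned by $d_Cr^2$, $r^2\phi_{1,j}$ with $\mu_{1,j}=8$, and $d_C(r^{\sqrt{\mu_{0,j}+4}-2}\phi_{0,j})$ with $\mu_{0,j}\in(5,12]$; these produce exactly items $(4)$, $(5)$, and $(6)$ after reparametrizing the exponent in terms of $\lambda$.

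The hard part will be the type decomposition on the Sasaki link: cleanly isolating the $(2,0)+(0,2)$, primitive $(1,1)$, and Kähler-direction summands of an arbitrary $\phi_{2,j}$, and showing that the Kähler-direction piece of any coclosed-but-not-closed homogeneous eigenform of rate in $[-3,1]$ must in fact be proportional to $\omega_C$. A secondary technical difficulty is verifying surjectivity of $\beta\mapsto \beta^{\sharp}\lrcorner\mathrm{Re}\,\Omega_C$ onto the relevant spaces of harmonic 2-forms at the rate endpoints $\lambda=\pm 3$ and $\lambda=1$, where several harmonic 1-form eigenspaces accumulate and care is needed to ensure each contributes a distinct item in the final list.
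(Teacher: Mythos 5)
Your overall strategy -- feed $p=2$ into Proposition~\ref{Harmonic-form-decomposition}, prune with Propositions~\ref{Obata}, \ref{Killing-metric}, \ref{Primitive-one-one}, and translate the $(2,0)+(0,2)$ piece into homogeneous harmonic 1-forms through $\beta\mapsto\beta^{\#}\lrcorner\mathrm{Re}\,\Omega_C$ -- is the paper's strategy. But two things need fixing.

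First, there is an arithmetic slip in the rate/eigenvalue relation for class~(7). For $p=2$, Proposition~\ref{Harmonic-form-decomposition}~(7) gives $\mu_{2,j}=(\lambda+p)(\lambda-p+4)=(\lambda+2)^2$, not $\lambda(\lambda+2)$. On $\lambda\in(-2,1]$ this ranges over $(0,9]$, not the $(0,3]$ you wrote. Consequently your claim that Proposition~\ref{Primitive-one-one} ``kills the primitive $(1,1)$ part since $\mu\in(0,3]$ forbids both $\mu=0$ and $\mu\ge 9$'' is not justified at the endpoint $\lambda=1$, where $\mu_{2,j}=9$ sits exactly on the boundary of Proposition~\ref{Primitive-one-one}; you need to either sharpen that bound or handle $\lambda=1$ by a separate argument. (The same slip would also propagate into your identification of item~(4), where $\gamma=r\partial_r\lrcorner\mathrm{Re}\,\Omega_C=r^3A$ must have $\mu_{2,j}=9$, not $3$.) The paper's own write-up of this point is terse, but your version rests on a wrong formula and the ensuing reasoning is not correct as stated.

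Second, the place where you perform the SU(3)-type split is different from the paper, and yours creates the difficulties you flag as ``the hard part.'' You propose to decompose the link eigenform $\phi_{2,j}$ on the Sasaki--Einstein $F$, and then wrestle with Reeb/transverse bookkeeping. The paper instead decomposes the homogeneous harmonic 2-form $\gamma$ on the cone $C$ into its $(2,0)+(0,2)$, multiple-of-$\omega_C$, and primitive $(1,1)$ pieces at the outset, using the parallel $\mathrm{SU}(3)$ structure on $C$. Because $\omega_C$ and $\Omega_C$ are parallel, the type projections commute with $\nabla$, hence with $\Delta$, $d$, $d^*$, and with $\mathcal{L}_{r\partial_r}$; so each piece is again a homogeneous harmonic 2-form and can be analyzed separately. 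The multiple-of-$\omega_C$ piece is then a homogeneous harmonic function times $\omega_C$, and $\mu_{0,j}>5$ forces the function to be constant, giving item~(3) directly; the $(2,0)+(0,2)$ piece is exactly in bijection with homogeneous harmonic 1-forms by your map $\beta\mapsto\beta^{\#}\lrcorner\mathrm{Re}\,\Omega_C$, which Propositions~\ref{Harmonic-one-form-below-zero} and~\ref{Cheeger-Tian} classify, producing items~(4)--(6); and the seven-type decomposition is applied only to the residual primitive $(1,1)$ piece. This avoids the Sasaki-level type decomposition of individual $\phi_{2,j}$'s entirely, and it also disentangles your confused remark that the Kähler-direction piece of a type~(7) (coclosed but not closed) form ``must be a multiple of $\omega_C$'': $\omega_C$ is closed, so in fact the multiple-of-$\omega_C$ part of a type~(7) summand simply vanishes. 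I would recommend restructuring along the paper's lines: split by SU(3)-type first, apply Proposition~\ref{Harmonic-form-decomposition} with the corrected $\mu_{2,j}=(\lambda+2)^2$ to the primitive $(1,1)$ piece, and reserve the endpoint $\lambda=1$ (and $\mu_{2,j}=9$) for explicit discussion.
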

\begin{proof}
There are two ways of decomposing homogenous harmonic 2-forms. The first way is to decompose it as in Proposition \ref{Harmonic-form-decomposition}. The other way is to decompose it into (2,0), (0,2), multiple of $\omega$ and primitive (1,1) components. Assume that $\gamma=\gamma'+\gamma''+\gamma'''$, where $\gamma'$ is a linear combination of (2,0), (0,2), multiple of $\omega$ homogenous harmonic forms, $\gamma''$ is a linear combination of all homogenous harmonic forms in Proposition \ref{Harmonic-form-decomposition} except the type (7), and $\gamma'''$ is primitive (1,1) form of type (7) in Proposition \ref{Harmonic-form-decomposition}. By Proposition \ref{Primitive-one-one}, $\gamma'''=0$. It is easy to see that only the type (4) and type (6) components of $\gamma''$ in Proposition \ref{Harmonic-form-decomposition} may be non-zero. They correspond to $\phi_{2,1}$ and $d_C(r^{\lambda+2}\phi_{1,j})$.

By the $\mathrm{SU}(3)$ structure, the multiple of $\omega$ component of $\gamma'$ equals to a homogenous harmonic function of rate $\lambda$ times $\omega$. By Proposition \ref{Harmonic-form-decomposition}, it equals to a constant multiple of $\omega$ because $\mu_{0,2}>5$. Still by the $\mathrm{SU}(3)$ structure, the (2,0) and (0,2) component must be the contraction of the metric dual of a homogenous harmonic 1-form of rate $\lambda$ with $\mathrm{Re}\Omega$. By Proposition \ref{Harmonic-one-form-below-zero} and Lemma \ref{Cheeger-Tian}, it must be a linear combination of $r^2\phi_{1,j}$ with $\mu_{1,j}=8$, $d_C(r^{\sqrt{\mu_{0,j'}+4}-2}\phi_{0,j'})$ with $\mu_{0,j'}=(\lambda+3)^2-4\in(5,12]$, and $rdr$.
\end{proof}

\begin{corollary}
Suppose that $\gamma_2$ and $\gamma_3$ are homogenous 2-form and 3-form with same rate $\lambda\in(-2,0]$ on $C$. Then $\gamma_2$ and $\gamma_3$ are both closed and coclosed if and only if for the 3-form $\gamma$ defined as $d\theta\wedge\gamma_2+\gamma_3$ on $C\times \mathbb{S}^1$, up to linear combinations, either

(1) $\gamma=\varphi=\mathrm{Re}\Omega+d\theta\wedge\omega$ with $\lambda=0$,

(2) $\gamma=d_C((d_C r^2)^\#\lrcorner\varphi)=6\mathrm{Re}\Omega+4d\theta\wedge\omega$ with $\lambda=0$,

(3) $\gamma=d_C(J(d_C r^2)^\#\lrcorner\varphi)=6\mathrm{Im}\Omega$ with $\lambda=0$, or

(4) \[\gamma=d_C((d_C(r^{\lambda+2}\phi_{0,j}))^\#\lrcorner\varphi)\] with $\lambda\in(-1,0]$ and $\mu_{0,j}=(\lambda+4)^2-4\in(5,12]$.
\label{Two-form-and-three-form-representation}
\end{corollary}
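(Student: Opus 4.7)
The plan is to separate the closed-and-coclosed condition into two independent conditions on $C$ via the product structure, then use the classifications in Propositions \ref{Harmonic-form-decomposition}, \ref{Harmonic-two-form}, \ref{Harmonic-one-form} and the $\mathrm{SU}(3)$-equivariance lemmas of Section 5 to match the pairs. The starting observation is that on $\mathbb{S}^1\times C$ with $\gamma_2, \gamma_3$ independent of $\theta$, one has $d\gamma = -d\theta\wedge d_C\gamma_2 + d_C\gamma_3$ and $\ast\gamma = \ast_C\gamma_2 + d\theta\wedge \ast_C\gamma_3$, so $\gamma$ is closed and coclosed if and only if each of $\gamma_2$ and $\gamma_3$ is closed and coclosed on $C$.

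For the backward direction I would check each case. In case (1), $\varphi$ is parallel for the Calabi-Yau structure on $C$. In cases (2)--(4), since $d\varphi=0$ one has $d_C(V\lrcorner\varphi)=L_V\varphi$, so it suffices to verify that $L_V\omega$ and $L_V\mathrm{Re}\Omega$ are each closed and coclosed on $C$. For case (2) the scaling vector $V=2r\partial_r$ acts by the weights $L_{r\partial_r}\omega=2\omega$ and $L_{r\partial_r}\mathrm{Re}\Omega=3\mathrm{Re}\Omega$, giving the stated $6\mathrm{Re}\Omega+4d\theta\wedge\omega$. For case (3) the field $2rJ\partial_r$ is twice the Reeb field; by the calculation performed inside the proof of Lemma \ref{Killing-SU(3)} one obtains $L_V\omega=0$ and $L_V\mathrm{Re}\Omega=6\mathrm{Im}\Omega$. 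For case (4), $\mu_{0,j}=(\lambda+4)^2-4\in(5,12]$ corresponds to rate $\lambda+2\in(1,2]$, hence $r^{\lambda+2}\phi_{0,j}$ is pluriharmonic by Proposition \ref{Calabi-Yau-cone}(7), so $V$ is the real part of a holomorphic vector field and $L_V\omega$, $L_V\mathrm{Re}\Omega$ are harmonic, in particular closed and coclosed.

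For the forward direction I would invoke Proposition \ref{Harmonic-two-form}: among its six items only those simultaneously closed and coclosed (types (3) and (4) of Proposition \ref{Harmonic-form-decomposition}) can contribute, and at rate $\lambda\in(-2,0]$ this leaves only $\gamma_2=c\omega$ and the $d_C(r^2\phi_{1,j})$ with $\mu_{1,j}=8$, both at $\lambda=0$; so $\gamma_2\equiv 0$ for $\lambda\in(-2,0)$. Applying Proposition \ref{Harmonic-form-decomposition} with $p=3$ to $\gamma_3$, the only closed-and-coclosed possibilities are type (4) with $\mu_{2,j}=(\lambda+3)^2$, together with the parallel forms $\mathrm{Re}\Omega,\mathrm{Im}\Omega$ at $\lambda=0$. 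Decomposing 2-forms on the link $F$ via the $\mathrm{SU}(3)$ splitting into primitive $(1,1)$, $(2,0)+(0,2)$, and K\"ahler parts, Proposition \ref{Primitive-one-one} rules out primitive $(1,1)$ eigenforms with $\mu_{2,j}\in(0,9)$, and the remaining components are handled by Propositions \ref{Obata} and \ref{Killing-metric}, eliminating $\gamma_3$ for $\lambda\in(-2,-1]$ and pinning it down for $\lambda\in(-1,0]$ as coming from the harmonic 1-forms $d(r^{\lambda+2}\phi_{0,j})$ of Proposition \ref{Cheeger-Tian}.

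Finally I would match the two classifications rate by rate. For $\lambda\in(-2,-1]$ both vanish; for $\lambda\in(-1,0)$ the pair $(0,\gamma_3)$ sits in the span of case (4) with $\mu_{0,j}\in(5,12)$; at $\lambda=0$ the three linearly independent triples produced by cases (1), (2), (3) span the parallel sector $(\omega,\mathrm{Re}\Omega,\mathrm{Im}\Omega)$, while the remaining $d_C(r^2\phi_{1,j})$ pieces of $\gamma_2$ with $\mu_{1,j}=8$ are matched with case (4) at $\mu_{0,j}=12$: the Reeb component $\phi_{1,1}$ gives $2\omega$ already covered, and the other $\mu_{1,j}=8$ Killing fields arise by $J$-rotation from gradients of pluriharmonic functions with $\mu_{0,j}=12$ via Proposition \ref{Harmonic-one-form}(4) and Lemma \ref{Killing-SU(3)}. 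The main obstacle is the forward-direction classification of $\gamma_3$: eliminating 2-form eigenvalues $\mu_{2,j}\in(1,9)$ on $F$ outside the pluriharmonic-gradient range requires a careful use of the $\mathrm{SU}(3)$ splitting on the 5-dimensional link combined with the eigenvalue bounds from Propositions \ref{Obata}, \ref{Killing-metric}, \ref{Primitive-one-one}; the matching at $\lambda=0$ through the $J$-rotation identity is also delicate, since cases (1) and (2) occupy the same $(\omega,\mathrm{Re}\Omega)$-sector and must be separated by their distinct ratios $(1,1)$ and $(4,6)$.
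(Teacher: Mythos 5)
Your proposal follows essentially the same route as the paper's (very terse) argument: pass to $\gamma_2,\gamma_3$ closed and coclosed on $C$, decompose via Proposition \ref{Harmonic-form-decomposition}, discard types (1), (2), (5), (7) by closedness/cocloseness and types (3), (6) by the constraint $\lambda\in(-2,0]$, so that each is of type (4), i.e.\ equal to $d_C$ of a coclosed homogeneous harmonic form of rate $\lambda+1$, which is then identified via Propositions \ref{Harmonic-one-form} and \ref{Harmonic-two-form}; your rate-by-rate matching, including the remark that cases (1) and (2) span the $(\omega,\mathrm{Re}\Omega)$-sector by their distinct ratios $(1,1)$ and $(4,6)$, is the content left implicit in the paper.

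Two small inaccuracies in your backward-direction justification for case (4) are worth flagging, though neither breaks the proof. First, $\mu_{0,j}\in(5,12]$ corresponds to rate $\lambda+2\in(1,2]$, and Proposition \ref{Calabi-Yau-cone}(7) gives pluriharmonicity only on the open interval $(1,2)$; at the boundary rate $2$ (i.e.\ $\mu_{0,j}=12$) one must use part (8), under which the function may contain a Reeb-invariant, non-pluriharmonic summand. Second, pluriharmonicity of $u$ gives $L_{\nabla u}\omega=0$ (from $dd^cu=0$), but it does not by itself say that $\nabla u$ is the real part of a holomorphic vector field nor that $L_{\nabla u}\mathrm{Re}\,\Omega$ is coclosed; the cleaner justification consistent with the paper's approach is simply that each form in (1)--(4) is a type (4) form in Proposition \ref{Harmonic-form-decomposition} (e.g.\ $\mathrm{Re}\,\Omega = \tfrac13 d_C(\iota_{r\partial_r}\mathrm{Re}\,\Omega)$, $\omega=\tfrac12 d_C(r^2\phi_{1,1})$), hence closed and coclosed by construction. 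Also, the $\mathrm{SU}(3)$ type decomposition you invoke lives naturally on the $6$-dimensional cone $C$, not on the $5$-dimensional link $F$; this is exactly how the proof of Proposition \ref{Harmonic-two-form} organizes the argument, so the intent is right but the phrasing should refer to the cone.
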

\begin{proof}
Decompose $\gamma_2$ and $\gamma_3$ as in Proposition \ref{Harmonic-form-decomposition}. The closeness implies that the type (5) and type (7) components in Proposition \ref{Harmonic-form-decomposition} vanish. The cocloseness implies that the type (1) and type (2) components also vanish. The type (3) and type (6) components also vanish by the assumption on $p$ and $\lambda$. So $\gamma_2$ and $\gamma_3$ are of type (4) in Proposition \ref{Harmonic-form-decomposition}. This implies that they are in the image of $d_C$ acting on homogenous harmonic 1-forms or 2-forms. The result follows easily from Proposition \ref{Harmonic-one-form} and Proposition \ref{Harmonic-two-form}.
\end{proof}

\section{Doubling construction of Calabi-Yau threefolds}

This section proves Theorem \ref{Doubling-construction-of-Calabi–Yau-threefolds}.

Recall that in the setting of Theorem \ref{Doubling-construction-of-Calabi–Yau-threefolds}, $M$ is glued by $V_\pm$. The first goal is to study the operator $d+d^*$ from odd-degree forms to even-degree forms on $\mathbb{S}^1$ times $C$, $V_{\pm}$ or $M$. Let $\theta$ be the standard variable on $\mathbb{S}^1$. Then any odd-degree form can be expressed as
\[\begin{split}
\gamma&=\gamma^1+\gamma^3+\gamma^5+\gamma^7\\
&=(d\theta\wedge\gamma_0+\gamma_1)+(d\theta\wedge\gamma_2+\gamma_3)
+(d\theta\wedge\gamma_4+\gamma_5)+(d\theta\wedge\gamma_6),
\end{split}\]
where $\gamma_p$ is a degree $p$-form on each slice. A direct calculation shows that
\[\begin{split}
(d_{\mathbb{S}^1\times C}+d_{\mathbb{S}^1\times C}^*)\gamma&=\sum_{p=0,2,4,6}(d_C\gamma_{p-1}+d_C^*\gamma_{p+1}-\frac{\partial\gamma_p}{\partial\theta})\\
&-\sum_{p=1,3,5}d\theta\wedge(d_C\gamma_{p-1}+d_C^*\gamma_{p+1}-\frac{\partial\gamma_p}{\partial\theta}),
\end{split}\]
where $d_C$ and $d_C^*$ mean doing the $d$ and $d^*$ operators on each slice. $C$ may be replaced by $V_{\pm}$ or $M$.

Similarly, any even-degree form can be expressed as
\[\begin{split}
\gamma&=\gamma^0+\gamma^2+\gamma^4+\gamma^6\\
&=(\gamma_0)+(d\theta\wedge\gamma_1+\gamma_2)+(d\theta\wedge\gamma_3+\gamma_4)
+(d\theta\wedge\gamma_5+\gamma_6),
\end{split}\]
Another direct calculation shows that
\[\begin{split}
(d_{\mathbb{S}^1\times C}+d_{\mathbb{S}^1\times C}^*)\gamma
&=-\sum_{p=0,2,4,6}d\theta\wedge(d_C\gamma_{p-1}+d_C^*\gamma_{p+1}-\frac{\partial\gamma_p}{\partial\theta})\\
&+\sum_{p=1,3,5}(d_C\gamma_{p-1}+d_C^*\gamma_{p+1}-\frac{\partial\gamma_p}{\partial\theta}).
\end{split}\]
When $\gamma$ is $\mathbb{S}^1$-invariant, then there is no $\frac{\partial}{\partial\theta}$ part. Therefore, it suffices to study $d+d^*$ on $C$, $M$ or $V_{\pm}$.

\begin{proposition}
Suppose that $\delta>0$ is small enough. \[\gamma\in W^{k,2}_{-3-\delta,...,-3-\delta,\delta}(\Lambda^{\mathrm{even}}(V_{\pm}))\] or \[\gamma\in W^{k,2}_{-3-\delta,...,-3-\delta}(\Lambda^{\mathrm{even}}(M)).\]
If $(d+d^*)\gamma=0$, then $d\gamma=0$.
\label{Closeness-harmonic-form}
\end{proposition}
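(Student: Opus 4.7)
The plan is to upgrade the hypothesis $D\gamma=0$, where $D:=d+d^{*}$, to the stronger conclusion that $\gamma$ is both closed and coclosed. Writing $\gamma=\gamma^{0}+\gamma^{2}+\gamma^{4}+\gamma^{6}$ and matching form degrees in $D\gamma=0$ gives the coupled system $d\gamma^{p}+d^{*}\gamma^{p+2}=0$ for every even $p$, so that $d\gamma=0$ is equivalent to $d^{*}\gamma=0$. Since $D^{2}=\Delta$ preserves form degree, we additionally get $\Delta\gamma^{p}=0$ componentwise.

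The first substantive step is a regularity improvement near each conical singularity $x_{i}$, applying Theorem \ref{Polyhomogenous-L-totally-character} to $D\gamma=0$ starting from weight $-3-\delta$. I would check that the only critical rate of $\Delta$ (hence of $D$) on even-degree forms lying in $(-3-\delta,-2+\varepsilon)$ for small $\delta,\varepsilon>0$ is $\lambda=-2$, and that every such homogeneous solution is closed and coclosed: for $0$- and $6$-forms this follows from Proposition \ref{Calabi-Yau-cone}(4) together with $\mu_{0,j}>5$ for $j\ge 2$ (Proposition \ref{Obata}), which rule out any critical rate in $(-4,0)$; for $2$-forms this is the content of Proposition \ref{Harmonic-two-form} (and for $4$-forms it follows by Hodge duality on $C_{x_{i}}$), pinning down the only admissible rate as $\lambda=-2$ and identifying the corresponding homogeneous solutions as parallel forms on the cone. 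Theorem \ref{Polyhomogenous-L-totally-character} then yields a decomposition
\[\gamma \;=\; \sum_{x_{i}} \chi_{i}\,\gamma_{i}^{*} \;+\; \tilde\gamma,\qquad \gamma_{i}^{*}\in \mathcal{P}_{i}(-2),\ \ \tilde\gamma\in W^{k,2}_{-2+\varepsilon,\ldots,-2+\varepsilon,\delta}\]
(with no cylindrical weight in the $M$ case). Since the leading $r^{-2}$ part of each $\gamma_{i}^{*}$ is parallel, hence closed and coclosed on $C_{x_{i}}$, both $d\gamma_{i}^{*}$ and $d^{*}\gamma_{i}^{*}$ have rate strictly greater than $-3$ near $x_{i}$, so $d\gamma$ and $d^{*}\gamma$ lie in ordinary $L^{2}$.

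The final step is integration by parts. For each even $p$, using $d^{2}=0$,
\[\int\bigl\langle d\gamma^{p},d^{*}\gamma^{p+2}\bigr\rangle \;=\; \int\bigl\langle d^{2}\gamma^{p},\gamma^{p+2}\bigr\rangle + (\mathrm{bdy}) \;=\; (\mathrm{bdy}),\]
and the boundary contribution at $r=\epsilon$ is controlled by $|\gamma|\,|d\gamma|\cdot\mathrm{vol}(\partial)\lesssim \epsilon^{-2}\cdot\epsilon^{-3+\varepsilon'}\cdot\epsilon^{5}=\epsilon^{\varepsilon'}\to 0$, using the parallel leading order from the previous step, while on the cylindrical end of $V_{\pm}$ the boundary at $t=T$ decays like $e^{-2\delta T}$. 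Thus $\int\langle d\gamma,d^{*}\gamma\rangle=0$, and combining with the pointwise identity $0=|D\gamma|^{2}=|d\gamma|^{2}+|d^{*}\gamma|^{2}+2\langle d\gamma,d^{*}\gamma\rangle$ gives $\int|d\gamma|^{2}+\int|d^{*}\gamma|^{2}=0$, forcing $d\gamma=0$. The main obstacle is the second step: correctly enumerating the critical rates of $\Delta$ on even-degree forms in the window $(-3-\delta,-2+\varepsilon)$ and verifying that every homogeneous solution at the only surviving rate $\lambda=-2$ is a parallel, hence closed, form on $C_{x_{i}}$. Once this is established, the boundary-term estimate and the final line are routine.
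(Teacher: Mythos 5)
Your overall strategy is the same as the paper's: show $\Delta\gamma^p=0$ componentwise, improve the weight near each singular point via Theorem \ref{Polyhomogenous-L-totally-character}, identify the only critical rate in $(-3-\delta,-2+\varepsilon)$ as $\lambda=-2$, and then integrate by parts to kill $d\gamma$. But there is a concrete gap in the weight-improvement step which the paper is careful to address and which your proposal silently passes over.

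The problem is your claim that every element of the rate $\lambda=-2$ indicial kernel is a parallel (closed and coclosed) form. That would be true if $\mathcal{K}_i(-2)$ consisted only of homogeneous solutions, but by Proposition \ref{No-log-term}(2), since $\hat\mu_{2,1}=0$ the kernel $\mathcal{K}_i(-2)$ on $2$-forms also contains the non-homogeneous solution $\log r\,\phi_{2,1}$, and by Hodge duality $*\log r\,\phi_{2,1}$ appears in the $4$-form kernel. These are $\Delta$-harmonic but $\log r\,\phi_{2,1}$ is not closed: $d(\log r\,\phi_{2,1})=r^{-1}dr\wedge\phi_{2,1}$ has rate exactly $-3$, and $\gamma_2$ itself has an extra $\log r$, so your boundary estimate at $r=\epsilon$ becomes $\epsilon^{-2}\log\epsilon^{-1}\cdot\epsilon^{-3}\cdot\epsilon^{5}=\log\epsilon^{-1}$, which does not tend to zero. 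The paper removes these terms before integrating by parts: it uses the coupled equation $d\gamma_2+d^*\gamma_4=0$ together with the explicit formulas of Proposition \ref{Laplacian-local-coordinate}, observing that $d(\log r\,\phi_{2,1})$ produces a $dr\wedge(\cdot)$ piece while $d^*(*\log r\,\phi_{2,1})$ produces a slice-tangential piece, so the two contributions cannot cancel and both coefficients must vanish. Only after that does one have $d\gamma_p\in W^{k,2}_{-3+\delta}$ near each $x_i$ and the integration by parts close. You should add this cancellation argument to make your proof correct; the rest of your outline (the enumeration of critical rates for $p=0,6$ and $p=2,4$, and the end behaviour on $V_\pm$) matches the paper.
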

\begin{proof}
Assume that $\gamma\in W^{k,2}_{-3-\delta,...,-3-\delta}(\Lambda^{\mathrm{even}}(M))$. Using the definition of the Hodge Laplacian $\Delta$, it is easy to see that $\Delta\gamma_0=\Delta\gamma_2=\Delta\gamma_4=\Delta\gamma_6=0$. Near each singular point $x_i\in V^{\mathrm{sing}}_{\pm}$, $\gamma\in W^{k,2}_{-3-\delta}$. By Proposition \ref{Harmonic-form-decomposition}, Proposition \ref{No-log-term} and Propositon \ref{Obata}, there is no 0-form in $\mathcal{K}_i(\lambda_i)$ with rate $\lambda_i\in(-3-\delta,-\delta)$. So by the definition of $\mathcal{P}_i(\lambda_i)$ and Theorem \ref{Polyhomogenous-L-totally-character}, $\gamma_0\in W^{k,2}_{-\delta,...,-\delta}(\Lambda^2 M)$.

Similarly, by Proposition \ref{No-log-term}, Proposition \ref{Harmonic-two-form}, and Theorem \ref{Polyhomogenous-L-totally-character}, $\gamma_2$ can be written as the linear combination of $\phi_{2,1}$, $\log r \phi_{2,1}$ and an element in $W^{k,2}_{-2+\delta}$ near $x_i$. Remark that the difference between $\phi_{2,1}$ or $\log r \phi_{2,1}$ and the corresponding element in $\mathcal{P}_i(\lambda_i)$ lies $W^{k,2}_{-2+\delta}$. By Hodge duality on $M$, $\gamma_4$ can be written as the linear combination of $*\phi_{2,1}$, $*\log r \phi_{2,1}$ and an element in $W^{k,2}_{-2+\delta}$ near $x_i$. However, the equation \[d\gamma_2+d^*\gamma_4=0\] implies that the coefficients of the log terms vanish by Proposition \ref{Laplacian-local-coordinate}. So near $x_i$, $d\gamma_p\in W^{k,2}_{-3+\delta}$ for $p=0,2,4$. Globally, $d\gamma_p\in W^{k,2}_{-3+\delta,...,-3+\delta}$.

It is easy to see that the boundary term in the integral \[\int_{r_i>r_0}(d\gamma_p,d\gamma_p)-(\gamma_p,d^*d\gamma_p)\] goes to 0 when $r_0$ goes to 0. Since $d^*d\gamma_p=-d^*d^*\gamma_{p+2}=0$, it follows that $d\gamma_p=0$. The $V_{\pm}$ case is similar.
\end{proof}

Recall the definitions of $V_{\pm}$ and $\tilde V_{\pm}$ in Section 4. In this section, remark that the definitions of $V_+$ and $\tilde V_+$ remain unchanged but the definitions of $V_-$ and $\tilde V_-$ have been changed to another copy of $V_+$ and $\tilde V_+$. By definition, $\tilde V_{\pm}$ is the small resolution of $V_{\pm}$. Locally, near each point $x\in V^{\mathrm{sing}}_{\pm}$, the neighborhood of $x$ is topologically a cone over $\mathbb{S}^2\times\mathbb{S}^3$. The corresponding set in $\tilde V_{\pm}$ is topologically $\mathbb{S}^2\times\mathbb{B}^4$, where $x$ is replaced by $\mathbb{S}^2\times\{0\}$.

The next goal is to study the Hodge theory on $V_{\pm}$. It was pioneered by Cheeger \cite{Cheeger} using a slightly difference version of weighted analysis and followed by many people including Melrose \cite{Melrose}.

Recall that $F_\infty=\mathbb{S}^1\times S_{\pm}$. As in Section 6, by Hilbert-Schmidt theorem, assume that $d_{F_\infty}\phi_{p-1,j,\infty}$ and $\phi_{p,j,\infty}$ are orthogonal basis for $L^2(\Lambda^p(F_\infty))$ satisfying $\Delta_{F_\infty}\phi_{p,j,\infty}=\mu_{p,j,\infty}\phi_{p,j,\infty}$. Moreover, let $h_{p,\infty}$ be the $p$-th betti number of $F_\infty$. Then

\begin{lemma}
Consider the Hodge Laplacian operator $\Delta$ acting on $p$-forms on $[T,\infty)\times F_\infty$.
\[\mathcal{K}_\infty(0)=\mathrm{Span}\{\phi_{p,j,\infty},t\phi_{p,j,\infty}\}_{j=1}^{h_{p,\infty}} \oplus\mathrm{Span}\{dt\wedge \phi_{p-1,j,\infty},tdt\wedge \phi_{p-1,j,\infty}\}_{j=1}^{h_{p-1,\infty}}.\]
\label{Polyhomogenous-end}
\end{lemma}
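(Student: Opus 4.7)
The plan is to exploit the product structure of the metric on $[T,\infty)\times F_\infty$. Writing any form uniquely as $\gamma=\beta+dt\wedge\alpha$ where $\beta$ is a $p$-form and $\alpha$ is a $(p-1)$-form on $F_\infty$ (allowed to depend on $t$), the product metric gives the componentwise decomposition
\[
\Delta(\beta+dt\wedge\alpha)=(-\partial_t^2\beta+\Delta_{F_\infty}\beta)+dt\wedge(-\partial_t^2\alpha+\Delta_{F_\infty}\alpha),
\]
since $\partial_t$ is parallel and there are no cross-terms from the warping. By Definition \ref{Definition-critical}, an element of $\mathcal{K}_\infty(0)$ has the form $\gamma=\sum_{p=0}^{P} t^p\gamma_p$ with $\gamma_p$ independent of $t$. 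Imposing $\Delta\gamma=0$ and matching coefficients of $t^p$ yields the recursion
\[
(p+1)(p+2)\gamma_{p+2}=\Delta_{F_\infty}\gamma_p,\qquad p\ge 0,
\]
which expresses every $\gamma_p$ with $p\ge 2$ in terms of $\gamma_0$ and $\gamma_1$; explicitly $\gamma_{2k}$ is a constant multiple of $\Delta_{F_\infty}^k\gamma_0$ and $\gamma_{2k+1}$ a constant multiple of $\Delta_{F_\infty}^k\gamma_1$.

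Next I would show the polynomial truncates at degree one. Because the sum is finite, for sufficiently large $k$ we have $\Delta_{F_\infty}^k\gamma_0=0$ and $\Delta_{F_\infty}^k\gamma_1=0$. Since $F_\infty$ is compact and $\Delta_{F_\infty}$ is non-negative and self-adjoint, the kernel of any positive power coincides with $\ker\Delta_{F_\infty}$, so $\gamma_0$ and $\gamma_1$ are themselves harmonic on $F_\infty$ (viewed as $t$-independent forms on the cylinder). Feeding this back into the recursion forces $\gamma_p=0$ for all $p\ge 2$, so $\gamma=\gamma_0+t\gamma_1$ with $\gamma_0,\gamma_1$ harmonic.

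To finish, I would identify the harmonic $p$-forms on the cylinder that are pullbacks from $F_\infty$. Decomposing each of $\gamma_0$ and $\gamma_1$ as $\beta+dt\wedge\alpha$ with $\beta,\alpha$ now genuinely pulled back from $F_\infty$, the componentwise formula above reduces harmonicity to $\Delta_{F_\infty}\beta=0$ and $\Delta_{F_\infty}\alpha=0$. Standard Hodge theory on the compact manifold $F_\infty$ then gives orthogonal bases $\{\phi_{p,j,\infty}\}_{j=1}^{h_{p,\infty}}$ and $\{\phi_{p-1,j,\infty}\}_{j=1}^{h_{p-1,\infty}}$ for the two components. The $t$-independent part of $\gamma$ thus ranges over $\mathrm{Span}\{\phi_{p,j,\infty}\}\oplus\mathrm{Span}\{dt\wedge\phi_{p-1,j,\infty}\}$, and the $t$-linear part is obtained by multiplying this basis by $t$. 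This reproduces exactly the formula for $\mathcal{K}_\infty(0)$ claimed in the lemma.

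The only technical obstacle is a careful verification of the product Laplacian formula on forms split as $\beta+dt\wedge\alpha$; this is standard, but it does require recording that $d$, $d^*$ on $[T,\infty)\times F_\infty$ split as $d=d_{F_\infty}+dt\wedge\partial_t$ and $d^*=d^*_{F_\infty}-\iota_{\partial_t}\partial_t$ with no mixed terms, so that squaring produces the clean expression used above.
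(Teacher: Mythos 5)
Your proof is correct. It shares the paper's starting point (the Künneth splitting $\gamma=\beta+dt\wedge\alpha$ and the componentwise product Laplacian $\Delta=\Delta_{F_\infty}-\partial_t^2$), but the mechanism for showing that only the $t$-degree $\le 1$, $F_\infty$-harmonic part survives is genuinely different. The paper expands $\alpha$ and $\beta$ in the eigenbasis of $\Delta_{F_\infty}$ first, obtaining for each mode the scalar ODE $(\mu-\tfrac{d^2}{dt^2})f=0$, and then discards all modes with $\mu\ne 0$ because their solutions are exponentials $e^{\pm\sqrt{\mu}\,t}$ rather than polynomials. You instead expand in powers of $t$ first, extract the recursion $(p+1)(p+2)\gamma_{p+2}=\Delta_{F_\infty}\gamma_p$, and conclude harmonicity from the finiteness of the polynomial via the nilpotency observation $\ker\Delta_{F_\infty}^k=\ker\Delta_{F_\infty}$ on the compact manifold $F_\infty$. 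Both are elementary separation-of-variables arguments; the eigenbasis route is the more standard bookkeeping (and is the one already running in the neighboring proof of Proposition \ref{No-log-term}), while your recursion route avoids any discussion of convergence of the Fourier expansion and is arguably more self-contained. One purely cosmetic point: you use the letter $p$ both for the degree of the form and for the power of $t$ in the expansion $\gamma=\sum t^p\gamma_p$, so the recursion index should be renamed to avoid a clash with the fixed form degree.
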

\begin{proof}
Any $\gamma\in\mathcal{K}_\infty(0)$ can be written as \[\gamma=dt\wedge\alpha+\beta.\] Then \[\Delta_{[T,\infty)\times\mathbb{S}^1\times S_{\pm}}\gamma=dt\wedge(\Delta_{\mathbb{S}^1\times S_{\pm}}-\frac{\partial^2}{\partial t^2})\alpha+(\Delta_{\mathbb{S}^1\times S_{\pm}}-\frac{\partial^2}{\partial t^2})\beta=0.\]
Consider the self-adjoint operator $\Delta_{\mathbb{S}^1\times S_{\pm}}$.
Write $\beta$ as
\[\beta=\sum_{j=h_{p-1,\infty}+1}^{\infty}\beta_{p-1,j}(t)d_{F_\infty}\phi_{p-1,j,\infty} +\sum_{j=1}^{\infty}\beta_{p,j}(t)\phi_{p,j,\infty},\]
then
\[(\mu_{p,j,\infty}-\frac{d^2}{dt^2})\beta_{p,j}=(\mu_{p-1,j,\infty}-\frac{d^2}{dt^2})\beta_{p-1,j}=0.\]
If $\mu_{p,j,\infty}\not=0$, $\beta_{p,j}$ is a linear combination of $r^{\pm\sqrt{\mu_{p,j,\infty}}}$. When $\mu_{p,j,\infty}=0$, $\beta_{p,j}$ is a linear combination of 1 and $t$. On the other hand, $\beta_{p-1,j}$ is always a linear combination of $r^{\pm\sqrt{\mu_{p-1,j,\infty}}}$. The result for $\alpha$ is similar.
\end{proof}

\begin{lemma}
(Poincar\'e lemma) Suppose that $\gamma\in W^{k,2}_{-\delta}(\Lambda^p([T,\infty)\times\mathbb{S}^1\times S_{\pm}))$ is closed. Then there exist $\tilde\gamma\in W^{k+1,2}_{-\delta}(\Lambda^{p+1}([T,\infty)\times\mathbb{S}^1\times S_{\pm}))$ and unique constants $\gamma_j$ such that
\[\gamma=d\tilde\gamma+\sum_{j=1}^{h_{p,\infty}}\gamma_j\phi_{p,j,\infty}.\]

Suppose that $\gamma$ is a closed form in $W^{k,2}_{\delta}(\Lambda^p([T,\infty)\times\mathbb{S}^1\times S_{\pm}))$ instead. Then there exists $\tilde\gamma\in W^{k+1,2}_{\delta}(\Lambda^{p+1}([T,\infty)\times\mathbb{S}^1\times S_{\pm}))$ such that $\gamma=d\tilde\gamma$.
\label{Poincare-lemma}
\end{lemma}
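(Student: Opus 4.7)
The natural approach is Hodge-theoretic. Since $\gamma$ is closed, I look for a $(p-1)$-form $\tilde\gamma$ in Coulomb gauge $d^*\tilde\gamma=0$ satisfying $d\tilde\gamma=\gamma-\eta$ for a harmonic remainder $\eta$ to be determined. Such a $\tilde\gamma$ must satisfy $\Delta\tilde\gamma=d^*\gamma$, so the first step is to invert the Hodge Laplacian on $d^*\gamma$ in the appropriate weighted space. For sufficiently small $\delta>0$, the weights $\pm\delta$ are non-critical for $\Delta$ on $F_\infty$-forms, since by Lemma \ref{Polyhomogenous-end} the only critical rate in a small neighborhood of $0$ is $0$ itself (the next critical rates are $\pm\sqrt{\mu_{p,j,\infty}}$ for the smallest positive eigenvalue). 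Thus Theorem \ref{Invertible-IL} (extended to the half-cylinder $[T,\infty)\times F_\infty$ by a standard reflection-and-cutoff reduction, or equivalently by a parametrix built from Theorem \ref{Polyhomogenous-IL}) provides an inverse $\Delta^{-1}\colon W^{k-1,2}_{\pm\delta}\to W^{k+1,2}_{\pm\delta}$. Set $\tilde\gamma=\Delta^{-1}(d^*\gamma)$ in the same weighted class as $\gamma$; then $\eta:=\gamma-d\tilde\gamma$ satisfies $\Delta\eta=0$, $d\eta=0$, and, using the Coulomb identity $\Delta\tilde\gamma=d^*d\tilde\gamma$, also $d^*\eta=0$.

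For the first case ($-\delta$), I combine Theorem \ref{Polyhomogenous-IL} with Lemma \ref{Polyhomogenous-end} to write the harmonic $\eta\in W^{k,2}_{-\delta}$, up to a tail in $W^{k,2}_\delta$, as a linear combination of the four explicit asymptotic modes $\phi_{p,j,\infty}$, $t\phi_{p,j,\infty}$, $dt\wedge\phi_{p-1,j,\infty}$, and $t\,dt\wedge\phi_{p-1,j,\infty}$. Among these, $t\phi_{p,j,\infty}$ has nonzero differential $dt\wedge\phi_{p,j,\infty}$ and is ruled out by closure of $\eta$. The two $dt$-type modes are exact via the polynomially growing primitives $t\phi_{p-1,j,\infty}$ and $\tfrac{1}{2}t^2\phi_{p-1,j,\infty}$, both of which lie in $W^{k+1,2}_{-\delta}$ since $\delta>0$. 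The $W^{k,2}_\delta$-tail is itself closed, coclosed, and harmonic, and is handled by the second assertion of the lemma applied recursively: a positive eigenmode $e^{-\sqrt{\mu}t}\alpha$ on $F_\infty$ that is closed on the cylinder is automatically $d$-exact with an explicit primitive in $W^{k+1,2}_\delta$. After absorbing all the exact pieces into $\tilde\gamma$ I am left with $\eta=\sum_j\gamma_j\phi_{p,j,\infty}$, which proves the first statement.

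Uniqueness of the constants $\gamma_j$ follows by pairing against $*_{F_\infty}\phi_{p,j',\infty}$ on a slice $\{T\}\times F_\infty$: the exact contribution $d\tilde\gamma$ has $dt$-components that drop by dimension count and $F_\infty$-components whose integral against $*_{F_\infty}\phi_{p,j',\infty}$ vanishes by Stokes on the closed manifold $F_\infty$ together with harmonicity of $\phi_{p,j',\infty}$, while the $\phi_{p,j,\infty}$ are $L^2$-orthonormal by construction. For the second assertion I repeat the construction with weight $+\delta$ in place of $-\delta$: the bounded harmonic forms $\phi_{p,j,\infty}$ do not belong to $W^{k,2}_\delta$, so the corresponding asymptotic modes in the expansion of $\eta$ must have vanishing coefficients, forcing $\eta=0$ and yielding $\gamma=d\tilde\gamma$ directly.

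The main obstacle is the half-cylinder form of the invertibility statement: Theorem \ref{Invertible-IL} is phrased on $\mathbb{R}\times F_\infty$, whereas here the domain is $[T,\infty)\times F_\infty$. One must either run an extension-by-reflection across $\{t=T\}$ or construct an explicit parametrix from the mode-by-mode exponential solutions on the end and control the resulting series in the weighted Sobolev norm. Once that reduction is carried out, the rest of the argument is essentially Fourier-series bookkeeping on $F_\infty$ plus the structural input from Lemma \ref{Polyhomogenous-end}.
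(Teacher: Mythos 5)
Your proposal takes a genuinely different route from the paper. The paper's proof is a direct, mode-by-mode construction: it expands $\gamma$ in the generalized Fourier series from Lemma \ref{Polyhomogenous-end}, reads off ODEs for the coefficients from closedness, and writes $\tilde\gamma$ down by explicit integration, choosing $\int_T^t$ in the slowly growing case and $\int_t^\infty$ in the decaying case precisely so the primitive lands in the stated weighted class. Your proposal instead inverts the Hodge Laplacian on $d^*\gamma$ and classifies the harmonic remainder via Theorem \ref{Polyhomogenous-IL} and Lemma \ref{Polyhomogenous-end}. That framework is conceptually cleaner, and your uniqueness argument by pairing against $*_{F_\infty}\phi_{p,j',\infty}$ on a slice is correct, but the proposal rests on a step the paper never needs and that you do not actually carry out: producing $\Delta^{-1}$ on the \emph{half}-cylinder $[T,\infty)\times F_\infty$. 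Theorem \ref{Invertible-IL} is stated for $\mathbb{R}\times F_\infty$; on a manifold with boundary the Hodge Laplacian is a boundary-value problem, and the inverse and the space of harmonic forms depend on the boundary condition at $\{t=T\}$. Your two suggested fixes are both gestures rather than arguments: extension across $\{t=T\}$ of an arbitrary $W^{k-1,2}_{\pm\delta}$ datum is not a reflection and needs a Seeley-type extension with weighted-norm control that you should write down, while Theorem \ref{Polyhomogenous-IL} is a regularity statement about kernel elements, not a construction of a parametrix. This is the substantive gap, and you flag it yourself as "the main obstacle" without closing it; the paper's explicit integration bypasses it entirely.

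A second, minor inaccuracy: you claim $d^*\eta=0$ "using the Coulomb identity $\Delta\tilde\gamma=d^*d\tilde\gamma$," but that identity presupposes $d^*\tilde\gamma=0$, which does not follow from $\Delta\tilde\gamma=d^*\gamma$ without an additional argument (e.g.\ showing $d^*\tilde\gamma$ is a harmonic form in a weighted space that contains none). Fortunately you never use $d^*\eta=0$ downstream — closedness plus harmonicity of $\eta$ already lets you run the asymptotic expansion and kill the $t\phi_{p,j,\infty}$ modes — so this should simply be removed or justified.
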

\begin{proof}
Write $\gamma$ as \[\begin{split}
\gamma=dt\wedge(\sum_{j=h_{p-2,\infty}+1}^{\infty}\alpha_{p-2,j}(t)d_{F_\infty}\phi_{p-2,j,\infty} +\sum_{j=1}^{\infty}\alpha_{p-1,j}(t)\phi_{p-1,j,\infty})\\
+\sum_{j=h_{p-1,\infty}+1}^{\infty}\beta_{p-1,j}(t)d_{F_\infty}\phi_{p-1,j,\infty} +\sum_{j=1}^{\infty}\beta_{p,j}(t)\phi_{p,j,\infty},
\end{split}\] then \[\begin{split}
0=&d_{[T,\infty)\times F_\infty}\gamma\\
=&dt\wedge(-\sum_{j=h_{p-1,\infty}+1}^{\infty}\alpha_{p-1,j}(t)d_{F_\infty}\phi_{p-1,j,\infty}\\ &+\sum_{j=h_{p-1,\infty}+1}^{\infty}\frac{d\beta_{p-1,j}}{dt}(t)d_{F_\infty}\phi_{p-1,j,\infty} +\sum_{j=1}^{\infty}\frac{d\beta_{p,j}}{dt}(t)\phi_{p,j,\infty})\\
+&\beta_{p,j}(t)d_{F_\infty}\phi_{p,j,\infty}.
\end{split}\]

So $\beta_{p,j}$ are constants. Moreover, they vanishes unless $j=1,2,...,h_{p,\infty}$. Define $\tilde\alpha_{p-2,j}(t)$ as
\[-\mu_{p-2,j,\infty}e^{\sqrt{\mu_{p-2,j,\infty}}t} \int_{t}^{\infty}e^{-2\sqrt{\mu_{p-2,j,\infty}}\tau}\int_{T}^{\tau} e^{\sqrt{\mu_{p-2,j,\infty}}s}\alpha_{p-2,j}(s)ds d\tau.\]
Define $\tilde\beta_{p-2,j}(t)$ as $\frac{1}{\mu_{p-2,j,\infty}}\frac{d}{dt}\tilde\alpha_{p-2,j}(t)$. Then it is easy to see that \[\frac{d}{dt}\tilde\alpha_{p-2,j}-\mu_{p-2,j,\infty}\tilde\beta_{p-2,j}=0\] and
\[-\tilde\alpha_{p-2,j}+\frac{d}{dt}\tilde\beta_{p-2,j}=\alpha_{p-2,j}.\]

When $\gamma\in W^{k,2}_{-\delta}(\Lambda^p([T,\infty)\times\mathbb{S}^1\times S_{\pm}))$, define $\tilde\gamma$ as
\[\begin{split}
\tilde\gamma=dt\wedge\sum_{j=h_{p-2,\infty}+1}^{\infty}\tilde\alpha_{p-2,j}(t)\phi_{p-2,j,\infty} +\sum_{j=h_{p-2,\infty}+1}^{\infty}\tilde\beta_{p-2,j}(t)d_{F_\infty}\phi_{p-2,j,\infty}\\ +\sum_{j=1}^{h_{p-1,\infty}}(\int_T^t\alpha_{p-1,j}(\tau)d\tau)\phi_{p-1,j,\infty} +\sum_{j=h_{p-1,\infty}}^{\infty}\beta_{p-1,j}(t)\phi_{p-1,j,\infty},
\end{split}\]
then $\gamma=d\tilde\gamma+\sum_{j=1}^{h_{p,\infty}}\beta_{p,j}(t)\phi_{p,j,\infty}$ and $d^*_{[T,\infty)\times F_\infty}\tilde\gamma=0$.

When $\gamma\in W^{k,2}_{\delta}(\Lambda^p([T,\infty)\times\mathbb{S}^1\times S_{\pm}))$, define $\tilde\gamma$ as
\[\begin{split}
\tilde\gamma=dt\wedge\sum_{j=h_{p-2,\infty}+1}^{\infty}\tilde\alpha_{p-2,j}(t)\phi_{p-2,j,\infty} +\sum_{j=h_{p-2,\infty}+1}^{\infty}\tilde\beta_{p-2,j}(t)d_{F_\infty}\phi_{p-2,j,\infty}\\ +\sum_{j=1}^{h_{p-1,\infty}}(-\int_t^\infty\alpha_{p-1,j}(\tau)d\tau)\phi_{p-1,j,\infty} +\sum_{j=h_{p-1,\infty}}^{\infty}\beta_{p-1,j}(t)\phi_{p-1,j,\infty},
\end{split}\]
then $\gamma=d\tilde\gamma$ and $d^*_{[T,\infty)\times F_\infty}\tilde\gamma=0$.

The estimate on $\tilde\gamma$ is standard.
\end{proof}

There is a natural map $e$ from the relative deRham cohomology group of $\tilde V_{\pm}$ to the absolute deRham cohomology group. By Section 6.4 of \cite{Melrose}, the image $e[\mathcal{H}^2_{\mathrm{dR,rel}}(\tilde V_{\pm})]$ is isomorphic to the space \[\frac{\{\gamma\in C_0^{\infty}(\Lambda^2(\tilde V_{\pm})), d\gamma=0\}}{\{\gamma\in C_0^{\infty}(\Lambda^2(\tilde V_{\pm})), \gamma=d\gamma', \gamma'\in \cap_{k=1}^{\infty}W^{k,2}_{-\delta}(\Lambda^1(\tilde V_{\pm}))\}}\]

The next goal is to show that

\begin{proposition}
Suppose that $\delta>0$ is small enough. Then the space \[e[\mathcal{H}^2_{\mathrm{dR,rel}}(\tilde V_{\pm})]=\frac{\{\gamma\in C_0^{\infty}(\Lambda^2(\tilde V_{\pm})), d\gamma=0\}}{\{\gamma\in C_0^{\infty}(\Lambda^2(\tilde V_{\pm})), \gamma=d\gamma', \gamma'\in \cap_{k=1}^{\infty}W^{k,2}_{-\delta}(\Lambda^1(\tilde V_{\pm}))\}}\]
is isomorphic to the space $\mathcal{H}^2_{b,\mathrm{Ho}}(V_{\pm})$ defined as
\[\{\gamma\in W^{k,2}_{-3-\delta,...,-3-\delta,\delta}(\Lambda^2(V_{\pm})), (d+d^*)\gamma=0\}.\]
\label{Hodge-theory-L2-harmonic-form}
\end{proposition}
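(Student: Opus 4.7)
The plan is to build a natural isomorphism $\Psi: \mathcal{H}^2_{b,\mathrm{Ho}}(V_\pm) \to e[\mathcal{H}^2_{\mathrm{dR,rel}}(\tilde V_\pm)]$, with inverse supplied by weighted Hodge theory. For the forward map, take $\gamma \in \mathcal{H}^2_{b,\mathrm{Ho}}(V_\pm)$. By Proposition \ref{Closeness-harmonic-form} together with its natural analogue applied to the decomposition of even-degree forms, $\gamma$ is both closed and coclosed. Apply Theorem \ref{Polyhomogenous-L-totally-character} to expand $\gamma$ near each $x_i \in V_\pm^{\mathrm{sing}}$; using the critical rate classification of Propositions \ref{Harmonic-form-decomposition}, \ref{Harmonic-two-form}, \ref{Obata}, \ref{Killing-metric}, \ref{Primitive-one-one}, and the absence of log terms from Proposition \ref{No-log-term}, the only critical 2-form rate lying in $(-3-\delta,-2+\delta)$ is the one associated with the primitive harmonic $(1,1)$-form $\phi_{2,1}^{(i)}$ at rate $-2$. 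Hence there exist constants $c_i$ with $\gamma - c_i \chi_i \phi_{2,1}^{(i)} \in W^{k,2}_{-2+\delta}$ near each $x_i$. On the cylindrical end, Lemma \ref{Poincare-lemma} simultaneously produces $\alpha \in W^{k,2}_\delta$ with $\gamma = d\alpha$. The key geometric input is that $\phi_{2,1}^{(i)}$ is a constant multiple of the Poincar\'e dual of the exceptional $\mathbb{CP}^1$ in the small resolution and so extends to a smooth closed $(1,1)$-form $\eta_i$ on $\tilde V_\pm$. Setting
\[
\omega \;=\; \gamma \;-\; \sum_i c_i\, d(\chi_i \tau_i) \;-\; d(\chi_\infty \alpha),
\]
where $\tau_i$ is a local primitive of $\chi_i \phi_{2,1}^{(i)} - \eta_i$, yields a smooth closed compactly supported form $\omega$ on $\tilde V_\pm$; define $\Psi(\gamma) := [\omega]$. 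Alternative choices of cutoffs, $\alpha$, or $\tau_i$ change $\omega$ only by $d$ of a compactly supported or exponentially decaying 1-form, so $\Psi$ is well-defined modulo the denominator.

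For injectivity, suppose $\Psi(\gamma) = 0$, so $\omega = d\beta$ for some $\beta \in \bigcap_k W^{k,2}_{-\delta}(\Lambda^1(\tilde V_\pm))$. Recombining $\beta$ with the cutoff primitives from the construction of $\omega$ produces a 1-form $\xi$ on $V_\pm \setminus V_\pm^{\mathrm{sing}}$, bounded near each $x_i$ and decaying exponentially at infinity, with $\gamma = d\xi$. Since $\gamma$ actually lies in standard $L^2(V_\pm)$ (rate $-2$ near the singularities combined with the volume factor $r^5$ gives $|\gamma|^2\,dV \sim r\,dr$), a two-parameter cutoff $\chi_{r_0, T}$ and integration by parts using $d^*\gamma = 0$ yields
\[
\int_{V_\pm}\chi_{r_0,T}\,|\gamma|^2 \;=\; -\int_{V_\pm}\langle d\chi_{r_0,T}\wedge\xi,\gamma\rangle.
\]
The boundary term at $r_{x_i}=r_0$ is bounded by $|\gamma|\cdot|\xi|\cdot\mathrm{vol}(\{r=r_0\}) \lesssim r_0^{-2}\cdot 1\cdot r_0^{5} = r_0^{3}\to 0$; the term at $t=T$ decays because $\gamma$ decays pointwise while the residual constant $d\vartheta$-component of $\xi$ coming from Lemma \ref{Poincare-lemma} is killed by pairing against the coclosed, exponentially decaying $\gamma$. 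For surjectivity, apply the Fredholm theory of Theorem \ref{Lockhart-McOwen} to $\Delta$ on $W^{k+2,2}_{-3-\delta,\dots,-3-\delta,\delta}(\Lambda^2(V_\pm))$; the weights are non-critical by the propositions above. Given any closed $\omega \in C_0^\infty(\Lambda^2(\tilde V_\pm))$, the weighted Hodge decomposition $\omega = \gamma_0 + d\alpha + d^*\beta$ together with $d\omega = 0$ forces $d^*\beta = 0$, producing $\gamma_0 \in \mathcal{H}^2_{b,\mathrm{Ho}}(V_\pm)$ with $\Psi(\gamma_0) = [\omega]$; set $\Phi([\omega]):=\gamma_0$.

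The main obstacle is the precise identification of $\phi_{2,1}^{(i)}$ with a nonzero constant multiple of the Poincar\'e dual of the exceptional $\mathbb{CP}^1$ under the small resolution, together with the explicit construction of the smooth bounded extension $\eta_i$ on $\tilde V_\pm$. This matching is exactly what makes the critical rate $-2$ on the conical side correspond to the extra second cohomology generated by the resolution, so that $W^{k,2}_{-3-\delta,\dots,-3-\delta,\delta}$ captures precisely $e[\mathcal{H}^2_{\mathrm{dR,rel}}]$ and nothing more. A secondary technical challenge is the careful bookkeeping of boundary terms in the integration-by-parts argument, which relies on the full strength of the polyhomogeneous expansions from Theorem \ref{Polyhomogenous-L-totally-character} to ensure that the asymptotic rates of $\xi$ and $\gamma$ conspire to kill the boundary contributions at both the conical ends and the cylindrical end.
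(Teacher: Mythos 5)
Your proposal constructs the isomorphism in the opposite direction from the paper: you build $\Psi:\mathcal{H}^2_{b,\mathrm{Ho}}(V_\pm)\to e[\mathcal{H}^2_{\mathrm{dR,rel}}(\tilde V_\pm)]$ directly, whereas the paper defines the map the other way (take a compactly supported closed representative, subtract off $d(\chi_x\gamma_x)$ near each $x$ using $H^2(\mathbb{S}^2\times\mathbb{B}^4)=\mathbb{R}$, land in the weighted space, and project onto the harmonic summand in the decomposition $W^{k-1,2}_{-3+\delta,\dots,-\delta}=\mathcal{H}^2_{b,\mathrm{Ho}}\oplus dW^{k,2}\oplus d^*W^{k,2}$), and then proves surjectivity by a density plus finite-dimensionality argument. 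That reversal is legitimate in principle, but as written your forward map has a genuine gap.

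The gap is in the claim that $\omega=\gamma-\sum_ic_i\,d(\chi_i\tau_i)-d(\chi_\infty\alpha)$ is a \emph{smooth} compactly supported 2-form on $\tilde V_\pm$. Near $x_i$, where $\chi_i\equiv1$, you get $\omega=(\gamma-c_i\phi_{2,1}^{(i)})+c_i\eta_i$. The piece $c_i\eta_i$ is smooth on $\tilde V_\pm$ by construction. But $\gamma-c_i\phi_{2,1}^{(i)}$ is only known to lie in $W^{k,2}_{-2+\delta}$ in the cone metric; while this gives pointwise decay (in fact rate $>-2+\delta$, so it vanishes continuously on approach to the exceptional $\mathbb{S}^2$), it does not give smoothness on $\tilde V_\pm$, so $\omega\notin C_0^\infty(\Lambda^2(\tilde V_\pm))$ without further argument. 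Your subtraction of $d(\chi_i\tau_i)$ merely swaps the singular model $\phi_{2,1}^{(i)}$ for the smooth representative $\eta_i$; it does not touch the residual $\gamma-c_i\phi_{2,1}^{(i)}$. What you need is the local analogue of Lemma \ref{Poincare-lemma} at each $x_i$: since $\gamma-c_i\phi_{2,1}^{(i)}$ is closed and decays at positive rate relative to $-2$, it admits a primitive $\zeta_i$ with matching decay, and subtracting $d(\chi_i\zeta_i)$ kills the entire residual near $x_i$, leaving $\omega=c_i\eta_i$ there — now manifestly smooth. This is precisely the ingredient the paper invokes in its surjectivity step ("the analogy of Lemma \ref{Poincare-lemma} near $x$"). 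Without it, $\Psi$ is not well-defined as a map into $e[\mathcal{H}^2_{\mathrm{dR,rel}}(\tilde V_\pm)]$, and your surjectivity argument, which relies on the claim $\Psi(\gamma_0)=[\omega]$, is also uninstantiated. The injectivity sketch is additionally looser than the paper's: $\beta\in\bigcap_k W^{k,2}_{-\delta}(\Lambda^1)$ can grow like $e^{\delta t}$ on the cylindrical end, and the cancellation of the $t=T$ boundary term needs the careful identification of the growing mode with the $d\vartheta$-component which is then killed by pairing against the decaying coclosed $\gamma$ — this is plausible, but as stated it is a claim rather than a proof and deviates from the paper's cleaner route of arguing directly on the Hodge decomposition on $V_\pm$.
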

\begin{proof}
The method in this proof is the combination of the results in Section 6.4 of Melrose's book \cite{Melrose}. Suppose that $\gamma\in C_0^{\infty}(\Lambda^2(\tilde V_{\pm}))$ is closed. Remark that $H^2(\mathbb{S}^2\times\mathbb{B}^4)=\mathbb{R}$. So $\gamma=d\gamma_x+\gamma_{2,1,x}\phi_{2,1,x}$ on $\mathbb{S}^2\times\mathbb{B}^4$ corresponding to $x\in V^{\mathrm{sing}}_{\pm}$, where $\phi_{2,1,x}$ is the pull back of the generator of $H^2(\mathbb{S}^2)$. Choose a cut-off function $\chi_x$ which is supported near $x$ and is 1 in a smaller neighborhood. Then \[\gamma'=\gamma-\sum_{x\in V^{\mathrm{sing}}_{\pm}}d(\chi_x\gamma_x)+\sum_{x\in V^{\mathrm{sing}}_{\pm}}\gamma_{2,1,x}\phi_{2,1,x}\] is a 2-form on $V_{\pm}$. It equals to $\gamma_{2,1,x}\phi_{2,1,x}$ near $x$.

Define $\mathcal{H}^p_{b,\mathrm{Ho}}(V_{\pm})$ as \[\{\gamma\in W^{k,2}_{-3-\delta,...,-3-\delta,\delta}(\Lambda^p(V_{\pm})), (d+d^*)\gamma=0\}\] for $p=0,2,4,6$ and define $\mathcal{H}^{\mathrm{even}}_{b,\mathrm{Ho}}(V_{\pm})$ as  \[\mathcal{H}^{\mathrm{even}}_{b,\mathrm{Ho}}(V_{\pm}):=\{\gamma\in W^{k,2}_{-3-\delta,...,-3-\delta,\delta}(\Lambda^{\mathrm{even}}(V_{\pm})), (d+d^*)\gamma=0\}.\] By Proposition \ref{Closeness-harmonic-form}, \[\mathcal{H}^{\mathrm{even}}_{b,\mathrm{Ho}}(V_{\pm})=\mathcal{H}^0_{b,\mathrm{Ho}}(V_{\pm}) \oplus\mathcal{H}^2_{b,\mathrm{Ho}}(V_{\pm})\oplus\mathcal{H}^4_{b,\mathrm{Ho}}(V_{\pm}) \oplus\mathcal{H}^6_{b,\mathrm{Ho}}(V_{\pm}).\]

The $L^2$ dual of \[d+d^*:W^{k,2}_{-2+\delta,...-2+\delta,-\delta}(\Lambda^{\mathrm{odd}}(V_{\pm}))\rightarrow W^{k-1,2}_{-3+\delta,...-3+\delta,-\delta}(\Lambda^{\mathrm{even}}(V_{\pm}))\] is \[d+d^*:W^{1-k,2}_{-3-\delta,...-3-\delta,\delta}(\Lambda^{\mathrm{even}}(V_{\pm}))\rightarrow W^{-k,2}_{-4-\delta,...-4-\delta,\delta}(\Lambda^{\mathrm{odd}}(V_{\pm})).\]
The kernel the the dual map is $\mathcal{H}^{\mathrm{even}}_{b,\mathrm{Ho}}(V_{\pm})$ by standard elliptic regularity. By the proof of Proposition \ref{Closeness-harmonic-form}, $\mathcal{H}^{\mathrm{even}}_{b,\mathrm{Ho}}(V_{\pm})\subset W^{k,2}_{-2+\delta,...-2+\delta,\delta}(\Lambda^{\mathrm{even}}(V_{\pm}))$. So \[W^{k-1,2}_{-3+\delta,...-3+\delta,-\delta}(\Lambda^{\mathrm{even}}(V_{\pm}))
=\mathcal{H}^{\mathrm{even}}_{b,\mathrm{Ho}}(V_{\pm})\oplus((d+d^*)(W^{k,2}_{-2+\delta,...-2+\delta,-\delta}))\]
by elliptic regularity.
Moreover,
\[\begin{split}
W^{k-1,2}_{-3+\delta,...-3+\delta,-\delta}&(\Lambda^2(V_{\pm}))
=\mathcal{H}^2_{b,\mathrm{Ho}}(V_{\pm})\\
&\oplus dW^{k,2}_{-2+\delta,...-2+\delta,-\delta}(\Lambda^1(V_\pm))
\oplus d^*W^{k,2}_{-2+\delta,...-2+\delta,-\delta}(\Lambda^3(V_\pm)).
\end{split}\]
In fact, it suffices to show that the intersection of $dW^{k,2}_{-2+\delta,...-2+\delta,-\delta}(\Lambda^1(V_\pm))$
and $d^*W^{k,2}_{-2+\delta,...-2+\delta,-\delta}(\Lambda^3(V_\pm))$ is the empty set. Choose any element $\gamma''$ in the intersection. It is harmonic. So by Theorem \ref{Polyhomogenous-L-totally-character} and Lemma \ref{Polyhomogenous-end}, it can be written as an element in $\mathcal{K}_\infty(0)$ plus an element in $W^{k-1,2}_{\delta}$ near infinity if $\delta$ is small enough. By the closeness and cocloseness, their are no $t\phi_{p,j,\infty}$ and $tdt\wedge \phi_{p-1,j,\infty}$ terms. By the exactness, coexactness and Lemma \ref{Poincare-lemma}, there are no $\phi_{p,j,\infty}$ and $dt\wedge \phi_{p-1,j,\infty}$ terms. So $\gamma''\in W^{k-1,2}_{\delta}$ near infinity. Using integration by parts, $\gamma''=0$.

There is a natural map $(r,(s_2,s_3))\rightarrow (s_2,(r,s_3))$ from $C(\mathbb{S}^2\times\mathbb{S}^3)$ to $\mathbb{S}^2\times\mathbb{B}^4$ outside the singular point. It induces a map from $W^{k-1,2}(\Lambda^1(\mathbb{S}^2\times\mathbb{B}^4))$ to $W^{k-1,2}_{-1-\delta}(\Lambda^1(C(\mathbb{S}^2\times\mathbb{S}^3)))$. Using this map, it is easy to see that the projection of $\gamma'$ to $\mathcal{H}^2_{b,\mathrm{Ho}}(V_{\pm})$ is a well-defined map from $e[\mathcal{H}^2_{\mathrm{dR,rel}}(\tilde V_{\pm})]$ to $\mathcal{H}^2_{b,\mathrm{Ho}}(V_{\pm})$. In order to show the injectivity, assume that $\gamma$ is mapped to 0. Then there exists $\gamma'''\in W^{k,2}_{-2+\delta,...-2+\delta,-\delta}(\Lambda^1(V_\pm))$ and $\gamma''''\in W^{k,2}_{-2+\delta,...-2+\delta,-\delta}(\Lambda^3(V_\pm))$ such that $\gamma'=d\gamma'''+d^*\gamma''''$. So $d\gamma'''$ is both exact and coexact on the end. So $d\gamma'''\in W^{k-1,2}_{\delta}$ near infinity as before. Using integration by parts, $d^*\gamma''''=0$. So $\gamma_{2,1,x}=0$ for all $x\in V_{\pm}^{\mathrm{sing}}$ using the uniqueness part of the analogy of Lemma \ref{Poincare-lemma} near $x$. Therefore $\gamma'=0$ near $x$. Since $H^1(S^2\times S^3)=0$, $\gamma'''=d\gamma'''_x$ near $x$. So \[\gamma'=d(\gamma'''-\sum_{x}d(\chi_x\gamma'''_x)).\]
Therefore, \[[\gamma]=[d(\gamma'''-\sum_{x}d(\chi_x\gamma'''_x))+\sum_{x}d(\chi_x\gamma_x)]=0\in e[\mathcal{H}^2_{\mathrm{dR,rel}}(\tilde V_{\pm})].\]

In order to show the surjectivity, pick any form $\tilde\gamma$ in $\mathcal{H}^2_{b,\mathrm{Ho}}(V_{\pm})$. By Proposition \ref{Closeness-harmonic-form}, $\tilde\gamma$ can be written as $\tilde\gamma=\tilde\gamma_{2,1,x}\phi_{2,1,x}+\tilde\gamma'_x$ near each $x\in V^{\mathrm{sing}}_{\pm}$, where $\tilde\gamma'_x\in W^{k,2}_{-2+\delta}$ near $x$. Since $\tilde\gamma'_x$ is closed and is in $W^{k,2}_{-2+\delta}$ near $x$, by the analogy of Lemma \ref{Poincare-lemma} near $x$, there exists $\tilde\gamma''_x$ such that $\tilde\gamma'_x=d\tilde\gamma''_x$ near $x$. So $\tilde\gamma'''=\tilde\gamma_{2,1,x}\phi_{2,1,x}+\tilde\gamma'_x-d(\chi_x\tilde\gamma''_x)$ is a well defined form on $\tilde V_{\pm}$. On the other hand, by Lemma \ref{Poincare-lemma}, there exists $\tilde\gamma''''$ such that $\tilde\gamma'''=d\tilde\gamma''''$ near infinity. Fix $\chi:\mathbb{R}\rightarrow[0,1]$ as a smooth function satisfying $\chi(s)=1$ for $s\le 1$ and $\chi(s)=0$ for $s\ge 2$, then $\tilde\gamma'''-d((1-\chi(t_{\pm}-T+2))\tilde\gamma'''')\in C^\infty_0$ and its image approaches $\tilde\gamma$ when $T$ goes to infinity. Therefore, the image of $e[\mathcal{H}^2_{\mathrm{dR,rel}}(\tilde V_{\pm})]$ is dense. Since $\mathcal{H}^2_{b,\mathrm{Ho}}(V_{\pm})$ is finite dimensional, the map is in fact surjective.
\end{proof}

\begin{corollary}
Suppose that $\delta>0$ is small enough and \[\gamma\in W^{k,2}_{-3-\delta,...,-3-\delta,\delta}(\Lambda^{\mathrm{even}}(V_{\pm})).\]
If $(d+d^*)\gamma=0$, then $\gamma=0$.
\label{Vanishing-harmonic-even-form}
\end{corollary}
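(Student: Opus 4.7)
The plan is to split $\gamma=\gamma_0+\gamma_2+\gamma_4+\gamma_6$ by degree, handle $\gamma_0,\gamma_6$ by integration by parts, and handle $\gamma_2,\gamma_4$ by invoking Proposition \ref{Hodge-theory-L2-harmonic-form} followed by a topological computation. The first step is to observe that $(d+d^*)\gamma=0$ on even-degree forms combined with Proposition \ref{Closeness-harmonic-form} gives $d\gamma=0$, and then $d^*\gamma=0$ as well; so each $\gamma_p$ is closed and coclosed with the same weighted decay. Since the Hodge star preserves pointwise norms (hence the weighted $W^{k,2}$ spaces) and swaps $d$ with $\pm d^*$, the vanishing of $\gamma_6$ reduces to that of $\gamma_0$, and the vanishing of $\gamma_4$ reduces to that of $\gamma_2$.

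For $\gamma_0$ I would run the argument embedded in the proof of Proposition \ref{Closeness-harmonic-form}: by Proposition \ref{Obata} the Laplacian on functions has no critical rate in $(-3-\delta,-\delta)$ at a nodal singularity, so Theorem \ref{Polyhomogenous-L-totally-character} upgrades $\gamma_0$ to $W^{k,2}_{-\delta,\ldots,-\delta,\delta}$. Integrating by parts on the exhaustion $\{r_{x_i}\ge r_0,\,t\le T_0\}$, the equation $\Delta\gamma_0=0$ together with the bounds $\gamma_0=O(r^{-\delta})$, $d\gamma_0=O(r^{-\delta-1})$ gives boundary contributions of size $O(r_0^{4-2\delta})$ and $O(e^{-2\delta T_0})$, which vanish in the limit. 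Hence $d\gamma_0=0$, so $\gamma_0$ is locally constant, and the exponential decay at infinity forces $\gamma_0\equiv 0$.

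For $\gamma_2$ Proposition \ref{Hodge-theory-L2-harmonic-form} identifies $\mathcal{H}^2_{b,\mathrm{Ho}}(V_\pm)\cong e[\mathcal{H}^2_{\mathrm{dR,rel}}(\tilde V_\pm)]$, so it suffices to show that the image of compactly-supported $H^2$ inside $H^2(\tilde V_\pm)$ vanishes. The long exact sequence of the compactification of $\tilde V_\pm$ by its end $F_\infty=\mathbb{S}^1\times S_\pm$, combined with $H^1(S_\pm)=0$ and K\"unneth, identifies this image with the kernel of the restriction $H^2(\tilde V_\pm)\to H^2(S_\pm)$. Using the Mayer--Vietoris sequence for $\tilde V_\pm=\tilde Z_\pm\setminus S_\pm$ the problem reduces further to injectivity of the restriction $H^2(\tilde Z_\pm)\to H^2(S_\pm)$ on the relevant subspace.

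The hard part will be this topological injectivity. For $X_-=\mathbb{CP}^3$ one checks it directly: $H^2(\tilde Z_-)$ is of rank two, generated by the hyperplane pull-back and the exceptional divisor of the blow-up at $C_-$, both restricting independently to the quartic K3 $S_-$. For $X_+$ with nine ODPs on the $2$-plane $\Pi$, one combines the basis $\{\tilde\Pi,-K_{Y_+}\}$ of $H^2(Y_+)$ recorded in Proposition \ref{Perpendicular-embedding} with the exceptional divisor from blowing up $C_+$, and uses the lattice data together with $S_+\in|-K_{\tilde Z_+}|$ to verify that each generator restricts to a nonzero class on the K3 and that these restrictions remain independent in $H^2(S_+)$. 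This injectivity, combined with the isomorphism of Proposition \ref{Hodge-theory-L2-harmonic-form}, then finishes the corollary.
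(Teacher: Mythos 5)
Your overall decomposition and reduction match the paper's: split $\gamma=\gamma_0+\gamma_2+\gamma_4+\gamma_6$, use Proposition \ref{Closeness-harmonic-form} to get each $\gamma_p$ closed and coclosed, kill $\gamma_0$ (and $\gamma_6$ by Hodge duality) by constancy and decay, and kill $\gamma_2$ (and $\gamma_4$ by duality) by Proposition \ref{Hodge-theory-L2-harmonic-form} plus the vanishing of $e[\mathcal{H}^2_{\mathrm{dR,rel}}(\tilde V_{\pm})]$. Where the paper simply cites Proposition~5.38 of Kovalev for this last vanishing, you instead sketch a direct topological verification. That is a legitimate alternative, but the version you wrote has a concrete error.

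You want injectivity of the restriction $H^2(\tilde V_{\pm})\to H^2(S_{\pm})$. Since $\tilde V_{\pm}=\tilde Z_{\pm}\setminus S_{\pm}$ and $H^1(S_{\pm})=0$, the Gysin sequence gives $H^2(\tilde V_{\pm})\cong H^2(\tilde Z_{\pm})/\langle[S_{\pm}]\rangle=H^2(\tilde Z_{\pm})/\langle-K_{\tilde Z_{\pm}}\rangle$, and $-K_{\tilde Z_{\pm}}$ restricts to zero on $S_{\pm}$ (the normal bundle of a fibre of $f$ is trivial). So what you actually need is that the kernel of $H^2(\tilde Z_{\pm})\to H^2(S_{\pm})$ is exactly the line $\langle -K_{\tilde Z_{\pm}}\rangle$ and no more. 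Your $X_-$ check asserts that the hyperplane class $H$ and the exceptional divisor $E$ of the blow-up of $C_-$ ``both restrict independently to the quartic K3 $S_-$.'' That is false: $E|_{S_-}=[C_-]$ and $C_-=S_-\cap S_{0,-}$ with both divisors quartic, so $E|_{S_-}=4H|_{S_-}$; the two restrictions are proportional. Indeed $H^2(\tilde Z_-)\to H^2(S_-)$ \emph{cannot} be injective, precisely because $-K_{\tilde Z_-}=4H-E$ lies in the kernel. The correct statement is that the kernel is one-dimensional, equal to $\langle 4H-E\rangle=\langle-K_{\tilde Z_-}\rangle$, so the induced map $H^2(\tilde V_-)\to H^2(S_-)$ on the one-dimensional quotient is injective. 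The same re-phrasing is needed for $X_+$: there $E|_{S_+}=(-K_{Y_+})|_{S_+}$, so the kernel of $H^2(\tilde Z_+)\to H^2(S_+)$ is again one-dimensional, spanned by $-K_{\tilde Z_+}=\pi^*(-K_{Y_+})-E$, and the independence of $\tilde\Pi|_{S_+}$ and $(-K_{Y_+})|_{S_+}$ (rank $N_+=2$) gives what you want. With those corrections your route is sound and recovers the same conclusion the paper gets by citing Kovalev.
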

\begin{proof}
By Proposition \ref{Closeness-harmonic-form}, $d\gamma_0=0$. So $\gamma_0$ is a constant. It vanishes because it decays at infinity. By Proposition \ref{Hodge-theory-L2-harmonic-form}, $\gamma_2=0$ because as in Proposition 5.38 of \cite{Kovalev}, the space $e[\mathcal{H}^2_{\mathrm{dR,rel}}(\tilde V_{\pm})]$ vanishes. By Hodge duality, $\gamma_4=\gamma_6=0$.
\end{proof}

Similarly, it is possible to prove the following:
\begin{proposition}
Suppose that $\delta>0$ is small enough. Choose $\chi:\mathbb{R}\rightarrow[0,1]$ as a smooth function satisfying $\chi(s)=1$ for $s\le 1$ and $\chi(s)=0$ for $s\ge 2$. Define $\chi_\infty=(1-\chi(t_{\pm}-T+1))$. Then the space $\mathcal{H}^2_{b-\mathrm{abs}}(V_{\pm})$ defined as
\[\{\gamma\in \mathrm{Span}\{\chi_{\infty}\phi_{2,j,\infty}\}_{j=1}^{h_{2,\infty}}\oplus W^{k,2}_{-3-\delta,...,-3-\delta,\delta}(\Lambda^2(V_{\pm})), (d+d^*)\gamma=0\}\] is isomorphic to $H^2_{\mathrm{dR,abs}}(\tilde V_{\pm})$. Define the space $\mathcal{H}^2_{b-\mathrm{rel}}(V_{\pm})$ as \[\{\gamma\in \mathrm{Span}\{\chi_\infty dt\wedge\phi_{1,j,\infty}\}_{j=1}^{h_{1,\infty}}\oplus W^{k,2}_{-3-\delta,...,-3-\delta,\delta}(\Lambda^2(V_{\pm})), (d+d^*)\gamma=0\}.\] Then the space $\mathcal{H}^2_{eb}(V_{\pm})$ defined as \[\{\gamma\in W^{k,2}_{-3-\delta,...,-3-\delta,-\delta}(\Lambda^2(V_{\pm})), (d+d^*)\gamma=0\}\] can be written as $\mathcal{H}^2_{eb}(V_{\pm})=\mathcal{H}^2_{b-\mathrm{abs}}(V_{\pm})\oplus\mathcal{H}^2_{b-\mathrm{rel}}(V_{\pm})$
\label{Hodge-theory}
\end{proposition}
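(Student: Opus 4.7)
The plan is to follow the template of Proposition \ref{Hodge-theory-L2-harmonic-form}, adjusting the asymptotic analysis to accommodate the enlarged function spaces at infinity. The analytic toolkit (Theorem \ref{Polyhomogenous-L-totally-character}, Lemma \ref{Polyhomogenous-end}, Lemma \ref{Poincare-lemma}) applies as before, and the main new ingredient is a careful accounting of the leading cylindrical terms and their identification with absolute versus relative de~Rham classes.

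\textbf{The absolute isomorphism.} First I would verify that any $\gamma\in \mathcal{H}^2_{b-\mathrm{abs}}(V_{\pm})$ is separately closed and coclosed, by the same proof as Proposition \ref{Closeness-harmonic-form}; the extra summand $\chi_\infty\phi_{2,j,\infty}$ is already closed and coclosed on the cylinder, so it contributes only compactly supported error terms which do not affect the decay analysis near the singular points. To construct the map to $H^2_{\mathrm{dR,abs}}(\tilde V_{\pm})$, decompose $\gamma$ near each $x\in V^{\mathrm{sing}}_\pm$ via Theorem \ref{Polyhomogenous-L-totally-character} and Proposition \ref{Harmonic-two-form} as $\gamma=\gamma_{2,1,x}\phi_{2,1,x}+\gamma''_x$ with $\gamma''_x\in W^{k,2}_{-2+\delta}$, make $\gamma''_x$ exact near $x$ using the local analogue of Lemma \ref{Poincare-lemma}, and use Lemma \ref{Poincare-lemma} near infinity to kill the $\chi_\infty\phi_{2,j,\infty}$ tail up to exactness. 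Cutting off yields a compactly modified closed 2-form defining an element of $H^2_{\mathrm{dR,abs}}(\tilde V_{\pm})$. The inverse map projects a compactly supported representative of a given cohomology class onto $\mathcal{H}^2_{b-\mathrm{abs}}(V_{\pm})$ using the $L^2$-pairing of dual weighted spaces as in Proposition \ref{Hodge-theory-L2-harmonic-form}, extended to incorporate the finite-dimensional summand $\mathrm{Span}\{\chi_\infty\phi_{2,j,\infty}\}$. Injectivity is the exactness-plus-coexactness argument of Proposition \ref{Hodge-theory-L2-harmonic-form} repeated verbatim: a form mapped to zero admits weighted primitives whose boundary contributions at infinity vanish by Lemma \ref{Poincare-lemma}, forcing it to be zero.

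\textbf{The $eb$ splitting.} For any $\gamma\in\mathcal{H}^2_{eb}(V_{\pm})$, Theorem \ref{Polyhomogenous-L-totally-character} combined with Lemma \ref{Polyhomogenous-end} expands $\gamma$ on the cylindrical end as a polyhomogeneous series. The list in Lemma \ref{Polyhomogenous-end} consists of $\phi_{p,j,\infty},\,t\phi_{p,j,\infty},\,dt\wedge\phi_{p-1,j,\infty},\,t\,dt\wedge\phi_{p-1,j,\infty}$; the weight bound $-\delta$ excludes every $t$-linear term, and the closedness and cocloseness derived via Proposition \ref{Closeness-harmonic-form} isolate the degrees $(p-1,p)=(1,2)$, so only $\phi_{2,j,\infty}$ and $dt\wedge\phi_{1,j,\infty}$ leading coefficients survive, with uniquely determined constants. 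Truncating each term with $\chi_\infty$ exhibits $\gamma$ as a sum of an element of $\mathcal{H}^2_{b-\mathrm{abs}}(V_{\pm})$ and an element of $\mathcal{H}^2_{b-\mathrm{rel}}(V_{\pm})$.

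\textbf{Main obstacle.} The subtle step is that the cut-off $\chi_\infty$ does not preserve harmonicity on the nose, so one cannot simply declare each model tail to be a harmonic form in the respective subspace. The fix is to solve the linear system $(d+d^*)\zeta = -(d+d^*)(\chi_\infty\phi_{2,j,\infty})$ in the decaying space $W^{k,2}_{-3-\delta,\ldots,-3-\delta,\delta}$, which is possible because the right-hand side is compactly supported and pairs trivially with every class in $\mathcal{H}^2_{b,\mathrm{Ho}}(V_{\pm})$ under the identification with $e[\mathcal{H}^2_{\mathrm{dR,rel}}(\tilde V_{\pm})]$ established in Proposition \ref{Hodge-theory-L2-harmonic-form}; the analogous correction handles the $dt\wedge\phi_{1,j,\infty}$ tails. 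Once this correction exists the splitting is direct, because any common element of $\mathcal{H}^2_{b-\mathrm{abs}}(V_{\pm})$ and $\mathcal{H}^2_{b-\mathrm{rel}}(V_{\pm})$ has leading tail of both types vanishing simultaneously, hence lies in $\mathcal{H}^2_{b,\mathrm{Ho}}(V_{\pm})$ and is already covered by Proposition \ref{Hodge-theory-L2-harmonic-form}; the integration by parts then forces it to be zero once one checks that its representative is simultaneously exact and coexact at infinity using the uniqueness clause in Lemma \ref{Poincare-lemma}.
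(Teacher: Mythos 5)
Your treatment of the $\mathcal{H}^2_{b\text{-}\mathrm{abs}}\cong H^2_{\mathrm{dR,abs}}(\tilde V_\pm)$ isomorphism follows the paper in spirit (modifying representatives near the singular points and at infinity and projecting via the weighted orthogonal decomposition), and that part is sound. The problem is the $eb$ splitting, where you take a genuinely different route from the paper and it does not close.

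The paper splits a given $\gamma\in\mathcal{H}^2_{eb}(V_\pm)$ by invoking the decomposition
\[
W^{k-1,2}_{-3+\delta,\ldots,-3+\delta,-\delta}(\Lambda^2) = dW^{k,2}_{-2+\delta,\ldots,-2+\delta,-\delta}(\Lambda^1)\oplus d^*W^{k,2}_{-2+\delta,\ldots,-2+\delta,-\delta}(\Lambda^3),
\]
valid because $\mathcal{H}^2_{b,\mathrm{Ho}}(V_\pm)=0$ by Corollary \ref{Vanishing-harmonic-even-form}. The two pieces are automatically closed and coclosed, one is exact and hence has only $dt\wedge\phi_{1,j,\infty}$ leading asymptotics (by Lemma \ref{Poincare-lemma} applied to its weighted primitive), the other is coexact and has only $\phi_{2,j,\infty}$ asymptotics; this puts them in $\mathcal{H}^2_{b\text{-}\mathrm{rel}}$ and $\mathcal{H}^2_{b\text{-}\mathrm{abs}}$ respectively with no further work. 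You instead propose to read off the leading coefficients $\phi_\infty=\sum c_j\phi_{2,j,\infty}$ and $dt\wedge\psi_\infty=\sum c'_j dt\wedge\phi_{1,j,\infty}$ from $\gamma$ and then manufacture harmonic representatives with exactly these tails by solving $(d+d^*)\zeta=-(d+d^*)(\chi_\infty\phi_{2,j,\infty})$. This is where the gap is.

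First, the solvability criterion you cite is wrong: the obstruction to solving $(d+d^*)\zeta=g$ with $\zeta$ even and in $W^{k,2}_{-3-\delta,\ldots,-3-\delta,\delta}$ is orthogonality of $g$ to harmonic odd forms in the dual weighted space $W^{1-k,2}_{-2+\delta,\ldots,-2+\delta,-\delta}(\Lambda^{\mathrm{odd}})$, not to $\mathcal{H}^2_{b,\mathrm{Ho}}(V_\pm)$. That kernel is a finite-dimensional but a priori nonzero space of odd-degree harmonic forms, and you have not shown that $-(d+d^*)(\chi_\infty\phi_{2,j,\infty})$ is orthogonal to it. Second, even granting some solvability, your scheme implicitly asserts that every $\phi_{2,j,\infty}$ arises as the leading asymptotic of some element of $\mathcal{H}^2_{b\text{-}\mathrm{abs}}(V_\pm)$. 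That cannot hold in general: the map $\mathcal{H}^2_{b\text{-}\mathrm{abs}}(V_\pm)\to H^2(F_\infty)$ recording the leading cylindrical coefficient is injective (by $\mathcal{H}^2_{b,\mathrm{Ho}}=0$) but has image of dimension $b_2(\tilde V_\pm)$, which is generally strictly smaller than $b_2(F_\infty)=b_2(S_\pm)=22$. So constructing corrections for each basis element $\phi_{2,j,\infty}$ separately is bound to fail for most $j$. The only thing that saves the splitting is that the particular $\phi_\infty$ coming from a closed coclosed $\gamma$ already lies in the realizable subspace, and the clean way to see this is precisely the paper's $d\oplus d^*$ decomposition. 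Finally, a smaller point: you attribute the absence of $t$-linear terms in the cylindrical expansion to the weight bound $-\delta$, but the weight bound permits polynomial growth; the $t\phi_{2,j,\infty}$ and $t\,dt\wedge\phi_{1,j,\infty}$ terms are actually killed by closedness and cocloseness, as in the proof of Proposition \ref{Hodge-theory-L2-harmonic-form}.
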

\begin{proof}
Remark that \[H^2_{\mathrm{dR,abs}}(\tilde V_{\pm})\cong H^2_{\mathrm{dR,abs}}(\tilde V_{\pm}\cap\{t_{\pm}<T+1\}),\] where the isomorphism map is given by restriction. Given any form $\gamma$ in $H^2_{\mathrm{dR,abs}}(\tilde V_{\pm}\cap\{t_{\pm}<T+1\})$, using the fact that
\[H^2_{\mathrm{dR,abs}}(\tilde V_{\pm}\cap\{T<t_{\pm}<T+1\})=H^2([T,T+1]\times \mathbb{S}^1\times S_{\pm})=H^2(S_{\pm}),\]
$\gamma$ can be written as $\gamma=d\gamma_\infty+\phi_{\infty}$ on $t_{\pm}\in(T,T+1)$, where $\phi_\infty\in H^2(S_{\pm})$.
As in the proof of Proposition \ref{Hodge-theory-L2-harmonic-form}, \[\gamma'=\gamma-\sum_{x\in V^{\mathrm{sing}}_{\pm}}d(\chi_x\gamma'_x)+\sum_{x\in V^{\mathrm{sing}}_{\pm}}\gamma_{2,1,x}\phi_{2,1,x}-d(\chi_\infty\gamma_\infty)+\phi_\infty\] is a 2-form on $V_{\pm}$.
Using the decomposition \[\begin{split}
W^{k-1,2}_{-3+\delta,...-3+\delta,-\delta}&(\Lambda^2(V_{\pm}))
=dW^{k,2}_{-2+\delta,...-2+\delta,-\delta}(\Lambda^1(V_\pm))\\
&\oplus d^*W^{k,2}_{-2+\delta,...-2+\delta,-\delta}(\Lambda^3(V_\pm)),
\end{split}\]
as in the proof of Proposition \ref{Hodge-theory-L2-harmonic-form}, the projection of $\gamma''$ to the second component provides a well-defined isomorphic from $H^2_{\mathrm{dR,abs}}(\tilde V_{\pm})$ to $\mathcal{H}^2_{b-\mathrm{abs},\mathrm{Ho}}(V_{\pm})$. Remark that there is no $\mathcal{H}^2_{b,\mathrm{Ho}}(V_{\pm})$ component by Corolloary \ref{Vanishing-harmonic-even-form}.

Finally, given $\gamma\in\mathcal{H}^2_{eb}(V_{\pm})\subset W^{k-1,2}_{-3+\delta,...-3+\delta,-\delta}(\Lambda^2(V_{\pm}))$, its first component is exact and coclosed. So as in the proof of Proposition \ref{Hodge-theory-L2-harmonic-form}, it belongs to $\mathcal{H}^2_{b-\mathrm{rel}}(V_{\pm})$. On the other hand, its second component belongs to $\mathcal{H}^2_{b-\mathrm{abs}}(V_{\pm})$.
\end{proof}

\begin{proposition}
Suppose $\delta>0$ is small enough. Then the space
\[\mathcal{H}^2_{\mathrm{Ho}}(M):=\{\gamma\in W^{k,2}_{-3-\delta,...,-3-\delta}(\Lambda^2(M)), (d+d^*)\gamma=0\}.\] is isomorphic to $H^2_{\mathrm{dR,abs}}(\tilde M)$.
\label{Hodge-theory-compact}
\end{proposition}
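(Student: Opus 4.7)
The plan is to follow the proof of Proposition \ref{Hodge-theory-L2-harmonic-form} almost verbatim, with the key simplification that $M$ is compact, so the cylindrical-end weight $\delta_\infty$ disappears and the entire analysis at infinity drops out. Only the conical weights at $V^{\mathrm{sing}}_+\cup V^{\mathrm{sing}}_-$ need to be tracked.

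First I would establish the analogue of Proposition \ref{Closeness-harmonic-form} on $M$: if $\gamma\in W^{k,2}_{-3-\delta,\ldots,-3-\delta}(\Lambda^{\mathrm{even}}(M))$ and $(d+d^*)\gamma=0$, then $d\gamma=d^*\gamma=0$. Near each singular point, Theorem \ref{Polyhomogenous-L-totally-character} combined with Propositions \ref{No-log-term}, \ref{Obata}, \ref{Harmonic-two-form} shows that the only homogeneous harmonic 2-forms with rate in $(-3-\delta,-2+\delta)$ are multiples of $\phi_{2,1}$ and $\log r\,\phi_{2,1}$, and the coupled equation $d\gamma_2+d^*\gamma_4=0$ (together with Hodge duality relating $\gamma_4$ to $\gamma_2$) forces the logarithmic coefficient to vanish, as in the $V_\pm$ case. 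This gives enough decay of $d\gamma_p$ to justify integration by parts, closing the argument.

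Next I would set up the weighted Hodge decomposition
\[
W^{k-1,2}_{-3+\delta,\ldots,-3+\delta}(\Lambda^2(M)) = \mathcal{H}^2_{\mathrm{Ho}}(M) \oplus dW^{k,2}_{-2+\delta,\ldots,-2+\delta}(\Lambda^1(M)) \oplus d^*W^{k,2}_{-2+\delta,\ldots,-2+\delta}(\Lambda^3(M)),
\]
exactly as in Proposition \ref{Hodge-theory-L2-harmonic-form}: the $L^2$-dual of $d+d^*:W^{k,2}_{-2+\delta,\ldots}(\Lambda^{\mathrm{odd}})\to W^{k-1,2}_{-3+\delta,\ldots}(\Lambda^{\mathrm{even}})$ has kernel $\mathcal{H}^{\mathrm{even}}_{\mathrm{Ho}}(M)$ by elliptic regularity, and directness of the sum follows because any form lying in the intersection of the exact and coexact summands is harmonic, hence by the closeness result above is simultaneously closed and coclosed, and integration by parts (with vanishing boundary terms on small spheres around the singularities, thanks to the weighted decay) forces it to vanish.

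I then build the map $\Phi:H^2_{\mathrm{dR,abs}}(\tilde M)\to\mathcal{H}^2_{\mathrm{Ho}}(M)$ by the same cut-off recipe: given a smooth closed representative $\gamma$ on $\tilde M$, use $H^2(\mathbb{S}^2\times\mathbb{B}^4)=\mathbb{R}\langle\phi_{2,1,x}\rangle$ to write $\gamma=d\gamma_x+\gamma_{2,1,x}\phi_{2,1,x}$ near each exceptional divisor, set
\[
\gamma' = \gamma - \sum_{x\in V^{\mathrm{sing}}_\pm} d(\chi_x\gamma_x),
\]
which descends to a 2-form on $M$ equal to $\gamma_{2,1,x}\phi_{2,1,x}$ near each singular point and hence lies in $W^{k,2}_{-3-\delta,\ldots,-3-\delta}(\Lambda^2(M))$ since $\phi_{2,1}$ has rate $-2>-3-\delta$. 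Define $\Phi([\gamma])$ as the harmonic projection of $\gamma'$ in the decomposition above; this is independent of the choice of representative and of $\chi_x,\gamma_x$ by standard cohomological arguments.

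For injectivity, if $\Phi([\gamma])=0$ then $\gamma'=d\gamma_1+d^*\gamma_2$ with $\gamma_1,\gamma_2\in W^{k,2}_{-2+\delta,\ldots,-2+\delta}$. The term $d^*\gamma_2$ has vanishing $\phi_{2,1,x}$ leading coefficient near $x$ (because pairing with the $\log r\,\phi_{2,1}$ obstruction, as in Proposition \ref{Closeness-harmonic-form}, would introduce terms of the wrong weight), which forces $\gamma_{2,1,x}=0$ and then $\gamma'=d\gamma_1$ near $x$. Since $H^1(\mathbb{S}^2\times\mathbb{S}^3)=0$, $\gamma_1$ has a local primitive on the resolution, so $\gamma_1-\sum_x d(\chi_x\tilde\gamma_{1,x})$ pulls back to a global 1-form on $\tilde M$ whose exterior derivative is $[\gamma]$. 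For surjectivity, given $\tilde\gamma\in\mathcal{H}^2_{\mathrm{Ho}}(M)$, Theorem \ref{Polyhomogenous-L-totally-character} with Propositions \ref{No-log-term} and \ref{Harmonic-two-form} gives $\tilde\gamma=\tilde\gamma_{2,1,x}\phi_{2,1,x}+\tilde\gamma'_x$ near each $x$ with $\tilde\gamma'_x\in W^{k,2}_{-2+\delta}$; closedness of $\tilde\gamma'_x$ plus a weighted local Poincaré lemma on the punctured cone (provided by the same Kondratev machinery and the absence of critical rates of 1-forms in the relevant interval, Proposition \ref{Harmonic-one-form-below-zero}) yields $\tilde\gamma'_x=d\tilde\gamma''_x$ locally, so $\tilde\gamma_{2,1,x}\phi_{2,1,x}+\tilde\gamma'_x-d(\chi_x\tilde\gamma''_x)$ extends to a smooth closed 2-form on $\tilde M$ mapped to $\tilde\gamma$ by $\Phi$.

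The main obstacle is the local weighted Poincaré lemma near each conical singularity: one needs every closed form in $W^{k,2}_{-2+\delta}(\Lambda^2)$ on a punctured neighborhood of $x$ to admit a primitive in $W^{k,2}_{-1+\delta}(\Lambda^1)$. This is not stated as a separate lemma earlier, but it follows from the same polyhomogeneous expansion and index-jump analysis used to prove Theorem \ref{Polyhomogenous-L-totally-character}, combined with Proposition \ref{Harmonic-one-form-below-zero} (no harmonic 1-forms on the cone in the problematic rate window), so once that is in hand the remainder of the argument is a direct transcription of the $V_\pm$ case, with the $[T,\infty)\times F_\infty$ analysis simply omitted.
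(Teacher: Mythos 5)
Your plan is correct and matches the paper's intended approach: the paper's actual proof of Proposition \ref{Hodge-theory-compact} is simply the one-liner ``It is proved similarly as Proposition \ref{Hodge-theory}; since the manifold is compact, there is no need to do anything near infinity,'' and you have fleshed out exactly what that transcription amounts to (cut-off of a closed representative near each $\phi_{2,1,x}$, weighted Hodge decomposition via the $L^2$ dual of $d+d^*$, projection to the harmonic space, injectivity/surjectivity via the local weighted Poincar\'e lemma on the punctured cone), with the infinity analysis simply omitted. Two tiny remarks: the $M$-case of Proposition \ref{Closeness-harmonic-form} is already stated in the paper, so your first step is a restatement rather than a new lemma; and you cite Proposition \ref{Hodge-theory-L2-harmonic-form} rather than Proposition \ref{Hodge-theory}, but for compact $\tilde M$ the relative and absolute cohomologies coincide so the two references are interchangeable.
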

\begin{proof}
It is proved similarly as Proposition \ref{Hodge-theory}. Since the manifold is compact, there is no need to do anything near infinity.
\end{proof}

Recall that $\tilde M$ is the gluing of $\tilde V_+$ and $\tilde V_-$ using $t_+=2T+1-t_-$. Define $t$ by $t=t_+-T-\frac{1}{2}=T+\frac{1}{2}-t_-$. Using the fact that $\tilde M=\{t<\frac{1}{2}\}\cup\{t>-\frac{1}{2}\}$, there is a long exact sequence for the cohomogology groups of $\tilde M$, $\{t<\frac{1}{2}\}$, $\{t>-\frac{1}{2}\}$ and $\{|t|<\frac{1}{2}\}$. In particular
\[H^2(\tilde M)\rightarrow H^2(\{t<\frac{1}{2}\})\oplus H^2(\{t>-\frac{1}{2}\})\rightarrow H^2(\{|t|<\frac{1}{2}\})\]
is exact. Remark that $\tilde V_+$ is isomorphic to $\tilde V_-$, so the map from $H^2(\{t>-\frac{1}{2}\})$ to $H^2(\{|t|<\frac{1}{2}\})$ is isomorphic to the map from $H^2(\{t<\frac{1}{2}\})$ to $H^2(\{|t|<\frac{1}{2}\})$. This map is injective by the proof of Proposition 5.38 of \cite{Kovalev}. It follows that the long exact sequence is reduced to
\[0\rightarrow H^1(\{|t|<\frac{1}{2}\})\rightarrow H^2(\tilde M)\rightarrow H^2(\{t<\frac{1}{2}\})\rightarrow 0\]
using the fact that $H^1(\{t<\frac{1}{2}\})\oplus H^1(\{t>-\frac{1}{2}\})=0$. By Proposition \ref{Hodge-theory} and Proposition \ref{Hodge-theory-compact}, it induces a natural map from $\mathcal{H}^2_{\mathrm{Ho}}(M)$ to $\mathcal{H}^2_{b-\mathrm{abs}}(V_+)$. Moreover, $\mathrm{dim}\mathcal{H}^2_{\mathrm{Ho}}(M)=\mathrm{dim}\mathcal{H}^2_{b-\mathrm{abs}}(V_+)+1$ because
\[H^1(\{|t|<\frac{1}{2}\},\mathbb{R})=H^1(\mathbb{S}^1\times S_+)=\mathbb{R}.\]

\begin{proposition}
There exists a map from $\mathcal{H}^{\mathrm{even}}_{\mathrm{Ho}}(M)$ to $\mathcal{H}^{\mathrm{even}}_{eb}(V_+)$. Moreover, suppose that $\gamma\in\mathcal{H}^{\mathrm{even}}_{\mathrm{Ho}}(M)$ is mapped to $\gamma'$, then if $T$ is large enough, \[(\gamma,\gamma'\chi(t_+-\frac{T}{2}))_{W^{k,2}_{-3-\delta,...,-3-\delta,-\delta}}\ge \frac{9}{10}||e^{-\delta(t+T+\frac{1}{2})}\gamma||_{W^{k,2}_{-3-\delta,...,-3-\delta}}^2.\]
\label{Asymptotic-kernel-approximation}
\end{proposition}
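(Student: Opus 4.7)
The strategy is to construct $\gamma'$ by asymptotic matching at the infinity of $V_+$ and then to control the mismatch $\gamma - \gamma'$ using uniform weighted estimates on the long cylindrical neck of $M$.

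First I would define the map via Fourier decomposition on the cylindrical cross-section $F_\infty = \mathbb{S}^1 \times S_+$ inside the neck of $M$. By Lemma \ref{Polyhomogenous-end}, the cylindrical-constant (zero-mode) harmonic contributions on the model cylinder are spanned by $\phi_{p,j,\infty}$, $t_+\phi_{p,j,\infty}$, $dt_+\wedge\phi_{p-1,j,\infty}$, and $t_+\,dt_+\wedge\phi_{p-1,j,\infty}$, while non-zero eigenvalue modes carry factors $e^{\pm\sqrt{\mu_{p,j,\infty}}\,t_+}$. Since $\gamma$ is bounded in the relevant weighted norm across a neck of length $\sim 2T$, only the zero-mode content can propagate without exponential suppression on at least one side. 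Extracting this zero-mode content at the neck midpoint as $\gamma_0$, I set $\gamma'$ to be the element of $\mathcal{H}^{\mathrm{even}}_{eb}(V_+)$ whose asymptotic expansion at infinity matches $\gamma_0$; existence and uniqueness follow from the cohomological description in Proposition \ref{Hodge-theory} together with the Mayer--Vietoris identification of $H^{\mathrm{even}}(\tilde M)$ discussed just before the proposition.

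The core estimate is the uniform exponential decay
\[
|\gamma - \gamma'|(t_+) \le C e^{-\mu_0 (T - t_+)} \|\gamma\|, \qquad 0 \le t_+ \le T,
\]
where $\mu_0 = \min\{\sqrt{\mu_{p,j,\infty}} : \mu_{p,j,\infty} > 0\}$ is the smallest positive cross-sectional rate and $C$ is independent of $T$. Since $\gamma - \gamma'$ is $(d+d^*)$-harmonic on this region and its leading zero-mode content vanishes at $t_+ = T$ by construction, the estimate follows from Theorem \ref{Polyhomogenous-L-totally-character} applied with a weight strictly between $0$ and $\mu_0$ on the translation-invariant model $\mathbb{R} \times F_\infty$; translation invariance secures the $T$-independence of $C$. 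On the support of $\chi(t_+ - T/2) \subset \{t_+ \le T/2 + 2\}$ this gives $|\gamma - \gamma'| \le C e^{-\mu_0 T/2}\|\gamma\|$.

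Setting $w = \prod_i r_{x_i}^{2\delta} e^{-2\delta t_+}$ for the weight density common to both sides of the inequality,
\[
(\gamma, \gamma'\chi)_{W^{k,2}_{-3-\delta,\dots,-3-\delta,-\delta}} \ge \int_{\{t_+\le T/2+1\}} |\gamma|^2 w - C e^{-\mu_0 T/2}\|\gamma\|^2,
\]
while the right-hand side $\|e^{-\delta t_+}\gamma\|^2_{W^{k,2}_{-3-\delta,\dots,-3-\delta}}$ equals $\int_M |\gamma|^2 w$, and its complementary contribution over $\{t_+ > T/2+1\}$ is at most $Ce^{-\delta T}\|\gamma\|^2$ because $e^{-2\delta t_+} \le e^{-\delta(T+2)}$ there. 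Taking $T$ large enough to absorb both errors below $\tfrac{1}{20}\|\gamma\|^2$ yields the claimed $9/10$ ratio. The main obstacle is the uniform-in-$T$ decay estimate above: it requires that the constants in Theorem \ref{Polyhomogenous-L-totally-character} not degenerate as the neck lengthens (secured by working on the translation-invariant cylindrical model) and that the zero-mode matching at $t_+ = T$ be exact rather than merely approximate, which relies on the surjectivity in the cohomological interpretation of the map $\mathcal{H}^{\mathrm{even}}_{\mathrm{Ho}}(M) \to \mathcal{H}^{\mathrm{even}}_{eb}(V_+)$.
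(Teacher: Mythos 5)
Your overall strategy — extract the translation-invariant zero-mode content of $\gamma$ in the neck, match it with an element of $\mathcal{H}^{\mathrm{even}}_{eb}(V_+)$, and show the mismatch is small for large $T$ — is the same skeleton as the paper's. However, two steps in your sketch are genuine gaps.

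\textbf{Existence of $\gamma'$.} You define $\gamma'$ as \emph{the} element of $\mathcal{H}^{\mathrm{even}}_{eb}(V_+)$ whose asymptotic expansion at infinity matches $\gamma_0$, asserting that existence follows from Proposition~\ref{Hodge-theory} and the Mayer--Vietoris discussion. This is not automatic: the boundary-value map $\mathcal{H}^2_{eb}(V_+)\rightarrow \mathcal{H}^1(F_\infty)\oplus\mathcal{H}^2(F_\infty)$, $\gamma'\mapsto(dt\wedge\alpha,\beta)$, has image a proper subspace (a ``Lagrangian''), so a given pair $(dt\wedge\alpha,\beta)$ is generally \emph{not} attained. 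The paper avoids this by constructing $\gamma'$ explicitly: it cuts $\gamma$ off at the neck midpoint to obtain a form $\tilde\gamma$ on $V_+$ belonging to $W^{k,2}_{-3-\delta,\dots,-3-\delta,-\delta}$, then uses the decomposition $W^{k-1,2}_{\dots,-\delta}(\Lambda^2)=dW\oplus d^*W$ of Proposition~\ref{Hodge-theory-L2-harmonic-form} (with $\mathcal{H}^2_{b,\mathrm{Ho}}(V_\pm)=0$ by Corollary~\ref{Vanishing-harmonic-even-form}). The $d^*W$ component of $\tilde\gamma$ is automatically closed (since $\tilde\gamma$ is closed) and hence lands in $\mathcal{H}^2_{b-\mathrm{abs}}(V_+)$; the analogous projection of $*_{V_+}(*_M\gamma - d(\cdot))$ gives the $\mathcal{H}^2_{b-\mathrm{rel}}(V_+)$ part. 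You would need to reproduce some version of this projection argument; abstract existence cannot simply be invoked.

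\textbf{The pointwise decay rate.} Your core estimate $|\gamma-\gamma'|(t_+)\leq Ce^{-\mu_0(T-t_+)}\|\gamma\|$ with $\mu_0=\min\{\sqrt{\mu_{p,j,\infty}}:\mu_{p,j,\infty}>0\}$ overstates the decay. It ignores that $\gamma$ is harmonic for the ACyl metric $\omega_{V_+}$, \emph{not} the exact cylinder metric, and the deviation is only $O(e^{-\nu t_\pm})$. After shrinking $\nu$ (as the paper does), one has $\nu<\mu_0$, and the mismatch decays only at rate governed by $\nu$; the paper's error term $\gamma'''''$ has rate $e^{-\nu T/8}$, \emph{not} $e^{-\mu_0 T}$. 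The paper also does not prove a pointwise estimate at all; instead it bounds $\|\gamma'''''\|_{W^{k,2}_{-3-\delta,\dots,-3-\delta,\delta}}\leq C\|(d+d^*)\gamma'''''\|_{W^{k-1,2}}$, valid because the injectivity in Corollary~\ref{Vanishing-harmonic-even-form} makes this a global Fredholm estimate with constant independent of $T$. Theorem~\ref{Polyhomogenous-L-totally-character}, which you cite, only gives a polyhomogeneous decomposition plus a weighted remainder on the exact cylinder model; by itself it does not give a uniform-in-$T$ pointwise decay in the presence of the ACyl perturbation and the cutoffs near the neck. Finally, a minor point: you should observe that the $t$-linear zero modes from Lemma~\ref{Polyhomogenous-end} are ruled out because $\gamma$ is closed and coclosed (Proposition~\ref{Closeness-harmonic-form}), and that the left-hand side of the inequality is a genuine $W^{k,2}$-inner product (including derivative terms), not just an $L^2$ pairing.
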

\begin{proof}
Map 1 to 1 and the volume form $*1$ to $*1$. The estimate is trivial for such components. Using Hodge star, it suffices to define the map for 2-forms. Assume that $\gamma$ is a 2-form and by normalization, $||e^{-\delta(t+T+\frac{1}{2})}\gamma||_{W^{k,2}_{-3-\delta,...,-3-\delta}}=1$.

Since the difference between the asymptotically cylindrical metric and the product metric on the cylinder is $O(e^{-\nu t_{\pm}})$, it is easy to see that \[||(d+d^*)_\infty\gamma||_{W^{k-1,2}(|t|<\frac{T}{2})}\le Ce^{-\frac{\nu T}{2}+\frac{3\delta T}{2}},\] where $(d+d^*)_\infty$ is the operator $d+d^*$ defined using the product metric on the cylinder.

Using generalized Fourier series, $\gamma$ can be written as $dt\wedge\alpha+\beta+\gamma''$ in $|t|<\frac{T}{2}$, where $\alpha\in \mathcal{H}^1(t=0)\cong\mathbb{R}$, $\beta\in \mathcal{H}^2(t=0)\cong H^2(S_{\pm})$ and $\gamma''$ is an exact form satisfying $||\gamma''||_{W^{k,2}(|t|<\frac{1}{2})}\le Ce^{-\frac{\nu T}{4}}$ if both $\nu$ and $\frac{\delta}{\nu}$ are small enough. Choose $\gamma'''$ such that $||\gamma'''||_{W^{k+1,2}(|t|<\frac{1}{2})}\le Ce^{-\frac{\nu T}{4}}$ and $d\gamma'''=\gamma''$ when $|t|<\frac{1}{2}$. It is clear that $\gamma-d((1-\chi(t+\frac{3}{2}))\gamma''')$ induces a form in $W^{k,2}_{-3-\delta,...,-3-\delta,-\delta}(V_+)$ which equals to $dt\wedge\alpha+\beta$ when $t_+>T+1$. Define $\gamma'_{b-\mathrm{abs}}$ as its $d^*W^{k+1,2}_{-2+\delta,...-2+\delta,-\delta}(\Lambda^3(V_+))$ component. It is easy to see that $\gamma'_{b-\mathrm{abs}}\in\mathcal{H}^2_{b-\mathrm{abs}}(V_+)$ and $\gamma'_{b-\mathrm{abs}}-\beta\in W^{k,2}_{\delta}(V_+)$ near infinity. By Proposition \ref{Hodge-theory} and Proposition \ref{Hodge-theory-compact}, $\gamma'_{b-\mathrm{abs}}$ is also the image of $\gamma$ using the restriction map from $H^2_{\mathrm{dR,abs}}(\tilde M)$ to $H^2_{\mathrm{dR,abs}}(\tilde V_+\cap \{t<\frac{1}{2}\})$.

On the other hand, $*_M\gamma-*_\infty(dt\wedge\alpha+\beta)$ can also be written as $d\gamma''''$ when $|t|<\frac{1}{2}$ for $||\gamma''''||_{W^{k+1,2}(|t|<\frac{1}{2})}\le Ce^{-\frac{\nu T}{4}}$. So
\[*_{V_+}(*_M\gamma-d((1-\chi(t+\frac{3}{2}))\gamma''''))\] also induces a form in $W^{k,2}_{-3-\delta,...,-3-\delta,-\delta}(V_+)$ which equals to $*_{V_+}*_\infty(dt\wedge\alpha+\beta)$ when $t_+>T+1$. Define $\gamma'_{b-\mathrm{rel}}$ as its $dW^{k+1,2}_{-2+\delta,...-2+\delta,-\delta}(\Lambda^1(V_+))$ component. It belongs to $\mathcal{H}^2_{b-\mathrm{rel}}$ and $\gamma'_{b-\mathrm{abs}}-dt\wedge\alpha\in W^{k,2}_{\delta}$ near infinity.

Define $\gamma'=\gamma'_{b,abs}+\gamma'_{b-\mathrm{rel}}$. Then $\gamma-d((1-\chi(t+\frac{3}{2}))\gamma''')$ can be written as $\gamma'+\gamma'''''$ for an element $\gamma'''''\in W^{k,2}_{-3-\delta,...,-3-\delta,\delta}$.

By weighted elliptic estimate, \[||\gamma'''''||_{W^{k,2}_{-3-\delta,...-3-\delta,\delta}}\le C||(d+d^*)\gamma'''''||_{W^{k-1,2}_{-4+\delta,...-4+\delta,\delta}}\le Ce^{-\frac{\nu T}{8}}.\]

So \[\begin{split}
||\gamma'||_{W^{k,2}_{-3-\delta,...,-\delta}(t_+<\frac{T}{2})}&\le ||\gamma||_{W^{k,2}_{-3-\delta,...,-\delta}(t_+<\frac{T}{2})} +||\gamma'''''||_{W^{k,2}_{-3-\delta,...,-\delta}(t_+<\frac{T}{2})}\\
&\le 1+Ce^{-\frac{\nu T}{8}}e^{\delta T}.
\end{split}\]
Using the fact that $\gamma'$ is asymptotic to $dt\wedge\alpha+\beta$,
\[||\gamma'||_{W^{k,2}_{-3-\delta,...,-\delta}(t_+>\frac{T}{2})}\le C e^{-\frac{\delta T}{2}} ||\gamma'||_{W^{k,2}_{-3-\delta,...,-\delta}(t_+<\frac{T}{2})}.\]
In particular, \[||\gamma||_{W^{k,2}(|t|\le\frac{1}{2})}\le C e^{\frac{\delta T}{2}}.\]
Using \[||(1-\chi(t+\frac{3}{2}))\gamma||_{W^{k,2}_{-3-\delta,...,\delta}(V_-)}\le C||(d+d^*)_{V_-}(1-\chi(t+\frac{3}{2}))\gamma||_{W^{k-1,2}_{-4-\delta,...,\delta}(V_-)},\]
it is easy to get the conclusion.
\end{proof}

\begin{proposition}
For large enough $T$, and $p=2,3$, there exists a linear map \[B_T:W^{k,2}_{-2+\delta,...-2+\delta}(\Lambda^pM)\rightarrow W^{k,2}_{-2+\delta,...-2+\delta}(\Lambda^pM)\]
such that $(d+d^*)B_T\gamma=d^*\gamma$ and \[||B_T\gamma||_{W^{k,2}_{-2+\delta,...-2+\delta}(\Lambda^pM)}\le
Ce^{3\delta T}||\gamma||_{W^{k,2}_{-2+\delta,...-2+\delta}(\Lambda^pM)}.\]
\label{Estimate-on-BT}
\end{proposition}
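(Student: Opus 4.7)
The plan is to reduce the problem to inverting the Hodge Laplacian on $(p-1)$-forms: if $\psi$ satisfies $\Delta\psi=d^*\gamma$ with $d^*\psi=0$, then $B_T\gamma:=d\psi$ automatically satisfies $dB_T\gamma=0$ and $d^*B_T\gamma=\Delta\psi-dd^*\psi=d^*\gamma$. So what is required is the construction of $\psi\in W^{k+1,2}_{-1+\delta,\ldots,-1+\delta}$ with a good norm estimate. Attacking the Laplacian on $M$ directly is awkward because the compact geometry depends on $T$; instead I would patch parametrices built on the two halves $V_+$ and $V_-$, which are independent of $T$.

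On each half, choose infinity weight $-\delta$, so that by Corollary \ref{Refined-index-change-formula} together with the Laplacian analogue of Proposition \ref{Hodge-theory} the operator $\Delta:W^{k+1,2}_{(-1+\delta,\ldots,-1+\delta,-\delta)}(V_\pm)\to W^{k-1,2}_{(-3+\delta,\ldots,-3+\delta,-\delta)}(V_\pm)$ is Fredholm with cokernel a concrete finite-dimensional harmonic space $\mathcal{H}^{p-1}_{eb}(V_\pm)$. Using the standard cutoff $\chi(t-T+c)$ write $\gamma=\gamma_++\gamma_-$ with $\gamma_\pm$ supported in $V_\pm$ and overlapping in the neck. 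On each $V_\pm$ solve $\Delta\psi_\pm=d^*\gamma_\pm-h_\pm$, where $h_\pm$ is the $L^2$-projection of $d^*\gamma_\pm$ onto the cokernel, then glue $\psi_+$ and $\psi_-$ by another cutoff to obtain a global candidate $\psi$ on $M$. The resulting error has two sources: the projections $h_\pm$, and commutators of $\Delta$ with the gluing cutoff, both concentrated in the neck.

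Here Proposition \ref{Asymptotic-kernel-approximation} plays the decisive role: it identifies each element of $\mathcal{H}^{p-1}_{eb}(V_\pm)$, up to an error exponentially small in $T$, with the restriction of a globally defined harmonic form on $M$. Since such global harmonic forms are closed by Proposition \ref{Closeness-harmonic-form}, $d^*\gamma$ pairs with them to zero after integration by parts, so $h_\pm$ is itself exponentially small in $T$. A Neumann iteration in the remaining small error then upgrades the approximate solution to a genuine $\psi$ on $M$, and $B_T\gamma:=d\psi$ is the desired map.

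The main technical obstacle, and the origin of the exponential factor $e^{3\delta T}$, is the weight bookkeeping between $M$ (no infinity weight) and $V_\pm$ (infinity weight $-\delta$). A form supported in $\{t_\pm\lesssim T\}$ on $V_\pm$ carries a factor $e^{-\delta t_\pm}$ in the weighted norm which reaches $e^{-\delta T}$ in the gluing region, so transferring $\gamma$ from $M$ to $V_\pm$ in the source norm costs one factor of $e^{\delta T}$, transferring $\psi_\pm$ back to $M$ contributes a second, and the orthogonality correction via Proposition \ref{Asymptotic-kernel-approximation} (whose inequality already carries an $e^{-\delta(t+T+1/2)}$ weight on the pairing) contributes a third. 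Summing these three contributions yields the claimed bound $Ce^{3\delta T}$, which is harmless in the later applications provided $\delta T$ stays bounded.
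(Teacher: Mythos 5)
Your reduction ``solve $\Delta\psi=d^*\gamma$ on $(p-1)$-forms, set $B_T\gamma=d\psi$'' is exactly what the paper does for $p=2$, but the paper goes out of its way to say it cannot be done for $p=3$ in the singular setting, and this is the genuine gap in your proposal. The reason lies in the spectral structure of $\Delta$ on the nodal cone. For $p=2$ you need the Laplacian on \emph{1-forms}; by Proposition \ref{Harmonic-one-form-below-zero} there are no critical rates for 1-forms on $C$ in the whole window $(-4,0)$, so the kernel of the $L^2$-dual of $\Delta$ at weight $-3-\delta$ automatically improves to rate $-2+\delta$ near each $x_i$, integration by parts is legal, and the cokernel consists precisely of closed and coclosed forms (which $d^*\gamma$ pairs with to zero). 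For $p=3$ you need the Laplacian on \emph{2-forms}, and here Proposition \ref{Harmonic-two-form}(1) produces the critical rate $\lambda=-2$ with $\mathcal{K}_i(-2)=\mathrm{Span}\{\phi_{2,1},\log r\cdot\phi_{2,1}\}$. An element of the cokernel of $\Delta:W^{k+1,2}_{-1+\delta}(\Lambda^2 M)\to W^{k-1,2}_{-3+\delta}(\Lambda^2 M)$ living in $W^{1-k,2}_{-3-\delta}$ can therefore pick up a $\log r\,\phi_{2,1}$ component near $x_i$, whose exterior derivative $r^{-1}dr\wedge\phi_{2,1}$ is nonzero; the Green's-formula boundary term $\int_{\partial B_r}h\wedge *dh$ has size $O(\log r)$ and does not vanish, so you cannot conclude that cokernel elements are closed, and hence you cannot conclude that $d^*\gamma$ is $L^2$-orthogonal to the cokernel. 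Without solvability of $\Delta\psi=d^*\gamma$, the construction of $B_T\gamma=d\psi$ does not get off the ground.

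The paper's actual proof of the $p=3$ case sidesteps this by never touching the second-order operator on $\Lambda^2$. It conjugates the \emph{first-order} operator $d+d^*$ on $\Lambda^{\mathrm{odd}}$ by $e^{\pm\delta t}$ to obtain $A_\pm$, $A_T$, and applies Kovalev--Singer's gluing parametrix (their Proposition 4.2) directly to $A_T$. The reason the first-order operator survives the critical rate is Proposition \ref{Closeness-harmonic-form}: for mixed even-degree forms, the coupled equation $d\gamma_2+d^*\gamma_4=0$ forces the $\log r$ coefficients to vanish, so the cokernel of $A_T^*$ is genuinely the closed-and-coclosed harmonic space, the pairing with $e^{-\delta t}d^*\gamma$ vanishes, and the estimate follows. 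You correctly sense that Propositions \ref{Asymptotic-kernel-approximation} and \ref{Closeness-harmonic-form} are the decisive inputs, but you feed them into the wrong operator. Also note that for $p=2$ the paper does not patch parametrices on $V_\pm$ at all: it works on $M$ directly and pulls out the $T$-independence from the elementary $L^2$ bound $\|d\gamma'\|_{L^2}\le\|\gamma\|_{L^2}$ together with local elliptic regularity, which is considerably simpler than what you propose; your patching machinery is only needed in the $p=3$ argument, and there it must be applied to $d+d^*$, not $\Delta$.
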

\begin{proof}
When $p=3$, define \[A_\pm: W^{k,2}_{-2+\delta,...-2+\delta,0}(\Lambda^{\mathrm{odd}}(V_{\pm}))\rightarrow W^{k-1,2}_{-3+\delta,...-3+\delta,0}(\Lambda^{\mathrm{even}}(V_{\pm}))\]
and \[A_T: W^{k,2}_{-2+\delta,...-2+\delta}(\Lambda^{\mathrm{odd}}(M))\rightarrow W^{k-1,2}_{-3+\delta,...-3+\delta}(\Lambda^{\mathrm{even}}(M))\]
as $A_{\pm}\gamma=e^{\mp\delta t_\pm}(d+d^*)(e^{\pm\delta t_\pm}\gamma)$ and $A_T\gamma=e^{-\delta t}(d+d^*)(e^{\delta t}\gamma)$. Define the asymptotic kernel as $\chi(t_\pm-\frac{T}{2})\mathrm{Ker} A_\pm$. The $L^2$-dual maps are
\[A^*_\pm: W^{1-k,2}_{-3-\delta,...-3-\delta,0}(\Lambda^{\mathrm{even}}(V_{\pm}))\rightarrow W^{-k,2}_{-4-\delta,...-4-\delta,0}(\Lambda^{\mathrm{odd}}(V_{\pm}))\]
and \[A_T^*: W^{1-k,2}_{-3-\delta,...-3-\delta}(\Lambda^{\mathrm{even}}(M))\rightarrow W^{-k,2}_{-4-\delta,...-4-\delta}(\Lambda^{\mathrm{odd}}(M))\]
defined as $A_{\pm}^*\gamma=e^{\pm\delta t_\pm}(d+d^*)(e^{\mp\delta t_\pm}\gamma)$ and $A_T^*\gamma=e^{\delta t}(d+d^*)(e^{-\delta t}\gamma)$.
$A_T$ induces a map $A_T'$ from the $L^2$ complement to the asymptotic kernels of $A_{\pm}$ to the $L^2$ complement to the asymptotic cokernels. By Proposition 4.2 of \cite{KovalevSinger}, $A_T'$ is bijective.

By Proposition \ref{Asymptotic-kernel-approximation}, there exists an injective map from $\mathcal{H}^2_{\mathrm{Ho}}(M)$ to $\mathcal{H}^2_{eb}(V_+)$. However, by Proposition \ref{Vanishing-harmonic-even-form} and the fact that $H^1(\mathbb{S}^1\times S_+)=\mathbb{R}$, the dimension of $\mathcal{H}^2_{rel}(V_+)$ is at most 1 but the dimension of $\mathcal{H}^2_{\mathrm{Ho}}(M)$ equals to the dimension of $\mathcal{H}^2_{abs}(V_+)$ plus 1. So the map from $\mathcal{H}^2_{\mathrm{Ho}}(M)$ to $\mathcal{H}^2_{eb}(V_+)$ is bijective. By Hodge duality, the map from $\mathcal{H}^{\mathrm{even}}_{\mathrm{Ho}}(M)$ to $\mathcal{H}^{\mathrm{even}}_{eb}(V_+)$ is also bijective. Therefore, using Proposition \ref{Asymptotic-kernel-approximation}, it is easy to see that the map $A_T$ from the $L^2$ complement of the asymptotic kernels of $A_{\pm}$ to the $L^2$ complement of the kernel of $A_T^*$ is also bijective.

For $\gamma\in W^{k,2}_{-2+\delta,...-2+\delta}(\Lambda^3M)$, $e^{-\delta t}d^*\gamma$ is in the $L^2$ complement of the kernel of $A_T^*$ by Proposition \ref{Closeness-harmonic-form}. So there exists an element $\gamma'$ in the $L^2$ complement of the asymptotic kernels of $A_{\pm}$ such that $A_T\gamma'=e^{-\delta t}d^*\gamma$. Let $\gamma''=e^{\delta t}\gamma'$, then it is easy to see that $d\gamma''_3+d^*\gamma''_5=0$ and $d^*\gamma''_3+d\gamma''_1=d^*\gamma$.
Using integration by parts, $d^*\gamma''_5=d\gamma''_1=0$. So $B_T\gamma=\gamma''_3$ in this case. The estimate for $B_T\gamma$ follows from Proposition 4.2 of \cite{KovalevSinger}.

When $p=2$, consider the Laplacian operator
\[\Delta:W^{k+1,2}_{-1+\delta,...-1+\delta}(\Lambda^1M)\rightarrow W^{k-1,2}_{-3+\delta,...-3+\delta}(\Lambda^1M).\]
The $L^2$ dual map is
\[\Delta:W^{1-k,2}_{-3-\delta,...-3-\delta}(\Lambda^1M)\rightarrow W^{-1-k,2}_{-5-\delta,...-3-\delta}(\Lambda^1M).\]
By Proposition \ref{Harmonic-form-decomposition}, Proposition \ref{No-log-term}, Proposition \ref{Obata}, and Proposition \ref{Killing-metric}, there is no element in $\mathcal{P}_i(\lambda_i)$ for $\mathrm{Re}\lambda_i\in(-4,0)$. So by standard elliptic regularity and Theorem \ref{Polyhomogenous-L-totally-character}, any element in the second kernel also lies in $W^{k,2}_{-2+\delta,...-2+\delta}(\Lambda^1M)$. So using integration by parts, it is closed and coclosed. Therefore, for any element $\gamma\in W^{k,2}_{-2+\delta,...-2+\delta}(\Lambda^2M)$, $d^*\gamma$ is $L^2$-perpendicular to the kernel of the second map. So it lies in the image of the first map. Let $\gamma'$ be its inverse. Then $dd^*\gamma'+d^*d\gamma'=d^*\gamma$. Using integration by parts, $dd^*\gamma'=0$. Define $B_T\gamma$ as $d\gamma'$, then $(d+d^*)B_T\gamma=d^*\gamma$. Moreover, integration by parts again implies that $||B_T\gamma||_{L^2}\le||\gamma||_{L^2}\le Ce^{\delta T}||\gamma||_{W^{k,2}_{-2+\delta,...-2+\delta}(\Lambda^2M)}$.

By standard elliptic regularity, \[||B_T\gamma||_{W^{k,2}_{-3,...,-3}}\le C(||B_T\gamma||_{L^2}+||\Delta B_T\gamma||_{W^{k-2,2}_{-5,...,-5}})\le Ce^{\delta T}||\gamma||_{W^{k,2}_{-2+\delta,...-2+\delta}}.\]
So the required estimate is obtained using Theorem \ref{Polyhomogenous-L-totally-character}.
\end{proof}

\begin{remark}
The $p=2$ case of Proposition \ref{Estimate-on-BT} is similar to Theorem A of \cite{Joyce}. The $p=3$ case of Proposition \ref{Estimate-on-BT} is similar to Proposition 5.40 of \cite{Kovalev} in the smooth twisted connected sum case.  However, the author is not able to understand the proof of Proposition 5.40 of \cite{Kovalev}. This does not affect the main result of \cite{Kovalev} because in the smooth case, Proposition 5.40 can be proved using Theorem A of Joyce's paper \cite{Joyce}. In the singular case, it is not possible to find analogy of Theorem A of Joyce's paper \cite{Joyce} in $p=3$ case. That is the reason to make full use of the Hodge theory in this paper.
\end{remark}

Using the identification $\gamma=d\theta\wedge\gamma_2+\gamma_3$, $B_T$ can also be viewed as a map from $\mathbb{S}^1$-invariant 3-form on $\mathbb{S}^1\times M$ to itself satisfying $(d+d^*)B_T\gamma=d^*\gamma$ and the estimate \[||B_T\gamma||_{W^{k,2}_{-2+\delta,...-2+\delta}(\Lambda^3(\mathbb{S}^1\times M))}\le
Ce^{3\delta T}||\gamma||_{W^{k,2}_{-2+\delta,...-2+\delta}(\Lambda^3(\mathbb{S}^1\times M))}.\]

The next goal is the improvement of the growth rate near each singularity.

\begin{proposition}
Given $x_i\in V_+^{\mathrm{sing}}\cup V_-^{\mathrm{sing}}$. For simplicity, denote $r_{x_i}$ by $r$. Then if $\gamma\in W^{k,2}_{-2+\delta}(\Lambda^3(\mathbb{S}^1\times(V_{\pm}\cap\{r\le r_{0,x_i}\})))$ is an $\mathbb{S}^1$-invariant 3-form with $(d+d^*)_{\mathbb{S}^1\times V_{\pm}}\gamma\in W^{k-1,2}_{-1+\delta}$, then $\gamma=\gamma_{\le 0}+\gamma_{>0}$ with
\[\begin{split}\gamma_{\le 0}&=\sum_{\sqrt{\mu_{0,j}+4}-4\in(-2+\delta,\delta)}
c_{j,1}d((d(r^{\sqrt{\mu_{0,j}+4}-2}\chi_i\phi_{0,j}))^\#\lrcorner\varphi)\\
&+\chi_i(c_2\mathrm{Re}\Omega+c_3\mathrm{Im}\Omega+c_4d\theta\wedge\omega),\end{split}\]
and
\[\begin{split}
\sum_{\sqrt{\mu_{0,j}+4}-4\in(-2+\delta,\delta)}|c_{j,1}|+|c_2|+|c_3|+|c_4|+||\gamma_{>0}||_{W^{k,2}_{\delta}}\\
\le C(||(d+d^*)\gamma||_{W^{k-1,2}_{-1+\delta}}+||\gamma||_{W^{k,2}_{-2+\delta}}),
\end{split}\]
where the norm is taking on $\Lambda^*(\mathbb{S}^1\times(V_{\pm}\cap\{r\le r_{0,x_i}\}))$.

Define $\gamma_{\ge 0}$ as \[\gamma_{\ge 0}=\gamma_{>0}+c_2\chi_i\mathrm{Re}\Omega +c_3\chi_i\mathrm{Im}\Omega+c_4\chi_id\theta\wedge\omega,\]
and $\gamma_{<0}$ as \[\gamma_{<0}=\sum_{\sqrt{\mu_{0,j}+4}-4\in(-2+\delta,\delta)}
c_{j,1}d((d(r^{\sqrt{\mu_{0,j}+4}-2}\chi_i\phi_{0,j}))^\#\lrcorner\varphi).\]
\label{Polyhomogenous-doubling}
\end{proposition}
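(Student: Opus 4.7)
The approach is to apply Theorem \ref{Polyhomogenous-L-totally-character} near $x_i$ to the operator $D = d+d^*$ of order $m=1$ on odd-degree forms and then classify the resulting asymptotic leading terms using Corollary \ref{Two-form-and-three-form-representation}. More precisely, the plan is to apply Theorem \ref{Polyhomogenous-L-totally-character} with weights $\delta'_i = -2+\delta$ and $\delta_i = \delta$: the hypothesis $\gamma \in W^{k,2}_{-2+\delta}$ together with $(d+d^*)\gamma \in W^{k-1,2}_{-1+\delta}$ is exactly of the required shape, since the weight shift under the first-order operator is indeed $1 = m$. The output is a decomposition $\gamma = \tilde\gamma + \gamma_{>0}$ with $\gamma_{>0} \in W^{k,2}_\delta$ and with the required estimate, where $\tilde\gamma$ is a finite sum of $\chi_i\mathcal{P}_i(\lambda)$-contributions over critical rates $\lambda$ with $\mathrm{Re}(\lambda) \in (-2+\delta, \delta)$. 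The task then reduces to identifying each $\mathcal{P}_i(\lambda)$ explicitly for these rates.

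For $\mathbb{S}^1$-invariant 3-forms $\gamma = d\theta\wedge\gamma_2 + \gamma_3$ on $\mathbb{S}^1\times C$, the formula displayed at the start of this section shows that $(d+d^*)\gamma = 0$ is equivalent to the four independent conditions $d_C\gamma_2 = d_C^*\gamma_2 = d_C\gamma_3 = d_C^*\gamma_3 = 0$. Hence a log-free element of $\mathcal{K}_i(\lambda)$ is precisely a pair $(\gamma_2, \gamma_3)$ of closed and coclosed homogeneous forms on $C$ of common rate $\lambda$, which is exactly the hypothesis of Corollary \ref{Two-form-and-three-form-representation}. That corollary then furnishes the explicit enumeration: at $\lambda=0$ the kernel is spanned by the three forms $\varphi$, $6\mathrm{Re}\Omega + 4\,d\theta\wedge\omega$, and $6\mathrm{Im}\Omega$, equivalently by $\mathrm{Re}\Omega$, $\mathrm{Im}\Omega$, $d\theta\wedge\omega$, producing the $c_2, c_3, c_4$ terms; at any nonzero critical $\lambda\in(-2+\delta,\delta)$, the kernel is spanned by the forms $d((d(r^{\sqrt{\mu_{0,j}+4}-2}\phi_{0,j}))^\#\lrcorner\varphi)$ for those eigenvalues with $\sqrt{\mu_{0,j}+4} - 4 = \lambda$, which match the $c_{j,1}$ terms. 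Absence of logarithmic terms at these rates follows from Proposition \ref{No-log-term} applied to $\Delta = (d+d^*)^2$, since an element in $\ker(d+d^*)$ is automatically in $\ker\Delta$.

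The final step is to replace the abstract $\chi_i\mathcal{P}_i(\lambda)$-elements by the concrete cutoff forms appearing in the statement. By the construction of $\mathcal{P}_i(\lambda)$ given just before Theorem \ref{Polyhomogenous-L-totally-character}, each element of $\mathcal{P}_i(\lambda)$ equals the corresponding pure-cone homogeneous form up to an element of $W^{k,2}_\delta$, and the discrepancy between this cutoff convention and the author's convention of cutting off the inner function $\phi_{0,j}$ via $\chi_i\phi_{0,j}$ is smooth, supported where $\chi_i$ has derivative, and hence also lies in $W^{k,2}_\delta$ and can be absorbed into $\gamma_{>0}$. The one technical subtlety is that Corollary \ref{Two-form-and-three-form-representation} is stated only for $\lambda\in(-2,0]$, while the range here is $(-2+\delta,\delta)$; but the proof of that corollary rules out types (3), (5), (6), (7) of Proposition \ref{Harmonic-form-decomposition} on purely algebraic grounds (the corresponding values of $\lambda$ are isolated and stay outside $(0,\delta)$ once $\delta$ is taken small enough), so the same list of generators continues to apply on the extended range. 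I expect the main obstacle to be bookkeeping this extension carefully and verifying the no-log-terms claim for $d+d^*$ at rate $0$, where the indicial operator has the richest kernel; the regrouping of $\gamma_{\le 0}$ into $\gamma_{<0}$ and $\gamma_{\ge 0}$ at the end is then a trivial rearrangement.
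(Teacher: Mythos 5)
Your outline tracks the paper's at the level of ``apply Theorem \ref{Polyhomogenous-L-totally-character}, rule out log terms, invoke Corollary \ref{Two-form-and-three-form-representation}, absorb cutoff discrepancies,'' but two of your specific choices create genuine gaps, and in both places the paper's proof does something materially different.

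First, you apply Theorem \ref{Polyhomogenous-L-totally-character} to $D=d+d^*$ acting on the whole bundle of odd-degree forms on $\mathbb{S}^1\times V_\pm$. But $d+d^*$ does not preserve form degree, and the abstract spaces $\mathcal{K}_i(\lambda)$ and $\mathcal{P}_i(\lambda)$ produced by the theorem live a priori in $\Lambda^{\mathrm{odd}}$, not in $\Lambda^3$; in particular the correction $\kappa'$ in the construction of $\mathcal{P}_i(\lambda)$ is obtained by inverting $\chi_iD+(1-\chi_i)D_C$ and can have components in degrees $1,5,7$ even when the seed $\kappa\in\mathcal{K}_i(\lambda)$ happens to be a 3-form. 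Your statement that ``a log-free element of $\mathcal{K}_i(\lambda)$ is precisely a pair $(\gamma_2,\gamma_3)$ of closed and coclosed forms'' is correct only for elements that are already known to be pure 3-forms, which the theorem gives you no right to assume. The paper circumvents this entirely: it writes $\gamma=d\theta\wedge\gamma_2+\gamma_3$ and applies Theorem \ref{Polyhomogenous-L-totally-character} to the Laplacian $\Delta$ acting on 2-forms and on 3-forms on $V_\pm$, separately; since $\Delta$ preserves degree, the asymptotic terms stay in the correct degree for free, and then the \emph{additional} hypothesis that $(d+d^*)\gamma$ lies in $W^{k-1,2}_{-1+\delta}$ (one order better than what $\Delta\gamma\in W^{k-2,2}_{-2+\delta}$ alone would give) is used to conclude that the leading homogeneous harmonic pieces must be closed and coclosed, at which point Corollary \ref{Two-form-and-three-form-representation} pins them down.

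Second, your final paragraph asserts that the discrepancy between the abstract $\chi_i\mathcal{P}_i(\lambda)$-elements and the explicit cutoff forms in the statement ``is smooth, supported where $\chi_i$ has derivative, and hence lies in $W^{k,2}_\delta$.'' This is not true. The correction $\kappa'$ in the definition of $\mathcal{P}_i(\lambda)$ is a globally-supported solution of an elliptic problem lying only in $W^{k,2}_{\delta_i}$ for some non-critical $\delta_i\in(\mathrm{Re}(\lambda),\mathrm{Re}(\lambda)+\nu)$, and for the low critical rates (near $-1$) this window sits well below the target weight $\delta>0$; you cannot absorb that error into $\gamma_{>0}$ in one step. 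The paper's proof therefore proceeds by an induction over the critical rates, and the observation that makes the induction close is that $\phi_{0,j}$ is pluriharmonic and hence $r^{\lambda+2}\phi_{0,j}$ is harmonic with respect to \emph{both} the cone metric and the $V_\pm$ metric near $x_i$. This allows one to \emph{redefine} $\mathcal{P}_i(\lambda)$ to be spanned by the $V_\pm$-metric expressions $d_{V_\pm}\bigl((d_{V_\pm}(r^{\lambda+2}\phi_{0,j}))^{\#_{V_\pm}}\lrcorner\varphi_{V_\pm}\bigr)$, which are exactly (not just asymptotically) closed and coclosed on $V_\pm$ near $x_i$; subtracting these off improves the weight by a full step of $\nu$ each time and the induction runs to completion. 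The $\lambda=0$ stratum is handled last and there $\phi_{0,j}=1$ may fail to be pluriharmonic, but the difference between $\mathcal{P}_i(0)$ and $\mathcal{K}_i(0)$ is $O(r^{\nu})$ and so absorbs into $\gamma_{>0}$ once $\delta<\nu$. Without the pluriharmonicity input, the cone-metric and $V_\pm$-metric versions of the generators differ at a rate that is not small enough, and your absorption argument fails at the lowest rates.
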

\begin{proof}
Write $\gamma$ as $\gamma=d\theta\wedge\gamma_2+\gamma_3$. Then $(d+d^*)_{V_\pm}\gamma_p=0$ for $p=2,3$. Consider the Laplacian operator acting on 2-forms or 3-forms on $V_{\pm}$. By Theorem \ref{Polyhomogenous-L-totally-character}, $\gamma=\gamma_{>0}+\gamma_{\le0}$, where $\gamma_{>0}\in W^{k,2}_\delta$ and $\gamma_{\le0}$ is a linear combination of $\chi_i\mathcal{P}_i(\lambda_i)$ for $\mathrm{Re}\lambda_i\in(-2+\delta,\delta)$. By Proposition \ref{No-log-term}, any element in such $\mathcal{K}_i(\lambda_i)$ is a homogenous harmonic form on $C$ with real rate $\lambda_i$. Consider the lowest critical rate $\lambda_i\in(-2+\delta,\delta)$. Using the fact that $(d+d^*)\gamma\in W^{k-1,2}_{-1+\delta}$, the element in $\mathcal{K}_i(\lambda_i)$ corresponding to the $\chi_i\mathcal{P}_i(\lambda_i)$ component of $\gamma_{\le0}$ must be closed and coclosed. By Corollary \ref{Two-form-and-three-form-representation}, it must be a linear combination of $d_C((d_C(r^{\lambda_i+2}\phi_{0,j}))^{\#_C}\lrcorner\varphi_C)$ with $\lambda_i\in(-1,0)$ and $\mu_{0,j}=(\lambda_i+4)^2-4\in(5,12)$. Using the fact that $\phi_{0,j}$ is pluriharmonic, it is a harmonic function both on $C$ and $V_{\pm}$ near $x_i$. So $d_{V_{\pm}}((d_{V_{\pm}}(r^{\lambda_i+2}\phi_{0,j}))^{\#_{V_{\pm}}}\lrcorner\varphi_{V_{\pm}})$ is also closed and coclosed on $V_{\pm}$ near $x_i$. So it can be redefined as an element in $\mathcal{P}_i(\lambda_i)$. Then the problem for the second lowest critical $\lambda_i\in(-2+\delta,\delta)$ is similar. By induction, the problem is reduced to the $\lambda_i=0$ case. In this case, $\mathrm{Re}\Omega$, $\mathrm{Im}\Omega$ and $d\theta\wedge\omega$ are in $\mathcal{K}_i(0)$. Moreover, $\phi_{0,j}$ may not be pluriharmonic. However, by choosing $\delta<\nu$, the difference between $\mathcal{P}_i(0)$ and $\mathcal{K}_i(0)$ can be absorbed into $\gamma_{>0}$.
\end{proof}

Recall that $\Theta(\varphi_T)$ is the Hodge dual of $\varphi_T$ using the metric defined by $\varphi_T$. Define $\tilde\Theta(\varphi_T)$ as \[\tilde\Theta(\varphi_T)=\Theta(\varphi_T)-\omega_{T,\pm}\wedge\omega_{T,\pm} -d\theta_{\pm}\wedge\mathrm{Im}\Omega_{T,\pm}.\]
It is easy to see that $d\Theta(\varphi_T)=d\tilde\Theta(\varphi_T)$ and $\tilde\Theta(\varphi_T)$ is a form supported in the regions $t_{\pm}\in[T-1,T]$. The $W^{k,2}$ norm of $\tilde\Theta(\varphi_T)$ is $O(e^{-\nu T})$.

The following lemma is similar to Proposition 10.3.4 of \cite{JoyceBook}. The proof is omitted.
\begin{lemma}
Suppose that $\xi\in W^{k+2,2}_{1+\delta,...1+\delta}$ is a function supported in the $t_\pm<T-1$ region. It defines a 3-form $\gamma_{<0}$ by \[\gamma_{<0}=d((d\xi)^{\#_\pm}\lrcorner\varphi_\pm).\]
Remark that when $t_\pm<T-1$, $\varphi_T=\varphi_\pm$.
Suppose that $\gamma_{\ge0}\in W^{k+2,2}_{-\delta,...-\delta}$ is a 3-form on $M$. Define $\gamma$ as $\gamma_{<0}+\gamma_{\ge 0}$. Then as long as the norms of $\xi$ and $\gamma_{\ge0}$ in the corresponding spaces are small enough, $\delta$ is small enough and $T$ is large enough, the equation
\[(d+d^*_{\varphi_T})\gamma+*_{\varphi_T}d((1+\frac{1}{3}<\gamma_{\ge 0},\varphi_T>)\tilde\Theta(\varphi_T))
-*_{\varphi_T}dQ_{\varphi_T}(\gamma_{\ge 0})=0\]
implies that \[d\Theta(\varphi_T+\gamma_{\ge 0})=0.\]
Remark that $Q_{\varphi_T}$ in the equation means the non-linear term of $\Theta$ defined in Proposition \ref{Theta-function}.
\end{lemma}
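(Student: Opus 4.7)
The plan is to split the equation by form degree and then compare it against the linearization of $\Theta$ at $\varphi_T$ given by Proposition~\ref{Theta-function}. Since both $*_{\varphi_T}d((1+\tfrac{1}{3}\langle\gamma_{\ge 0},\varphi_T\rangle)\tilde\Theta(\varphi_T))$ and $*_{\varphi_T}dQ_{\varphi_T}(\gamma_{\ge 0})$ are 2-forms (being $*_{\varphi_T}$ of 5-forms), while $(d+d^*_{\varphi_T})\gamma$ decomposes into the 4-form $d\gamma$ and the 2-form $d^*_{\varphi_T}\gamma$, the equation immediately forces $d\gamma=0$. Combined with $\gamma_{<0} = d((d\xi)^{\#_\pm}\lrcorner\varphi_\pm)$ being exact, this yields $d\gamma_{\ge 0} = 0$, which is the ``$d\varphi$-closed'' half of the torsion-free $G_2$ condition for $\varphi_T + \gamma_{\ge 0}$.

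Next I would apply $*_{\varphi_T}$ to the 2-form part of the equation. Using $*_{\varphi_T}^2 = \mathrm{id}$ in seven dimensions and the identity $*_{\varphi_T}d^*_{\varphi_T}\alpha = -d*_{\varphi_T}\alpha$ for a 3-form $\alpha$, this becomes the 5-form identity
\[
d\bigl[(1+\tfrac{1}{3}\langle\gamma_{\ge 0},\varphi_T\rangle)\tilde\Theta(\varphi_T) - *_{\varphi_T}\gamma - Q_{\varphi_T}(\gamma_{\ge 0})\bigr] = 0.
\]
By construction, $\Theta(\varphi_T) - \tilde\Theta(\varphi_T) = \omega_{T,\pm}\wedge\omega_{T,\pm} + d\theta_\pm\wedge\mathrm{Im}\,\Omega_{T,\pm}$ on each side; since $\omega_{T,\pm}$ and $\Omega_{T,\pm}$ are modifications of $\omega_\pm, \Omega_\pm$ by exact forms and are therefore still closed, we obtain $d(\omega_{T,\pm}\wedge\omega_{T,\pm}) = 0$ and $d(d\theta_\pm\wedge\mathrm{Im}\,\Omega_{T,\pm}) = 0$, hence $d\Theta(\varphi_T) = d\tilde\Theta(\varphi_T)$.

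Applying Proposition~\ref{Theta-function} to expand $\Theta(\varphi_T + \gamma_{\ge 0})$, then rewriting the linearization via $\pi_1+\pi_7+\pi_{27} = \mathrm{id}$ as $*_{\varphi_T}\gamma_{\ge 0} + \tfrac{1}{3}*_{\varphi_T}\pi_1(\gamma_{\ge 0}) - 2*_{\varphi_T}\pi_{27}(\gamma_{\ge 0})$, and recalling the normalization $\pi_1(\eta) = \tfrac{1}{3}\langle\eta,\varphi_T\rangle\varphi_T$ so that $*_{\varphi_T}\pi_1(\eta) = \tfrac{1}{3}\langle\eta,\varphi_T\rangle\Theta(\varphi_T)$, the goal $d\Theta(\varphi_T+\gamma_{\ge 0}) = 0$ reduces to matching the derived closed 5-form with the expanded expression for $d\Theta(\varphi_T+\gamma_{\ge 0})$. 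The main obstacle is the final pointwise $G_2$-algebraic verification: one must check that $d*_{\varphi_T}\gamma_{<0} - 2d*_{\varphi_T}\pi_{27}(\gamma_{\ge 0})$ combines with the trace-term residue $\tfrac{1}{3}d[\langle\gamma_{\ge 0},\varphi_T\rangle(\omega_{T,\pm}\wedge\omega_{T,\pm} + d\theta_\pm\wedge\mathrm{Im}\,\Omega_{T,\pm})]$ into zero. This is the essence of the $G_2$-analogue of Joyce's Proposition~10.3.4 and relies on the crucial structural fact that $(d\xi)^{\#_\pm}\lrcorner\varphi_\pm$ lies in the $\Omega^2_7$ subspace of 2-forms under the canonical isomorphism $TM \cong \Omega^2_7$ given by $X \mapsto X\lrcorner\varphi$, so that $*_{\varphi_T}\gamma_{<0}$ is computable via the identity $\varphi\wedge\alpha = 2*\alpha$ for $\alpha \in \Omega^2_7$. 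The smallness of $\xi$, $\gamma_{\ge 0}$, $\delta$ and the largeness of $T$ ensure the linearization is valid and the nonlinear $Q$-remainder is controlled.
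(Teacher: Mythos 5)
The paper itself does not give a proof here --- it explicitly states ``The proof is omitted'' and refers to Proposition 10.3.4 of Joyce's book. So your proposal can only be assessed on its own merits, and there is a genuine gap in the final step.

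Your setup is correct and efficient: the degree splitting of $(d+d^*_{\varphi_T})\gamma$ against the purely degree-2 source terms gives $d\gamma = 0$ and hence, since $\gamma_{<0}$ is exact, $d\gamma_{\ge 0}=0$; applying $*_{\varphi_T}$ to the 2-form component via $*_{\varphi_T}d^*_{\varphi_T}\alpha = -d*_{\varphi_T}\alpha$ gives the exact 5-form identity you wrote; and the observation $d\Theta(\varphi_T)=d\tilde\Theta(\varphi_T)$ follows because $\omega_{T,\pm}$ and $\Omega_{T,\pm}$ are closed. The problem is that the step you label ``the final pointwise $G_2$-algebraic verification'' is not an algebraic identity at all, and your identification of the residual terms is wrong. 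Substituting the 2-form identity into Joyce's expansion, then combining $2*\gamma_{\ge 0}-2*\pi_{27}(\gamma_{\ge 0}) = 2*\pi_1(\gamma_{\ge 0}) + 2*\pi_7(\gamma_{\ge 0})$ and $\tfrac{7}{3}*\pi_1(\gamma_{\ge 0}) = \tfrac{1}{3}\langle\gamma_{\ge 0},\varphi_T\rangle\Theta(\varphi_T)$ (using the correct normalization $\pi_1(\chi)=\tfrac{1}{7}\langle\chi,\varphi_T\rangle\varphi_T$, $|\varphi_T|^2=7$, not the $\tfrac{1}{3}$ you asserted), what actually remains is
\[
d\Theta(\varphi_T+\gamma_{\ge 0}) \;=\; d*_{\varphi_T}\gamma_{<0} \;+\; 2\,d*_{\varphi_T}\pi_7(\gamma_{\ge 0}) \;+\; \tfrac{1}{3}\,d\bigl[\langle\gamma_{\ge 0},\varphi_T\rangle\,(\Theta(\varphi_T)-\tilde\Theta(\varphi_T))\bigr],
\]
with a $\pi_7$ term, not the $\pi_{27}$ term you wrote. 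Since $\gamma_{<0}$ (determined by $\xi$) and $\gamma_{\ge 0}$ are independent input data, this expression cannot be killed by a pointwise fibrewise $G_2$-representation-theory cancellation; some further structural input is required, and you have not identified it.

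What your proposal also misses is the one place where the special form of $\gamma_{<0}$ must enter. The point of taking $\gamma_{<0}=d((d\xi)^{\#_\pm}\lrcorner\varphi_\pm) = L_{(d\xi)^{\#_\pm}}\varphi_\pm$ on the region where $\varphi_T=\varphi_\pm$ is torsion-free is that $\gamma_{<0}$ is an infinitesimal diffeomorphism of a torsion-free structure; consequently the linearisation of $\Theta$ on $\gamma_{<0}$ equals $L_{(d\xi)^{\#_\pm}}\Theta(\varphi_\pm) = d\bigl((d\xi)^{\#_\pm}\lrcorner\Theta(\varphi_\pm)\bigr)$, which is exact, so $dL(\gamma_{<0})=0$. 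This is the nontrivial input that your ``$\alpha\in\Omega^2_7$, $\varphi\wedge\alpha = 2*\alpha$'' observation gestures at but does not actually deliver. As written the proposal reaches the critical identity, misstates it, and then does not close the argument.
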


Using the norm on $W^{k,2}_{\delta,...,\delta}\oplus(\oplus_{i=1}^{2N_2}(\mathbb{R}\chi_i\mathrm{Re}\Omega
\oplus\mathbb{R}\chi_i\mathrm{Im}\Omega\oplus\mathbb{R}\chi_id\theta\wedge\omega))$, it is easy to see that
$||*_{\varphi_T}Q_{\varphi_T}(\gamma)||\le ||\gamma||^2$ if $||\gamma||$ is small enough. By implicit function theorem, it is possible to find a solution of the equation \[(d+d^*_{\varphi_T})\gamma+*_{\varphi_T}d((1+\frac{1}{3}<\gamma_{\ge 0},\varphi_T>)\tilde\Theta(\varphi_T))
-*_{\varphi_T}dQ(\gamma_{\ge 0})=0\] with $\gamma_{\ge0}\in W^{k,2}_{\delta,...,\delta}\oplus(\oplus_{i=1}^{2N_2}(\mathbb{R}\chi_i\mathrm{Re}\Omega
\oplus\mathbb{R}\chi_i\mathrm{Im}\Omega\oplus\mathbb{R}\chi_id\theta\wedge\omega))$. So $\varphi_T+\gamma_{\ge 0}$ provides the required $\mathbb{S}^1$-invariant torsion-free G$_2$ structure on $\mathbb{S}^1\times M$, or equivalently, the Calabi-Yau threefold structure on $M$.

\section{The obstruction of the singular twisted connected sum construction}

In this section, the first goal is to prove the analogy of Theorem \ref{Polyhomogenous-L-totally-character} as a refined version of Theorem 7.14 of \cite{Mazzeo}.

Let $C$ be the nodal cone as in Example \ref{Nodal-singularity}. Consider the Laplacian operator $\Delta$ acting on $p$-forms on $\mathbb{S}^1\times C$. Let $\theta$ be the standard variable on $\mathbb{S}^1$. Then any $p$-form can be expressed as
\[\gamma=d\theta\wedge\alpha+\beta,\]
where $\alpha$ is a $(p-1)$-form on $C$ and $\beta$ is a $p$-form on $C$. A direct calculation shows that
\[\begin{split}
\Delta_{\mathbb{S}^1\times C}\gamma=d\theta\wedge(\Delta_C\alpha-\frac{\partial^2}{\partial\theta^2}\alpha)
+(\Delta_C\beta-\frac{\partial^2}{\partial\theta^2}\beta),
\end{split}\]
where $\Delta_C$ means $\Delta$ operator on each slice.

By Mazzeo \cite{Mazzeo}, up to a sign, the corresponding operator $\Delta_0$ on $(p-1)$-forms $\alpha$ and $p$-forms $\beta$ on $C$ is given by
\[\Delta_0(\alpha,\beta)=(\Delta_C+1)(\alpha,\beta),\]
where roughly speaking, $\frac{\partial}{\partial\theta}$ is replaced by $i=\sqrt{-1}$. Still by \cite{Mazzeo}, up to a sign, the operator $I(\Delta)$ on forms $(\alpha,\beta)$ on $C$ is given by
\[I(\Delta)(\alpha,\beta)=(\Delta_C\alpha,\Delta_C\beta),\]
where roughly speaking, $\frac{\partial}{\partial\theta}$ is deleted. $\delta$ is called critical if it is critical for $I(\Delta)$.

The solution of $L_0\gamma=0$ is related to the Bessel function.
\begin{definition}
The Bessel I-function is defined by \[I_{\mu}(r)=\sum_{m=0}^{\infty}\frac{1}{m! \Gamma(m+\mu+1)}(\frac{r}{2})^{2m+\mu}.\]
The Bessel K-function is defined by \[K_{\mu}(r)=\frac{\pi}{2}\frac{I_{-\mu}(r)-I_{\mu}(r)}{\sin\mu\pi}\] if $\mu\not\in\mathbb{Z}$. When $\mu\in\mathbb{Z}$, the limit $\lim_{\hat\mu\rightarrow\mu}K_{\hat\mu}(r)$ exists and is defined as $K_\mu(r)$. In either cases, $I_{\mu}(r)$ and $K_{\mu}(r)$ are two independent solutions to the modified Bessel equation
\[((r{\frac{d}{dr}})^2-(r^2+\mu^2))y=r^{2}{\frac {d^2y}{dr^2}}+r\frac{dy}{dr}-(r^2+\mu^2)y=0.\]
\end{definition}

The following proposition is well known.

\begin{proposition}
(1) When $r$ goes to infinity, \[I_{\mu}(r)=\frac{1}{\sqrt{2\pi r}}e^r(1+O(\frac{1}{r})),\]
and \[K_{\mu}(r)=\sqrt{\frac{\pi}{2r}}e^{-r}(1+O(\frac{1}{r})).\]

(2) When $r$ goes to 0, \[\lim_{r\rightarrow 0}r^{-\mu}I_{\mu}(r)=\frac{1}{\Gamma(\mu+1)}(\frac{1}{2})^{\mu}.\]
On the other hand, if $\mu>0$, then \[\lim_{r\rightarrow 0}r^{\mu}K_{\mu}(r) =\lim_{\tilde\mu\rightarrow\mu}\frac{\pi}{2\Gamma(-\tilde\mu+1)\sin\tilde\mu\pi}(\frac{1}{2})^{-\mu}.\]
If $\mu=0$, then \[\lim_{r\rightarrow 0}K_0(r)(\log r)^{-1}=-1.\]

(3) $K_0'(r)=-K_1(r)$.
\end{proposition}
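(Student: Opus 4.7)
The plan is to verify each of the three claims directly from the definitions given, since the proposition is standard; no deep input is required, but the $\mu=0$ case of part (2) needs a little care. Nothing stated earlier in the paper is invoked.

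For part (2), the behavior of $I_\mu$ at $r=0$ is immediate from the defining power series, because only the $m=0$ term contributes to $\lim_{r\to 0} r^{-\mu} I_\mu(r)$, giving $(1/2)^{\mu}/\Gamma(\mu+1)$. For $K_\mu$ with $\mu>0$ and $\mu\notin\mathbb{Z}$, one substitutes the formula $K_\mu = \frac{\pi}{2\sin\mu\pi}(I_{-\mu}-I_\mu)$: the $I_\mu$ contribution vanishes after multiplication by $r^\mu$, while the $I_{-\mu}$ contribution produces the claimed limit from its $m=0$ term. For integer $\mu>0$, pass $\tilde\mu\to\mu$; both numerator and denominator in $\frac{\pi(I_{-\tilde\mu}-I_{\tilde\mu})}{2\sin\tilde\mu\pi}$ vanish in the limit, so L'H\^opital in $\tilde\mu$ applies, and the stated formula (written already as a limit in $\tilde\mu$) is precisely the outcome. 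For $\mu=0$, the same L'H\^opital calculation produces a derivative $\partial_{\tilde\mu}(r/2)^{\pm\tilde\mu}|_{\tilde\mu=0} = \pm\log(r/2)$, and this is where the $\log r$ term in $K_0$ is born; matching the coefficient of $\log r$ yields $\lim_{r\to 0}K_0(r)(\log r)^{-1} = -1$.

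For part (1), the asymptotic expansions at infinity follow from the standard WKB reduction of the modified Bessel equation. Substitute $y = e^{\pm r}r^{-1/2}u$ into $r^2 y'' + r y' - (r^2+\mu^2)y = 0$; the resulting equation for $u$ has the form $u'' \pm 2u' + O(1/r^2) u = 0$, so it admits a unique bounded solution with $u(r) = 1 + O(1/r)$ at infinity. The constants $1/\sqrt{2\pi r}$ and $\sqrt{\pi/2r}$ are then pinned down by comparing, for $K_\mu$, with the integral representation $K_\mu(r) = \int_0^\infty e^{-r\cosh t}\cosh(\mu t)\,dt$ and applying Laplace's method at $t=0$, and for $I_\mu$ by the connection formula relating the two decaying/growing WKB solutions.

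For part (3), the cleanest route is to differentiate the defining series of $K_0$ with respect to $r$ term by term, then rewrite the result as the corresponding series for $-K_1$; equivalently one may invoke the standard recurrence $K_\mu'(r) = -\tfrac{1}{2}(K_{\mu-1}(r) + K_{\mu+1}(r))$ together with $K_{-1} = K_1$. The main subtle point in the entire proof is, as noted, extracting the $\log r$ asymptotic of $K_0$ from the removable singularity at $\mu=0$; everything else is bookkeeping with power series and a routine WKB analysis.
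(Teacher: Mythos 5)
The paper offers no proof of this proposition at all; it is prefaced only by the remark that the statements are well known (they appear in any standard reference on Bessel functions, e.g.\ Watson or Olver). Your sketch is a correct and complete outline of a standard proof: the power-series manipulations for $I_\mu$ and $K_\mu$ at $r=0$ (including the L'H\^opital argument at integer $\mu$, which for $\mu=0$ produces the $\log(r/2)$ from $\partial_{\tilde\mu}(r/2)^{\pm\tilde\mu}$ and hence the coefficient $-1$), the WKB substitution $y=r^{-1/2}e^{\pm r}u$ reducing the modified Bessel equation to $u''\pm 2u' + \tfrac{1/4-\mu^2}{r^2}u=0$ with the normalizations pinned by Laplace's method on $K_\mu(r)=\int_0^\infty e^{-r\cosh t}\cosh(\mu t)\,dt$, and the recurrence $K_\mu'=-\tfrac12(K_{\mu-1}+K_{\mu+1})$ together with $K_{-1}=K_1$ for part (3). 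There is no alternative approach in the paper to compare against, and no gap in what you wrote.
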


\begin{proposition}
Suppose that $\gamma\in W^{k,2}_\delta(\Lambda^p(C))$ for $\delta>-2$. If $(\Delta+1)\gamma=0$, then $\gamma=0$.
\label{Underline-delta}
\end{proposition}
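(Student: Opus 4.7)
The plan is to use separation of variables against the Hilbert basis $\{\hat\phi_{p,j}\}$ of link eigenforms from Definition \ref{Definition-hat-phi}, which diagonalizes $\Delta_C + 1$ into a family of modified Bessel equations, one per radial mode, and then to rule out both Bessel families using the weight constraint $\delta > -2$.

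Concretely, I would write $\gamma = \sum_j f_j(r)\hat\phi_{p,j}$ using the $L^2$-completeness of $\{\hat\phi_{p,j}\}$ in $L^2(\Lambda^{p-1}F\oplus \Lambda^p F)$. A direct computation using Proposition \ref{Laplacian-local-coordinate}, or equivalently using the degree $-2$ homogeneity of $\Delta_C$ together with the fact that $r^{\pm\sqrt{\hat\mu_{p,j}}}\hat\phi_{p,j}$ is homogeneous harmonic, shows that on the $j$-th mode
\[
\Delta_C(f_j\hat\phi_{p,j}) = r^{-2}\bigl(-(r\partial_r)^2 + \hat\mu_{p,j}\bigr)f_j \cdot \hat\phi_{p,j}.
\]
Hence $(\Delta_C + 1)\gamma = 0$ reduces to
\[
(r\partial_r)^2 f_j - (r^2 + \hat\mu_{p,j})f_j = 0,
\]
which is the modified Bessel equation with parameter $\mu_j := \sqrt{\hat\mu_{p,j}}\in[0,\infty)$ (nonnegativity comes from the four eigenvalue cases in Definition \ref{Definition-hat-phi}). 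Therefore $f_j(r) = a_j I_{\mu_j}(r) + b_j K_{\mu_j}(r)$.

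Next I rule out each family using the weighted norm. The scaling identification in Definition \ref{Definition-hat-phi} gives $|\hat\phi_{p,j}|_{g_C} \sim r^{-2}$, and the cone volume form is $r^{2n-1}\,dr\,dV_F = r^5\,dr\,dV_F$, so
\[
\|\gamma\|_{L^2_\delta(C)}^2 \sim \sum_j \int_0^\infty f_j(r)^2\, r^{-2\delta - 5}\,dr.
\]
As $r\to\infty$, $I_{\mu_j}(r)\sim (2\pi r)^{-1/2} e^r$ grows exponentially, so the $a_j$ term produces a divergent integral at infinity for every $\delta$, forcing $a_j = 0$. As $r\to 0$, $K_{\mu_j}(r) \sim r^{-\mu_j}$ when $\mu_j > 0$ and $K_0(r)\sim -\log r$, so integrability at $0$ of the $b_j$ term would require $\delta < -\mu_j - 2 \leq -2$, contradicting $\delta > -2$; thus $b_j = 0$. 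Every coefficient $f_j$ vanishes, so $\gamma = 0$.

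The main obstacle is the reduction in the first step: one must track the $r^{-2}(r^{p-1}dr\wedge\alpha + r^p\beta)$ identification carefully through the formulas of Proposition \ref{Laplacian-local-coordinate} to confirm that the radial operator on each mode is exactly $r^{-2}\bigl(-(r\partial_r)^2 + \hat\mu_{p,j}\bigr)$, including the correct constant shift (e.g.\ for $p=0$, $\hat\mu_{0,j} = \mu_{0,j} + 4$, consistent with the homogeneous harmonic rates $-2\pm\sqrt{\mu_{0,j}+4}$). Once this normalization is in place, the remainder of the argument is a standard asymptotic comparison with the classical Bessel functions.
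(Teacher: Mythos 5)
Your argument coincides with the paper's: both expand $\gamma$ in the Hilbert basis $\{\hat\phi_{p,j}\}$ from Definition \ref{Definition-hat-phi}, reduce $(\Delta_C+1)\gamma=0$ to the modified Bessel equation $((r\frac{d}{dr})^2-(r^2+\hat\mu_{p,j}))\gamma_j=0$ on each mode, and note that no nonzero combination of $I_{\sqrt{\hat\mu_{p,j}}}$ and $K_{\sqrt{\hat\mu_{p,j}}}$ lies in the relevant weighted space because $I$ grows exponentially at infinity while $K$ has a singularity at the origin inconsistent with $\delta>-2$. The paper's proof is just this argument stated more tersely; your write-up simply fills in the asymptotic bookkeeping.
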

\begin{proof}
Write $\gamma$ as the generalized Fourier series \[\gamma=\sum_{j=1}^{\infty}\gamma_j(r)\hat\phi_{p,j}.\]
The equation is reduced to the ordinary differential equations
\[((r{\frac{d}{dr}})^2-(r^2+\hat\mu_{p,j}))\gamma_j=0.\] However, any linear combination of $I_{\sqrt{\hat\mu_{p,j}}}(r)$ and $K_{\sqrt{\hat\mu_{p,j}}}(r)$ does not lie in $W^{k,2}_{\delta+2}$. Thus $\gamma=0$.
\end{proof}

Using the terminology of \cite{Mazzeo},  Proposition \ref{Underline-delta} implies that $\underline{\delta}\le -2$ for $\Delta$ acting on $p$-forms. An immediate corollary is the following:

\begin{corollary}
Suppose that $\gamma\in W^{k,2}_{\delta}(\Lambda^*(\mathbb{S}^1\times C))$ for a non-critical $\delta>-2$, then
\[||\gamma||_{W^{k,2}_{\delta}(\Lambda^*(\mathbb{S}^1\times C))}\le C||\Delta_{\mathbb{S}^1\times C}\gamma||_{W^{k-2,2}_{\delta-2}(\Lambda^*(\mathbb{S}^1\times C))}.\]
\label{Estimate-on-C-times-S1}
\end{corollary}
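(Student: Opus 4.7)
The estimate is equivalent to saying that $\Delta_{\mathbb{S}^1\times C}:W^{k,2}_{\delta}(\Lambda^*(\mathbb{S}^1\times C))\to W^{k-2,2}_{\delta-2}$ is injective with closed range, at which point the bound follows from the bounded inverse theorem applied to the restriction of the operator to its image. My plan is to deduce both pieces from the pair of ingredients already at hand: Mazzeo's edge calculus supplies the Fredholm property (hence closed range once the kernel is identified), and Fourier analysis on the $\mathbb{S}^1$ factor rules out the kernel.

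For the Fredholm property I would appeal to Theorem 7.14 of \cite{Mazzeo}. Its two hypotheses are that the indicial operator at the edge is invertible at the chosen weight and that the edge threshold $\underline{\delta}$ lies strictly below the chosen weight. The indicial operator is $I(\Delta)=\Delta_C$, which is invertible at every non-critical $\delta$ by Theorem \ref{Invertible-IL} (recall that the critical weights for $\Delta$ on $\mathbb{S}^1\times C$ are by definition the critical weights of $I(\Delta)$). The bound $\underline{\delta}\leq -2$ is exactly the content of Proposition \ref{Underline-delta}, since in Mazzeo's framework $\underline{\delta}$ is characterized as the worst weight at which the normal operator $\Delta_0=\Delta_C+1$ fails to be injective. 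Hence for non-critical $\delta>-2$ the operator is Fredholm.

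To eliminate the kernel I would Fourier-decompose $\gamma=\sum_{n\in\mathbb{Z}}e^{in\theta}\gamma_n$; Parseval places each $\gamma_n$ in $W^{k,2}_{\delta}(\Lambda^*C)$, and the eigenvalue equation $\Delta_{\mathbb{S}^1\times C}\gamma=0$ decouples into $(\Delta_C+n^2)\gamma_n=0$ for every $n$. The mode $n=0$ is killed directly by Theorem \ref{Invertible-IL}. For $n\neq 0$ I would rescale $\tilde r=|n|r$: the cone $C$ is self-similar and the weighted Sobolev norm is scale-homogeneous up to a universal factor, so $\tilde\gamma_n$ still lies in $W^{k,2}_{\delta}$ and satisfies $(\Delta_C+1)\tilde\gamma_n=0$, whence it vanishes by Proposition \ref{Underline-delta}. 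This leaves $\gamma=0$, and combined with Fredholmness gives the desired bounded-below estimate.

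The main obstacle I expect is purely organizational: matching the conventions of \cite{Mazzeo} to those of Definition \ref{Definition-weighted-Sobelov} in this paper, in particular verifying that Mazzeo's $\underline{\delta}$ in his weight normalization corresponds to the $-2$ appearing in Proposition \ref{Underline-delta} under the Lockhart--McOwen conventions used here (already the subject of the remark preceding Theorem \ref{Lockhart-McOwen}), and confirming that his Fredholm criterion specializes correctly to the trivial edge fibration $\mathbb{S}^1\times C\to\mathbb{S}^1$ with smooth fiber $C\setminus\{o\}$. Once this bookkeeping is carried out, no further analytic surprises should arise.
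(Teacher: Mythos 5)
Your injectivity argument (second paragraph) matches the paper's approach for that piece: the paper's proof, following Mazzeo's Theorem~5.16, performs a Fourier decomposition in $\theta$, rescales $r\mapsto |n|r$, and applies the \emph{inverse} of $\Delta_C+1$, with Proposition~\ref{Underline-delta} supplying trivial kernel for each rescaled mode.

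The Fredholm step is where the proposal breaks down. You cite Mazzeo's Theorem~7.14 for the Fredholm property, but the paper itself identifies that result, in the paragraph discussing \cite{Mazzeo} in Section~2, as the edge analogue of Theorem~\ref{Polyhomogenous-L-totally-character} --- a polyhomogeneity statement, not a Fredholm criterion. More seriously, even the correct Fredholm theorem in \cite{Mazzeo} would not apply here: $\mathbb{S}^1\times C$ with the whole (untruncated) cone $C$ is the non-compact \emph{normal model} for the edge, not a compact manifold with an edge singularity, and the invertibility of that normal operator with uniform estimates is precisely the \emph{hypothesis} one must verify before Mazzeo's Fredholm theory on a compact manifold with edge is available --- it is not a consequence one can quote. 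Invoking it here is therefore circular. Closed range cannot be obtained for free; it has to be established quantitatively, mode by mode. Proposition~\ref{Underline-delta} gives you only \emph{injectivity} of $\Delta_C+1$; to finish you need the explicit Bessel-function solution formulas (as in Proposition~\ref{Bessel-equation}) to \emph{invert} $\Delta_C+1$ on the weighted space with a bound that, after the rescaling $r\mapsto |n|r$, is uniform in $n$. This is the content of Mazzeo's Theorem~5.16, which is what the paper's one-line proof actually invokes, and it replaces both your Fredholm step and your kernel step by a single direct estimate.
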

\begin{proof}
This corollary is essentially due to \cite{Mazzeo}. As in the proof of Theorem 5.16 of \cite{Mazzeo}, this estimate is obtained from Fourier transform, rescaling, applying the inverse of $\Delta+1$ and then the inverse Fourier transform.
\end{proof}

The next proposition is the key estimate for the ordinary differential equation involving Bessel functions.

\begin{proposition}
Assume that $\mu>0$. Suppose that $y(r)\in W^{k,2}_{\delta}((0,1))$ and $z\in W^{k-2,2}_{\delta'}((0,1))$ are functions on the interval $(0,1)$. They vanish in a neighborhood of $1$ and they satisfy the equation \[((r{\frac{d}{dr}})^2-(n^2r^2+\mu^2))y=z(r).\] Then

(1) If $-\mu<\delta<\delta'<\mu$, then $y\in L^2_{\delta'}$.

(2) If $\mu<\delta<\delta'$, then $y\in L^2_{\delta'}$.

(3) If $-\mu<\delta<\mu<\delta'$ and $n\not=0$, define $y_{\le\mu}(r)$ as \[-I_\mu(|n|r)\chi(2|n|r)\lim_{\tilde\mu\rightarrow\mu}\frac{\Gamma(\tilde\mu+1)\Gamma(-\tilde\mu+1)\sin \tilde\mu\pi}{\tilde\mu\pi}\int_0^1 K_\mu(|n|s)z(s)\frac{ds}{s}.\] Define $y_{>\mu}(r)$ as $y(r)-y_{\le\mu}(r)$, then $y_{>\mu}\in L^2_{\delta'}$.

(4) If $-\mu<\delta<\mu<\delta'$ and $n=0$, define $y_{\le\mu}(r)$ by \[y_{\le\mu}(r)=\chi(2r)r^{\mu}\int_0^1(\int_0^s t^{\mu}z(t)\frac{dt}{t})s^{-2\mu}\frac{ds}{s}.\] Define $y_{>\mu}(r)$ as $y(r)-y_{\le\mu}(r)$, then $y_{>\mu}\in L^2_{\delta'}$.
\label{Bessel-equation}
\end{proposition}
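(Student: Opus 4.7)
The plan is to solve the scalar ODE explicitly by variation of parameters and then read off the weighted regularity from the asymptotics of the fundamental solutions. For $n \neq 0$ the homogeneous equation $((r\tfrac{d}{dr})^2 - (n^2 r^2 + \mu^2))y = 0$ has fundamental solutions $u_1(r) = I_\mu(|n|r)$ and $u_2(r) = K_\mu(|n|r)$ with Wronskian $u_1 u_2' - u_2 u_1' = -1/r$ by Abel's formula; for $n = 0$ the fundamental solutions are $r^\mu$ and $r^{-\mu}$, with Wronskian $-2\mu/r$. Variation of parameters yields a particular solution
\[
y_p(r) = u_1(r)\int_{a_1}^{r} u_2(s) z(s) \frac{ds}{s} \;-\; u_2(r)\int_{a_2}^{r} u_1(s) z(s) \frac{ds}{s},
\]
with the choice of lower limits $a_1, a_2$ to be determined; the general solution is $y = y_p + c_1 u_1 + c_2 u_2$.

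Since $u_2(r) \sim r^{-\mu}$ as $r \to 0$, the hypothesis $\delta > -\mu$ automatically rules out a nonzero $u_2$-component in $y$, so only the $u_1$-coefficient is potentially nonzero. For cases (1) and (2) I would choose $a_1 = 1, a_2 = 0$ (respectively $a_1 = a_2 = 1$ in case (2), where $\delta > \mu$ additionally forces $c_1 = 0$) so that $y_p$ has no $u_2$-asymptote at the origin. A Hardy--Schur test applied to the integral kernels $K_{+}(r,s) = u_1(r) u_2(s) s^{-1}\chi_{\{s>r\}}$ and $K_{-}(r,s) = u_2(r) u_1(s) s^{-1}\chi_{\{s<r\}}$, using the pointwise bounds $I_\mu(x) = O(x^\mu)$ and $K_\mu(x) = O(x^{-\mu})$ as $x \to 0$ together with their exponential behavior as $x \to \infty$, then places $y_p$ in $L^2_{\delta'}$.

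Case (3) is the delicate one: $\delta < \mu < \delta'$ allows a nonzero $u_1$-component in $L^2_\delta$ but obstructs $L^2_{\delta'}$-membership. With the splitting
\[
y_p(r) = -u_1(r)\int_r^1 u_2(s) z(s) \frac{ds}{s} \;-\; u_2(r)\int_0^r u_1(s) z(s) \frac{ds}{s},
\]
the $r \to 0$ limit of the first factor is the constant $-\int_0^1 K_\mu(|n|s) z(s)\,\tfrac{ds}{s}$, which is precisely the coefficient appearing in the definition of $y_{\le\mu}$ (the prefactor $\lim_{\tilde\mu \to \mu}\Gamma(\tilde\mu+1)\Gamma(-\tilde\mu+1)\sin(\tilde\mu\pi)/(\tilde\mu\pi)$ equals $1$ for non-integer $\tilde\mu$ by the reflection formula and extends continuously to integer $\mu$). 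Subtracting $y_{\le\mu}$ converts $\int_r^1$ into $\int_0^r$, and the same Hardy--Schur argument, now with weight $r^{-\delta'}$, shows $y_{>\mu} \in L^2_{\delta'}$. Case (4) is the analogous $n = 0$ calculation with $u_1 = r^\mu$, $u_2 = r^{-\mu}$; a Fubini rearrangement identifies the given double-integral formula for $y_{\le\mu}$ with the $u_1$-component subtraction, modulo an added multiple of the homogeneous solution $r^\mu$, which is harmless for the weighted estimate.

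The hard part will be the Hardy--Schur estimate in the second step, uniform in $n$ and $\mu$: one separates the regions $|n|r \lesssim 1$ and $|n|r \gtrsim 1$, uses the polynomial asymptotics of $I_\mu$ and $K_\mu$ in the first and their exponential growth/decay in the second, and applies Schur's test in each region. The only subtle point is integer $\mu$, where $I_\mu$ and $K_\mu$ are not analytically independent and a logarithmic term appears; passing to the limit $\tilde\mu \to \mu$ in the non-integer formulas (as already built into the definition of $y_{\le\mu}$) yields uniform bounds in both settings.
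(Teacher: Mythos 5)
Your plan coincides with the paper's: the paper's proof consists entirely of writing down the explicit variation-of-parameters formulas (with fundamental solutions $I_\mu(|n|r),K_\mu(|n|r)$, resp.\ $r^{\pm\mu}$) and the Wronskian identity, leaving the weighted estimates to the reader. Your observation that the $\Gamma$-prefactor $\lim_{\tilde\mu\to\mu}\Gamma(\tilde\mu+1)\Gamma(-\tilde\mu+1)\sin\tilde\mu\pi/(\tilde\mu\pi)$ is just the constant $1$ is correct, and the Hardy--Schur sketch is a reasonable way to supply the missing estimates.

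However, two steps in your write-up are incomplete or false. In cases (3) and (4) you read off the obstruction coefficient from $y_p$ alone, writing the $r^\mu$-part of $y$ as $-\int_0^1 K_\mu(|n|s)z\,\frac{ds}{s}$; but the general solution is $y=y_p+c_1 u_1+c_2 u_2$, and in the range $-\mu<\delta<\mu$ the term $c_1 u_1\sim c_1 r^\mu$ does lie in $L^2_\delta$, so the decay hypothesis alone does not force $c_1=0$. What does is the hypothesis you never invoke, that $y$ and $z$ vanish near $r=1$: with your choice $a_1=1,a_2=0$, on the annulus where $z\equiv 0$ the particular solution $y_p$ collapses to a pure $u_2$-multiple (since $\int_r^1 u_2 z\,ds/s=0$ there), so $y\equiv 0$ there forces $c_1=0$ and pins $c_2$, which combined with $c_2=0$ from the decay condition also produces the implicit compatibility $\int_0^1 I_\mu(|n|s)z\,\frac{ds}{s}=0$. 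Without this step the coefficient in $y_{\le\mu}$ is off by the unknown $c_1$ and the conclusion fails. Second, your claim in case (4) that the double-integral formula matches the genuine $u_1$-subtraction ``modulo an added multiple of $r^\mu$, which is harmless for the weighted estimate'' cannot be right as stated: when $\delta'>\mu$, $r^\mu\notin L^2_{\delta'}$, so any nonzero multiple of $r^\mu$ surviving in $y_{>\mu}$ is exactly the obstruction the proposition is eliminating, not a harmless remainder. To close case (4) you must show this multiple vanishes, which again requires the boundary condition at $r=1$ together with the compatibility relation $\int_0^1 t^\mu z\,\frac{dt}{t}=0$ that it enforces.
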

\begin{proof}
When $n\not=0$, \[\begin{split}
y(r)=&\lim_{\tilde\mu\rightarrow\mu}\frac{\Gamma(\tilde\mu+1)\Gamma(-\tilde\mu+1)\sin \tilde\mu\pi}{\tilde\mu\pi}(-I_\mu(|n|r)\int_r^1 K_\mu(|n|s)z(s)\frac{ds}{s}\\
&-K_\mu(|n|r)\int_0^r I_\mu(|n|s)z(s)\frac{ds}{s})+C_1I_\mu(|n|r)+C_2 K_{\mu}(|n|r)
\end{split}\] using the fact that \[I_\mu'(r)K_\mu(r)-K_\mu'(r)I_\mu(r)=
\lim_{\tilde\mu\rightarrow\mu}\frac{\tilde\mu\pi}{\Gamma(\tilde\mu+1)\Gamma(-\tilde\mu+1)\sin \tilde\mu\pi}\frac{1}{r}.\]
When $n=0$, \[y(r)=r^{\mu}\int_0^r(\int_0^s t^{\mu}z(t)\frac{dt}{t})s^{-2\mu}\frac{ds}{s}+C_1r^{\mu}+C_2r^{-\mu}.\]
\end{proof}

Assume that $\delta$ is small enough. Pick $x_i\in V_+^{\mathrm{sing}}$. Denote $r_{x_i}$ by $r$. Denote $r_{0,x_i}$ by $r_0$. Then the following theorem is an analogy of Proposition \ref{Polyhomogenous-doubling}:
\begin{theorem}

(1) Suppose that $-2<\delta'<\delta''<-1+\delta$ satisfy $\delta''-\delta'<\nu$. If \[\gamma\in W^{k,2}_{\delta'}(\Lambda^*(\mathbb{S}^1\times(V_+\cap\{r\le r_0\})))\]
and \[(d+d^*)_{\mathbb{S}^1\times V_+}\gamma\in W^{k-1,2}_{\delta''-1}(\Lambda^*(\mathbb{S}^1\times(V_+\cap\{r\le r_0\}))),\] then $\gamma\in W^{k,2}_{\delta''}(\Lambda^*(\mathbb{S}^1\times(V_+\cap\{r\le r_0\})))$ and
\[||\gamma||_{W^{k,2}_{\delta''}}\le C(||(d+d^*)\gamma||_{W^{k-1,2}_{\delta''-1}}
+||\gamma||_{W^{k,2}_{\delta'}}),\]
where the norm is taking on $\Lambda^*(\mathbb{S}^1\times(V_+\cap\{r\le r_0\}))$.

(2) Suppose that $-1+\delta\le \delta'<\delta''<0$. If \[\gamma\in W^{k,2}_{\delta'}(\Lambda^3(\mathbb{S}^1\times(C\cap\{r\le r_0\})))\] and \[(d+d^*)_{\mathbb{S}^1\times C}\gamma\in W^{k-1,2}_{\delta''-1}(\Lambda^*(\mathbb{S}^1\times(C\cap\{r\le r_0\}))),\] then $\gamma=\gamma_{<\delta''}+\gamma_{\ge \delta''}$,
with
\[\gamma_{<\delta''}=\sum_{n=-\infty}^{\infty}\sum_{\sqrt{\mu_{0,j}+4}-4\in(\delta',\delta'')}
c_{n,j}d((d(r^{\sqrt{\mu_{0,j}+4}-2}\chi(\frac{4|n+\frac{1}{2}|r}{r_0})\phi_{0,j}e^{in\theta}))^\#\lrcorner\varphi),\]
and
\[\begin{split}
\sqrt{\sum_{n=-\infty}^{\infty}\sum_{\sqrt{\mu_{0,j}+4}-4\in(\delta',\delta'')}
c_{n,j}^2|n+\frac{1}{2}|^{8-2\sqrt{\mu_{0,j}+4}-2\delta''}}+||\gamma_{\ge\delta''}||_{W^{k,2}_{\delta''}}\\
\le C(||(d+d^*)\gamma||_{W^{k-1,2}_{\delta''-1}}+||\gamma||_{W^{k,2}_{\delta'}}),
\end{split}\]
where the norm is taking on $\Lambda^*(\mathbb{S}^1\times(C\cap\{r\le r_0\}))$.

(3) Suppose that $-1+\delta\le \delta'<0<\delta''<\frac{\delta}{2}$. If \[\gamma\in W^{k,2}_{\delta'}(\Lambda^3(\mathbb{S}^1\times(C\cap\{r\le r_0\})))\] and \[(d+d^*)_{\mathbb{S}^1\times C}\gamma\in W^{k-1,2}_{\delta''-1}(\Lambda^*(\mathbb{S}^1\times(C\cap\{r\le r_0\}))),\] then $\gamma=\gamma_{\le 0}+\gamma_{>0}$ with
\[\begin{split}
\gamma_{\le 0}&=\sum_{n=-\infty}^{\infty}\sum_{\sqrt{\mu_{0,j}+4}-4\in(\delta',\delta'')}c_{n,j,1} d((d(r^{\sqrt{\mu_{0,j}+4}-2}\chi(\frac{4|n+\frac{1}{2}|r}{r_0})\phi_{0,j}e^{in\theta}))^\#\lrcorner\varphi)\\
&+\sum_{n=-\infty}^{\infty}\chi(\frac{4|n+\frac{1}{2}|r}{r_0})e^{in\theta}
(c_{n,2}\mathrm{Re}\Omega+c_{n,3}\mathrm{Im}\Omega+c_{n,4}d\theta\wedge\omega),\end{split}\]
and
\[\begin{split}
\sqrt{\sum_{n=-\infty}^{\infty}\sum_{\sqrt{\mu_{0,j}+4}-4\in(\delta',\delta'')}
c_{n,j,1}^2|n+\frac{1}{2}|^{8-2\sqrt{\mu_{0,j}+4}-2\delta''}}\\
+\sqrt{\sum_{n=-\infty}^{\infty}(c_{n,2}^2+c_{n,3}^2+c_{n,4}^2)|n+\frac{1}{2}|^{-2\delta''}}
+||\gamma_{>0}||_{W^{k,2}_{\delta''}}\\
\le C(||(d+d^*)\gamma||_{W^{k-1,2}_{\delta''-1}}+||\gamma||_{W^{k,2}_{\delta'}}),
\end{split}\]
where the norm is taking on $\Lambda^*(\mathbb{S}^1\times(C\cap\{r\le r_0\}))$.

(4) Part (2) and (3) are also true if $C$ is replaced by $V_+$.
\label{Polyhomogenous-L}
\end{theorem}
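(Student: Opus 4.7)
The plan is to reduce the problem to the ODE setting of Proposition \ref{Bessel-equation} via Fourier analysis on $\mathbb{S}^1$ and eigenform expansion on the link of $C$. First I would decompose $\gamma$ into Fourier modes $e^{i(n+\frac{1}{2})\theta}$ (the $\frac{1}{2}$-shift coming from the twisted gluing structure of this section's $\mathbb{S}^1$ factor), and then expand each mode along the eigenform basis $\hat\phi_{p,j}$ of Definition \ref{Definition-hat-phi}. The equation $(d+d^*)_{\mathbb{S}^1\times C}\gamma = z$ then decouples into ODEs of the form
\[
\left((r\partial_r)^2 - ((n+\tfrac{1}{2})^2 r^2 + \hat\mu_{p,j})\right) y_{n,j,p} = z_{n,j,p},
\]
together with the bounded algebraic couplings between $(\alpha,\beta)$-pairs produced by $d_C$ and $d_C^*$ as in Proposition \ref{Laplacian-local-coordinate}. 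This places each mode squarely in the setting of Proposition \ref{Bessel-equation} with $\mu = \sqrt{\hat\mu_{p,j}}$.

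For Part (1), since $\delta''-\delta'<\nu$ and $\delta''<-1+\delta$, the spectral gaps on strongly regular Calabi-Yau 3-cones established in Propositions \ref{Obata}, \ref{Killing-metric}, \ref{Primitive-one-one} guarantee no critical rate in $(\delta',\delta'')$. Then Proposition \ref{Bessel-equation}(1)--(2) applied mode by mode, together with Corollary \ref{Estimate-on-C-times-S1} for the square-summed version, produces the weight-shift estimate with no polyhomogeneous terms.

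For Parts (2) and (3), the indicial roots $\mu=\sqrt{\mu_{0,j}+4}-2$ associated with pluriharmonic link eigenfunctions enter the window as $\delta''$ crosses $-1+\delta$. I would apply Proposition \ref{Bessel-equation}(3) for $n\ne 0$ and (4) for $n=0$ to peel off the slow term $y_{\le\mu}$. Inside the support of the cutoff $\chi(4|n+\frac{1}{2}|r/r_0)$, $I_\mu(|n+\tfrac{1}{2}|r)$ equals a constant multiple of $(|n+\tfrac{1}{2}|r)^\mu$ up to $O((|n+\tfrac{1}{2}|r)^{\mu+2})$, so reassembling the form on $\mathbb{S}^1\times C$ and applying $d((\cdot)^\#\lrcorner\varphi)$ produces exactly the asserted representatives $c_{n,j}\,d((d(r^{\sqrt{\mu_{0,j}+4}-2}\chi\phi_{0,j}e^{in\theta}))^\#\lrcorner\varphi)$; Corollary \ref{Two-form-and-three-form-representation} confirms these are the unique closed-and-coclosed harmonic rate-$(\sqrt{\mu_{0,j}+4}-4)$ representatives, and in Part (3) it further accounts for $\mathrm{Re}\,\Omega$, $\mathrm{Im}\,\Omega$, and $d\theta\wedge\omega$ at the critical level $\hat\mu_{p,j}=0$. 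The weight $|n+\frac{1}{2}|^{8-2\sqrt{\mu_{0,j}+4}-2\delta''}$ on $c_{n,j}$ then comes out of rescaling $r\mapsto r/|n+\tfrac{1}{2}|$ in the Bessel profile and tracking the two derivatives introduced by $d((\cdot)^\#\lrcorner\varphi)$.

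Part (4), extending from $C$ to $V_+$, will use the conical asymptotics: writing $(d+d^*)_{V_+}=(d+d^*)_C+E$, the error $E$ has order $\le 1$ with $O(r^\nu)$ coefficients by Definition \ref{Definition-conical-singularity}, so iterating the cone estimate while keeping $\delta''-\delta'<\nu$ absorbs $E$, and Proposition \ref{Calabi-Yau-cone}(9) ensures the same polyhomogeneous basis works on $V_+$ because $\phi_{0,j}$ is pluriharmonic. The main obstacle will be the $n$-uniform operator estimates: the Bessel kernel norms between weighted spaces must scale with $|n+\tfrac{1}{2}|$ in precisely the way predicted by rescaling, and only this uniform control delivers the asserted Parseval-summable bounds on the coefficients $c_{n,j}$, which is what will make the (ultimately infinite-dimensional) obstruction space of the next section tractable.
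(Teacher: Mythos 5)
Your strategy—Fourier expansion in $\theta$, eigenform decomposition along $\hat\phi_{p,j}$, reduction to the Bessel-type ODE of Proposition \ref{Bessel-equation}, then the polyhomogeneous peeling matched against Corollary \ref{Two-form-and-three-form-representation}, followed by a chain-of-weights induction for Part (4)—is exactly the route the paper takes, and most of the individual steps are identified correctly. However, there is a concrete error in your setup: the Fourier modes are the ordinary integer modes $e^{in\theta}$, $n\in\mathbb{Z}$, so the mode-by-mode ODE is
\[
\left((r\partial_r)^2 - (n^2 r^2 + \hat\mu_{p,j})\right) y_{n,j} = z_{n,j},
\]
\emph{not} $(n+\tfrac12)^2 r^2$. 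The quantity $|n+\tfrac12|$ that appears in the statement is purely a bookkeeping device introduced after the fact: for $n\ne 0$ one replaces the leading Bessel profile $I_\mu(|n|r)\sim c(|n|r)^\mu$ by $c(|n+\tfrac12|r)^\mu$ at the cost of a factor $(|n+\tfrac12|/|n|)^\mu$ comparable to $1$, while for $n=0$ the Euler-equation solution $r^\mu$ is absorbed into the same formula because $|n+\tfrac12|=\tfrac12\ne 0$. Your version, by contrast, would apply Proposition \ref{Bessel-equation}(3) at $n=0$ with $|n+\tfrac12|=\tfrac12$, which is the wrong ODE: at $n=0$ one must use the Euler case (4), not a Bessel kernel. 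The $\tfrac12$-shift has nothing to do with twisted gluing; there is no twisting of the global $\mathbb{S}^1$ factor in this doubling construction.

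Two smaller remarks. For Part (1), the correct logic is not ``no critical rates in $(\delta',\delta'')$'' — there generally are indicial roots in that window — but rather that the accelerated decay of $(d+d^*)\gamma$ forces any leading term to be simultaneously closed and coclosed, and Corollary \ref{Two-form-and-three-form-representation} shows no such term exists at rates in $(-2,-1)$. For Part (4), your sketch is sound, but the paper's induction depends on the explicit identity
\[
(d+d^*)\,d\bigl((d(r^{\sqrt{\mu_{0,j}+4}-2}\phi_{0,j}e^{in\theta}))^{\#}\lrcorner\varphi\bigr)
= n^2\bigl((d(r^{\sqrt{\mu_{0,j}+4}-2}\phi_{0,j}e^{in\theta}))^{\#}\lrcorner\varphi\bigr),
\]
valid on both $\mathbb{S}^1\times C$ and $\mathbb{S}^1\times V_+$ because $r^{\sqrt{\mu_{0,j}+4}-2}\phi_{0,j}$ is pluriharmonic; this is what makes the swap of the $C$ representatives for the $V_+$ representatives produce an error controlled uniformly in $n$. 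You invoke Proposition \ref{Calabi-Yau-cone}(9) for the pluriharmonicity, which is the right ingredient, but without the identity the ``absorb $E$'' step leaves the uniform-in-$n$ control of the swapped polyhomogeneous terms unjustified.
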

\begin{proof}
(1) Using the cut-off function, assume that $\gamma$ is supported in $r<r_0$ and use the metric $g_C$ to define $d+d^*$ and $\Delta$ instead of $g_{V_+}$. Write $\gamma$ and $\Delta\gamma$ as \[\gamma=\sum_{n=-\infty}^{\infty}\sum_{j=1}^{\infty}\gamma_{n,j}(r)\hat\phi_{3,j}e^{in\theta},\]
and \[\Delta\gamma=\sum_{n=-\infty}^{\infty}\sum_{j=1}^{\infty}\gamma'_{n,j}(r)\hat\phi_{3,j}e^{in\theta},\] then the equation is reduced to the ordinary differential equations
\[((r{\frac{d}{dr}})^2-(\hat\mu_{3,j}+n^2r^2)\gamma_{n,j}=r^2\gamma'_{n,j}.\] Remark that the rate of $\hat\phi_{3,j}$ is -2, so $\gamma_{n,j}\in L^2_{\delta'+2}$ and $r^2\gamma'_{n,j}\in L^2_{\delta''+2}$. By Proposition \ref{Bessel-equation}, if $\delta'+2<\sqrt{\hat\mu_{3,j}}<\delta''+2$, then $\gamma_{n,j}=\gamma_{n,j,\le\delta''+2}+\gamma_{n,j,>\delta''+2}$ with $\gamma_{n,j,>\delta''+2}\in L^2_{\delta''+2}$ and \[\begin{split}
\gamma_{n,j,\le\delta''+2} =-I_{\sqrt{\hat\mu_{3,j}}}(|n|r)\chi(\frac{|n|r}{r_0})\lim_{\tilde\mu\rightarrow\sqrt{\hat\mu_{3,j}}}
\frac{\Gamma(\tilde\mu+1)\Gamma(-\tilde\mu+1)\sin\tilde\mu\pi}{\tilde\mu\pi}\\
\int_0^1 K_{\sqrt{\hat\mu_{3,j}}}(|n|s)s^2\gamma'_{n,j}(s)\frac{ds}{s}
\end{split}\] if $n\not=0$, while \[\gamma_{n,j,\le\delta''+2}=\chi(\frac{r}{r_0})r^{\sqrt{\hat\mu_{3,j}}}\int_0^1(\int_0^s t^{\sqrt{\hat\mu_{3,j}}+2}\gamma'_{n,j}(t)\frac{dt}{t})s^{-2\sqrt{\hat\mu_{3,j}}}\frac{ds}{s}\] if $n=0$. Define \[c_{n,j}=\lim_{\tilde\mu\rightarrow\sqrt{\hat\mu_{3,j}}}
\frac{\Gamma(\tilde\mu+1)\Gamma(-\tilde\mu+1)\sin \tilde\mu\pi}{\tilde\mu\pi|n|}\int_0^1 K_{\sqrt{\hat\mu_{3,j}}}(|n|s)s^2\gamma'_{n,j}(s)\frac{ds}{s}\] for $n\not=0$, then
\[\begin{split}
|c_{n,j}|&\le C_{j,1}\sqrt{\int_0^1K_{\sqrt{\hat\mu_{3,j}}}^2(|n|s)s^{2\delta''+4}\frac{ds}{s}}\sqrt{\int_0^1 (s^2\gamma'_{n,j}(s))^2s^{-2\delta''-4}\frac{ds}{s}}\\
&\le C_{j,2}|n|^{-\delta''-2}||s^2\gamma'_{n,j}(s)||_{L^2_{\delta''+2}},
\end{split}\]
where \[C_{j,2}=\lim_{\tilde\mu\rightarrow\sqrt{\hat\mu_{3,j}}}
\frac{\Gamma(\tilde\mu+1)\Gamma(-\tilde\mu+1)\sin \tilde\mu\pi}{\tilde\mu\pi}\sqrt{\int_0^\infty K_{\sqrt{\hat\mu_{3,j}}}^2(s)s^{2\delta''+4}\frac{ds}{s}}.\]
Remark that replacing $I_{\sqrt{\hat\mu_{3,j}}}(|n|r)$ by its leading term $\frac{1}{\Gamma(\sqrt{\hat\mu_{3,j}}+1)}(\frac{|n|r}{2})^{\sqrt{\hat\mu_{3,j}}}$ does not affect the conclusion. By multiplying $(\frac{|n+\frac{1}{2}|}{|n|})^{\sqrt{\hat\mu_{3,j}}}$, it can be replaced by $\frac{1}{\Gamma(\sqrt{\hat\mu_{3,j}}+1)}(\frac{|n+\frac{1}{2}|r}{2})^{\sqrt{\hat\mu_{3,j}}}$ so that the $n=0$ case can be absorbed into the estimate.

Using the fact that \[(d+d^*)_{\mathbb{S}^1\times V_+}\gamma\in W^{k-1,2}_{\delta''-1}(\Lambda^*(\mathbb{S}^1\times(V_+\cap\{r\le r_0\}))),\] it suffices to consider $\hat\phi_{3,j}$ such that $\delta'+2<\sqrt{\hat\mu_{3,j}}<\delta''+2$ and $\hat\phi_{3,j}$ is both closed and coclosed. By Corollary \ref{Two-form-and-three-form-representation}, such form does not exist. Therefore, $\gamma_{n,j}=\gamma_{n,j,>\delta''+2}\in L^2_{\delta''+2}$ if $\delta'+2<\sqrt{\hat\mu_{3,j}}<\delta''+2$.

On the other hand, if $\delta'+2<\sqrt{\hat\mu_{3,j}}<\delta''+2$ is not true, by Proposition \ref{Bessel-equation}, it is also true that $\gamma_{n,j}\in L^2_{\delta''+2}$.

Apply Corollary \ref{Estimate-on-C-times-S1} for each term $\gamma_{n,j}(r)\hat\phi_{3,j}e^{in\theta}$. Using the fact that the Laplacian of each term are perpendicular to each other in any weighted $L^2$-norm, it is easy to see that \[||\gamma||_{L^2_{\delta''}}\le C||\Delta_C\gamma||_{L^2_{\delta''-2}}.\] By standard elliptic estimate
\[||\gamma||_{W^{k,2}_{\delta''}}\le C||\Delta_C\gamma||_{W^{k-2,2}_{\delta''-2}}.\]

(2) The proof is similar to (1).

(3) The proof is similar to (1).

(4) Choose non-critical $\delta_l$ so that $\delta'=\delta_1<...<\delta_{N_5}=\delta''$ and $\delta_l-\delta_{l-1}<\nu$. Assume that $\delta_{N_5-1}<0$. The statement is proved by induction. When $N_5=2$, the statement follows from (2) and (3) because $\delta''-\delta'<\nu$ in this case. Suppose that the statement has been proved for all $N_5<N_6$. The goal is to prove the statement for $N_5=N_6$.

By assumption, $\gamma=\gamma_{<\delta_{N_5-1}}+\gamma_{\ge\delta_{N_5-1}}$ with $\gamma_{<\delta_{N_5-1}}$ defined by
\[\sum_{n=-\infty}^{\infty}\sum_{\sqrt{\mu_{0,j}+4}-4\in(\delta',\delta_{N_5-1})}
c_{n,j}d((d(r^{\sqrt{\mu_{0,j}+4}-2}\chi(\frac{4|n+\frac{1}{2}|r}{r_0})\phi_{0,j}e^{in\theta}))^\#\lrcorner\varphi)\]
using the G$_2$ structure on $\mathbb{S}^1\times V_+$,
and
\[\begin{split}
\sqrt{\sum_{n=-\infty}^{\infty}\sum_{\sqrt{\mu_{0,j}+4}-4\in(\delta',\delta_{N_5-1})}
c_{n,j}^2|n+\frac{1}{2}|^{8-2\sqrt{\mu_{0,j}+4}-2\delta_{N_5-1}}}+||\gamma_{\ge \delta_{N_5-1}}||_{W^{k,2}_{\delta_{N_5-1}}}\\
\le C(||(d+d^*)_{\mathbb{S}^1\times V_+}\gamma||_{W^{k-1,2}_{\delta_{N_5-1}-1}}+||\gamma||_{W^{k,2}_{\delta'}}),
\end{split}\]
where the norm is taking on $\Lambda^*(\mathbb{S}^1\times(V_+\cap\{r\le r_0\}))$.

Consider $\gamma'_{<\delta_{N_5-1}}$ defined by \[\sum_{n=-\infty}^{\infty}\sum_{\sqrt{\mu_{0,j}+4}-4\in(\delta',\delta_{N_5-1})}
c_{n,j}d((d(r^{\sqrt{\mu_{0,j}+4}-2}\chi(\frac{4|n+\frac{1}{2}|r}{r_0})\phi_{0,j}e^{in\theta}))^\#\lrcorner\varphi)\]
using the G$_2$ structure on $\mathbb{S}^1\times C$ instead, then
\[\begin{split}
&(d+d^*)_{\mathbb{S}^1\times C}(\gamma-\gamma_{<\delta_{N_5-1}}+\gamma'_{<\delta_{N_5-1}})\\
&=((d+d^*)_{\mathbb{S}^1\times C}-(d+d^*)_{\mathbb{S}^1\times V_+})(\gamma-\gamma_{<\delta_{N_5-1}})\\
&+(d+d^*)_{\mathbb{S}^1\times V_+}\gamma+((d+d^*)_{\mathbb{S}^1\times C}\gamma'_{<\delta_{N_5-1}} -(d+d^*)_{\mathbb{S}^1\times V_+}\gamma_{<\delta_{N_5-1}}).
\end{split}\]
By Proposition \ref{Calabi-Yau-cone}, $r^{\sqrt{\mu_{0,j}+4}-2}\phi_{0,j}$ is pluriharmonic and therefore harmonic on both $C$ and $V_+$. So
\[\begin{split}
&(d+d^*)d((d(r^{\sqrt{\mu_{0,j}+4}-2}\phi_{0,j}e^{in\theta}))^\#\lrcorner\varphi)\\
&=\Delta((d(r^{\sqrt{\mu_{0,j}+4}-2}\phi_{0,j}e^{in\theta}))^\#\lrcorner\varphi)
-dd^*((d(r^{\sqrt{\mu_{0,j}+4}-2}\phi_{0,j}e^{in\theta}))^\#\lrcorner\varphi)\\
&=((d\Delta(r^{\sqrt{\mu_{0,j}+4}-2}\phi_{0,j}e^{in\theta}))^\#\lrcorner\varphi)
-d*d((d(r^{\sqrt{\mu_{0,j}+4}-2}\phi_{0,j}e^{in\theta}))\wedge*\varphi)\\
&=n^2((d(r^{\sqrt{\mu_{0,j}+4}-2}\phi_{0,j}e^{in\theta}))^\#\lrcorner\varphi)
\end{split}\]
on both $\mathbb{S}^1\times V_+$ and $\mathbb{S}^1\times C$. By estimates on each terms, it is easy to see that
\[\begin{split}
&||(d+d^*)_{\mathbb{S}^1\times C}(\gamma-\gamma_{<\delta_{N_5-1}} +\gamma'_{<\delta_{N_5-1}})||_{W^{k-1,2}_{\delta''-1}}\\
&\le C(||(d+d^*)_{\mathbb{S}^1\times V_+}\gamma||_{W^{k-1,2}_{\delta''-1}}+||\gamma||_{W^{k,2}_{\delta'}}).
\end{split}\]
The induction statement follows from (2) or (3) applied to $\gamma-\gamma_{<\delta_{N_5-1}}+\gamma'_{<\delta_{N_5-1}}$.
\end{proof}

The analogy of Proposition \ref{Polyhomogenous-doubling} has been proved. The next goal is to obtain the analogy of Proposition \ref{Closeness-harmonic-form}. The key point in the proof of Proposition \ref{Closeness-harmonic-form} is the fact that there is no $\log r\phi_{2,1}$ term in the $\mathbb{S}^1$-invariant case. However, in the singular twisted connected sum case, it is easy to see that for $n\not=0$, \[(d+d^*)_{\mathbb{S}^1\times C}(e^{in\theta}d\theta\wedge K_0(|n|r)\phi_{2,1}+\frac{i|n|}{n}e^{in\theta}K_1(|n|r)dr\wedge\phi_{2,1})=0.\] Remark that $K_0(|n|r)$ is asymptotic to $-\log(|n|r)$ when $|n|r$ is small. By comparison to the proof of Proposition \ref{Closeness-harmonic-form} and \ref{Polyhomogenous-L}, they provide the infinite dimensional obstruction space for the singular twisted connected sum construction.

The leading obstruction terms are \[e^{in\theta}d\theta\wedge -\log(|n|r)\phi_{2,1}+\frac{i|n|}{n}e^{in\theta}(|n|r)^{-1}dr\wedge\phi_{2,1}.\] Remark that the decay rates of $e^{in\theta}d\theta\wedge -\log(|n|r)\phi_{2,1}$ are $r^{-2}\log r$ while the decay rates of $\frac{i|n|}{n}e^{in\theta}(|n|r)^{-1}dr\wedge\phi_{2,1}$ are $r^{-3}$. So the leading obstruction term is an infinite dimensional linear combination of $e^{in\theta}(|n|r)^{-1}dr\wedge\phi_{2,1}$.

The obstruction comes from inverting the error term in some sense. Using Fourier series with respect to the first $\mathbb{S}^1$ factor, $M_+$ is $\mathbb{S}^1$-invariant. On the other hands, even though $M_-$ is $\mathbb{S}^1$-invariant with respect to the second $\mathbb{S}^1$ factor, the deviation of $M_-$ from being $\mathbb{S}^1$-invariant with respect to the first $\mathbb{S}^1$ factor decays exponentially. Moreover, the decay rate for the $e^{in\theta}$ factor is $O(e^{-|n|t_-})$. The leading term is the $n=\pm1$ case. So the leading obstruction term is a linear combination of $\cos\theta r^{-1}dr\wedge\phi_{2,1}$ and $\sin{\theta}r^{-1}dr\wedge\phi_{2,1}$. By rescaling and changing $\theta$ by a constant, the leading obstruction term is $\sin\theta r^{-1}dr\wedge\phi_{2,1}$.

Remark that the Calabi-Yau metric on the deformation $C_\epsilon$ of $C$ is asymptotically conically with leading error term $\epsilon r^{-1}dr\wedge\phi_{2,1}$ \cite{ConlonHein}, so roughly speaking, the obstruction will be resolved if each slice $\{\theta\}\times C$ of $\mathbb{S}^1\times C$ is deformed to $C_{\sin\theta}$. It is an analogy to the construction of Li \cite{Li}. There are singularities near $\theta=0$ and $\theta=\pi$. Topologically, when $\theta=0$ or $\theta=\pi$, the slice is $C(\mathbb{S}^2\times \mathbb{S}^3)$. The other slices are $\mathbb{B}^3\times \mathbb{S}^3$. The total space is $C(\mathbb{S}^3\times \mathbb{S}^3)$ near $\theta=0$ or $\theta=\pi$. In other words, there are strong evidences that the nodal singularity along $\mathbb{S}^1$ should be replaced by two isolated conical singularities with model $C(\mathbb{S}^3\times \mathbb{S}^3)$. Such problem will be left for future studies.

Remark that the main tool of Li's construction \cite{Li} is Yau's solution \cite{Yau} of the Calabi conjecture and its non-compact generalizations by Tian-Yau \cite{TianYau} and Hein \cite{Hein}. In the G$_2$ case, the analogy of Yau's theorem is not available, so one has to instead find extra structures to reduce the dimension.


\begin{thebibliography}{99}
 \bibitem{AgmonNirenberg} \textit{S.~Agmon} and \textit{L.~Nirenberg}, Properties of solutions of ordinary differential equations in Banach space, Comm. Pure Appl. Math., \textbf{16} 1963, 121--239.
 \bibitem{Anderson} \textit{M.~T.~Anderson}, The L$^2$ structure of moduli spaces of Einstein metrics on 4-manifolds, Geom. Funct. Anal. \textbf{2} (1992), no.1, 29--89.
 \bibitem{ArezzoSpotti} \textit{C.~Arezzo} and \textit{C.~Spotti}, On cscK resolutions of conically singular cscK varieties, J. Funct. Anal. \textbf{271} (2016), 474--494.
 \bibitem{AtiyahWitten} \textit{M.~Atiyah} and \textit{E.~Witten}, $M$-theory dynamics on a manifold of $G_2$
 holonomy, Adv. Theor. Math. Phys. \textbf{6} (2002), no.1, 1--106.
 \bibitem{Besse} \textit{A.~L.~Besse}, Einstein manifolds, 10, Springer-Verlag, Berlin, 1987.
 \bibitem{BurnsRapoport} \textit{D.~Burns.~Jr} and \textit {M.~Rapoport}, On the Torelli problem for k\"ahlerian K3 surfaces, Ann. Sci. Ecole Norm. Sup. (4) \textit{8} (1975), no.2, 235--273.
 \bibitem{CandelasdelaOssa} \textit{P.~Candelas} and \textit{X.~de~la~Ossa}, Comments on conifolds, Nuclear Phys. B \textbf{342} (1990), 246--268.
 \bibitem{CheegerSpectral1} \textit{J.~Cheeger}, On the spectral geometry of spaces with cone-like singularities, Proc. Nat. Acad. Sci. U.S.A. \textit{76} (1979), no. 5, 2103-2106.
 \bibitem{CheegerSpectral2} \textit{J.~Cheeger}, Spectral geometry of singular Riemannian spaces, J. Differential Geom. \textit{18} (1983), no. 4, 575-657 (1984)
 \bibitem{Cheeger} \textit{J.~Cheeger}, On the Hodge theory of Riemannian pseudomanifolds, Geometry of the Laplace operator (Proc. Sympos. Pure Math., Univ. Hawaii, Honolulu, Hawaii, 1979), pp. 91--146, Proc. Sympos. Pure Math., XXXVI, Amer. Math. Soc., Providence, R.I., 1980.
 \bibitem{CheegerColding1} \textit{J.~Cheeger} and \textit{T.~H.~Colding}, On the structure of spaces with Ricci curvature bounded below. I, J. Differential Geom., \textbf{46} (1997), no.3, 406--480.
 \bibitem{CheegerColding2} \textit{J.~Cheeger} and \textit{T.~H.~Colding}, On the structure of spaces with Ricci curvature bounded below. II, J. Differential Geom., \textbf{54} (2000), no.1, 13--35.
 \bibitem{CheegerColding3} \textit{J.~Cheeger} and \textit{T.~H.~Colding}, On the structure of spaces with Ricci curvature bounded below. III, J. Differential Geom., \textbf{54} (2000), no.1, 37--74.
 \bibitem{CheegerTian} \textit{J.~Cheeger} and \textit{G.~Tian}, On the cone structure at infinity of Ricci flat manifolds with Euclidean volume growth and quadratic curvature decay, Invent. Math., \textbf{118} (1994), 493--571.
 \bibitem{ChenChen1} \textit{G.~Chen} and \textit{X.~Chen}, Gravitational instantons with faster than quadratic curvature decay (I), preprint, \url{https://arxiv.org/abs/1505.01790}.
 \bibitem{ChenChen2} \textit{G.~Chen} and \textit{X.~Chen}, Gravitational instantons with faster than quadratic curvature decay (II), accepted by J. Reine Angew. Math.
 \bibitem{ChenChen3} \textit{G.~Chen} and \textit{X.~Chen}, Gravitational instantons with faster than quadratic curvature decay (III), preprint, \url{https://arxiv.org/abs/1603.08465}.
 \bibitem{ChenDonaldsonSun1} \textit{X.~Chen}, \textit{S.~K.~Donaldson} and \textit{S.~Sun}, K\"ahler-Einstein metrics on Fano manifolds. I: Approximation of metrics with cone singularities, J. Amer. Math. Soc., \textbf{28} (2015), no.1, 183--197.
 \bibitem{ChenDonaldsonSun2} \textit{X.~Chen}, \textit{S.~K.~Donaldson} and \textit{S.~Sun}, K\"ahler-Einstein metrics on Fano manifolds. II: Limits with cone angle less than 2$\pi$, J. Amer. Math. Soc., \textbf{28} (2015), no.1, 199--234.
 \bibitem{ChenDonaldsonSun3} \textit{X.~Chen}, \textit{S.~K.~Donaldson} and \textit{S.~Sun}, K\"ahler-Einstein metrics on Fano manifolds. III: Limits as cone angle approaches 2$\pi$ and completion of the main proof, J. Amer. Math. Soc., \textbf{28} (2015), no.1, 235--278.
 \bibitem{ChenWang} \textit{X.~Chen} and \textit{Y.~Wang}, $C^{2,\alpha}$-estimate for Monge-Amp\'ere equations with H\"older-continuous right hand side, Ann. Global Anal. Geom., \textbf{49} (2016), no.2, 195--204.
 \bibitem{Chern} \textit{S.-S.~Chern}, On holomorphic mappings of hermitian manifolds of the same dimension, 1968 Entire Functions and Related Parts of Analysis (Proc. Sympos. Pure Math., La Jolla, Calif., 1966), pp. 157--170, Amer. Math. Soc., Providence, R.I.
 \bibitem{ConlonHein} \textit{R.~J.~Conlon} and \textit{H.-J.~Hein}, Asymptotically conical Calabi-Yau manifolds, I, Duke Math. J. \textbf{162} (2013), no.15, 2855--2902.
 \bibitem{CortiHaskinsNordstromPacini1} \textit{A.~Corti}, \textit{M.~Haskins}, \textit{J.~Nordstr\"om} and \textit{T.~Pacini}, Asymptotically cylindrical Calabi-Yau 3-folds from weak Fano 3-folds, Geom. Topol., \textbf{17} (2013), 1955--2059.
 \bibitem{CortiHaskinsNordstromPacini2} \textit{A.~Corti}, \textit{M.~Haskins}, \textit{J.~Nordstr\"om} and \textit{T.~Pacini}, G$_2$-manifolds and associative submanifolds via semi-Fano 3-folds. Duke Math. J. \textbf{164} (2015), no. 10, 1971--2092.
 \bibitem{CrowleyNordstrom} \textit{D.~Crowley} and \textit{J.~Nordstr\"om}, Exotic G$_2$-manifolds, preprint, \url{https://arxiv.org/abs/1411.0656}.
 \bibitem{Demailly} \textit{J.~P.~Demailly}, Complex Analytic and Differential Geometry, preprint, \url{https://www-fourier.ujf-grenoble.fr/~demailly/manuscripts/agbook.pdf}.
 \bibitem{DoiYotsutani} \textit{M.~Doi}, and \textit{N.~Yotsutani}, Doubling construction of Calabi-Yau threefolds, New York J. Math., \textbf{20} (2014), 1203–1235.
 \bibitem{Donaldson} \textit{S.~K.~Donaldson}, Calabi-Yau metrics on Kummer surfaces as a model gluing problem, Advances in geometric analysis, 109--118, Adv. Lect. Math. (ALM), 21, Int. Press, Somerville, MA, 2012.
 \bibitem{DonaldsonSun1} \textit{S.~K.~Donaldson} and \textit{S.~Sun}, Gromov-Hausdorff limits of K\"ahler manifolds and algebraic geometry, Acta Math., \textbf{213} (2014), no.1, 63--106.
 \bibitem{DonaldsonSun2} \textit{S.~K.~Donaldson} and \textit{S.~Sun}, Gromov-Hausdorff limits of K\"ahler manifolds and algebraic geometry, II, J. Differential Geom., \textbf{107} (2017), no.2, 327--371.
 \bibitem{EellsSampson} \textit{J~ Eells,~Jr.} and \textit{J.~H.~Sampson}, Harmonic mappings of Riemannian manifolds, Amer. J. Math., \textbf{86} (1964), 109--160.
 \bibitem{EyssidieuxGuedjZeriahi} \textit{P.~Eyssidieux}, \textit{V.~Guedj} and \textit{A.~Zeriahi}, Singular K\"ahler-Einstein metrics, J. Amer. Math. Soc., \textbf{22} (2009), 607--639.
 \bibitem{FernandezGray}  \textit{M.~Fern\'andez} and \textit{A.~Gray}, Riemannian manifolds with structure group G$_2$. Ann. Mat. Pura Appl. (4) \textbf{132} (1982), 19--45 (1983).
 \bibitem{FoscoloHaskinsNordstorm} \textit{L.~Foscolo}, \textit{M.~Haskins} and \textit{J.~Nordstr\"om}, Complete non-compact G$_2$-manifolds from asymptotically conical Calabi-Yau 3-folds, preprint, \url{https://arxiv.org/abs/1709.04904}.
 \bibitem{Fukuoka} \textit{T.~Fukuoka}, On the existence of almost Fano threefolds with del Pezzo fibrations, Math. Nachr., \textbf{290} (2017), no. 8--9, 1281--1302.
 \bibitem{GilbargTrudinger} \textit{D.~Gilbarg} and \textit{N.~S.~Trudinger}, Elliptic partial differential equations of second order. Reprint of the 1998 edition, Springer-Verlag, Berlin, 2001.
 \bibitem{Gross1} \textit{M.~Gross}, Primitive Calabi-Yau threefolds, J. Diff. Geom. \textbf{45} (1997), 288--318.
 \bibitem{Gross2} \textit{M.~Gross}, Connecting the web: A prognosis, in Mirror symmetry III, AMS/IP Stud. Adv. Math., 10, Amer. Math. Soc., (1999), 157--169.
 \bibitem{Hartshorne} \textit {R.~Hartshorne}, Deformation theory, Graduate Texts in Mathematics, \textbf{257}, Springer, New York, 2010.
 \bibitem{HaskinsHeinNordstrom} \textit{M.~Haskins}, \textit{H.-J.~Hein}, \textit{J.~Nordstr\"om}, Asymptotically cylindrical Calabi-Yau manifolds, J. Differential Geom. \textbf{101} (2015), no.2, 213--265.
 \bibitem{Hein} \textit{H.-J.~Hein}, On gravitational instantons, Thesis (Ph.D.)–Princeton University, 2010, \textbf{129} pp, ISBN: 978-1124-34891-9, ProQuest LLC
 \bibitem{HeinSun} \textit{H.-J.~Hein} and \textit{S.~Sun}, Calabi-Yau manifolds with isolated conical singularities, Publ. Math. Inst. Hautes Études Sci, \textbf{126} (2017), 73--130.
 \bibitem{HunsickerMazzeo} \textit{E.~Hunsicker} and \textit{R.~Mazzeo}, Harmonic forms on manifolds with edges, Int. Math. Res. Not. 2005, no.52, 3229--3272.
 \bibitem{Joyce} \textit{D.~Joyce}, Compact Riemannian 7-manifolds with holonomy G$_2$. I, J. Differential Geom. \textbf{43} (1996),  no.2, 291--328.
 \bibitem{JoyceBook} \textit{D.~Joyce}, Compact manifolds with special holonomy, OUP Mathematical Monographs series, Oxford 2000.
 \bibitem{JoyceKarigiannis} \textit{D.~Joyce} and \textit{S.~Karigiannis}, A new construction of compact G2-manifolds by gluing families of Eguchi-Hanson spaces, preprint, \url{https://arxiv.org/abs/1707.09325}.
 \bibitem{KarigiannisLotay} \textit{S.~Karigiannis} and \textit{J.~Lotay}, Deformation theory of G$_2$ conifolds, accepted by Comm. Anal. Geom.
 \bibitem{Klimek} \textit{M.~Klimek}, Pluripotential theory, London Math. Soc. Monogr. Ser., 6, The Clarendon Press, Oxford University Press, New York, 1991.
 \bibitem{Kodaira} \textit{K.~Kodaira}, On the structure of compact complex analytic surfaces. I, Amer. J. Math., \textbf{86} (1964), 751--798.
 \bibitem{Kondratev} \textit{V.~A.~Kondrat'ev}, Boundary value problems for elliptic equations in domains with conical or angular points, Trans. Moscow Math. Soc., \textbf{16} (1967), 209--292.
 \bibitem{Kovalev} \textit{A.~Kovalev}, Twisted connected sums and special Riemannian holonomy, J. Reine Angew. Math. \textbf{565} (2003), 125--160.
 \bibitem{KovalevSinger} \textit{A.~Kovalev} and \textit{M.~Singer}, Gluing theorems for complete anti-self-dual spaces, Geom. Funct. Anal., \textbf{11} (2001), 1229--1281.
 \bibitem{Kronheimer1} \textit{P.~B.~Kronheimer}, The construction of ALE spaces as hyper-K\"ahler quotients, J. Differential Geom. \textbf{29} (1989), no.3, 665--683.
 \bibitem{Kronheimer2} \textit{P.~B.~Kronheimer}, A Torelli-type theorem for gravitational instantons, J. Differential Geom. \textbf{29} (1989), no.3, 685--697.
 \bibitem{Li} \textit{Y.~Li}, A new complete Calabi-Yau metric on $\mathbb{C}^3$, preprint, \url{https://arxiv.org/abs/1705.07026}.
 \bibitem{LockhartMcOwen} \textit{R.~B.~Lockhart} and \textit{R.~C.~McOwen}, Elliptic differential operators on noncompact manifolds, Ann. Scuola Norm. Sup. Pisa Cl. Sci. (4) \textbf{12} (1985), no.3, 409--447.
 \bibitem{LooijengaPeters} \textit{E.~Looijenga} and \textit{C.~Peters}, Torelli theorems for K\"ahler K3 surfaces, Compositio Math. \textbf{42} (1980/81), no.2, 145--186.
 \bibitem{Lu} \textit{Y.-C.~Lu}, Holomorphic mappings of complex manifolds, J. Diff. Geom. \textbf{2} (1968), 299--312.
 \bibitem{MazjaPlamenevskii} \textit{V.~G.~Maz'ja} and \textit{B.~A.~Plamenevski\v{\i}}, Estimates in L$^p$ and H\"older classes and the Miranda-Agmon Maximum principle for solutions of elliptic boundary problems in domains with singular points on the boundary (in Russian), Math. Nachr., \textit{81} (1978), 25--82.
 \bibitem{Mazzeo} \textit{R.~Mazzeo}, Elliptic theory of edge operators, I, Comm. Partial Differential Equations \textbf{16} (1991), 1615--1664.
 \bibitem{Minerbe} \textit{V.~Minerbe}, Rigidity for multi-Taub-NUT metrics, J. Reine Angew. Math. \textbf{656} (2011), 47--58.
 \bibitem{Melrose} \textit{R.~B.~Melrose}, The Atiyah-Patodi-Singer index theorem. Research Notes in Mathematics, 4. A K Peters, Ltd., Wellesley, MA, 1993.
 \bibitem{MelroseMendoza} \textit{R.~B.~Melrose} and \textit{G.~Mendoza}, Elliptic operators of totally characteristic type, unpublished preprint from MSRI, 047-83, 1983.
 \bibitem{MoroianuSemmelmann} \textit{A.~Moroianu} and \textit{U.~Semmelmann}, The Hermitian Laplacian operator on nearly K\"ahler manifolds, Comm. Math. Phys. \textbf{294} (2010), no.1, 251--272.
 \bibitem{Obata} \textit{M.~Obata}, Certain conditions for a Riemannian manifold to be isometric with a sphere, J. Math. Soc. Japan, \textbf{14} (1962), 333--340.
 \bibitem{Pjateckii-SapiroSafarevic} \textit{I.~I.~Pjatecki\v{\i}-\v{S}apiro} and \textit{I.~R.~\v{S}afarevi\v{c}}, A Torelli theorem for algebraic surfaces of type K3, Math USSR Izvestiya. \textbf{35} (1971), 530--572.
 \bibitem{Reid} \textit{M.~Reid}, The moduli space of 3-folds with K = 0 may nevertheless be irreducible, Math. Ann. \textbf{287} (1987), 329--334.
 \bibitem{ReidK3} \textit{M.~Reid}, Chapters on algebraic surfaces, Complex Algebraic Geometry (Park City,UT, 1993), IAS/Park City Math. Ser. 3, Amer. Math. Soc., Providence, 1997, 3--159.
 \bibitem{RongZhang} \textit{X.~Rong} and \textit {Y.~Zhang}, Continuity of extremal transitions and flops for Calabi-Yau manifolds, Appendix B by Mark Gross, J. Differential Geom. \textbf{89} (2011), no.2, 233--269.
 \bibitem{Rossi} \textit{M.~Rossi}, Geometric transitions, J. Geom. Phys. \textbf{56} (2006), no. 9, 1940--1983.
 \bibitem{SchulzTammaro} \textit{M.~B.~Schulz} and \textit {E.~F.~Tammaro}, M-theory/type IIA duality and K3 in the Gibbons-Hawking approximation. Preprint, arxiv:1206.1070
 \bibitem{Siu} \textit{Y.~T.~Siu}, A simple proof of the surjectivity of the period map of K3 surfaces, Manuscripta Math. \textbf{35} (1981), no.3, 311--321.
 \bibitem{Stenzel} \textit{M.~Stenzel}, Ricci-flat metrics on the complexification of a compact rank one symmetric space, Manuscripta Math. \textbf{80} (1993), 151--163.
 \bibitem{TianYau} \textit{G.~Tian} and \textit{S.~T.~Yau}, Complete K\"ahler manifolds with zero Ricci curvature. I., J. Amer. Math. Soc., \textbf{3} (1990), no.3, 579--609.
 \bibitem{Todorov} \textit{A.~N.~Todorov}, Applications of the K\"ahler-Einstein-Calabi-Yau metric to moduli of K3 surfaces, Invent. Math. \textbf{61} (1980), no.3, 251--265.
 \bibitem{Torelli} \textit{R.~Torelli}, Sulk variet\`a di Jacobi, Rend. R. Accad. Lincei. \textbf{22} (1913), 98--103.
 \bibitem{Yau} \textit{S.~T.~Yau}, On the Ricci curvature of a compact K\"ahler manifold and the complex Monge-Amp\'ere equation. I, Comm. Pure Appl. Math., \textbf{31} (1978), no.3, 339--411.
 \end{thebibliography}
\end{document}